\newcounter{assu}
\theoremstyle{plain}
\newtheorem{lemma}{Lemma}[section]
\newtheorem{theorem}[lemma]{Theorem}
\newtheorem{proposition}[lemma]{Proposition}
\newtheorem{corollary}[lemma]{Corollary}
\theoremstyle{definition}
\newtheorem{assumption}[assu]{Assumption}
\newtheorem{definition}[lemma]{Definition}
\newtheorem{remark}[lemma]{Remark}
\newtheorem{example}{Example}
\numberwithin{equation}{section}
\newcommand{\R}{\mathbb{R}}
\newcommand{\N}{\mathbb{N}}
\newcommand{\Q}{\mathbb{Q}}
\newcommand{\TV}{\text{\rm Tot.Var.}}
\newcommand{\BV}{\text{\rm BV}}
\newcommand{\supp}{\text{\rm supp}}
\newcommand{\Id}{\mathbb{I}}
\newcommand{\Z}{\mathbb{Z}}
\newcommand{\gr}{\textrm{graph}}
\newcommand{\diam}{\textrm{diam\,}}
\newcommand{\haus}{\mathcal{H}}
\newcommand{\ve}{\varepsilon}
\newcommand{\erre}{\mathbb{R}}
\newcommand{\enne}{\mathbb{N}}
\newcommand{\f}{\varphi}
\newcommand{\weak}{\rightharpoonup}
\begin{document}

\pagestyle{fancy}%
\renewcommand{\sectionmark}[1]{\markright{\thesection\ #1}}
%\lhead{} \chead{\leftmark} \rhead{} \lfoot{} \cfoot{\thepage}
%\rfoot{}
\fancyhf{}%
\fancyhead[LE,RO]{\bfseries\thepage}%
\fancyhead[LO,RE]{\bfseries\rightmark}%
%\fancyhead[RE]{\bfseries\leftmark}%
%\fancyhead[LO,RE]{\textbf{pippo}}%
%\fancyfoot[LE,RO]{\bfseries\thepage}%
\fancyfoot[C]{Preprint SISSA 50/2009/M (August 6, 2009)}%
%\fancyfoot[LO]{\small\textsc{Corso di Laurea in Scienze
%Matematiche}}%
%\renewcommand{\headrulewidth}{0.5pt}
\renewcommand{\footrulewidth}{.5pt}
\addtolength{\headheight}{0.5pt}
\fancypagestyle{plain}{
  \fancyhead{}%
  \renewcommand{\headrulewidth}{0pt}
}%

\title[Monge problem with distance cost]{
The Monge problem for distance cost in geodesic spaces}

\author{Stefano Bianchini and Fabio Cavalletti \\ August 6, 2009 \\ Preprint SISSA  50/2009/M}

\address{SISSA, via Bonomea 265, IT-34136 Trieste (ITALY)}
\date{}

%\title{The Monge problem for distance cost in geodesic spaces}
%
%\author{Stefano Bianchini}
%\address{SISSA, via Beirut 2, IT-34014 Trieste (ITALY)}
%\email{bianchin@sissa.it}
%
%\author{Fabio Cavalletti}
%\address{SISSA, via Beirut 2, IT-34014 Trieste (ITALY)}
%\email{cavallet@sissa.it}

\bibliographystyle{plain}

\begin{abstract}
We address the Monge problem in metric spaces with a geodesic distance: $(X,d)$ is a Polish space and $d_L$ is a geodesic Borel distance which makes $(X,d_L)$ a non branching geodesic space. We show that under the assumption that geodesics are $d$-continuous and locally compact, we can reduce the transport problem to $1$-dimensional transport problems along geodesics.

We introduce two assumptions on the transport problem $\pi$ which imply that the conditional probabilities of the first marginal on each geodesic are continuous or absolutely continuous w.r.t. the $1$-dimensional Hausdorff distance induced by $d_L$. It is known that this regularity is sufficient for the construction of a transport map.

We study also the dynamics of transport along the geodesic, the stability of our conditions and show that in this setting $d_L$-cyclical monotonicity is not sufficient for optimality.
\end{abstract}

\maketitle

\tableofcontents

\section{Introduction}
\label{S:intro}

This paper concerns the Monge transportation problem in geodesic spaces, i.e. metric spaces with a geodesic structure.
Given two Borel probability measure $\mu,\nu \in \mathcal{P}(X)$, where $(X,d)$ is a Polish space, we study the minimization of the functional
\[ 
\mathcal{I}(T) = \int d_{L}(x,T(x)) \mu(dy)  
\]
where $T$ varies over all Borel maps $T:X \to X$ such that $T_{\sharp}\mu = \nu$ and $d_{L}$ is a Borel distance that makes $(X,d_{L})$ a non branching geodesic space.

Before giving an overview of the paper and of the existence result, we recall which are the main results concerning the Monge problem. 

In the original formulation given by Monge in 1781 the problem was settled in $\erre^{n}$, 
with the cost given by the Euclidean norm and the measures $\mu, \nu$ were supposed to be absolutely 
continuous and supported on two disjoint compact sets.
The original problem remained unsolved for a long time. 
In 1978 Sudakov \cite{sudak} claimed to have a solution for any distance cost function induced 
by a norm: an essential ingredient in the proof was that if $\mu \ll \mathcal{L}^{d}$ and $\mathcal{L}^{d}$-a.e. $\erre^{d}$ can be decomposed into 
convex sets of dimension $k$, then
then the conditional probabilities are absolutely continuous with respect to the $\haus^{k}$ measure of the correct dimension. 
But it turns out that when $d>2$, $0<k<d-1$ the property claimed by Sudakov is not true. An example with $d=3$, $k=1$ can be found in \cite{larm}.

The Euclidean case has been correctly solved only during the last decade. L. C. Evans and W. Gangbo in \cite{evagangbo} 
solved the problem under the assumptions that 
$\textrm{spt}\,\mu \cap  \textrm{spt}\,\nu = \emptyset$,  $\mu,\nu \ll \mathcal{L}^{d}$ and their densities are Lipschitz functions with compact support.
The first existence results for general absolutely continuous measures $\mu,\nu$ with compact support have been independently obtained by 
L. Caffarelli, M. Feldman and R.J. McCann in \cite{caffafeldmc} and by N. Trudinger and X.J. Wang in \cite{trudiwang}. 
Afterwards M. Feldman and R.J. McCann \cite{feldcann:mani} extended the results to manifolds with geodesic cost. 
The case of a general norm as cost function on $\erre^{d}$, including also the case with non strictly convex unitary ball, 
has been solved first in the particular case of crystalline norm by L. Ambrosio, B. Kirchheim and A. Pratelli in 
\cite{ambprat:crist}, and then in fully generality independently by L. Caravenna in \cite{caravenna:Monge} and by T. Champion and L. De Pascale in 
\cite{champdepasc:Monge}. 

\subsection{Overview of the paper}
\label{Ss:over}
The presence of $1$-dimensional sets (the geodesics) along which the cost is linear is a strong degeneracy for transport problems. This degeneracy is equivalent to the following problem in $\R$: if $\mu$ is concentrated on $(-\infty,0]$, and $\nu$ is concentrated on $[0,+\infty)$, then every transference plan is optimal for the $1$-dimensional distance cost $|\cdot|$. In fact, every $\pi \in \Pi(\mu,\nu)$ is supported on the set $(-\infty,0] \times [0,+\infty)$, on which $|x-y| = y-x$ and thus
\[
\int |x-y| \pi(dxdy) = - \int x \mu(dx) + \int y \nu(dy).
\]
Nevertheless, for this easy case an explicit map $T : \R \to \R$ can be constructed if $\mu$ is continuous (i.e. without atoms): the easiest choice is the monotone map, a minimizer of the quadratic cost $|\cdot|^2$.

The approach suggested by the above simple case is the following:
\begin{enumerate}
\item reduce the problem to transportation problems along distinct geodesics;
\item show that the disintegration of the marginal $\mu$ on each geodesic is continuous;
\item find a transport map on each geodesic and piece them together.
\end{enumerate}
While the last point can be seen as an application of selection principles in Polish spaces, the first two points are more subtle.

The geodesics used by a given transference plan $\pi$ to transport mass can be obtained from a set $\Gamma$ on which $\pi$ is concentrated. If $\pi$ wants to be a minimizer, then it certainly chooses the shortest paths: however the metric space can be branching, i.e. geodesics can bifurcate. 

In this paper we assume that the space is non branching.

Under this assumption, a cyclically monotone plan $\pi$ 
yields a natural partition $R$ of a subset of the transport set $\mathcal T_e$, i.e. the set of points on the geodesics used by $\pi$: 
defining
\begin{itemize}
\item the set $\mathcal T$ made of inner points of geodesics,
\item the set $a \cup b := \mathcal T_e \setminus \mathcal T$ of initial points $a$ and end points $b$,
\end{itemize}
the non branching assumption and the cyclical monotonicity of $\Gamma$ imply that the geodesics used by $\pi$ are a partition on $\mathcal T$. 
In general in $a$
there are points from which more than geodesic starts and in $b$ there are points in which more than one geodesic ends, hence 
being on a geodesic can't be an equivalence relation on the set $a\cup b$.
For example one can think to the unit circle with $\mu = \delta_0$ and $\nu = \delta_\pi$. 

We note here that $\pi$ gives also a direction along each component of $R$, as the one dimensional example above shows.

Even if we have a natural partition $R$ in $\mathcal T$ and $\mu(a \cup b) = 0$, we cannot reduce the transport problem to one dimensional problems: a necessary and sufficient condition is that the disintegration of the measure $\mu$ is strongly consistent, which is equivalent to the fact that there exists a $\mu$-measurable quotient map $f : \mathcal T \to \mathcal T$ of the equivalence relation $R$. In this case, one can write
\[
m := f_\sharp \mu, \quad \mu = \int \mu_y m(dy), \quad \mu_y(f^{-1}(y)) = 1,
\]
i.e. the conditional probabilities $\mu_y$ are concentrated on the counterimages $f^{-1}(y)$ (which are single geodesics). We can obtain the one dimensional problems by 
partitioning $\pi$ w.r.t. the partition $R \times (X \times X)$,
\[
\pi = \int \pi_y m(dy), \quad \nu = \int \nu_y m(dy) \quad \nu_y := (P_2)_\sharp \pi_y,
\]
and considering the one dimensional problems along the geodesic $R(y)$ with marginals $\mu_y$, $\nu_y$ and cost $|\cdot|$, the length on the geodesic. At this point we can study the problem of the regularity of the conditional probabilities $\mu_y$.

The fact that there exists a strongly consistent disintegration is a property of the geodesics of the metric space. In the setting considered in this paper, $(X,d_L)$ is a non branching geodesic space, not necessarily Polish. To assure that standard measure theory can be used, there exists a second distance $d$ on $X$ which makes $(X,d)$ Polish, and $d_L$ is a Borel function on $X \times X$ with the metric $d \times d$. 

Note that we do not require $d_L$ to be l.s.c., so the existence of an optimal plan $\pi$ is not assured, but we consider a $d_L$-cyclically monotone transference plan $\pi$. 
It is worth notice that we do not use the existence of optimal potentials $(\phi,\psi)$, as well as the optimality of $\pi$.

Thus, let $\pi$ be a $d_L$-cyclically monotone transference plan. The strong consistency of the disintegration of $\mu$ along the geodesic used by $\pi$ 
is a consequence of the topological properties of the geodesics of $d_L$ considered as curves in $(X,d)$: in fact we require that they are $d$-continuous and locally compact. Under this assumption, on $\mathcal T$ (the transport set without end points) it is possible to disintegrate $\mu$. Moreover, a natural operation on sets can be considered: the translation along geodesics. If $A$ is a subset of $\mathcal T$, we denote by $A_t$ the set translated by $t$ in the direction determined by $\pi$.

It turns out that the fact that $\mu(a \cup b) = 0$ and the measures $\mu_y$ are continuous depends on how the function $t \mapsto \mu(A_t)$ behaves. We can now state the main result.

\begin{theorem}[Lemma \ref{L:puntini} and Proposition \ref{P:nonatoms}]
\label{T:-1}
If $\sharp \{t > 0: \mu(A_t) > 0\}$ is uncountable for all $A$ Borel such that $\mu(A) > 0$, then $\mu(a \cup b) = 0$ and the conditional probabilities $\mu_y$ are continuous.
\end{theorem}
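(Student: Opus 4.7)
My plan is to handle the two assertions of the theorem in sequence, in both cases turning the hypothesis against a suitably constructed ``bad'' set. First for $\mu(a\cup b)=0$: for any Borel $B\subseteq b$, each point of $B$ is the terminal point of its geodesic, so the forward translation admits no image: $B_t=\emptyset$ for every $t>0$. The set $\{t>0:\mu(B_t)>0\}$ is then empty, hence countable, so the contrapositive of the hypothesis forces $\mu(B)=0$, giving $\mu(b)=0$. For $\mu(a)=0$, argue by contradiction: applying the hypothesis to $A=a$ yields uncountably many $t>0$ with $\mu(a_t)>0$. But $\{a_t\}_{t>0}$ is pairwise disjoint, since $a_t\subseteq\mathcal T$ and if $z\in a_t\cap a_{t'}$ with $t\neq t'$ then $z$ would be an interior point lying on two geodesics with distinct backward extensions, contradicting non-branching. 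A standard $\sigma$-finiteness argument --- partition into $T_n:=\{t:\mu(a_t)\ge 1/n\}$ and note $|T_n|\le n\mu(X)$ --- shows that only countably many $t$ can have $\mu(a_t)>0$, yielding the required contradiction.

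For the continuity of $\mu_y$, suppose for contradiction that $Y:=\{y:\mu_y\ \text{has an atom}\}$ has $m(Y)>0$. Each $\mu_y$ has only countably many atoms, all lying in $\mathcal T\cap R(y)$ by the previous step. A measurable selection theorem of Kuratowski--Ryll-Nardzewski type furnishes a Borel map $y\mapsto x(y)$ with $\mu_y(\{x(y)\})>0$ on $Y$; setting $A:=\{x(y):y\in Y\}$,
\[
\mu(A)=\int_Y \mu_y\bigl(\{x(y)\}\bigr)\,m(dy)>0.
\]
The hypothesis then gives uncountably many $t>0$ with $\mu(A_t)>0$. On the other hand $\{A_t\}_{t>0}$ is again pairwise disjoint: along a single geodesic the parametrization is injective in $t$, and a coincidence of translates across distinct $y,y'\in Y$ would place an interior point on two distinct geodesics, contradicting non-branching. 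The same $\sigma$-finiteness argument produces the contradiction, so $m(Y)=0$ and $\mu_y$ is continuous for $m$-a.e.\ $y$.

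The delicate ingredient is the Borel selection of an atom $x(y)$ per geodesic in the second part: one needs the graph $\{(y,x):x\in R(y),\,\mu_y(\{x\})>0\}$ to be sufficiently regular (analytic or Borel with $\sigma$-compact sections) to invoke a selection theorem, which rests on the Borel regularity of the disintegration $\{\mu_y\}$ and of the geodesic-partition structure established earlier in the paper. Once selection is available, the remainder is an organized repetition of the pairwise-disjoint-translates mechanism, so the main conceptual point is already visible in the simpler case $\mu(a)=0$.
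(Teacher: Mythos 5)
Your argument is correct and rests on exactly the paper's mechanism: an uncountable family of pairwise disjoint translates of a set of positive $\mu$-measure is impossible since $\mu$ is finite. The treatment of $a$ is literally the paper's proof of Lemma \ref{L:puntini}; for $b$ the paper instead disposes of the final points beforehand (Lemma \ref{L:finini0} and \eqref{E:extere}), but your direct derivation from the hypothesis ($B_t=\emptyset$ for $t>0$) is equally valid. In the atom part the paper does not select a single atom: it writes the whole atomic part as countably many Borel graphs $x_i(y)$ via Lusin, fixes one graph $A_j$, and counts the coincidence sets $S_{ij}(t)$; your single-selection variant lets you apply disjointness to $A_t$ itself and is slightly more direct, at the price of the selection issue you correctly flag (the paper's Lusin argument supplies exactly that Borel graph). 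One small caveat: the translates $A_t$ need not be literally pairwise disjoint, since $g$ is injective only on $\mathcal T$ and distinct rays may share a final point in $b(\mathcal T)$; but since the first part gives $\mu(a\cup b)=0$, the intersections $A_t\cap A_{t'}$ are $\mu$-null and the counting argument goes through unchanged.
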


This is sufficient to solve the Monge problem, i.e. to find a transport map which has the same cost as $\pi$. A second result concerns a stronger regularity assumption.

\begin{theorem}[Theorem \ref{teo:a.c.}]
\label{T:1}
Assume that $\mathcal L^1(\{t > 0: \mu(A_t) > 0\}) > 0$ for all $A$ Borel such that $\mu(A) > 0$. Then $\mu(a \cup b) = 0$ and $\mu_y$ is a.c. w.r.t. the $1$-dimensional Hausdorff measure $\mathcal H^1_{d_L}$ induced by $d_L$.
\end{theorem}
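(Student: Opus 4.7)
The plan is to invoke Theorem \ref{T:-1} for the first claim and argue the absolute continuity by contradiction via a Fubini computation along the flow of geodesics.

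Since any Borel subset of $\R$ of positive Lebesgue measure is uncountable, the present hypothesis is strictly stronger than the one in Theorem \ref{T:-1}. I therefore obtain immediately $\mu(a\cup b)=0$ together with the continuity of $\mu_y$ for $m$-a.e.\ $y$. What remains is to upgrade continuity to absolute continuity with respect to $\mathcal H^1_{d_L}$.

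Suppose, towards a contradiction, that on a set of $y$'s of positive $m$-measure the Lebesgue decomposition $\mu_y=h_y\mathcal H^1_{d_L}+\mu_y^{\,s}$ on $R(y)$ has a non-trivial singular part. Using the strong consistency of the disintegration (the $\mu$-measurable quotient map $f$ from the overview) together with the joint measurability of the Lebesgue decomposition in the parameter $y$, I would exhibit a Borel set $N\subset\mathcal T$ such that $\mu(N)>0$ while $\mathcal H^1_{d_L}(N\cap R(y))=0$ for every $y$; in effect $N$ concentrates the singular parts $\mu_y^{\,s}$.

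The standing hypothesis applied to this $N$ gives $\mathcal L^1(\{t>0:\mu(N_t)>0\})>0$. I would derive a contradiction by showing $\int_0^T \mu(N_t)\,dt=0$ for every $T>0$. Parametrize each geodesic by $d_L$-arclength $\gamma_y:[0,L_y]\to R(y)$, so that $\mathcal H^1_{d_L}$ corresponds to $\mathcal L^1$ and the flow $\Phi_t:x\mapsto x_t$ becomes the shift $s\mapsto s+t$. Writing $N_y^{\ast}=\gamma_y^{-1}(N\cap R(y))$ and $\widetilde\mu_y=(\gamma_y^{-1})_{\sharp}\mu_y$, a single Fubini on a fixed geodesic yields
\[
\int_0^T \mu_y\bigl(N_t\cap R(y)\bigr)\,dt = \int \mathcal L^1\bigl(\{t\in[0,T]:\, s-t\in N_y^{\ast}\}\bigr)\,\widetilde\mu_y(ds) \le \mathcal L^1(N_y^{\ast})\cdot \widetilde\mu_y([0,L_y])=0,
\]
by translation invariance of $\mathcal L^1$ and the fact that $\mathcal L^1(N_y^{\ast})=\mathcal H^1_{d_L}(N\cap R(y))=0$. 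A second Fubini in $m(dy)$, legitimate thanks to $\mu=\int\mu_y\,m(dy)$, then produces $\int_0^T\mu(N_t)\,dt=0$, contradicting the lower bound coming from the hypothesis.

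The main obstacle is measure-theoretic bookkeeping: the Borel set $N$ must catch the singular parts of the $\mu_y$ uniformly in $y$, and the family of arclength parametrizations $\gamma_y$ together with the flow $(t,x)\mapsto\Phi_t(x)$ must be jointly Borel in $(y,s,t)$. Both points lean on the topological hypotheses of the paper—geodesics are $d$-continuous and locally compact—and should fit into the measurable-selection and disintegration machinery developed earlier in the paper.
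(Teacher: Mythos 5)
Your proposal is correct and follows essentially the same route as the paper: the Borel set $N$ carrying the singular parts is exactly the set $C$ produced in Lemma \ref{Lem:dec} (obtained there by a single global Radon--Nikodym decomposition of $\mu$ against the reference measure $g_\sharp(m\otimes\mathcal L^1)$, which disposes of your ``joint measurability in $y$'' worry in one stroke), and your Fubini/translation-invariance contradiction is the computation in the proof of Theorem \ref{teo:a.c.}. The reduction of the first claim to Theorem \ref{T:-1} is also how the paper proceeds, via Lemma \ref{L:puntini} and Proposition \ref{P:nonatoms}.
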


The assumption of the above theorem and the assumption $d_L \geq d$ allows to define a current in $(X,d)$ which represents the vector field corresponding to the translation $A \mapsto A_t$, and moreover to solve the equation
\[
\partial U = \mu - \nu
\]
is the sense of current in metric space.

The final results of the paper are the stability of these conditions under Measure-Gromov-Hausdorff like convergence of structures $(X_n,d_n,d_{L,n},\pi_{n})$. The conclusion is that a sort of uniform integrability condition on the conditional probability w.r.t. $\mathcal H^1_{d_{L,n}}$ passes to the limit, so that one can verify by approximation if Theorem \ref{T:1} holds.

In the case $d = d_L$, considering a reference measure $\eta \in \mathcal{P}(X)$ such that
$(X,d,\eta)$ is a geodesic measure space satisfying the $MCP(K,N)$ for $K\in \erre$ and $N\geq1$, 
the application of the above results together with $MCP$-condition implies that Assumption \ref{A:NDE} holds for 
$\eta$ w.r.t. the optimal flow induced by any $d$-monotone plan $\pi\in \Pi(\mu,\nu)$. 
Hence if $\mu \ll \eta$,
the existence of a minimizer for the Monge minimization problem with marginal $\mu$ and $\nu$ follows.

To conclude this introduction, we observe that it is probably possible to extend these results to the case where $-d_L$ is a Souslin function on $(X \times X,d \times d)$: 
this means that $d_{L}^{-1}(-\infty,t)$ is an analytic set in the sense of Souslin.

The interested reader can refer for example to the analysis of \cite{biacar:cmono}.

%Observe also that if $X$ is complete Riemannian manifold with Riemannian distance $d$ and Riemannian volume $m$, then 
%$(X,d,m)$ satisfies $\textrm{CD}(K,N)$ condition if and only if $X$ has dimension less than $N$ and Ricci curvature greater or equal than $K$. For the definition of $\textrm{CD}(K,N)$ condition and the Generalized Brunn-Minkowski inequality we refer to \cite{sturm:MGH2}.

\subsection{Structure of the paper}
\label{Ss:struct}

The paper is organized as follows.

In Section \ref{S:preli}, we recall the basic mathematical results we use. In Section \ref{Ss:univmeas} the fundamentals of projective set theory are listed. In Section \ref{S:disintegrazione} we recall the Disintegration Theorem, using the version of \cite{biacar:cmono}. Next, the basic results of selection principles are in Section \ref{Ss:sele}, and in Section \ref{ss:Metric} we define the geodesic structure $(X,d,d_L)$ which is studied in this paper. Finally, Section \ref{Ss:General Facts} recalls some fundamental results in optimal transportation theory.

The next three sections are the key ones.

Section \ref{S:Optimal} shows how using only the $d_L$-cyclical monotonicity of a set $\Gamma$ we can obtain a partial order relation $G \subset X \times X$ as follows (Lemma \ref{L:analGR} and Proposition \ref{P:equiv}): $xGy$ iff there exists $(w,z) \in \Gamma$ and a geodesic $\gamma: [0,1] \to X$, with 
$\gamma(0)=w$, $\gamma(1)=z$, such that $x$, $y$ belongs to $\gamma$ and $\gamma^{-1}(x) \leq \gamma^{-1}(y)$. 
This set $G$ is analytic, and allows to define
\begin{itemize}
\item the transport ray set $R$ \eqref{E:Rray},
\item the transport sets $\mathcal T_e$, $\mathcal T$ (with and without and points) \eqref{E:TR0},
\item the set of initial points $a$ and final points $b$ \eqref{E:endpoint0}.
\end{itemize}
Moreover we show that $R \llcorner_{\mathcal T \times \mathcal T}$ is an equivalence relation (Proposition \ref{P:equiv}), we can assume that the set of final points $b$ can be taken $\mu$-negligible (Lemma \ref{L:finini0}), and in two final remarks we study what happens in the case more regularity on the cost $d_L$ is assumed, Remark \ref{R:lsccase1} and Remark \ref{R:TR}.\\
Notice that in the case $d=d_{L}$ the existence of a Lipschitz potential $\f$, one can take 
\[
\Gamma = G = \Big\{ (x,y) : \f(x)-\f(y)=d(x,y) \Big\}.
\]
Thus the main result of this section is that these sets can be defined even if the potential does not exist.

Section \ref{S:partition} proves that the continuity and local compactness of geodesics imply that the disintegration induced by $R$ on $\mathcal T$ is strongly consistent (Proposition \ref{P:sicogrF}): as Example \ref{Ex:nonsection} 
shows, the strong consistency of the disintegration is a non trivial property of the metric spaces we are considering.\\
Using this fact, we can define an order preserving map $g$ which maps our transport problem into a transport problem on $\mathcal S \times \R$, where $\mathcal S$ is a cross section of $R$ (Proposition \ref{P:gammaclass}). Finally we show that under this assumption there exists a transference plan with the same cost of $\pi$ which leaves the common mass $\mu \wedge \nu$ at the same place (note that in general this operation lowers the transference cost).

In Section \ref{S:regurlr} we prove Theorem \ref{T:-1} and Theorem \ref{T:1}. We first introduce the operation $A \mapsto A_t$, the translation along geodesics \eqref{E:At}, and show that $t \mapsto \mu(A_t)$ is a Souslin function if $A$ is analytic (Lemma \ref{L:measumuAt}). \\ 
Next we show that under the assumption
\[
\mu(A) > 0 \quad \Longrightarrow \quad \sharp \big\{ t > 0: \mu(A_t) > 0 \big\} > \aleph_0
\]
the set of initial points $a$ is $\mu$-negligible (Lemma \ref{L:puntini}) and the conditional probabilities $\mu_y$ are continuous. \\
Finally, we show that under the stronger assumption
\begin{equation}
\label{E:intro0}
\mu(A) > 0 \quad \Longrightarrow \quad \int_{\R^+} \mu(A_t) dt > 0,
\end{equation}
the conditional probabilities $\mu_y$ are a.c. w.r.t. $\mathcal H^1_{d_L}$ (Theorem \ref{teo:a.c.}). A final result shows that actually Condition \eqref{E:intro0} yields that $t \mapsto \mu(A_t)$ has more regularity than just integrability (Proposition \ref{P:peloso}) it is in fact continuous

After the above results, the solution of the Monge problem is routine, and it is done in Theorem \ref{T:mongeff} of Section \ref{S:Solution}.

Under Condition \ref{E:intro0} and $d \leq d_L$, in Section \ref{S:div} we give a dynamic interpretation to the transport along geodesics. In Definition \ref{D:dotgamma} we define the current $\dot g$ in $(X,d)$, which represents the flow induced by the transference plan $\pi$. Not much can be said of this flow, unless some regularity assumptions are considered. These assumptions are the natural extensions of properties of transportation problems in finite dimensional spaces. \\
If there exists a background measure $\eta$ whose disintegration along geodesics satisfies
\[
\eta = \int q_y \mathcal{H}^1_{d_L} m(dy), \quad q_y \in \BV, \ \int \TV(q_y) m(dy) < +\infty,
\]
then $\dot g$ is a normal current, i.e. its boundary is a bounded measure on $X$ (Lemma \ref{L:normalcurr}). \\
We can also consider the problem $\partial U = \mu - \nu$ in the sense of currents: Proposition \ref{P:bidual} gives a solution, and in the case $q_y(t) > 0$ for $\mathcal H^1_{d_L}$-a.e. $t$ we can write represent $U = \rho \dot g$, i.e. the flow $\dot g$ multiplied by a scalar density $\rho$ (Corollary \ref{C:regudual}).

In Section \ref{S:limite} we address the stability of the assumptions under Measure-Gromov-Hausdorff-like convergence of 
structures $(X_n,d_n,d_{L,n},\pi_{n})$. 
Under a uniform integrability condition of $\mu_{y,n}$ w.r.t. $\mathcal H^1_{d_{L,n}}$ and 
a uniform bound on the $\pi_{n}$ transportation cost 
(Assumption \ref{A:equi} of Section \ref{Ss:GHC}), we show that the marginal $\mu$ can be represented 
as the image of a measure $r m \otimes \mathcal L^1$ by a Borel function $h : \mathcal T \times \R \to \mathcal T_e$, 
with $r \in L^1(m \otimes \mathcal L^1)$ (Proposition \ref{P:graphnn}). 
The key feature of $h$ is that $t \mapsto h(y,t)$ is a geodesic of $\mathcal T$ for $m$-a.e. $y \in \mathcal T$. \\
Thus while $h(0,\mathcal T)$ is not a cross section for $R$ (in that case we would have finished the proof), in Proposition \ref{P:finalapr} we show which conditions on $h$ imply that $\mu$ can be disintegrated with a.c. conditional probabilities, and we verify that this is our case in Theorem \ref{T:aproxxgg}. \\
In two remarks we suggest how to pass also uniform estimates on the disintegration on $(X_n,d_n,d_{L,n})$ to the transference problem in $(X,d,d_L)$ (Remark \ref{R:morereg} and Remark \ref{R:reguadd}).

In Section \ref{S:mcp} we consider an application of the results obtained in the previous sections. 
We assume $d=d_{L}$ and the existence of background probability measure $\eta$ such that $(X,d,\eta)$ 
satisfies $MCP(K,N)$ (Definition \ref{D:mcp}).
In this framework we prove that for any $d$-cyclically monotone transference plan $\pi$, $\eta$ admits a disintegration along the geodesics 
used by $\pi$ with marginal probabilities absolutely continuous w.r.t. $\haus^{1}$ (Theorem \ref{T:regularity}). 
This implies directly (Corollary \ref{C:conclu}) that if $\mu\ll \eta$ the Monge minimization problem with marginals $\mu$ and $\nu$
admits a solution. 
The final result of the section (Lemma \ref{L:mhdregul}) shows that we can solve the dynamical problem $\partial U = \mu - \nu$ 
with $U=\rho \dot g$, and if the support of $\mu$ and $\nu$ are disjoint $U$ is a normal current.

The last section contains two important examples. In Example \ref{Ex:nonsection} we show that if the geodesics are not locally compact, then in general the disintegration along transport rays is not strongly supported. In Example \ref{Ex:nooptir} we show that under our assumptions the $c$-monotonicity is not sufficient for optimality.

We end with a list of notations, Section \ref{S:notation}.

\section{Preliminaries}
\label{S:preli}

In this section we recall some general facts about projective classes, the Disintegration Theorem for measures, measurable selection principles, geodesic spaces and optimal transportation problems.

\subsection{Borel, projective and universally measurable sets}
\label{Ss:univmeas}

The \emph{projective class $\Sigma^1_1(X)$} is the family of subsets $A$ of the Polish space $X$ for which there exists $Y$ Polish and $B \in \mathcal{B}(X \times Y)$ such that $A = P_1(B)$. The \emph{coprojective class $\Pi^1_1(X)$} is the complement in $X$ of the class $\Sigma^1_1(X)$. The class $\Sigma^1_1$ is called \emph{the class of analytic sets}, and $\Pi^1_1$ are the \emph{coanalytic sets}.

The \emph{projective class $\Sigma^1_{n+1}(X)$} is the family of subsets $A$ of the Polish space $X$ for which there exists $Y$ Polish and $B \in \Pi^1_n(X \times Y)$ such that $A = P_1(B)$. The \emph{coprojective class $\Pi^1_{n+1}(X)$} is the complement in $X$ of the class $\Sigma^1_{n+1}$.

If $\Sigma^1_n$, $\Pi^1_n$ are the projective, coprojective pointclasses, then the following holds (Chapter 4 of \cite{Sri:courseborel}):
\begin{enumerate}
\item $\Sigma^1_n$, $\Pi^1_n$ are closed under countable unions, intersections (in particular they are monotone classes);
\item $\Sigma^1_n$ is closed w.r.t. projections, $\Pi^1_n$ is closed w.r.t. coprojections;
\item if $A \in \Sigma^1_n$, then $X \setminus A \in \Pi^1_n$;
\item the \emph{ambiguous class} $\Delta^1_n = \Sigma^1_n \cap \Pi^1_n$ is a $\sigma$-algebra
% its sets are measurable for all Borel measures% 
and $\Sigma^1_n \cup \Pi^1_n \subset \Delta^1_{n+1}$.
\end{enumerate}
We will denote by $\mathcal{A}$ the $\sigma$-algebra generated by $\Sigma^1_1$: clearly $\mathcal{B} = \Delta^1_1 \subset \mathcal{A} \subset \Delta^1_2$.

We recall that a subset of $X$ Polish is \emph{universally measurable} if it belongs to all completed $\sigma$-algebras of all Borel measures on $X$:
it can be proved that every set in $\mathcal{A}$ is universally measurable.
We say that $f:X \to \erre \cup \{\pm \infty\}$ is a \emph{Souslin function} if $f^{-1}(t,+\infty] \in \Sigma^{1}_{1}$.

\begin{lemma}
\label{L:measuregP}
If $f : X \to Y$ is universally measurable, then $f^{-1}(U)$ is universally measurable if $U$ is.
\end{lemma}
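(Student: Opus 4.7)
The plan is to unpack both uses of the phrase ``universally measurable''. Recall that a set $A \subset X$ is universally measurable iff, for every Borel probability $\mu$ on $X$, there exist Borel sets $B^-, B^+$ with $B^- \subset A \subset B^+$ and $\mu(B^+ \setminus B^-)=0$; equivalently, $A$ lies in the $\mu$-completion of $\mathcal B(X)$. A map $f:X\to Y$ is universally measurable iff $f^{-1}(B)$ is universally measurable in $X$ for every Borel $B \subset Y$, or equivalently iff $f$ is $\mu$-measurable for every Borel probability $\mu$ on $X$.

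Fix a Borel probability $\mu$ on $X$; it suffices to show $f^{-1}(U)$ belongs to the $\mu$-completion of $\mathcal B(X)$. First I would push $\mu$ forward along $f$. Since $f^{-1}(B)$ is universally measurable, hence $\mu$-measurable, for every $B \in \mathcal B(Y)$, the formula
\[
\nu(B) := \mu\bigl(f^{-1}(B)\bigr), \qquad B \in \mathcal B(Y),
\]
defines a Borel probability measure on $Y$ (countable additivity is immediate from the $\mu$-additivity on the preimages, which are disjoint whenever the $B$'s are).

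Next I would use the universal measurability of $U$ applied to this particular measure $\nu$: there exist Borel sets $B^-, B^+ \subset Y$ with $B^- \subset U \subset B^+$ and $\nu(B^+ \setminus B^-) = 0$. Taking preimages,
\[
f^{-1}(B^-) \subset f^{-1}(U) \subset f^{-1}(B^+),
\]
and both $f^{-1}(B^\pm)$ are universally measurable, hence $\mu$-measurable. Moreover
\[
\mu\bigl(f^{-1}(B^+) \setminus f^{-1}(B^-)\bigr) = \mu\bigl(f^{-1}(B^+ \setminus B^-)\bigr) = \nu(B^+ \setminus B^-) = 0.
\]
Thus $f^{-1}(U)$ is sandwiched between two $\mu$-measurable sets whose difference is $\mu$-null, so $f^{-1}(U)$ belongs to the $\mu$-completion of $\mathcal B(X)$. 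Since $\mu$ was arbitrary, $f^{-1}(U)$ is universally measurable.

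There is essentially no obstacle here; the only point that deserves a sentence of care is that $\nu$ is a well-defined Borel measure even though $f$ is only universally, not Borel, measurable --- this is exactly why the hypothesis on $f$ is phrased the way it is. Once $\nu$ is in hand, the Borel inner/outer approximation of $U$ pulls back verbatim.
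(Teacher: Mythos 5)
Your argument is correct and follows essentially the same route as the paper: push $\mu$ forward to $\nu = f_\sharp\mu$, apply the universal measurability of $U$ to $\nu$ to get Borel sets $B^-\subset U\subset B^+$ with $\nu(B^+\setminus B^-)=0$, and pull back. The only cosmetic difference is that the paper finishes by explicitly sandwiching $f^{-1}(B^\pm)$ between Borel sets $C\subset\cdot\subset C'$ with $\mu(C'\setminus C)=0$, whereas you invoke membership in the $\mu$-completion directly; these are equivalent.
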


\begin{proof}
If $\mu \in \mathcal{M}(X)$, then $f_\sharp \mu \in \mathcal{M}(Y)$, so for $U \subset Y$ universally measurable there exist Borel sets $B$, $B'$ such that $B \subset U \subset B'$ and
\[
0 = (f_\sharp \mu)(B' \setminus B) = \mu \big( f^{-1}(B') \setminus f^{-1}(B) \big).
\]
Since $f^{-1}(B), f^{-1}(B') \subset X$ are universally measurable, there exists Borel sets $C$, $C'$ such that
\[
C \subset f^{-1}(B) \subset f^{-1}(U) \subset f^{-1}(B') \subset C'
\]
and $\mu(C' \setminus C) = 0$. The conclusion follows.
\end{proof}

\subsection{Disintegration of measures}
\label{S:disintegrazione}

Given a measurable space $(R, \mathscr{R})$ and a function $r: R \to S$, with $S$ generic set, we can endow $S$ with the \emph{push forward $\sigma$-algebra} $\mathscr{S}$ of $\mathscr{R}$:
\[
Q \in \mathscr{S} \quad \Longleftrightarrow \quad r^{-1}(Q) \in \mathscr{R},
\]
which could be also defined as the biggest $\sigma$-algebra on $S$ such that $r$ is measurable. Moreover given a measure space 
$(R,\mathscr{R},\rho)$, the \emph{push forward measure} $\eta$ is then defined as $\eta := (r_{\sharp}\rho)$.

Consider a probability space $(R, \mathscr{R},\rho)$ and its push forward measure space $(S,\mathscr{S},\eta)$ induced by a map $r$. From the above definition the map $r$ is clearly measurable and inverse measure preserving.

\begin{definition}
\label{defi:dis}
A \emph{disintegration} of $\rho$ \emph{consistent with} $r$ is a map $\rho: \mathscr{R} \times S \to [0,1]$ such that
\begin{enumerate}
\item  $\rho_{s}(\cdot)$ is a probability measure on $(R,\mathscr{R})$ for all $s\in S$,
\item  $\rho_{\cdot}(B)$ is $\eta$-measurable for all $B \in \mathscr{R}$,
\end{enumerate}
and satisfies for all $B \in \mathscr{R}, C \in \mathscr{S}$ the consistency condition
\[
\rho\left(B \cap r^{-1}(C) \right) = \int_{C} \rho_{s}(B) \eta(ds).
\]
A disintegration is \emph{strongly consistent with respect to $r$} if for all $s$ we have $\rho_{s}(r^{-1}(s))=1$.
\end{definition}

The measures $\rho_s$ are called \emph{conditional probabilities}.

We say that a $\sigma$-algebra $\mathcal{H}$ is \emph{essentially countably generated} with respect to a measure $m$ if there exists a countably generated $\sigma$-algebra $\hat{\mathcal{H}}$ such that for all $A \in \mathcal{H}$ there exists $\hat{A} \in \hat{\mathcal{H}}$ such that $m (A \vartriangle \hat{A})=0$.

We recall the following version of the disintegration theorem that can be found on \cite{Fre:measuretheory4}, Section 452 (see \cite{biacar:cmono} for a direct proof).

\begin{theorem}[Disintegration of measures]
\label{T:disintr}
Assume that $(R,\mathscr{R},\rho)$ is a countably generated probability space, $R = \cup_{s \in S}R_{s}$ a partition of R, $r: R \to S$ the quotient map and $\left( S, \mathscr{S},\eta \right)$ the quotient measure space. Then  $\mathscr{S}$ is essentially countably generated w.r.t. $\eta$ and there exists a unique disintegration $s \mapsto \rho_{s}$ in the following sense: if $\rho_{1}, \rho_{2}$ are two consistent disintegration then $\rho_{1,s}(\cdot)=\rho_{2,s}(\cdot)$ for $\eta$-a.e. $s$.

If $\left\{ S_{n}\right\}_{n\in \enne}$ is a family essentially generating  $\mathscr{S}$ define the equivalence relation:
\[
s \sim s' \iff \   \{  s \in S_{n} \iff s'\in S_{n}, \ \forall\, n \in \enne\}.
\]
Denoting with p the quotient map associated to the above equivalence relation and with $(L,\mathscr{L}, \lambda)$ the quotient measure space, the following properties hold:
\begin{itemize}
\item $R_{l}:= \cup_{s\in p^{-1}(l)}R_{s} = (p \circ r)^{-1}(l)$ is $\rho$-measurable and $R = \cup_{l\in L}R_{l}$;
\item the disintegration $\rho = \int_{L}\rho_{l} \lambda(dl)$ satisfies $\rho_{l}(R_{l})=1$, for $\lambda$-a.e. $l$. In particular there exists a 
strongly consistent disintegration w.r.t. $p \circ r$;
\item the disintegration $\rho = \int_{S}\rho_{s} \eta(ds)$ satisfies $\rho_{s}= \rho_{p(s)}$ for $\eta$-a.e. $s$.
\end{itemize}
\end{theorem}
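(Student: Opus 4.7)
The plan is to construct the candidate disintegration using Radon--Nikodym derivatives on a countable generating family for $\mathscr{R}$, verify uniqueness by a monotone class argument, and finally obtain strong consistency by embedding the refined quotient $L$ into a standard Borel space.

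First, I would establish that $\mathscr{S}$ is essentially countably generated. Let $\mathscr{A}_0 = \{B_n\}_{n \in \mathbb{N}}$ be a countable algebra generating $\mathscr{R}$. For each $n$, the map $C \mapsto \rho(B_n \cap r^{-1}(C))$ is a finite measure on $(S, \mathscr{S})$ absolutely continuous with respect to $\eta$; let $f_n$ be a version of its Radon--Nikodym derivative. Denote by $\hat{\mathscr{S}}$ the countably generated sub-$\sigma$-algebra of $\mathscr{S}$ generated by $\{f_n\}$. A standard $L^1(\rho)$ approximation, together with the fact that any $r^{-1}(C)$ is $r$-saturated, shows that every $C \in \mathscr{S}$ is $\eta$-a.e. equal to some $\hat{C} \in \hat{\mathscr{S}}$, which is the desired essential countable generation. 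Setting $\rho_s(B_n) := f_n(s)$ for $\eta$-a.e. $s$ and extending by monotone class then yields a countably additive kernel $s \mapsto \rho_s$ on $\mathscr{R}$ that disintegrates $\rho$ consistently with $r$ in the sense of Definition \ref{defi:dis}; uniqueness modulo $\eta$-null sets is immediate since any two such disintegrations must agree $\eta$-a.e.\ on the countable family $\{B_n\}$ and hence on the generated $\sigma$-algebra $\mathscr{R}$.

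The subtler step is strong consistency through the refined quotient $L$. By construction, $p : S \to L$ identifies two points of $S$ exactly when they belong to the same $S_n$'s, so the evaluation $l \mapsto (\mathbf{1}_{p(S_n)}(l))_n \in \{0,1\}^{\mathbb{N}}$ injects $(L, \mathscr{L})$ measurably into a standard Borel space, and the push-forward of $\lambda$ is a Borel probability on the image. Applying the classical Rokhlin disintegration theorem for Borel measures on Polish spaces, one obtains a disintegration of $\rho$ along the measurable map $p \circ r$ whose conditional measures $\rho_l$ concentrate on the fibers $R_l = (p \circ r)^{-1}(l)$, which are $\rho$-measurable since they can be written as countable Boolean combinations of the $r^{-1}(S_n)$'s. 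The identity $\rho_s = \rho_{p(s)}$ for $\eta$-a.e. $s$ then follows from the uniqueness statement of the first part applied to $\{\rho_s\}$ and to the kernel $s \mapsto \rho_{p(s)}$, which is also a disintegration consistent with $r$ because $p$ is $\mathscr{S}$-measurable. I expect the main obstacle to be the clean treatment of this last step: in particular, verifying that the image of $L$ in $\{0,1\}^{\mathbb{N}}$ is measurable enough to invoke the Borel disintegration, and checking that the equivalence classes of $\sim$ are indeed $\eta$-determined by $\hat{\mathscr{S}}$, so that no information is lost in passing from $S$ to $L$.
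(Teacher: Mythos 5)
The paper does not prove Theorem \ref{T:disintr}: it is quoted from Fremlin, Section 452, with a direct proof referenced in \cite{biacar:cmono}, so there is no in-paper argument to compare against. Your outline follows the standard proof of that reference in its first half: essential countable generation via the Radon--Nikodym derivatives $f_n$ of $C \mapsto \rho(B_n \cap r^{-1}(C))$ is sound, because $\int |f_n - f_m|\, d\eta \leq \rho(B_n \triangle B_m)$, so approximating $\chi_{r^{-1}(C)}$ by the $\chi_{B_n}$ in $L^1(\rho)$ forces $\chi_C$ to be an $L^1(\eta)$-limit of $\sigma(\{f_n\})$-measurable functions; and uniqueness on the countable generating algebra is immediate as you say.

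Two steps need more care. First, ``extending by monotone class'' is where the real work of the existence part lies and, as written, is a gap: a finitely additive set function defined $\eta$-a.e.\ on a countable algebra does not automatically extend to a countably additive probability on $\sigma(\{B_n\}) = \mathscr{R}$. The standard fix is precisely the device you reserve for the second half: embed $(R,\mathscr{R})$ into $\{0,1\}^{\enne}$ by $x \mapsto (\chi_{B_n}(x))_n$; on the cylinder algebra the generating sets are clopen, hence form a compact class, so finite additivity upgrades to countable additivity and Carath\'eodory applies; one then checks that for $\eta$-a.e.\ $s$ the resulting Borel measure gives full outer measure to the image of $R$ and so restricts to the trace $\sigma$-algebra, i.e.\ to $\mathscr{R}$. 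Second, invoking Rokhlin for strong consistency over $L$ is both heavier than needed and not directly applicable, since $(R,\mathscr{R},\rho)$ is not assumed standard Borel. The economical argument is: $R_l = (p\circ r)^{-1}(l)$ is the countable intersection of the sets $r^{-1}(S_n)$ and their complements selected by $l$; each $S_n$ is $\sim$-saturated, so consistency applied to $C = S_n$ gives $\rho_l(r^{-1}(S_n)) = \chi_{p(S_n)}(l)$ for $\lambda$-a.e.\ $l$, and intersecting over $n$ yields $\rho_l(R_l) = 1$ for $\lambda$-a.e.\ $l$. Your derivation of $\rho_s = \rho_{p(s)}$ from uniqueness is correct, granted the essential countable generation established in the first step.
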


In particular we will use the following corollary.

\begin{corollary}
\label{C:disintegration}
If $(S,\mathscr{S})=(X,\mathcal{B}(X))$ with $X$ Polish space, then the disintegration is strongly consistent.
\end{corollary}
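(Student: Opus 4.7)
The plan is to reduce the corollary to the second half of Theorem \ref{T:disintr} by showing that, when $\mathscr{S} = \mathcal{B}(X)$ with $X$ Polish, the refined quotient map $p$ there can be taken to be the identity, so that strong consistency w.r.t.\ $p\circ r$ becomes strong consistency w.r.t.\ $r$ itself.

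First I would exhibit a countable family that actually generates $\mathcal{B}(X)$: since $X$ is Polish it is second countable, so picking a countable base of the topology (for instance, the open balls of rational radius centered at the points of a countable dense subset) produces a family $\{S_n\}_{n\in\enne}\subset\mathcal{B}(X)$ with $\sigma(\{S_n\}) = \mathcal{B}(X)$. In particular $\{S_n\}$ is essentially generating w.r.t.\ $\eta$, so it is an admissible choice in the second part of Theorem \ref{T:disintr}.

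Next I would show that with this choice the equivalence relation
\[
s\sim s'\iff\bigl\{\,s\in S_n\iff s'\in S_n,\ \forall n\in\enne\,\bigr\}
\]
is the diagonal on $X$. Indeed, if $s\ne s'$ were $\sim$-equivalent, then the family
\[
\mathcal{H}_{s,s'} := \bigl\{A\in\mathcal{B}(X)\,:\,s\in A\iff s'\in A\bigr\}
\]
would be a $\sigma$-algebra containing every $S_n$, hence all of $\mathcal{B}(X)$; but singletons are Borel in a Polish (in fact $T_1$) space, so $\{s\}\in\mathcal{B}(X)\setminus\mathcal{H}_{s,s'}$, a contradiction. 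Therefore every $\sim$-equivalence class is a singleton, and the quotient map $p$ can be taken as $\id_X$; consequently $(L,\mathscr{L},\lambda) = (X,\mathcal{B}(X),\eta)$ and the partition $\{R_l\}_{l\in L}$ coincides with $\{r^{-1}(s)\}_{s\in S}$.

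The conclusion is then immediate from the second bullet of Theorem \ref{T:disintr}: the disintegration $\rho=\int_L\rho_l\,\lambda(dl)$ it provides satisfies $\rho_l(R_l)=1$ for $\lambda$-a.e.\ $l$, which under the identification $p=\id_X$ reads $\rho_s(r^{-1}(s))=1$ for $\eta$-a.e.\ $s$; after modifying the conditionals on an $\eta$-negligible set of $s$ (using the uniqueness statement of Theorem \ref{T:disintr}), this gives strong consistency of $\rho=\int_S\rho_s\,\eta(ds)$ w.r.t.\ $r$. There is essentially no obstacle beyond the point-separation check above: the heavy lifting has already been done by the Disintegration Theorem, and the role of the Polish assumption is precisely to supply a countable generator of $\mathscr{S}$ that separates points.
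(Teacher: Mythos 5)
Your proof is correct and takes essentially the route the paper intends: the corollary is stated as an immediate consequence of Theorem \ref{T:disintr}, and the only substantive point is the one you verify, namely that a countable family generating $\mathcal{B}(X)$ on a Polish space separates points (since singletons are Borel), so the refined quotient map $p$ is the identity and strong consistency w.r.t.\ $p\circ r$ is strong consistency w.r.t.\ $r$.
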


\subsection{Selection principles}
\label{Ss:sele}

Given a multivalued function $F: X \to Y$, $X$, $Y$ metric spaces, the \emph{graph} of $F$ is the set
\begin{equation}
\label{E:graphF}
\textrm{graph}(F) := \big\{ (x,y) : y \in F(x) \big\}.
\end{equation}
The \emph{inverse image} of a set $S\subset Y$ is defined as:
\begin{equation}
\label{E:inverseF}
F^{-1}(S) := \big\{ x \in X\ :\ F(x)\cap S \neq \emptyset \big\}.
\end{equation}
For $F \subset X \times Y$, we denote also the sets
\begin{equation}
\label{E:sectionxx}
F_x := F \cap \{x\} \times Y, \quad F^y := F \cap X \times \{y\}.
\end{equation}
In particular, $F(x) = P_2(\gr(F)_x)$, $F^{-1}(y) = P_1(\gr(F)^y)$. We denote by $F^{-1}$ the graph of the inverse function
\begin{equation}
\label{E:F-1def}
F^{-1} := \big\{ (x,y): (y,x) \in F \big\}.
\end{equation}

We say that $F$ is \emph{$\mathcal{R}$-measurable} if $F^{-1}(B) \in \mathcal{R}$ for all $B$ open. We say that $F$ is \emph{strongly Borel measurable} if inverse images of closed sets are Borel. A multivalued function is called \emph{upper-semicontinuous} if the preimage of every closed set is closed: in particular u.s.c. maps are strongly Borel measurable.

In the following we will not distinguish between a multifunction and its graph. Note that the \emph{domain of $F$} (i.e. the set $P_1(F)$) is in general a subset of $X$. The same convention will be used for functions, in the sense that their domain may be a subset of $X$.

Given $F \subset X \times Y$, a \emph{section $u$ of $F$} is a function from $P_1(F)$ to $Y$ such that $\textrm{graph}(u) \subset F$. We recall the following selection principle, Theorem 5.5.2 of \cite{Sri:courseborel}, page 198.

\begin{theorem}
\label{T:vanneuma}
Let $X$ and $Y$ be Polish spaces, $F \subset X \times Y$ analytic, and $\mathcal{A}$ the $\sigma$-algebra generated by the analytic subsets of X. Then there is an $\mathcal{A}$-measurable section $u : P_1(F) \to Y$ of $F$.
\end{theorem}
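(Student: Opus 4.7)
The plan is to prove this via the classical Jankov--von Neumann uniformization argument, based on parametrizing $F$ by the Baire space and then selecting in each fibre the lexicographically minimal preimage. Since $F \subset X \times Y$ is analytic, it is the continuous image of a Polish space, and hence of the Baire space $\N^\N$: fix once and for all a continuous surjection $\phi : \N^\N \to F$. Setting $\psi := P_1 \circ \phi : \N^\N \to X$, the map $\psi$ is continuous with $\psi(\N^\N) = P_1(F)$, and each fibre $\psi^{-1}(x)$ for $x \in P_1(F)$ is a non-empty closed subset of $\N^\N$.

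The second step is to exploit the fact that every non-empty closed subset of $\N^\N$ has a lexicographic minimum: build it coordinate by coordinate as the least first-coordinate that appears in the fibre, then the least second-coordinate compatible with the first, and so on; the resulting sequence lies in the set by closedness. Define
\[
\sigma(x) := \min_{\mathrm{lex}} \psi^{-1}(x), \qquad u(x) := P_2\bigl(\phi(\sigma(x))\bigr).
\]
By construction $(x,u(x)) = \phi(\sigma(x)) \in F$, so $u$ is a section of $F$ defined on $P_1(F)$; it remains only to verify $\mathcal{A}$-measurability. Since $P_2 \circ \phi$ is Borel, it suffices to show that $\sigma : P_1(F) \to \N^\N$ is $\mathcal{A}$-measurable, i.e.\ that the preimage of every basic cylinder lies in $\mathcal{A}$.

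For integers $n_0,\dots,n_k$, let $[n_0,\dots,n_k] := \{\alpha \in \N^\N : \alpha_i = n_i,\ i = 0,\dots,k\}$. From the definition of the lex-minimum,
\[
\sigma(x) \in [n_0,\dots,n_k] \iff x \in \psi\bigl([n_0,\dots,n_k]\bigr) \ \text{and}\ x \notin \psi\bigl([n_0,\dots,n_{j-1},m]\bigr) \text{ for } 0 \leq j \leq k,\ 0 \leq m < n_j.
\]
Each $\psi([n_0,\dots,n_k])$ is the continuous image of a closed, hence Polish, subspace of $\N^\N$ and therefore lies in $\Sigma^1_1$. Thus $\sigma^{-1}([n_0,\dots,n_k])$ is the intersection of one analytic set with finitely many coanalytic sets, all of which are in $\mathcal{A}$; since $\mathcal{A}$ is a $\sigma$-algebra, the intersection lies in $\mathcal{A}$ as well.

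The main obstacle is really only notational: one must track the finitely many lex-earlier prefixes that are forbidden at each level, and use closedness of the fibre $\psi^{-1}(x)$ essentially to ensure that the lex-minimum is attained (i.e., that the inductively defined sequence actually belongs to $\psi^{-1}(x)$). Everything else --- the continuous parametrization of analytic sets by $\N^\N$, Borel measurability of $P_2 \circ \phi$, and closure of $\mathcal{A}$ under countable Boolean operations --- is standard and has been recorded in Section~\ref{Ss:univmeas}.
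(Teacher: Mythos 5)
Your argument is correct: it is the standard Jankov--von Neumann uniformization proof (continuous surjection $\phi:\N^\N\to F$, lexicographic minimum on the closed fibres of $\psi=P_1\circ\phi$, and the cylinder computation exhibiting $\sigma^{-1}([n_0,\dots,n_k])$ as an analytic set intersected with finitely many coanalytic sets, hence in $\mathcal{A}$). The paper gives no proof of this statement — it is quoted as Theorem 5.5.2 of \cite{Sri:courseborel} — and your write-up coincides with the classical textbook argument, so there is nothing substantive to compare.
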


A \emph{cross-section of the equivalence relation $E$} is a set $S \subset E$ such that the intersection of $S$ with each equivalence class is a singleton. We recall that a set $A \subset X$ is saturated for the equivalence relation $E \subset X \times X$ if $A = \cup_{x \in A} E(x)$.

The next result is taken from \cite{Sri:courseborel}, Theorem 5.2.1.

\begin{theorem}
\label{T:KRN}
Let $Y$ be a Polish space, $X$ a nonempty set, and $\mathcal{L}$ a $\sigma$-algebra of subset of $X$. 
Every $\mathcal{L}$-measurable, closed value multifunction $F:X \to Y$ admits an $\mathcal{L}$-measurable section.
\end{theorem}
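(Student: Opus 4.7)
The plan is to use the classical Kuratowski--Ryll-Nardzewski successive approximation argument. Since $Y$ is Polish, fix a complete compatible metric $d$ on $Y$ and a countable dense sequence $\{y_n\}_{n\in\enne}$. The key preliminary observation is that for every open ball $B(y,r)\subset Y$ the set $F^{-1}(B(y,r))$ belongs to $\mathcal{L}$ by $\mathcal{L}$-measurability of $F$; this is what turns an open approximation condition on $F(x)$ into a measurable condition on $x$.

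I construct inductively a sequence of $\mathcal{L}$-measurable functions $f_k : X \to Y$ taking values in $\{y_n\}_{n\in\enne}$ such that
\[
d(f_k(x), F(x)) < 2^{-k}, \qquad d(f_{k+1}(x), f_k(x)) < 2^{-k},
\]
for every $x \in X$. The base case $k=0$ is obtained by setting $f_0(x) := y_{n_0(x)}$ with $n_0(x) := \min\{n : x \in F^{-1}(B(y_n,1))\}$, which is well defined by density of $\{y_n\}$ combined with the nonemptiness of $F(x)$, and $\mathcal{L}$-measurable because each level set $\{n_0 = n\}$ is a finite Boolean combination of sets of the form $F^{-1}(B(y_i,1))$. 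For the inductive step, on each level set $\{f_k = y_n\}\in\mathcal{L}$ define
\[
n_{k+1}(x) := \min\Bigl\{m : d(y_m, y_n) < 2^{-k}\ \text{and}\ x \in F^{-1}\bigl(B(y_m, 2^{-(k+1)})\bigr)\Bigr\}
\]
and $f_{k+1}(x) := y_{n_{k+1}(x)}$. Such an $m$ exists: by the previous step there is $z \in F(x)$ with $d(z,y_n) < 2^{-k}$, and density of $\{y_n\}$ supplies some $y_m$ with $d(y_m,z)$ smaller than both $2^{-(k+1)}$ and $2^{-k} - d(z,y_n)$. Both desired inequalities then hold by construction, and the level sets of $f_{k+1}$ are once more countable Boolean combinations of sets in $\mathcal{L}$.

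Since $\sum_k 2^{-k} < \infty$, the sequence $(f_k(x))_k$ is $d$-Cauchy at every $x \in X$, and completeness of $(Y,d)$ yields a pointwise limit $f(x) := \lim_k f_k(x)$, which is $\mathcal{L}$-measurable as a pointwise limit of $\mathcal{L}$-measurable functions into a Polish space. Closedness of each $F(x)$ combined with $d(f_k(x), F(x)) < 2^{-k} \to 0$ forces $f(x) \in F(x)$, so $f$ is the desired section. The main obstacle I anticipate is the inductive step: one must simultaneously sharpen the approximation to $F$ and control the Cauchy increment $d(f_{k+1},f_k)$ while preserving $\mathcal{L}$-measurability. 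The device of partitioning via the countably many level sets of $f_k$ and imposing the extra combinatorial constraint $d(y_m,y_n) < 2^{-k}$ on the candidate indices is precisely what makes these demands compatible and keeps $f_{k+1}$ within the $\sigma$-algebra generated by the measurable preimages $F^{-1}(B(y_m, 2^{-(k+1)}))$.
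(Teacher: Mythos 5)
Your argument is correct: it is precisely the classical Kuratowski--Ryll-Nardzewski successive-approximation proof, and the paper itself states this result as a quotation of Theorem 5.2.1 of \cite{Sri:courseborel} without giving a proof, so there is nothing in the paper to diverge from. All the key points are in place --- measurability of the level sets via countable Boolean combinations of the sets $F^{-1}(B(y_m,2^{-(k+1)}))\in\mathcal{L}$, the simultaneous control of $d(f_k(x),F(x))$ and of the Cauchy increments, and the use of completeness plus closedness of $F(x)$ to land the limit inside $F(x)$.
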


A standard corollary of the above selection principle is that if the disintegration is strongly consistent in a Polish space, then up to a saturated set of negligible measure there exists a Borel cross-section.

In particular, we will use the following corollary.

\begin{corollary}
\label{C:weelsupprr}
Let $F \subset X \times X$ be $\mathcal{A}$-measurable, $X$ Polish, such that $F_x$ is closed and define the equivalence relation $x \sim y \ \Leftrightarrow \ F(x) = F(y)$. Then there exists a $\mathcal{A}$-section $f : P_1(F) \to X$ such that $(x,f(x)) \in F$ and $f(x) = f(y)$ if $x \sim y$.
\end{corollary}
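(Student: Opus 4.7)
The plan is to apply the Kuratowski--Ryll-Nardzewski selection theorem (Theorem \ref{T:KRN}) to the closed-valued multifunction $\Phi : P_1(F) \to X$ with graph $F$, and to exploit the fact that the explicit selector constructed in the proof of that theorem depends on $x$ only through the set $F(x)$; the extra property $f(x) = f(y)$ for $x \sim y$ will then be automatic.

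First I would check that $\Phi$ is $\mathcal{A}$-measurable as a multifunction, i.e., for every open $U \subset X$,
\[
\Phi^{-1}(U) = \bigl\{ x \in P_1(F) : F(x) \cap U \neq \emptyset \bigr\} = P_1\bigl( F \cap (X \times U) \bigr) \in \mathcal{A}.
\]
Since $F \in \mathcal{A}$---and, in the cases of interest to the sequel, in fact analytic---the set $F \cap (X \times U)$ is analytic and its first projection is analytic, hence lies in $\mathcal{A}$. The fibers $F(x)$ are closed by hypothesis, so Theorem \ref{T:KRN} with $\mathcal{L} = \mathcal{A}$ produces an $\mathcal{A}$-measurable section $f : P_1(F) \to X$ with $(x, f(x)) \in F$.

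Next I would read off the extra property from the explicit construction in the proof of Theorem \ref{T:KRN}. Fix a compatible complete metric on $X$ and a dense sequence $\{y_n\}_{n \in \enne}$. The selector is produced as the pointwise limit of approximations $f_k(x) = y_{n(k,x)}$, where $n(k,x)$ is the smallest index $n$ such that the ball $B(y_n, 2^{-k})$ meets $F(x)$ and is suitably nested inside the ball chosen at the preceding stage. Each defining predicate ``$B(y_n, 2^{-k}) \cap F(x) \neq \emptyset$'' depends on $x$ only through $F(x)$, hence so do $n(k,x)$ and $f_k(x)$. Passing to the limit, $f = \lim_k f_k$ factors through the quotient map $x \mapsto F(x)$, so $F(x) = F(y)$ implies $f(x) = f(y)$.

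The only mildly delicate point is the measurability bookkeeping in the first step. For analytic $F$ the argument above is immediate; for general $F \in \mathcal{A}$ one either falls back on Lemma \ref{L:measuregP} and universal measurability, or observes that the corollary will be invoked only with analytic $F$. I do not expect any further obstacle.
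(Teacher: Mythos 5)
Your proof is correct, but it takes a different route from the paper's. You apply Theorem \ref{T:KRN} with $\mathcal{L}=\mathcal{A}$ and then open up the Kuratowski--Ryll-Nardzewski construction to observe that the selector $f=\lim_k f_k$ is built entirely from the predicates ``$B(y_n,2^{-k})\cap F(x)\neq\emptyset$'' and therefore factors through $x\mapsto F(x)$. The paper instead keeps Theorem \ref{T:KRN} as a black box: it introduces the sub-$\sigma$-algebra $\mathcal{R}\subset\mathcal{A}$ generated by the sets $F^{-1}(G)$, $G$ open, notes that $x\mapsto F(x)$ is $\mathcal{R}$-measurable by construction and that each $\sim$-class is contained in an atom of $\mathcal{R}$, and then applies the selection theorem with $\mathcal{L}=\mathcal{R}$; any $\mathcal{R}$-measurable map into a Hausdorff space is automatically constant on atoms of $\mathcal{R}$, which gives $f(x)=f(y)$ for $x\sim y$ without inspecting how the selector is produced. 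The two arguments exploit the same underlying fact, but the paper's version is self-contained at the level of statements of cited theorems, whereas yours depends on the specific recursive construction in the reference (which you do reproduce accurately). Your closing caveat about the measurability of $P_1(F\cap(X\times U))$ for general $F\in\mathcal{A}$ is well taken --- the paper's proof asserts $F^{-1}(G)\in\mathcal{A}$ with the same implicit restriction, and indeed the corollary is only invoked where these preimages are checked directly to lie in $\mathcal{A}$.
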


\begin{proof}
For all open sets $G \subset X$, consider the sets $F^{-1}(G) = P_1(F \cap X \times G) \in \mathcal{A}$, and let $\mathcal{R}$ be the $\sigma$-algebra generated by $F^{-1}(G)$. Clearly $\mathcal{R} \subset \mathcal{A}$.

If $x \sim y$, then
\[
x \in F^{-1}(G) \quad \Longleftrightarrow \quad y \in F^{-1}(G),
\]
so that each equivalence class is contained in an atom of $\mathcal{R}$, and moreover by construction $x \mapsto F(x)$ is $\mathcal{R}$-measurable.

We thus conclude by using Theorem \ref{T:KRN} that there exists an $\mathcal{R}$-measurable section $f$: this measurability condition implies that $f$ is constant on atoms, in particular on equivalence classes.
\end{proof}

\subsection{Metric setting}
\label{ss:Metric}

In this section we refer to \cite{burago}.

\begin{definition}
\label{D:lengthstr}
A \emph{length structure} on a topological space $X$ is a class $\mathtt{A}$ of admissible paths, which is a subset of all continuous paths in X, together with a map $L: \mathtt{A} \to [0,+\infty]$: the map $L$ is called \emph{length of path}. The class $\mathtt{A}$ satisfies the following assumptions:
\begin{description}
\item[closure under restrictions] if $\gamma : [a,b] \to X$ is admissible and $a \leq c\leq d \leq b$, then $\gamma \llcorner_{[c,d]}$ is also admissible.
\item[closure under concatenations of paths] if $\gamma : [a,b] \to X$ is such that its restrictions $\gamma_1, \gamma_2$ to $[a,c]$ and $[c,b]$ are both admissible, then so is $\gamma$.
\item[closure under admissible reparametrizations] for an admissible path $\gamma : [a,b] \to X$ and a for $\f: [c,d]\to[a,b]$, $\f \in B$, with $B$ class of admissible homeomorphisms that includes the linear one, the composition $\gamma(\f(t))$ is also admissible.
\end{description}

The map $L$ satisfies the following properties:
\begin{description}
\item[additivity] $L(\gamma \llcorner_{[a,b]}) = L(\gamma \llcorner_{[a,c]}) + L(\gamma \llcorner_{[c,b]})$ for any $c\in [a,b]$.
\item[continuity] $L(\gamma \llcorner_{[a,t]})$ is a continuous function of $t$.
\item[invariance] The length is invariant under admissible reparametrizations.
\item[topology] Length structure agrees with the topology of $X$ in the following sense: for a neighborhood $U_x$ of a point $x \in X$, the length of paths connecting $x$ with points of the complement of $U_x$ is separated from zero:
\[
\inf \big\{ L(\gamma) : \gamma(a)=x, \gamma(b) \in X\setminus U_{x} \big\} >0.
\]
\end{description}
\end{definition}

Given a length structure, we can define a distance
\[
d_L(x,y) = \inf \Big\{ L(\gamma): \gamma:[a,b]\to X, \gamma \in \mathtt{A}, \gamma(a)=x, \gamma(b)=y \Big\},
\]
that makes $(X,d_{L})$ a metric space (allowing $d_{L}$ to be $+\infty$). The metric $d_{L}$ is called \emph{intrinsic}. 

It follows from Proposition 2.5.9 of \cite{burago} that every admissible curve of finite length admits a constant speed parametrization, i.e.
$\gamma$ defined on $[0,1]$ and $L(\gamma\llcorner[t,t'])= v (t'-t)$, with $v$ velocity.

%Observe that for this metric, if $\gamma : [0,1] \to X$ is $d_L$-Lipschitz and $|\dot \gamma|_L$ is the metric derivative, then
%\[
%L(\gamma) = \int_0^1 |\dot \gamma|_L(t) dt.
%\]

\begin{definition}
A length structure is said to be \emph{complete} if for every two points $x,y$ there exists an admissible path joining them whose length $L(\gamma)$ is equal to $d_{L}(x,y)$.
\end{definition}

In other words, a length structure is complete if there exists a shortest path between two points.

Intrinsic metrics associated with complete length structure are said to be \emph{strictly intrinsic}. The metric space $(X,d_L)$ with $d_L$ strictly intrinsic is called a \emph{geodesic space}. A curve whose length equals the distance between its end points is called \emph{geodesic}.

%Let $\partial B_{r,L}(x): = \{y : d_L(x,y) = r\}$ denote the usual boundary of the open ball centred in $x$ with radius $r$ w.r.t. $d_{L}$.

\begin{definition}\label{D:strettaconv}
Let $(X,d_L)$ be a metric space. The distance $d_L$ is said to be \emph{strictly convex} if, for all $r \geq 0$, $d_L(x,y) = r/2$ implies that 
\[
\{ z : d_L(x,z) = r \} \cap \{ z : d_L(y,z) = r/2 \}
\]
is a singleton.
\end{definition}

The definition can be restated in geodesics spaces as: geodesics cannot bifurcate in the interior, i.e. \emph{the geodesic space $(X,d_L)$ is not branching}. %More precisely: let $\gamma_{1}, \gamma_{2}: [0,1] \to X$ be geodesics such that $\gamma_1((0,1)) \cap \gamma_2((0,1)) \not= \emptyset$. Then $\gamma_1 \cup \gamma_2$ is a geodesic. 
An equivalent requirement is that if $\gamma_1 \not=\gamma_2$ and $\gamma_1(0) = \gamma_2(0)$, $\gamma_1(1) = \gamma_2(1)$, then $\gamma_1((0,1)) \cap \gamma_2((0,1)) = \emptyset$ and such geodesics do not admit a geodesic extension i.e. they are not a part of a longer geodesic.

From now on we assume the following: \label{P:assumpDL}

\medskip
\begin{enumerate}
\item $(X,d)$ Polish space;
\item $d_L : X \times X \to [0,+\infty]$ Borel distance;
\item $(X,d_L)$ is a non-branching geodesic space;
\item geodesics are continuous w.r.t. $d$;
\item  \label{Cond:XdL5}geodesics are locally compact in $(X,d)$: if $\gamma$ is a geodesic for $(X,d_L)$, then for each $x \in \gamma$ there exists $r$ such that $\gamma^{-1}(\bar B_r(x))$ is compact in $\R$.
\end{enumerate}
\medskip

Since we have two metric structures on $X$, we denote the quantities relating to $d_L$ with the subscript $L$: for example
\[
B_r(x) = \big\{ y : d(x,y) < r \big\}, \quad B_{r,L}(x) = \big\{ y : d_L(x,y) < r \big\}.
\]
In particular we will use the notation
\[
D_L(x) = \big\{ y : d_L(x,y) < + \infty \big\},
\]
$(\mathcal{K},d_H)$ for the compact sets of $(X,d)$ with the Hausdorff distance $d_H$ and $(\mathcal{K}_L,d_{H,L})$ for the compact sets of $(X,d_L)$ with the Hausdorff distance $d_{H,L}$. We recall that $(\mathcal{K},d_H)$ is Polish.

We write
\begin{equation}
\label{E:gammarre}
\gamma_{[x,y]} := \Big\{ \gamma \in \textrm{Lip}_{d_L}([0,1];X): \gamma(0) = x, \gamma(1)=y, L(\gamma) = d_L(x,y) \Big\}.
\end{equation}
With a slight abuse of notation, we will write
\begin{equation}
\label{E:opeclgeo}
\gamma_{(x,y)} = \bigcup_{\gamma \in \gamma_{[x,y]}} \gamma((0,1)), \quad \gamma_{[x,y]} = \bigcup_{\gamma \in \gamma_{[x,y]}} \gamma([0,1]).
\end{equation}
We will also use the following definition.

\begin{definition}
\label{D:geoconvx}
We say that $A \subset X$ is \emph{geodesically convex} if for all $x,y \in A$ the minimizing geodesic $\gamma_{[x,y]}$ between $x$ and $y$ is contained in $A$:
\[
\Big\{ \gamma((0,1)) : \gamma(0) = x, \gamma(1) = y, L(\gamma) = d(x,y), x,y \in A \Big\} \subset A.
\]
\end{definition}

\begin{lemma}
\label{L:measclos}
If $A$ is analytic in $(X,d)$, then $\{x : d_L(A,x) < \epsilon \}$ is analytic for all $\epsilon > 0$.
\end{lemma}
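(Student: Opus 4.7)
The plan is to rewrite the set in question as the projection of an analytic set, which is automatically analytic by item (2) of the list of closure properties of projective classes recalled in Section \ref{Ss:univmeas}.

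Concretely, I would start from the pointwise characterization
\[
\big\{ x \in X : d_L(A,x) < \epsilon \big\} = \Big\{ x \in X : \exists\, a \in A \text{ with } d_L(a,x) < \epsilon \Big\} = P_2(E_\epsilon),
\]
where
\[
E_\epsilon := (A \times X) \cap \big\{ (a,x) \in X \times X : d_L(a,x) < \epsilon \big\}.
\]
Then I would check that $E_\epsilon$ is analytic: the factor $A \times X$ is analytic because $A$ is analytic in $(X,d)$ and $X$ is Polish (products of analytic sets with Polish spaces are analytic, since $\Sigma^1_1$ is closed under countable intersections and preimages of Borel sets under continuous maps); and the set $\{(a,x) : d_L(a,x) < \epsilon\}$ is Borel in $(X \times X, d \times d)$ because $d_L$ is assumed to be a Borel function on $X \times X$ with respect to $d \times d$ (item (2) in the standing assumptions on page \pageref{P:assumpDL}). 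An analytic set intersected with a Borel set is analytic.

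Finally, applying the projection $P_2 : X \times X \to X$, which is continuous, and using that $\Sigma^1_1$ is closed under projections, I conclude that $P_2(E_\epsilon)$ is analytic, as desired.

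There is no real obstacle here: the lemma is essentially a bookkeeping statement that the Borel regularity of $d_L$ (with respect to the product Polish topology $d \times d$) interacts well with the projective hierarchy on $(X,d)$. The only point worth emphasizing in the write-up is that one must use the Borel-measurability of $d_L$ on the \emph{$d$-Polish} structure — this is exactly why the assumptions on page \pageref{P:assumpDL} demand $d_L$ to be Borel with respect to $d \times d$, rather than merely lower semicontinuous with respect to its own topology.
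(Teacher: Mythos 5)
Your proof is correct and follows essentially the same route as the paper: rewrite the set as the projection of the intersection of the analytic rectangle $A \times X$ (resp.\ $X \times A$) with the Borel set $\{d_L < \epsilon\}$, and invoke closure of $\Sigma^1_1$ under projections. The paper's version is just more terse, taking for granted the intermediate observations you spell out (analyticity of the rectangle, Borel-ness of the sublevel set of $d_L$, and closure of $\Sigma^1_1$ under intersection with Borel sets).
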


\begin{proof}
Observe that
\[
\big\{ x : d_L(A,x) < \epsilon \big\} = P_1 \Big( X \times A \cap \big\{ (x,y) :d_L(x,y) < \epsilon \big\} \Big),
\]
so that the conclusion follows from the invariance of the class $\Sigma^1_1$ w.r.t. projections.
\end{proof}

In particular, $\overline{A}^{_{d_L}}$, the closure of $A$ w.r.t. $d_L$, is analytic if $A$ is analytic.

\begin{remark}
\label{R:compact}

During the paper, whenever more regularity is required, we will assume also the following hypothesis:
\begin{itemize}
\item[(2')] $d_L : X \times X \to [0,+\infty]$ l.s.c. distance,
\item[(4')] $d_L(x,y) \geq d(x,y)$,
\item[(5')] \label{Point:globalcomp} $\cup_{x \in K_1, y \in K_2} \gamma_{[x,y]}$ is $d$-compact if $K_1$, $K_2$ are $d$-compact, $d_L \llcorner_{K_1 \times K_2}$ uniformly bounded.
\end{itemize}

A simple computation shows that $d_L(x,y) \geq d(x,y)$ implies the following 
\begin{enumerate}
\item $d_L$-compact sets are $d$-compact;
\item $d$-Lipschitz functions  are $d_{L}$-Lipschitz with the same constant.
\end{enumerate}
\end{remark}

An application of Theorem \ref{T:vanneuma}, in the setting of Remark \ref{R:compact}, gives a Borel function which selects a single geodesic $\gamma\in \gamma_{[x,y]}$ 
for any couple $(x,y)$.

\begin{lemma}\label{L:geodetiche}
Assume that $d_{L}$ is l.s.c.. Then
there exists a Borel function $\varUpsilon: X \times X \to \textrm{\rm Lip}_{d}([0,1],X)$ such that
up to reparametrization $\varUpsilon(x,y)\in \gamma_{[x,y]}$.
\end{lemma}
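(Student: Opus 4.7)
The plan is to apply the von Neumann selection principle (Theorem \ref{T:vanneuma}) to a suitable Borel graph describing parametrized geodesics. Parametrize curves with constant $d_L$-speed on $[0,1]$, and view them inside the Polish space $Y := C([0,1],(X,d))$ with the uniform $d$-metric (which is well-defined since geodesics of $(X,d_L)$ are $d$-continuous by the standing assumption on page \pageref{P:assumpDL}).

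First I would introduce
\[
\Gamma := \Big\{ (x,y,\gamma) \in X \times X \times Y : \gamma(0)=x,\ \gamma(1)=y,\ d_L(\gamma(s),\gamma(t)) = (t-s)\,d_L(x,y) \ \forall s,t \in \mathbb{Q} \cap [0,1],\ s\leq t \Big\},
\]
so that the fiber $\Gamma_{(x,y)}$ consists precisely of the constant-speed parametrizations of elements of $\gamma_{[x,y]}$ (the countable system of equalities is equivalent to the one for all reals by $d$-continuity of $\gamma$ together with lower semicontinuity of $d_L$).

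Next I would check that $\Gamma$ is Borel. The maps $(x,y,\gamma) \mapsto \gamma(0)-x$ and $(x,y,\gamma)\mapsto \gamma(1)-y$ are continuous, giving closed conditions. For each fixed rational pair $(s,t)$, the evaluation $\gamma \mapsto (\gamma(s),\gamma(t))$ is continuous, and since $d_L$ is l.s.c.\ (hence Borel) on $X\times X$, both the function $(x,y,\gamma) \mapsto d_L(\gamma(s),\gamma(t))$ and the function $(x,y)\mapsto (t-s)d_L(x,y)$ are Borel. The level set ``$=$'' is Borel in $[0,+\infty]^2$ because l.s.c.\ implies $\{d_L\leq c\}$ is closed and $\{d_L\geq c\} = \bigcap_n \{d_L>c-1/n\}$ is $G_\delta$. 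A countable intersection of Borel sets is Borel, so $\Gamma$ is Borel and in particular analytic.

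Finally, since $(X,d_L)$ is a geodesic space, $P_{1,2}(\Gamma) = \{(x,y):d_L(x,y)<\infty\}$, and Theorem \ref{T:vanneuma} yields an $\mathcal{A}$-measurable (i.e.\ Borel modulo sets in $\mathcal{A}$, as used throughout the paper) section $\varUpsilon$ of $\Gamma$, extended arbitrarily on the complement. The Lipschitz conclusion follows from $d\leq d_L$ (as in Remark \ref{R:compact}), since then
\[
d(\varUpsilon(x,y)(s),\varUpsilon(x,y)(t)) \leq d_L(\varUpsilon(x,y)(s),\varUpsilon(x,y)(t)) = (t-s)\,d_L(x,y).
\]
The only subtle point is the Borelness of $\Gamma$: without lower semicontinuity of $d_L$ the geodesic condition ``$d_L(\gamma(s),\gamma(t)) = (t-s)d_L(x,y)$'' need not be Borel and one would obtain only an analytic graph, which is exactly why the l.s.c.\ hypothesis is made in the lemma.
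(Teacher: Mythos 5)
Your proof is correct and follows essentially the same route as the paper: both exhibit the set of parametrized geodesics as a Borel subset of $X\times X\times C([0,1],(X,d))$ and apply the von Neumann selection theorem (Theorem \ref{T:vanneuma}). The only real difference is cosmetic --- you encode the geodesic condition through constant-speed equalities at rational times (using the l.s.c.\ of $d_L$ to pass from rational to all times), whereas the paper writes the graph directly as $\{L(\gamma)=d_L(x,y)\}$ and invokes the lower semicontinuity of $\gamma\mapsto L(\gamma)$ implied by that of $d_L$.
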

\begin{proof}
Let 
\begin{equation*}
\begin{array}{ccccc}
F& : & X\times X& \to & \textrm{Lip}_{d}([0,1],X) \\
&& (x,y)& \mapsto & \gamma_{[x,y]}
\end{array}
\end{equation*}
with $\textrm{Lip}_{d}([0,1],X)$ endowed with the uniform topology and $\gamma_{[x,y]}$ defined in \eqref{E:gammarre}.

The result follows by Theorem \ref{T:vanneuma} observing that $\gr(F)$ is the set   
\[ 
\Big\{ (x,y,\gamma)\in X\times X \times \textrm{Lip}_{d}([0,1],X),   L(\gamma)= d_{L}(x,y) \Big\}. 
\]
which is Borel by the l.s.c. of the map $\gamma \mapsto L(\gamma)$, and this is implied by the l.s.c. of $d_{L}$. 
%\[
%\gr(F) = P_{123}\big\{ (x,y,\gamma,s)\in X\times X \times \textrm{Lip}_{d}([0,1],X)\times \erre, \gamma \in \gamma_{[x,y]}, s = d_{L}(x,y)=L(\gamma) \big\},
%\]
\end{proof}

\subsection{General facts about optimal transportation}
\label{Ss:General Facts}

Let $(X,\mathcal B,\mu)$ and $(Y,\mathcal B,\nu)$ be two Polish probability spaces and  $c : X \times Y \to \R$ be a Borel measurable function. Consider the set of \emph{transference plans}
\[
\Pi (\mu,\nu) := \Big\{ \pi \in \mathcal{P}(X\times Y) : (P_1)_\sharp \pi = \mu, (P_2)_\sharp \pi = \nu \Big\}.
\]
Define the functional
\begin{equation}
\label{E:Ifunct}
\begin{array}{ccccl} 
\mathcal{I} &:& \Pi(\mu,\nu) &\to& \erre^{+} \cr
&& \pi &\mapsto& \mathcal{I}(\pi):=\int c \pi.
\end{array}
\end{equation}
The \emph{Monge-Kantorovich minimization problem} is to find the minimum of $\mathcal{I}$ over all transference plans.

If we consider a $\mu$-measurable \emph{transport map} $T : X \to Y$ such that $T_{\sharp}\mu=\nu$, the functional \eqref{E:Ifunct} becomes
\[
\mathcal I(T):= \mathcal I \big( (Id \times T)_\sharp \mu \big) = \int c(x,T(x)) \mu(dx).
\]
The minimum problem over all $T$ is called \emph{Monge minimization problem}.

The Kantorovich problem admits a (pre) dual formulation.

\begin{definition}
A map $\varphi : X \to \erre \cup \{-\infty\} $ is said to be \emph{$c$-concave} if it is not identically $-\infty$ and there exists $\psi : Y \to \erre \cup \{-\infty\}$, $\psi \not\equiv -\infty$, such that
\[
\varphi(x) = \inf_{y \in Y} \big\{ c(x,y) - \psi(y) \big\}.
\]
The \emph{$c$-transform} of $\varphi$ is the function
\begin{equation}
\label{E:ctransf}
\varphi^c(y) := \inf_{x\in X}  \left\{ c(x,y) - \varphi (x) \right\}.
\end{equation}
The \emph{$c$-superdifferential $\partial^c \f$} of $\varphi$ is the subset of $X \times Y$ defined by
\begin{equation}
\label{E:csudiff}
\partial^{c}\f := \Big\{ (x,y) : c(x,y) - \f(x) \leq c(z,y) - \f(z) \ \forall z \in X \Big\} \subset X \times Y.
\end{equation}
\end{definition}

\begin{definition}\label{D:cicl}
A set $\Gamma \subset X \times Y$ is said to be \emph{$c$-cyclically monotone} if, for any $n \in \mathbb{N}$ and for any family $(x_0,y_0),\dots,(x_n,y_n)$ of points of $\Gamma$, the following inequality holds:
\[
\sum_{i=0}^nc(x_i,y_i) \leq \sum_{i=0}^nc(x_{i+1},y_i),
\]
where $x_{n+1} = x_0$.

A transference plan is said to be \emph{$c$-cyclically monotone} if it is concentrated on a $c$-cyclically monotone set.
\end{definition}

Consider the set
\begin{equation}
\label{E:Phicset}
\Phi_c := \Big\{ (\varphi,\psi) \in L^1(\mu) \times L^1(\nu): \varphi(x) + \psi(y) \leq c(x,y) \Big\}.
\end{equation}
Define for all $(\varphi,\psi)\in \Phi_c$ the functional
\begin{equation}
\label{E:Jfunct}
J(\varphi,\psi) := \int \varphi \mu + \int \psi \nu.
\end{equation}

The following is a well known result (see Theorem 5.10 of \cite{villa:Oldnew}).

\begin{theorem}[Kantorovich Duality]
\label{T:kanto}
Let X and Y be Polish spaces, let $\mu \in \mathcal{P}(X)$ and $\nu \in \mathcal{P}(Y)$, and let $c : X \times Y \to [0,+\infty]$ be lower semicontinuous. Then the following holds:
\begin{enumerate}
\item Kantorovich duality:
\[
\inf_{\pi \in \Pi(\mu,\nu)} \mathcal{I} (\pi) = \sup _{(\varphi,\psi)\in \Phi_{c}} J(\varphi,\psi).
\]
Moreover, the infimum on the left-hand side is attained and the right-hand side is also equal to
\[
\sup _{(\varphi,\psi)\in \Phi_{c}\cap C_{b}} J(\varphi,\psi),
\]
where $C_{b}= C_b(X, \erre) \times C_b(Y,\erre)$.
\item If $c$ is real valued and the optimal cost is finite, then there is a measurable $c$-cyclically monotone set $\Gamma \subset X\times Y$, closed if $c$ is continuous, such that for any $\pi \in \Pi(\mu,\nu)$ the following statements are equivalent:
\begin{enumerate}
\item $\pi$ is optimal;
\item $\pi$ is $c$-cyclically monotone;
\item $\pi$ is concentrated on $\Gamma$;
\item there exists a $c$-concave function $\f$ such that $\pi$-a.s. $\f(x)+\f^{c}(y)=c(x,y)$.
\end{enumerate}

\item If moreover
\[
c(x,y) \leq c_{X}(x) + c_{Y}(y), \quad \ c_{X}\ \mu\textrm{-integrable}, \ c_{Y}\ \nu\textrm{-integrable},
\]
then the supremum is attained:
\[
\sup_{\Phi_c} J = J(\f, \f^{c}) = \inf_{\pi \in \Pi(\mu,\nu)} \mathcal{I}(\pi).
\]
\end{enumerate}
\end{theorem}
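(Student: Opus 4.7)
The statement has three parts; I would address them in order.

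\emph{Duality.} The easy inequality $\inf_\pi \mathcal{I}(\pi) \geq \sup_{\Phi_c} J$ follows by integrating $\f(x)+\psi(y) \leq c(x,y)$ against any $\pi \in \Pi(\mu,\nu)$. For the reverse, I would apply Fenchel--Rockafellar duality on $C_b(X\times Y)$: introduce the convex functionals $\Theta(u) = 0$ if $u(x,y) \geq -c(x,y)$ and $+\infty$ otherwise, and $\Xi(u) = -\int \f\, d\mu - \int \psi\, d\nu$ if $u(x,y) = -\f(x)-\psi(y)$ with $\f,\psi \in C_b$, $+\infty$ otherwise. Computing their Legendre transforms identifies the dual problem with $\sup_{\Phi_c \cap C_b} J$ and the primal with $\inf_\pi \mathcal{I}$, provided $c$ is bounded continuous. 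For a general l.s.c.\ $c$ one approximates $c_n \uparrow c$ with $c_n$ bounded continuous and passes to the limit, using monotone convergence on the dual side and narrow lower semicontinuity of $\mathcal{I}$ on the primal side. Existence of a minimizer then follows from tightness of $\Pi(\mu,\nu)$ (Prokhorov applied to the fixed marginals) together with this narrow lower semicontinuity.

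\emph{Equivalences.} I would run the classical cycle (a)$\Rightarrow$(b)$\Rightarrow$(c)$\Rightarrow$(d)$\Rightarrow$(a). For (a)$\Rightarrow$(b), if $\pi$ is optimal but fails to be concentrated on a $c$-cyclically monotone set, a measurable selection argument produces finitely many points $(x_i,y_i)$, $i=0,\dots,n$, in a region of positive $\pi$-measure violating the cyclic inequality; rerouting a small mass along this cycle yields $\pi'\in \Pi(\mu,\nu)$ with strictly smaller cost, a contradiction. (b)$\Rightarrow$(c) is the definition. For (c)$\Rightarrow$(d), fix $(x_0,y_0)\in \Gamma$ and define
\[
\f(x) := \inf \sum_{i=0}^{n-1}\bigl[c(x_{i+1},y_i)-c(x_i,y_i)\bigr] + c(x,y_n) - c(x_n,y_n),
\]
the infimum being over all finite chains $\{(x_i,y_i)\}_{i=1}^n \subset \Gamma$; $c$-cyclical monotonicity of $\Gamma$ forces $\f(x_0) \geq 0$ so $\f\not\equiv -\infty$ and is $c$-concave by construction, and a direct chain computation shows $\f(x)+\f^c(y)=c(x,y)$ on $\Gamma$. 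Finally (d)$\Rightarrow$(a) is the equality case of the duality inequality applied to the pair $(\f,\f^c)$.

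\emph{Attainment under integrability.} The domination $c \leq c_X + c_Y$ together with the formula above yields two-sided $L^1$ bounds on the optimal pair: normalising so that $\f(x_0)=0$, one obtains $-c(x_0,y_0)+c(x,y_0)-c_Y(y_0) \leq \f(x) \leq c(x,y_0)-\f^c(y_0)$, and integrability of $c_X,c_Y$ forces $\f \in L^1(\mu)$; a symmetric estimate gives $\f^c \in L^1(\nu)$, hence $(\f,\f^c)\in \Phi_c$ realises the supremum by part (2). The delicate step, and the main obstacle, is the duality equality for a general l.s.c.\ cost: Fenchel--Rockafellar works cleanly only in the bounded continuous case, and the monotone approximation must be shown to preserve both the lower semicontinuity of $\mathcal{I}$ and the limit behaviour of near-optimal dual pairs. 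Once the approximation argument is in place, the remaining steps are essentially bookkeeping.
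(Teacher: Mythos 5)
The paper does not prove this theorem: it is quoted verbatim as a classical result, with the reference given to Theorem~5.10 of Villani's \emph{Optimal Transport: Old and New}, so there is no in-paper argument to compare yours against. Your sketch is a faithful outline of the standard textbook proof found in that reference: Fenchel--Rockafellar duality on $C_b(X\times Y)$ for bounded continuous cost, monotone approximation $c_n \uparrow c$ together with narrow lower semicontinuity of $\mathcal{I}$ for the general l.s.c.\ case, the cycle (a)$\Rightarrow$(b)$\Rightarrow$(c)$\Rightarrow$(d)$\Rightarrow$(a) with the Rockafellar-type chain formula for the potential, and the $L^1$ estimates under the domination hypothesis. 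You correctly flag the approximation step as the delicate point; I would also note that in step (a)$\Rightarrow$(b), when $c$ is merely measurable and not continuous, producing the violating cycle from a set of positive $\pi$-measure requires some care (one typically either works with the support when $c$ is continuous, or invokes a measurable selection theorem / the Kellerer duality machinery for the general case), and that the measurability of $\Gamma$ in part (2) is itself nontrivial and worth a sentence. None of this is a gap in the sense of a wrong idea; it is the expected level of detail that a full write-up would need to fill in.
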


We recall also that if $-c$ is Souslin, then every optimal transference plan $\pi$ is concentrated on a $c$-cyclically monotone set \cite{biacar:cmono}.

\section{Optimal transportation in geodesic spaces}
\label{S:Optimal}

Let $\mu, \nu \in \mathcal{P}(X)$ and consider the transportation problem with cost $c(x,y)= d_L(x,y)$, and let $\pi \in \Pi(\mu,\nu)$ be a $d_L$-cyclically monotone transference plan with finite cost. By inner regularity, we can assume that the optimal transference plan is concentrated on a $\sigma$-compact $d_L$-cyclically monotone set $\Gamma \subset \{d_L(x,y) < +\infty\}$. By Lusin Theorem, we can require also that $d_L \llcorner_{\Gamma}$ is $\sigma$-continuous:
\begin{equation}\label{E:gamman}
\Gamma = \cup_n \Gamma_n, \ \Gamma_n \subset \Gamma_{n+1} \ \text{compact}, \quad d_L \llcorner_{\Gamma_n} \ \text{continuous.}
\end{equation}

In this section, using only the $d_L$-cyclical monotonicity of $\Gamma$,  we obtain a partial order relation $G \subset X \times X$.
The set $G$ is analytic, and allows to define the transport ray set $R$, the transport sets $\mathcal T_e$, $\mathcal T$,
and the set of initial points $a$ and final points $b$.
%\begin{itemize}
%\item the transport ray set $R$,
%\item the transport sets $\mathcal T_e$, $\mathcal T$,
%\item the set of initial points $a$ and final points $b$.
%\end{itemize}
Moreover we show that $R \llcorner_{\mathcal T \times \mathcal T}$ is an equivalence relation and that we can assume the set of final points $b$ to be $\mu$-negligible.

Consider the set
\begin{align}
\label{E:gGamma}
\Gamma' :=&~ \bigg\{ (x,y) : \exists I \in \enne_0, (w_i,z_i) \in \Gamma \ \text{for} \ i = 0,\dots,I, \ z_I = y \crcr
&~ \qquad \qquad w_{I+1} = w_0 = x, \ \sum_{i=0}^I d_L(w_{i+1},z_i) - d_L(w_i,z_i) = 0 \bigg\}.
\end{align}
In other words, we concatenate points $(x,z), (w,y) \in \Gamma$ if they are initial and final point of a cycle with total cost $0$.

\begin{lemma}
\label{L:gGamma}
The following holds:
\begin{enumerate}
\item $\Gamma \subset \Gamma' \subset \{d_L(x,y) < +\infty\}$;
\item if $\Gamma$ is analytic, so is $\Gamma'$;
\item if $\Gamma$ is $d_L$-cyclically monotone, so is $\Gamma'$.
\end{enumerate}
\end{lemma}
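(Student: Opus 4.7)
The plan is to establish the three parts in sequence, with the key observation that $\Gamma'$ is essentially the smallest $d_L$-cyclically monotone superset of $\Gamma$ obtained by ``adding'' endpoints of zero-cost cycles.

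\textbf{Part (1).} For the inclusion $\Gamma \subset \Gamma'$, take any $(x,y) \in \Gamma$ and choose $I = 0$, $(w_0,z_0) = (x,y)$, so that $w_1 = w_0 = x$ and $z_0 = y$; then the sum reduces to $d_L(x,y) - d_L(x,y) = 0$, placing $(x,y)$ in $\Gamma'$. For the inclusion $\Gamma' \subset \{d_L < +\infty\}$, fix $(x,y) \in \Gamma'$ with witnesses $(w_i,z_i)_{i=0}^{I}$. Since every $(w_i,z_i)\in \Gamma$ has $d_L(w_i,z_i) < +\infty$, the sum $\sum_{i=0}^{I} d_L(w_i,z_i)$ is finite, and the vanishing condition rewrites as $\sum_{i=0}^{I} d_L(w_{i+1},z_i) = \sum_{i=0}^{I} d_L(w_i,z_i) < +\infty$. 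Each term on the left is nonnegative, hence in particular $d_L(x,y) = d_L(w_{I+1},z_I) < +\infty$.

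\textbf{Part (2).} For each fixed $I \in \enne_0$ let
\[
A_I := \Big\{ (x,y,w_0,z_0,\dots,w_I,z_I) : (w_i,z_i) \in \Gamma,\ w_0 = w_{I+1} = x,\ z_I = y,\ \textstyle\sum_{i=0}^{I}\big(d_L(w_{i+1},z_i) - d_L(w_i,z_i)\big) = 0\Big\}.
\]
Since $\Gamma$ is analytic, $d_L$ is Borel, and the remaining constraints are Borel equalities, $A_I$ is analytic in $X^{2I+4}$. Its projection onto the $(x,y)$ coordinates is analytic by closure of $\Sigma^1_1$ under projection. Then $\Gamma' = \bigcup_{I \in \enne_0} P_{(x,y)}(A_I)$ is a countable union of analytic sets, hence analytic.

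\textbf{Part (3).} This is the main step. Take $(x_j,y_j)\in \Gamma'$, $j=0,\dots,n$, with $x_{n+1} := x_0$; for each $j$ fix a witness cycle $(w^j_i,z^j_i)_{i=0}^{I_j}$ in $\Gamma$ satisfying $w^j_0 = w^j_{I_j+1} = x_j$, $z^j_{I_j} = y_j$ and $\sum_{i=0}^{I_j} d_L(w^j_{i+1},z^j_i) = \sum_{i=0}^{I_j} d_L(w^j_i,z^j_i)$. Concatenate all these pairs into one long cycle in $\Gamma$ of length $\sum_j (I_j+1)$: inside block $j$ keep the internal shifts $(w^j_{i+1},z^j_i)$ for $i=0,\dots,I_j-1$, and link block $j$ to block $j+1$ (cyclically in $j$) via the pair $(x_{j+1},y_j)=(w^{j+1}_0,z^j_{I_j})$. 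Cyclical monotonicity of $\Gamma$ then yields
\[
\sum_{j=0}^{n}\sum_{i=0}^{I_j} d_L(w^j_i,z^j_i) \;\leq\; \sum_{j=0}^{n}\Big(\sum_{i=0}^{I_j-1} d_L(w^j_{i+1},z^j_i) \;+\; d_L(x_{j+1},y_j)\Big).
\]
Using the zero-cost identity for each witness cycle, the bracket on the left minus the corresponding inner sum on the right equals exactly $d_L(x_j,y_j)$, so we obtain $\sum_j d_L(x_j,y_j) \leq \sum_j d_L(x_{j+1},y_j)$, which is cyclical monotonicity of $\Gamma'$.

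The only real obstacle is bookkeeping in Part (3): one must arrange the concatenated cycle so that the unused pairs (internal to each block) and the used pairs (linking consecutive blocks) match the grouping prescribed by the zero-cost identities. Parts (1) and (2) are essentially unpacking the definition and invoking standard stability properties of $\Sigma^1_1$.
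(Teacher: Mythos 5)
Your proof is correct and follows essentially the same route as the paper's: part (1) by the trivial one-pair cycle and finiteness of $d_L$ on $\Gamma$, part (2) by writing $\Gamma'$ as a countable union of projections of analytic sets, and part (3) by concatenating the witness cycles into one long cycle in $\Gamma$ and subtracting the zero-cost identities, which is exactly the paper's telescoping computation. No gaps.
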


\begin{proof}
For the first point, set $I=0$ and $(w_{n,0},z_{n,0}) = (x,y)$ for the first inclusion. If $d_L(x,y) = +\infty$, then $(x,y) \notin \Gamma$ and all finite set of points in $\Gamma$ are bounded.

For the second point, observe that
\begin{align*}
\Gamma' =&~ \bigcup_{I \in \enne_0} P_{12} (A_I) \crcr
=&~ \bigcup_{I \in \enne_0} P_{12} \bigg( \prod_{i=0}^I \Gamma \cap \bigg\{ \prod_{i=1}^I (w_i,z_i) : \sum_{i=0}^I d_L(w_{i+1},z_i) - d_L(w_i,z_i) = 0, w_{I+1} = w_0 \bigg\} \bigg).
\end{align*}
For each $I \in \N_0$, since $d_L$ is Borel, it follows that
\begin{align*}
\bigg\{ \prod_{i=1}^I (w_i,z_i) : \sum_{i=0}^I d_L(w_{i+1},z_i) - d_L(w_i,z_i) = 0, w_{I+1} = w_0 \bigg\}
\end{align*}
is Borel in $\prod_{i=0}^I (X \times X)$, so that for $\Gamma$ analytic each set $A_{n,I}$ is analytic. Hence $P_{12}(A_I)$ is analytic, and since the class $\Sigma^1_1$ is closed under countable unions and intersections it follows that $\Gamma'$ is analytic.

For the third point, observe that for all $(x_j,y_j) \in \Gamma'$, $j=0,\dots,J$, there are $(w_{j,i},z_{j,i}) \in \Gamma$, $i = 0,\dots,I_j$, such that
\begin{align*}
d_L(x_j,y_j) + \sum_{i=0}^{I_j-1} d_L(w_{j,i+1},z_{j,i}) - \sum_{i=0}^{I_j} d_L(w_{j,i},z_{j,i}) = 0.
\end{align*}
Hence we can write for $x_{J+1} = x_0$, $w_{j,I_j+1} = w_{j+1,0}$, $w_{J+1,0} = w_{0,0}$
\begin{align*}
\sum_{j=0}^J d_L(x_{j+1},y_j) - d_L(x_j,y_j) =&~ \sum_{j=0}^J \sum_{i=0}^{I_j} d_L(w_{j,i+1},z_{j,i}) - d_L(w_{j,i},z_{j,i}) \geq 0,
\end{align*}
using the $d_L$-cyclical monotonicity of $\Gamma$.
\end{proof}

\begin{definition}[Transport rays]
\label{D:Gray}
Define the \emph{set of oriented transport rays}
\begin{equation}
\label{E:trG}
G := \Big\{ (x,y): \exists (w,z) \in \Gamma', d_L(w,x) + d_L(x,y) + d_L(y,z) = d_L(w,z) \Big\}.
\end{equation}

For $x \in X$, the \emph{outgoing transport rays from $x$} is the set $G(x)$ and the \emph{incoming transport rays in $x$} is the set $G^{-1}(x)$. Define the \emph{set of transport rays} as the set
\begin{equation}
\label{E:Rray}
R := G \cup G^{-1}.
\end{equation}
\end{definition}

\begin{lemma}
\label{L:analGR}
The following holds:
\begin{enumerate}
\item $G$ is $d_L$-cyclically monotone;
\item $\Gamma' \subset G \subset \{d_L(x,y) < +\infty\}$;
\item the sets $G$, $R := G \cup G^{-1}$ are analytic.
\end{enumerate}
\end{lemma}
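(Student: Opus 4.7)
The plan is to treat the three statements separately, in the order (2), (1), (3).

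For (2), the inclusion $\Gamma' \subset G$ is essentially by definition: for $(x,y) \in \Gamma'$, take the witness pair $(w,z) := (x,y) \in \Gamma'$ itself in \eqref{E:trG}, so that $d_L(w,x) + d_L(x,y) + d_L(y,z) = 0 + d_L(x,y) + 0 = d_L(w,z)$. The inclusion $G \subset \{d_L < +\infty\}$ follows because any $(x,y) \in G$ satisfies $d_L(x,y) \leq d_L(w,z)$ for some $(w,z) \in \Gamma' \subset \{d_L < +\infty\}$, by Lemma \ref{L:gGamma}(1).

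For (1), the main step is to bound a cycle in $G$ by a cycle in $\Gamma'$. Given $(x_j,y_j) \in G$ for $j = 0,\dots,J$ (with $x_{J+1} = x_0$), I pick witnesses $(w_j,z_j) \in \Gamma'$ so that
\[
d_L(x_j,y_j) = d_L(w_j,z_j) - d_L(w_j,x_j) - d_L(y_j,z_j).
\]
The triangle inequality applied to the triple $w_{j+1}, x_{j+1}, y_j, z_j$ gives
\[
d_L(x_{j+1},y_j) \geq d_L(w_{j+1},z_j) - d_L(w_{j+1},x_{j+1}) - d_L(y_j,z_j).
\]
Summing over $j$ and subtracting, the $d_L(w_j,x_j)$ and $d_L(y_j,z_j)$ terms cancel (after reindexing $j \to j+1$ in the first sum), leaving
\[
\sum_{j=0}^{J} d_L(x_{j+1},y_j) - \sum_{j=0}^{J} d_L(x_j,y_j) \geq \sum_{j=0}^{J} d_L(w_{j+1},z_j) - \sum_{j=0}^{J} d_L(w_j,z_j) \geq 0,
\]
where the last inequality is the $d_L$-cyclical monotonicity of $\Gamma'$ from Lemma \ref{L:gGamma}(3).

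For (3), I rewrite $G$ as the projection
\[
G = P_{12}\Bigl( \bigl\{(x,y,w,z) \in X^4 : (w,z) \in \Gamma',\ d_L(w,x)+d_L(x,y)+d_L(y,z)=d_L(w,z) \bigr\} \Bigr).
\]
Since $\Gamma'$ is analytic by Lemma \ref{L:gGamma}(2) and $d_L$ is Borel, the set inside the projection is analytic in $X^4$; its projection $G$ is therefore in $\Sigma^1_1(X \times X)$. The set $G^{-1}$ is the image of $G$ under the coordinate swap, hence also analytic, and $R = G \cup G^{-1}$ is analytic since $\Sigma^1_1$ is closed under countable unions.

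The only mildly tricky part is the bookkeeping in step (1); everything else is a direct application of Lemma \ref{L:gGamma} together with the stability properties of $\Sigma^1_1$ recorded in Section \ref{Ss:univmeas}.
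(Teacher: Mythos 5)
Your proposal is correct and follows essentially the same route as the paper: point (2) by taking $(w,z)=(x,y)$ as its own witness, point (3) by writing $G$ as the projection of an analytic subset of $X^4$, and point (1) by using the witness decomposition together with the triangle inequality to dominate a cycle in $G$ by the corresponding cycle of witnesses in $\Gamma'$. The only difference is cosmetic: you organize the estimate in (1) as a per-index identity plus a per-index inequality and then sum (with the cyclic cancellation of the $d_L(w_j,x_j)$ and $d_L(y_j,z_j)$ terms), whereas the paper chains the same inequalities in a telescoping fashion; both yield $\sum_j d_L(x_{j+1},y_j)-d_L(x_j,y_j) \geq \sum_j d_L(w_{j+1},z_j)-d_L(w_j,z_j) \geq 0$.
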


\begin{proof}
The second point follows by the definition: if $(x,y) \in \Gamma'$, just take $(w,z) = (x,y)$ in the r.h.s. of \eqref{E:trG}.

The third point is consequence of the fact that
\[
G = P_{34} \Big( \big( \Gamma' \times X \times X \big) \cap \Big\{ (w,z,x,y) : d_L(w,x) + d_L(x,y) + d_L(y,z) = d_L(w,z) \Big\} \Big),
\]
and the result follows from the properties of analytic sets.

The first point follows from the following observation: if $(x_i,y_i) \in \gamma_{[w_i,z_i]}$, then from triangle inequality
\begin{align*}
d_L(x_{i+1},y_i) - d_L(x_i,y_i) + d_L(x_i,y_{i-1}) \geq&~ d_L(x_{i+1},z_i) - d_L(z_i,y_i) - d_L(x_i,y_i) + d_L(x_i,y_{i-1}) \crcr
=&~ d_L(x_{i+1},z_i) - d_L(x_i,z_i) + d_L(x_i,y_{i-1}) \crcr
\geq&~  d_L(x_{i+1},z_i) - d_L(x_i,z_i) + d_L(w_i,y_{i-1}) - d_L(w_i,x_i) \crcr
=&~  d_L(x_{i+1},z_i) - d_L(w_i,z_i) + d_L(w_i,y_{i-1}).
\end{align*}
Repeating the above inequality finitely many times one obtain
\[
\sum_i d_L(x_{i+1},y_i) - d_L(x_i,y_i) \geq \sum_i d_L(w_{i+1},z_i) - d_L(w_i,z_i) \geq 0.
\]
Hence the set $G$ is $d_L$-cyclically monotone.
\end{proof}

\begin{definition} Define the \emph{transport sets}
\begin{subequations}
\label{E:TR0}
\begin{align}
\label{E:TR}
\mathcal T :=&~ P_1 \big( \textrm{graph}(G^{-1}) \setminus \{x = y\} \big) \cap P_1 \big( \textrm{graph}(G) \setminus \{x = y\} \big), \\
\label{E:TRe}
\mathcal T_e :=&~ P_1 \big( \textrm{graph}(G^{-1}) \setminus \{x = y\} \big) \cup P_1 \big( \textrm{graph}(G) \setminus \{x = y\} \big).
\end{align}
\end{subequations}
\end{definition}

From the definition of $G$ it is fairly easy to prove that $\mathcal{T}$, $\mathcal{T}_e$ are analytic sets. The subscript $e$ refers to the endpoints of the geodesics: clearly we have
\begin{equation}
\label{E:RTedef}
\mathcal{T}_e = P_1(R \setminus \{x = y\}).
\end{equation}

The following lemma shows that we have only to study the Monge problem in $\mathcal{T}_e$.

\begin{lemma}
\label{L:mapoutside}
It holds $\pi(\mathcal{T}_e \times \mathcal{T}_e \cup \{x = y\}) = 1$.
\end{lemma}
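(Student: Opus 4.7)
The plan is to unpack the definitions and observe that the conclusion is essentially immediate from the inclusion $\Gamma \subset G$, together with the fact that $\pi$ is concentrated on $\Gamma$.

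First, I would recall that by assumption $\pi(\Gamma) = 1$. The goal then reduces to showing the set-theoretic inclusion
\[
\Gamma \subset (\mathcal{T}_e \times \mathcal{T}_e) \cup \{x = y\}.
\]
So pick an arbitrary $(x,y) \in \Gamma$. If $x = y$ there is nothing to prove, so assume $x \neq y$. By Lemma \ref{L:analGR}(2) we have $\Gamma \subset \Gamma' \subset G$, hence $(x,y) \in G$ with $x \neq y$.

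Next, I would read off from the definitions \eqref{E:TR} and \eqref{E:TRe} that
\[
x \in P_1\bigl(\gr(G) \setminus \{x=y\}\bigr), \qquad y \in P_2\bigl(\gr(G) \setminus \{x=y\}\bigr) = P_1\bigl(\gr(G^{-1}) \setminus \{x=y\}\bigr),
\]
simply because $(x,y)$ itself is the required witness on the left and $(y,x) \in G^{-1}$ is the required witness on the right. Thus both $x$ and $y$ lie in $\mathcal{T}_e$, so $(x,y) \in \mathcal{T}_e \times \mathcal{T}_e$, giving the inclusion above.

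There is no real obstacle here; the lemma is purely definitional once one has $\Gamma \subset G$, which was established in the previous lemma. The only small care needed is to treat the diagonal $\{x=y\}$ separately, since points $(x,x) \in \Gamma$ need not be in $\mathcal{T}_e \times \mathcal{T}_e$ (they correspond to mass that is not moved), but they are explicitly absorbed into the second piece of the union.
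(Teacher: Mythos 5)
Your proof is correct and follows essentially the same route as the paper: reduce to the inclusion $\Gamma \setminus \{x=y\} \subset \mathcal{T}_e \times \mathcal{T}_e$ via $\Gamma \subset \Gamma' \subset G$, then note that $(x,y) \in G$ off the diagonal witnesses $x \in P_1(\gr(G)\setminus\{x=y\})$ and $y \in P_1(\gr(G^{-1})\setminus\{x=y\})$. Nothing is missing.
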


\begin{proof}
If $x \in P_1(\Gamma \setminus \{x=y\})$, then $x \in G^{-1}(y) \setminus \{y\}$ for some $y\in X$. Similarly, $y \in P_2(\Gamma \setminus \{x=y\})$ implies that $y \in G(x) \setminus \{x\}$
for some $x\in X$. Hence $\Gamma \setminus \mathcal{T}_e \times \mathcal{T}_e \subset \{x = y\}$.
\end{proof}

As a consequence, $\mu(\mathcal{T}_e) = \nu(\mathcal{T}_e)$ and any maps $T$ such that for $\nu \llcorner_{\mathcal{T}_e} = T_\sharp \mu \llcorner_{\mathcal{T}_e}$ can be extended to a map $T'$ such that $\nu = T_\sharp \mu$ with the same cost by setting
\begin{equation}
\label{E:extere}
T'(x) =
\begin{cases}
T(x) & x \in \mathcal{T}_e \crcr
x & x \notin \mathcal{T}_e
\end{cases}
\end{equation}

We now use the non branching assumption.

\begin{lemma}
\label{L:uniqr}
If $x \in \mathcal{T}$, then $R(x)$ is a single geodesic.
\end{lemma}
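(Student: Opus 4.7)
The plan is to combine the $d_L$-cyclical monotonicity of $G$ (Lemma \ref{L:analGR}) with the non-branching assumption on $(X,d_L)$ to show that $R(x)=G(x)\cup G^{-1}(x)$ is contained in the image of a single geodesic through $x$. Since $x\in\mathcal{T}$, I can select $w\in G^{-1}(x)\setminus\{x\}$ and $\bar z\in G(x)\setminus\{x\}$. Applying $d_L$-cyclic monotonicity of $G$ to the pair $(w,x),(x,y)$ with arbitrary $y\in G(x)\setminus\{x\}$ yields
\[
d_L(w,x)+d_L(x,y)\le d_L(w,y)+d_L(x,x),
\]
which together with the triangle inequality produces the alignment identity $d_L(w,y)=d_L(w,x)+d_L(x,y)$. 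The analogous identities $d_L(z,\bar z)=d_L(z,x)+d_L(x,\bar z)$ and $d_L(z,y)=d_L(z,x)+d_L(x,y)$ for $z\in G^{-1}(x)\setminus\{x\}$ and $y\in G(x)\setminus\{x\}$ follow by the same argument.

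Next I would show $G(x)$ lies on one geodesic emanating from $x$. Given $y_1,y_2\in G(x)\setminus\{x\}$ with $d_L(x,y_1)\le d_L(x,y_2)$, the alignment identity lets us concatenate a geodesic from $w$ to $x$ with a geodesic from $x$ to $y_i$ into a geodesic $\gamma_i$ from $w$ to $y_i$ of length $d_L(w,y_i)$, passing through $x$ at parameter $d_L(w,x)$. The restrictions $\gamma_i\llcorner_{[0,d_L(w,x)]}$ are then two geodesics from $w$ to $x$, each admitting the extension $\gamma_i$; by the non-branching assumption in the form recalled just after Definition \ref{D:strettaconv}---distinct geodesics with the same endpoints cannot be extended to a longer geodesic---these restrictions must coincide. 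I then rule out bifurcation of $\gamma_1$ and $\gamma_2$ at any later parameter $t^\ast$ via the equivalent strict convexity: on a shrinking symmetric neighborhood $[t^\ast-\varepsilon,t^\ast+\varepsilon]$ the two candidate successor points $\gamma_1(t^\ast+\varepsilon)\ne\gamma_2(t^\ast+\varepsilon)$ would lie at equal $d_L$-distance from the common point $\gamma_i(t^\ast-\varepsilon)$ and from its midpoint $\gamma_i(t^\ast)$, contradicting the singleton property of Definition \ref{D:strettaconv}. Hence $\gamma_1=\gamma_2\llcorner_{[0,d_L(w,y_1)]}$ and in particular $y_1$ lies on $\gamma_2$. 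A symmetric argument, using $\bar z$ in place of $w$ and running time backwards, places $G^{-1}(x)$ on a single geodesic terminating at $x$.

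Finally, for any $z\in G^{-1}(x)\setminus\{x\}$ and $y\in G(x)\setminus\{x\}$, the identity $d_L(z,y)=d_L(z,x)+d_L(x,y)$ from the first step ensures that concatenating a geodesic from $z$ to $x$ with one from $x$ to $y$ is again a geodesic, so the backward and forward rays fit together into a single curve through $x$; combined with the previous paragraph this proves the claim. The main delicate point is the non-branching step: the definition as stated directly forbids only two distinct geodesics \emph{with the same endpoints} from being extended, whereas the proof requires uniqueness of forward extension at a hypothetical branching time $t^\ast$. I expect to handle this exactly by the local strict-convexity argument above, applied in a shrinking symmetric neighborhood of $t^\ast$, where the two competing extensions produce two points violating the singleton property of Definition \ref{D:strettaconv}.
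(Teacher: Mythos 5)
Your proof is correct and follows essentially the same route as the paper's: extract $w\in G^{-1}(x)\setminus\{x\}$ and a point of $G(x)\setminus\{x\}$, use $d_L$-cyclical monotonicity of $G$ plus the triangle inequality to get the alignment identities, and then invoke the non-branching assumption to collapse $R(x)$ onto one geodesic. The only difference is that you spell out in detail the non-branching step (via the strict-convexity formulation of Definition \ref{D:strettaconv}) that the paper leaves implicit, which is a legitimate and careful filling-in rather than a different argument.
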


\begin{proof}
Since $x \in \mathcal{T}$, there exists $(w,x), (x,z) \in G \setminus \{x=y\}$: from the $d_L$-cyclical monotonicity and triangular inequality, it follows that
\[
d_L(w,z) = d_L(w,x) + d_L(x,z),
\]
so that $(w,z) \in G$ and $x \in \gamma_{(w,z)}$. Hence from the non branching assumption the set
\[
R(x) = \bigcup_{y \in G(x)} \gamma_{[x,y]} \cup \bigcup_{z \in G^{-1}(x)} \gamma_{[z,x]}
\]
is a single geodesic.
\end{proof}

\begin{proposition}
\label{P:equiv}
The set $R \cap \mathcal{T} \times \mathcal{T}$ is an equivalence relation on $\mathcal{T}$. The set $G$ is a partial order relation on $\mathcal{T}_e$.
\end{proposition}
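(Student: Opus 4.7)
The plan is to verify the three partial order axioms for $G$ on $\mathcal{T}_e$ first, and then to deduce the equivalence relation claim for $R \cap \mathcal{T} \times \mathcal{T}$ by a case analysis relying on transitivity of $G$ and Lemma \ref{L:uniqr}.

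Reflexivity of $G$ on $\mathcal{T}_e$ should be immediate from \eqref{E:RTedef}: every $x \in \mathcal{T}_e$ is paired with some $y \neq x$ via $R$, giving a witness $(w,z) \in \Gamma'$ with $x$ on $\gamma_{[w,z]}$, and substituting $y = x$ into the defining identity \eqref{E:trG} of $G$ produces $(x,x) \in G$. Antisymmetry I would obtain from the $d_L$-cyclical monotonicity of $G$ (Lemma \ref{L:analGR}): if $(x,y), (y,x) \in G$, shifting the pair against the diagonal gives $d_L(x,y) + d_L(y,x) \leq d_L(x,x)+d_L(y,y)=0$, forcing $x = y$.

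For transitivity of $G$, given $(x,y), (y,z) \in G$ with witnesses $(w_0,z_0), (w_1,z_1) \in \Gamma'$, I would first get $d_L(x,z) = d_L(x,y) + d_L(y,z)$ from cyclical monotonicity of $G$ combined with the triangle inequality. Then, chaining triangle inequalities with the two witness identities yields
\[
d_L(w_0,z_1) + d_L(w_1,z_0) \leq d_L(w_0,x) + d_L(x,y) + d_L(y,z) + d_L(z,z_1) + d_L(w_1,y) + d_L(y,z_0) = d_L(w_0,z_0) + d_L(w_1,z_1),
\]
while the $d_L$-cyclical monotonicity of $\Gamma'$ (Lemma \ref{L:gGamma}) furnishes the opposite inequality. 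Equality then propagates to each constituent triangle inequality, giving $d_L(w_0,z_1) = d_L(w_0,x) + d_L(x,z) + d_L(z,z_1)$. Concatenating the two $\Gamma$-cycles certifying $(w_0,z_0), (w_1,z_1) \in \Gamma'$ through this vanishing 2-cycle produces a $\Gamma$-cycle of zero cost closed by $(w_0,z_1)$, so $(w_0,z_1) \in \Gamma'$; this exhibits $(x,z) \in G$.

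For $R \cap \mathcal{T} \times \mathcal{T}$, reflexivity and symmetry will be automatic from $G \subset R$ and $R = G \cup G^{-1}$. I would split transitivity into four cases according to whether each of $(x,y), (y,z)$ lies in $G$ or in $G^{-1}$; the two aligned cases follow directly from transitivity of $G$. In either of the two opposed cases, say $(x,y), (z,y) \in G$ with $y \in \mathcal{T}$, Lemma \ref{L:uniqr} makes $R(y)$ a single geodesic, and both witness geodesics $\gamma_{[w_0,z_0]}$, $\gamma_{[w_1,z_1]}$ pass through $y$ and hence are sub-arcs of $R(y)$. The hard part will be to exclude, using only the $d_L$-cyclical monotonicity of $\Gamma'$, the possibility that these two sub-arcs run in opposite directions along $R(y)$: parametrising $R(y)$ so that $w_0,x,y,z_0$ occur with increasing parameter, a four-way case analysis on the signs of $t_{z_1} - t_{w_0}$ and $t_{z_0} - t_{w_1}$ shows that every configuration in which $w_1,z,y,z_1$ is decreasing contradicts the cyclical monotonicity inequality $d_L(w_0,z_0) + d_L(w_1,z_1) \leq d_L(w_0,z_1) + d_L(w_1,z_0)$. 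Once the orientations agree, $x$ and $z$ both precede $y$ on $R(y)$, and comparing their parameters shows that one of the two witnesses certifies either $(x,z) \in G$ or $(z,x) \in G$, so $(x,z) \in R$; the second opposed case is symmetric.
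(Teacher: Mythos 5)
Your proof is correct, and it rests on the same two ingredients as the paper's: $d_L$-cyclical monotonicity combined with the triangle inequality for the order properties of $G$, and Lemma \ref{L:uniqr} (the fact that $R(y)$ is a single geodesic for $y \in \mathcal{T}$) for the equivalence-relation part. The difference is one of explicitness rather than of strategy. For transitivity of $G$ the paper simply writes ``as in Lemma \ref{L:uniqr}, $d_L$-cyclical monotonicity implies $(x,z) \in G$'', which only yields the metric identity $d_L(x,z) = d_L(x,y) + d_L(y,z)$; since membership in $G$ requires an actual witness pair in $\Gamma'$, your concatenation of the two zero-cost cycles through the vanishing $2$-cycle $d_L(w_0,z_1)+d_L(w_1,z_0)-d_L(w_0,z_0)-d_L(w_1,z_1)=0$ is exactly the step the paper leaves implicit, and it is worth writing out. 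Likewise, for transitivity of $R$ the paper asserts directly that $x,y,z$ all lie on a single geodesic $\gamma_{(w,z)}$ with $(w,z) \in G$, which silently subsumes your orientation analysis: your four-way sign check showing that two witness sub-arcs of $R(y)$ with opposite orientations would violate monotonicity (except in the degenerate cases $x=y$ or $z=y$, which you should mention are trivial) is the honest content of that assertion. Finally, you verify antisymmetry of $G$, which the paper omits entirely even though it is part of being a partial order; your one-line argument via monotonicity against the diagonal is the right one.
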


\begin{proof}
Using the definition of $R$, one has in $\mathcal{T}$:
\begin{enumerate}
\item $x \in \mathcal{T}$ implies that
\[
\exists y \in G(x) \setminus \{x = y\},
\]
so that from the definition of $G$ it follows $(x,x) \in G$;
\item if $y \in R(x)$, $x,y \in \mathcal{T}$, then from Lemma \ref{L:uniqr} there exists $(w,z) \in G$ such that $x , y \in \gamma_{(w,z)}$. Hence $x \in R(y)$;
\item if $y \in R(x)$, $z \in R(y)$, $x,y,z \in \mathcal{T}$, then from Lemma \ref{L:uniqr} it follows again there exists $(w,z) \in G$ such that $x , y , z \in \gamma_{(w,z)}$. Hence $z \in R(x)$.
\end{enumerate}

The second part follows similarly:
\begin{enumerate}
\item $x \in \mathcal{T}_e$ implies that
\[
\exists (x,y) \in \big( G \setminus \{x=y\} \big) \cup \big( G^{-1} \setminus \{x=y\} \big),
\]
so that in both cases $(x,x) \in G$;
\item as in Lemma \ref{L:uniqr}, $(x,y), (y,z) \in G \setminus \{x=y\}$ implies by $d_L$-cyclical monotonicity that $(x,z) \in G$.
\end{enumerate}
% The case when $(x,y),(y,z) \in G$ follows from the above point.
\end{proof}

\begin{remark}
\label{R:orderX}
Note that $G \cup \{x=y\}$ is a partial order relation on $X$.
\end{remark}

\begin{definition}
\label{D:endpoint}
Define the multivalued \emph{endpoint graphs} by:
\begin{subequations}
\label{E:endpoint0}
\begin{align}
\label{E:endpointa}
a :=&~ \big\{ (x,y) \in G^{-1}: G^{-1}(y) \setminus \{y\} = \emptyset \big\}, \\
\label{E:endpointb}
b :=&~ \big\{ (x,y) \in G: G(y) \setminus \{y\} = \emptyset \big\}.
\end{align}
\end{subequations}
We call $P_2(a)$ the set of \emph{initial points} and $P_2(b)$ the set of \emph{final points}.
\end{definition}

Even if $a$, $b$ are not in the analytic class, still they belong to the $\sigma$-algebra $\mathcal{A}$.

\begin{figure}
\label{Fi:mongemetri3}
\psfrag{Gamma}{\color{MidnightBlue}$\Gamma$}
\psfrag{Gamma'}{\color{Brown}$\Gamma'$}
\psfrag{b}{\color{Purple}$b$}
\psfrag{a}{\color{Goldenrod}$a$}
\psfrag{G}{\color{Cyan}$G$}
\psfrag{G-1}{\color{CarnationPink}$G^{-1}$}
\psfrag{mu}{\color{Red}$\mu$}
\psfrag{nu}{\color{Green}$\nu$}
\psfrag{T}{\color{Cyan}$\mathcal{T}$}
\centerline{\resizebox{12cm}{12cm}{\includegraphics{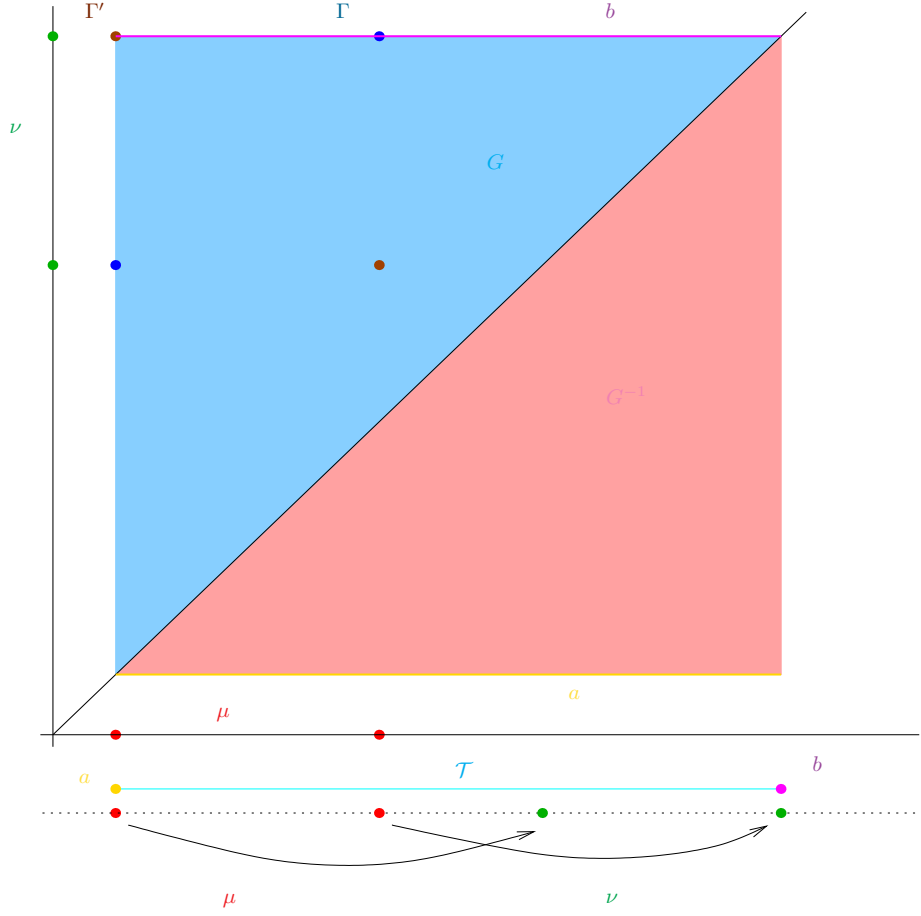}}}
\caption{Construction of the sets $\Gamma$, $\Gamma'$, $G$, $G^{-1}$, $a$, $b$ in a 1-dimensional example with $d_{L}=|\cdot|$.}
\end{figure}

\begin{proposition}
The following holds:
\begin{enumerate}
\item the sets
\[
a,b \subset X \times X, \quad a(A), b(A) \subset X,
\]
belong to the $\mathcal{A}$-class if $A$ analytic;
\item $a \cap b \cap \mathcal{T}_e \times X = \emptyset$;
\item $a(x)$, $b(x)$ are singleton or empty when $x \in \mathcal{T}$;
\item $a(\mathcal{T}) = a(\mathcal{T}_e)$, $b(\mathcal{T}) = b(\mathcal{T}_e)$;
\item $\mathcal{T}_e = \mathcal{T} \cup a(\mathcal{T}) \cup b(\mathcal{T})$, $\mathcal{T} \cap (a(\mathcal{T}) \cup b(\mathcal{T})) = \emptyset$.
\end{enumerate}
\end{proposition}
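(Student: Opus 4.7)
The plan is to verify each of the five assertions by unwinding the definitions of $a$, $b$, $G$, $\mathcal{T}$, $\mathcal{T}_e$ from Definitions \ref{D:Gray} and \ref{D:endpoint} and combining them with Lemma \ref{L:analGR} (analyticity and cyclical monotonicity of $G$), Lemma \ref{L:uniqr} (on $\mathcal{T}$ each equivalence class $R(x)$ is a single geodesic), and the non-branching hypothesis on $(X,d_L)$.

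For (1) I would observe that the set of initial points is
\[
\mathcal{I} := \bigl\{ y : G^{-1}(y) \setminus \{y\} = \emptyset \bigr\} = X \setminus P_2(G \setminus \{x=y\}).
\]
Since $G \in \Sigma^1_1$ by Lemma \ref{L:analGR} and $\{x=y\}$ is Borel, $P_2(G \setminus \{x=y\}) \in \Sigma^1_1$, so $\mathcal{I} \in \Pi^1_1 \subset \mathcal{A}$. Then $a = G^{-1} \cap (X \times \mathcal{I})$ is the intersection of an analytic and a coanalytic set, hence in $\mathcal{A}$. For $A$ analytic, I would rewrite
\[
a(A) = \bigl\{ y \in \mathcal{I} : G(y) \cap A \neq \emptyset \bigr\} = \mathcal{I} \cap P_1\bigl(G \cap (X \times A)\bigr),
\]
again analytic-meets-coanalytic, hence in $\mathcal{A}$. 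The analogous argument gives $b$ and $b(A)$.

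For (2) I would note that $(x,y) \in a \cap b$ forces $y$ to be simultaneously initial and terminal, i.e.\ $G(y) \cup G^{-1}(y) \subseteq \{y\}$, while $(x,y) \in G \cap G^{-1}$ with this constraint gives $x=y$; then $x=y \in \mathcal{T}_e$ would require by definition some $z \neq y$ with $(y,z) \in G$ or $(z,y) \in G$, contradicting the initial/terminal property. For (3), if $y_1 \neq y_2$ were both in $a(x)$ with $x \in \mathcal{T}$, then both belong to the single geodesic $R(x)$ (Lemma \ref{L:uniqr}), so one precedes the other in the $G$-order on $R(x)$, contradicting that the later one is an initial point; the argument for $b(x)$ is symmetric.

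For (4) and (5) the key observation is that if $y \in \mathcal{T}_e \setminus \mathcal{T}$ is, say, an initial point, then $y \in P_1(G \setminus \{x=y\})$, so there exists $z \neq y$ with $(y,z) \in G$; picking any interior point $x'$ of $\gamma_{[y,z]}$ (which exists because $d_L(y,z)>0$ and geodesics are continuous), a short computation using the defining equality of $G$ and the triangle inequality (as in Lemma \ref{L:uniqr}) shows that both $(y,x') \in G$ and $(x',z) \in G$ with $y \neq x' \neq z$, so $x' \in \mathcal{T}$ and $(x',y) \in a$, whence $y \in a(\mathcal{T})$. This gives $a(\mathcal{T}_e) \subseteq a(\mathcal{T})$; the opposite inclusion is trivial from $\mathcal{T} \subseteq \mathcal{T}_e$. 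The same split into ``initial / terminal'' cases proves $\mathcal{T}_e \subseteq \mathcal{T} \cup a(\mathcal{T}) \cup b(\mathcal{T})$, while the inclusion $a(\mathcal{T}) \cup b(\mathcal{T}) \subseteq \mathcal{T}_e$ is immediate because $(x,y) \in a$ with $x \neq y$ gives $(y,x) \in G$, $y \neq x$. Finally, $\mathcal{T} \cap a(\mathcal{T}) = \emptyset$ because any $y \in a(\mathcal{T})$ is an initial point, so $y \notin P_1(G^{-1} \setminus \{x=y\})$, contradicting $y \in \mathcal{T}$; symmetrically for $b(\mathcal{T})$.

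The main obstacle is really bookkeeping in (1)---making sure every set produced lies in $\mathcal{A}$ rather than in a larger projective class; using that $\mathcal{A}$ contains both $\Sigma^1_1$ and $\Pi^1_1$ and is closed under countable Boolean operations keeps things inside $\mathcal{A}$, while the projections that would take us out of $\mathcal{A}$ are avoided by rewriting $a(A)$ as a Boolean combination involving only one projection (which produces an analytic set) and the fixed coanalytic set of initial points. Everything else reduces to the non-branching geometry already captured by Lemma \ref{L:uniqr} and the $d_L$-additivity along geodesics used in the proof of Lemma \ref{L:analGR}.
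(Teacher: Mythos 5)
Your proposal is correct and follows essentially the same route as the paper: part (1) is the same ``analytic intersected with coanalytic lies in $\mathcal{A}$'' computation (the paper uses an auxiliary set $C$ of two-step chains where you use the coanalytic set $\mathcal{I}$ of initial points, but the bookkeeping is equivalent), part (3) is the same appeal to Lemma \ref{L:uniqr}, and parts (2), (4), (5) are the same prolongation/interior-point argument that the paper states in one sentence. Your write-up simply supplies the details the paper leaves as ``follows easily.''
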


\begin{proof}
Define
\[
C := \Big\{ (x,y,z) \in \mathcal{T}_e \times \mathcal{T}_e \times \mathcal{T}_e: y \in G(x), z \in G(y) \Big\} = (G \times X) \cap (X \times G) \cap \mathcal{T}_e \times \mathcal{T}_e \times \mathcal{T}_e,
\]
that is clearly analytic. Then
\[
b = \Big\{ (x,y) \in G : y \in G(x), G(y) \setminus \{y\} = \emptyset \Big\} = G \setminus P_{1,2}(C \setminus X \times \{y=z\}),
\]
\[
b(A) = \Big\{ y: y \in G(x), G(y) \setminus \{y\} = \emptyset, x \in A \Big\} = P_2(G \cap A \times X) \setminus P_2(C \setminus X \times \{y=z\}).
\]
A similar computation holds for $a$:
\[
a = G^{-1} \setminus P_{23}(C \setminus \{x=y\} \times X), \quad a(A) = P_1(G \cap X \times A) \setminus P_1(C \setminus \{x=y\} \times X)
\]
Hence $a,b \in \mathcal{A}(X \times X)$, $a(A), b(A) \in \mathcal{A}(X)$, being the intersection of an analytic set with a coanalytic one.

If $x \in \mathcal{T}$, then from Lemma \ref{L:uniqr} it follows that $a(x)$, $b(x)$ are empty or singletons and $a(x) \not= b(x)$. If $x \in \mathcal{T}_e \setminus \mathcal{T}$, then it follows that the geodesic $\gamma_{[w,z]}$, $(w,z) \in G$, to which $x$ belongs cannot be prolonged in at least one direction: hence $x \in a(x) \cup b(x)$.

The other point follows easily.
\end{proof}

We finally show that we can assume that the $\mu$-measure of final points and the $\nu$-measure of the initial points are $0$.

\begin{lemma}
\label{L:finini0}
The sets $G \cap b(\mathcal{T}) \times X$, $G \cap X \times a(\mathcal{T})$ is a subset of the graph of the identity map.
\end{lemma}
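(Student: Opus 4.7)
The plan is to simply unwind the definitions of $a$, $b$, and their images $a(\mathcal{T})$, $b(\mathcal{T})$, since the statement turns out to be essentially tautological once the quantifiers are made explicit.

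First, I would observe that by \eqref{E:endpointb}, any pair $(x',y)\in b$ satisfies $G(y)\setminus\{y\}=\emptyset$, so the second coordinate $y$ is characterized by $G(y)\subset\{y\}$. Passing to the image, $b(\mathcal{T})=\{y:\exists x'\in\mathcal{T},\,(x',y)\in b\}$; in particular, every $y\in b(\mathcal{T})$ satisfies $G(y)\setminus\{y\}=\emptyset$, independently of the witness $x'$.

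Next, I would take any $(x,z)\in G\cap(b(\mathcal{T})\times X)$. The hypothesis $x\in b(\mathcal{T})$ yields $G(x)\subset\{x\}$ by the previous step, while the hypothesis $(x,z)\in G$ means exactly $z\in G(x)$. Combining these two inclusions gives $z=x$, so the pair lies on the diagonal. This proves the first inclusion $G\cap b(\mathcal{T})\times X\subset\{x=y\}$.

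The argument for $G\cap X\times a(\mathcal{T})$ is completely symmetric, using \eqref{E:endpointa} in place of \eqref{E:endpointb}: if $z\in a(\mathcal{T})$, then $G^{-1}(z)\subset\{z\}$, and from $(x,z)\in G$ one gets $x\in G^{-1}(z)$, hence $x=z$. There is no real obstacle here; the only point worth emphasizing is that the condition $G(y)\setminus\{y\}=\emptyset$ encoded in the definition of $b$ is a property of the \emph{endpoint} $y$ alone (and similarly for $a$), which is exactly what makes the images $b(\mathcal{T})$ and $a(\mathcal{T})$ inherit the property cleanly.
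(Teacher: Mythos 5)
Your proof is correct and follows the same route as the paper's (very terse) argument: membership in $b(\mathcal{T})$ (resp.\ $a(\mathcal{T})$) forces $G(x)\subset\{x\}$ (resp.\ $G^{-1}(z)\subset\{z\}$) directly from \eqref{E:endpointb} and \eqref{E:endpointa}, and intersecting with $G$ then pins the pair to the diagonal. Nothing is missing.
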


\begin{proof}
From the definition of $b$ one has that
\[
x \in b(\mathcal{T}) \quad \Longrightarrow \quad G(x) \setminus \{x\} = \emptyset,
\]
A similar computation holds for $a$.
\end{proof}

Hence we conclude that
\[
\pi (b(\mathcal{T}) \times X) = \pi(G \cap b(\mathcal{T}) \times X) = \pi(\{x = y\}),
\]
and following \eqref{E:extere} we can assume that
\[
\mu(b(\mathcal{T})) = \nu(a(\mathcal{T})) = 0.
\]

\begin{remark}
\label{R:lsccase1}

In the case considered in Remark \ref{R:compact}, it is possible to obtain more regularity for the sets introduced so far.
Recall that we are now assuming
\begin{itemize}
\item[(2')] $d_L : X \times X \to [0,+\infty]$ l.s.c. distance,
\item[(4')] $d_L(x,y) \geq d(x,y)$,
\item[(5')] $\cup_{x \in K_1, y \in K_2} \gamma_{[x,y]}$ is $d$-compact if $K_1$, $K_2$ are $d$-compact, $d_L \llcorner_{K_1 \times K_2}$ uniformly bounded.
\end{itemize}

The set $\Gamma'$ is $\sigma$-compact: in fact, if one restrict to each $\Gamma_n$
given by \eqref{E:gamman}, then the set of cycles of order $I$ is compact, and thus
\begin{align*}
\Gamma'_{n,\bar I} :=&~ \bigg\{ (x,y) : \exists I \in \{0,\dots,\bar I\}, (w_i,z_i) \in \Gamma_n \ \text{for} \ i = 0,\dots,I, \ z_I = y \crcr
&~ \qquad \qquad w_{I+1} = w_0 = x, \ \sum_{i=0}^I d_L(w_{i+1},z_i) - d_L(w_i,z_i) = 0 \bigg\}
\end{align*}
is compact. Finally $\Gamma' = \cup_{n,I} \Gamma'_{n,I}$.

Moreover, $d_L \llcorner_{\Gamma'_{n,I}}$ is continuous. If $(x_n,y_n) \to (x,y)$, then from the l.s.c. and
\[
\sum_{i=0}^I d_L(w_{n,i+1},z_{n,i}) = \sum_{i=0}^I d_L(w_{n,i},z_{n,i}), \quad w_{n,I+1} = w_{n,0} = x_{n}, \ z_{n,I} = y_{n},
\]
it follows also that each $d_L(w_{n,i+1},z_{n,i})$ is continuous.

Similarly the sets $G$, $R$, $a$, $b$ are $\sigma$-compact: assumption (5') and the above computation in fact shows that
\[
G_{n,I} := \Big\{ (x,y): \exists (w,z) \in \Gamma'_{n,I}, d_L(w,x) + d_L(x,y) + d_L(y,z) = d_L(w,z) \Big\}
\]
is compact. For $a$, $b$, one uses the fact that projection of $\sigma$-compact sets is $\sigma$-compact.

So if we are in the case of Remark \ref{R:compact}, $\Gamma$, $\Gamma'$, $G$, $G^{-1}$, $a$ and $b$ are $\sigma$-compact sets.
\end{remark}

\begin{remark}
\label{R:TR}
Many simplifications occur in the case the disintegration w.r.t. the partition $\{D_L(x)\}_{x \in X}$ is strongly consistent. Recall that 
$D_L(x) = \big\{ y : d_L(x,y) < + \infty \big\}$. Let
\[
\pi = \int_0^1 \pi_\alpha m(d\alpha), \quad \mu = \int_0^1 \mu_\alpha m(d\alpha), \ \nu = \int_0^1 \nu_\alpha m(d\alpha)
\]
be strongly consistent disintegrations such that
\[
\mu_\alpha(D_L(x_\alpha)) = \nu_\alpha(D_L(x_\alpha)) = 1, \quad \pi_\alpha \in \Pi(\mu_\alpha,\nu_\alpha).
\]
We have used the fact that the partition $\{D_L(x) \times D_L(x)\}_{x \in X}$ has the crosswise structure, and then we can apply the results of \cite{biacar:cmono}.

\medskip

\noindent {\it 1) Optimality of $\pi_\alpha$.} Since $\pi$ is $d_L$-cyclically monotone, also the $\pi_\alpha$ are $d_L$-cyclically monotone: precisely they are concentrated on the sets
\[
\Gamma_\alpha = \Gamma \cap D_L(x_\alpha) \times D_L(x_\alpha),
\]
if $\Gamma$ is $d_L$-cyclically monotone and $\pi(\Gamma) = 1$.

Using the fact that $(D_L(x_\alpha),d_L)$ is a metric space, then we can construct a potential $\varphi(x,x_\alpha)$ using the formula
\[
\varphi(x,x_\alpha) = \inf \bigg\{ \sum_{i=0}^I d_L(x_{i+1},y_i) - d_L(x_i,y_i), (x_i,y_i) \in \Gamma_\alpha, x_{I+1} = x, (x_0,y_0) = (x_\alpha,x_\alpha) \bigg\}.
\]
and since this is bounded on $(D_L(x_\alpha),d_L)$, we see that $\pi_\alpha$ and hence $\pi$ are optimal.

\medskip

\noindent {\it 2. Potential for $\pi$.} Extend $\varphi(x,x_\alpha)$ to $X$ by setting $\varphi(x,x_\alpha) = +\infty$ if $x \notin D_L(x_\alpha)$. If $\{(x_\alpha,x_\alpha)\}_{\alpha \in [0,1]}$ is a Borel section, then the function
\[
\varphi(x) = \inf_\alpha\{\varphi(x,\alpha)\}
\]
is easily seen to be analytic. This function is clearly a potential for $\pi$. In particular, it follows again from \cite{biacar:cmono} that $\pi$ is optimal if it is $d_L$-cyclically monotone.

\medskip

\noindent {\it 3.Transport set.} We can then define the set of oriented transport rays as the set
\[
G = \Big\{ (x,y) \in X\times X: \varphi (x) - \varphi(y) = d_L(x,y) \Big\}.
\]
In general, this sets is larger than the one of definition \ref{D:Gray}.
% 
% \medskip
% 
% If we assume that $\cup_{x,y \in K} \gamma_{[x,y]}$ is compact if $K$ is compact, then we can deduce that the set $G$ can be taken $\sigma$-compact.
\end{remark}

\section{Partition of the transport set $\mathcal{T}$}
\label{S:partition}

In this section we use the continuity and local compactness of geodesics to show that the disintegration induced by $R$ on $\mathcal T$ is strongly consistent. 
Using this fact, we can define an order preserving map $g$ which maps our transport problem into a transport problem on $\mathcal S \times \R$, where $\mathcal S$ is a cross section of $R$.

%In the first part of this section we show how to construct a $\mathcal{A}$-section of the equivalence relation $R$. 
%In the second part we use the relative quotient map to define a parametrization of the transport sets $\mathcal{T}$ and $\mathcal{T}_{e}$ by geodesics.

Let $\{x_i\}_{i \in \N}$ be a dense sequence in $(X,d)$.

\begin{lemma}
\label{L:reguloclco}
The sets
\[
W_{ijk} := \Big\{ x \in \mathcal{T} \cap \bar B_{2^{-j}}(x_i): L(G(x)),L(G^{-1}(x)) \geq 2^{2-k}, L \big( R(x) \cap \bar B_{2^{1-j}}(x_i) \big) \leq 2^{-k} \Big\}
\]
form a countable covering of $\mathcal{T}$ of class $\mathcal{A}$.
\end{lemma}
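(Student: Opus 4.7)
My plan is to split the statement into two independent verifications: the covering property, a geometric fact resting on local compactness of geodesics and non-branching, and the $\mathcal A$-measurability of each $W_{ijk}$, a routine projective-class bookkeeping. Countability is automatic since the index runs over $\N^3$.

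First I handle the covering. Fix $x \in \mathcal T$. By Lemma \ref{L:uniqr} the set $R(x)$ is a single non-branching geodesic $\gamma$; since $x \in \mathcal T$ is an interior point of this geodesic, both $L(G(x)) > 0$ and $L(G^{-1}(x)) > 0$. Pick $k$ with $2^{2-k}\leq \min\{L(G(x)),L(G^{-1}(x))\}$. The local-compactness hypothesis on $(X,d,d_L)$ gives $r>0$ with $\gamma^{-1}(\bar B_r(x))$ compact in $\R$; since a geodesic reparameterized by arclength visits each point at most once (otherwise a sub-arc of positive $d_L$-length would have zero $d_L$-diameter, contradicting minimality), $\gamma^{-1}(\{x\})$ is a single parameter value. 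Combined with $d$-continuity, the nested compact preimages $\gamma^{-1}(\bar B_\rho(x))$ shrink to this singleton as $\rho\to 0^+$, so $L(R(x)\cap \bar B_\rho(x)) = \mathcal L^1(\gamma^{-1}(\bar B_\rho(x))) \to 0$. Choose $j$ with $3 \cdot 2^{-j}\leq r$ and $L(R(x)\cap \bar B_{3\cdot 2^{-j}}(x))\leq 2^{-k}$, then $i$ with $d(x,x_i)\leq 2^{-j}$. The triangle inequality gives $\bar B_{2^{1-j}}(x_i)\subset \bar B_{3\cdot 2^{-j}}(x)$, hence $L(R(x)\cap \bar B_{2^{1-j}}(x_i))\leq 2^{-k}$ and $x\in W_{ijk}$.

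Next I check the membership $W_{ijk}\in \mathcal A$. The set $\mathcal T\cap \bar B_{2^{-j}}(x_i)$ is analytic, being the intersection of the analytic $\mathcal T$ from Lemma \ref{L:analGR} with a closed ball. Each of $\{x:L(G(x))\geq 2^{2-k}\}$ and $\{x:L(G^{-1}(x))\geq 2^{2-k}\}$ is analytic, the first being $P_1\bigl(G \cap \{d_L(x,y)\geq 2^{2-k}\}\bigr)$ and the second its symmetric counterpart. For the length condition, decompose $L(R(x)\cap \bar B)=L(G(x)\cap \bar B)+L(G^{-1}(x)\cap \bar B)$ (they share only $\{x\}$) and parameterize the forward ray by $d_L$-arclength to get
\[
L(G(x)\cap \bar B_{2^{1-j}}(x_i))=\mathcal L^1\bigl(\{t\geq 0:\exists\, y\in \bar B_{2^{1-j}}(x_i),\,(x,y)\in G,\,d_L(x,y)=t\}\bigr),
\]
so the measured set is the $x$-section of an analytic subset of $X\times\R$, and the standard analytic-Fubini statement for section measures makes $x\mapsto \mathcal L^1(\cdot)_x$ an $\mathcal A$-measurable function; the symmetric argument handles $G^{-1}$.

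The main obstacle is the geometric input $L(R(x)\cap \bar B_\rho(x))\to 0$: it simultaneously uses local compactness (to make the preimage compact, so that a shrinking-intersection of compact sets argument applies) and non-branching (to forbid $\gamma$ from looping back through $x$, which would keep the preimage measure bounded away from zero). The measurability bookkeeping, while mixing analytic and coanalytic pieces, is routine once the length is expressed as the Lebesgue measure of the analytic section above.
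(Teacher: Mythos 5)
Your proof is correct and follows the paper's strategy: the covering is obtained from local compactness plus $d$-continuity of geodesics (your extra care in showing $\gamma^{-1}(\{x\})$ is a singleton and that the nested compact preimages shrink to it is a welcome elaboration of what the paper merely asserts), and the measurability is a condition-by-condition projective-class check. The one place you diverge is the third condition: the paper writes $\{x : L(R(x)\cap \bar B_{2^{1-j}}(x_i))\leq 2^{-k}\}$ directly as $\mathcal T$ minus the projection $P_1\bigl(R\cap\{d(x_i,y)\leq 2^{1-j}\}\cap\{d_L(x,y)>2^{-k}\}\bigr)$, i.e.\ the intersection of an analytic set with a coanalytic one, which lands in $\mathcal A$ with no machinery beyond closure of $\Sigma^1_1$ under projections; you instead realize the length as $\mathcal L^1$ of the $x$-section of an analytic subset of $X\times\R$ and invoke the measurability of $x\mapsto\mathcal L^1(A_x)$ for analytic $A$. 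Your route is valid (and is essentially the device the paper itself uses later in Lemma \ref{L:measumuAt}), but it imports a heavier section-measurability theorem where a set-theoretic complement suffices.
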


\begin{proof}
We first prove the measurability. We consider separately the conditions defining $W_{ijk}$.

{\it Point 1.} The set
\[
A_{ij} := \mathcal{T} \cap \bar B_{2^{-j}}(x_i)
\]
is clearly analytic.

{\it Point 2.} The set
\[
B_k := \big\{ x \in \mathcal{T}: L(G(x)) \geq 2^{2-k} \big\} = P_1 \Big( G \cap \big\{ d_L(x,y) \geq 2^{2-k} \big\} \Big)
\]
is again analytic, being the projection of an analytic set. Similarly, the set
\[
C_k := \big\{ x \in \mathcal{T}: L(G^{-1}(x)) \geq 2^{2-k} \big\} = P_1 \Big( G^{-1} \cap \big\{ d_L(x,y) \geq 2^{2-k} \big\} \Big)
\]
is again analytic.

{\it Point 3.} The set
\begin{align*}
D_{jk} :=&~ \big\{ x \in \mathcal{T}: L \big( R(x) \cap \bar B_{2^{-j}}(x_i) \big) \leq 2^{-k} \big\} \crcr
=&~ \mathcal{T} \setminus P_1 \Big( R \cap \big( \{(x,y): d(x_i,y) \leq 2^{1-j}\} \cap \{d_L(x,y) > 2^{-k}\} \big) \Big)
\end{align*}
is in the $\mathcal{A}$-class, being the difference of two analytic sets.

We finally can write
\begin{align*}
W_{ijk} = A_{ij} \cap B_k \cap C_k \cap D_{jk},
\end{align*}
and the fact that $\mathcal{A}$ is a $\sigma$-algebra proves that $W_{ijk} \in \mathcal{A}$.

To show that it is a covering, notice that for all $x \in \mathcal{T}$ it holds
\[
\min \big\{ L(G(x)), L(G^{-1}(x)) \big\} \geq 2^{2-\bar k}
\]
for some $\bar k \in \N$.

From the local compactness of geodesics, Assumption \eqref{Cond:XdL5} of page \pageref{P:assumpDL}, it follows that if $\gamma^{-1}(\bar B_r(x))$ is compact, then the continuity of $\gamma$ implies that $\gamma^{-1}(\bar B_{r'}(x))$ is also compact for all $r' \leq r$, and $\diam_{d_L}(\gamma \cap \bar B_{r'}(x)) \to 0$ and $r' \to 0$. In particular there exists $\bar j \in \N$ such that
\[
L \big( R(x) \cap \bar B_{2^{1-\bar j}}(x) \big) \leq 2^{-\bar k},
\]
with $\bar k$ the one chosen above.

Finally, one choose $x_{\bar i}$ such that $d(x,x_{\bar i}) < 2^{-1-\bar j}$, so that $x \in \bar B_{2^{-\bar j}}(x_{\bar i}) \subset \bar B_{2^{1-\bar j}}(x)$ and thus
\[
L \big( R(x) \cap \bar B_{2^{-\bar j}}(x_{\bar i}) \big) \leq 2^{-\bar k}.
\]
\end{proof}

\begin{lemma}
\label{L:partitilW}
There exist $\mu$-negligible sets $N_{ijk} \subset W_{ijk}$ such that the family of sets
\begin{align*}
\mathcal{T}_{ijk} = R^{-1}(W_{ijk} \setminus N_{ijk})
\end{align*}
is a countable covering of $\mathcal{T} \setminus \cup_{ijk} N_{ijk}$ into saturated analytic sets.
\end{lemma}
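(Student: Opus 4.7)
The plan is to replace each $\mathcal{A}$-measurable piece $W_{ijk}$ from Lemma \ref{L:reguloclco} by an \emph{analytic} (in fact Borel) subset that carries the same $\mu$-mass, so that the $R$-saturation becomes analytic as the projection of an analytic set. Concretely, since every $\mathcal{A}$-set is universally measurable in the Polish space $(X,d)$ and $\mu$ is a finite Borel measure, inner regularity yields for each $W_{ijk}$ a Borel set $B_{ijk}\subset W_{ijk}$ with
\[
\mu(W_{ijk}\setminus B_{ijk}) = 0.
\]
Set $N_{ijk}:= W_{ijk}\setminus B_{ijk}$; these are $\mu$-negligible by construction, and $W_{ijk}\setminus N_{ijk}=B_{ijk}$ is Borel.

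The analyticity of $\mathcal{T}_{ijk}=R^{-1}(B_{ijk})$ then follows from
\[
\mathcal{T}_{ijk} = P_1\bigl(R\cap (X\times B_{ijk})\bigr),
\]
because $R$ is analytic by Lemma \ref{L:analGR} and $\Sigma^1_1$ is closed under countable intersections with Borel sets and under projections. For the covering property, let $x\in\mathcal{T}\setminus\bigcup_{ijk}N_{ijk}$: by Lemma \ref{L:reguloclco} there exists $(i_0,j_0,k_0)$ with $x\in W_{i_0j_0k_0}$, so $x\in W_{i_0j_0k_0}\setminus N_{i_0j_0k_0}=B_{i_0j_0k_0}$, and reflexivity of $R$ on $\mathcal{T}$ (Proposition \ref{P:equiv}) gives $x\in R(x)\cap B_{i_0j_0k_0}$, i.e.\ $x\in\mathcal{T}_{i_0j_0k_0}$. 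For saturation, note that $R$ is symmetric and by Proposition \ref{P:equiv} transitive on $\mathcal{T}$: if $x\in\mathcal{T}_{ijk}$, pick $y\in R(x)\cap B_{ijk}$, and for any $x'\in R(x)$ (inside $\mathcal{T}$, so inside one geodesic by Lemma \ref{L:uniqr}) we have $y\in R(x')$, hence $x'\in\mathcal{T}_{ijk}$.

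The only genuinely non-trivial point is the very first step, i.e.\ passing from the $\mathcal{A}$-measurable $W_{ijk}$ (which in general is not analytic, so $R^{-1}(W_{ijk})$ need not be analytic either) to a Borel subset that differs only by a $\mu$-null set. This rests crucially on $\mathcal{A}\subset\Delta^1_2$ being contained in the universally measurable $\sigma$-algebra, together with inner regularity of the Borel probability $\mu$ on the Polish space $(X,d)$; without the universal measurability of $\Sigma^1_1$-sets recalled in Section \ref{Ss:univmeas}, the conclusion $\mathcal{T}_{ijk}\in\Sigma^1_1$ would not be available, and the subsequent strongly consistent disintegration on $\mathcal{T}$ could not be set up.
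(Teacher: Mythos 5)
Your proposal is correct and follows essentially the same route as the paper: use the universal measurability of the $\mathcal{A}$-sets $W_{ijk}$ to replace them, up to $\mu$-null sets $N_{ijk}$, by Borel subsets, so that the saturation $R^{-1}(W_{ijk}\setminus N_{ijk})=P_1(R\cap(X\times B_{ijk}))$ is analytic. The paper's proof is just a terser version of yours; your added verifications of the covering and saturation properties (via Lemma \ref{L:reguloclco} and Proposition \ref{P:equiv}) are exactly what the paper leaves implicit.
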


\begin{proof}
First of all, since $W_{ijk} \in \mathcal{A}$, then there exists $\mu$-negligible set $N_{ijk} \subset W_{ijk}$ such that $W_{ijk} \setminus N_{ijk} \in \mathcal{B}(X)$. Hence $\{W_{ijk} \setminus N_{ijk}\}_{i,j,k \in \N}$ is a countable covering of $\mathcal{T} \setminus \cup_{ijk} N_{ijk}$. It follows immediately that $\{\mathcal{T}_{ijk}\}_{i,j,k \in \N}$ satisfies the lemma.
\end{proof}

\begin{remark}
\label{R:compFF}
Observe that $\bar B_{2^{-j}}(x_i) \cap R(x)$ is compact for all $x \in \mathcal{T}_{ijk}$: in fact, during the proof of Lemma \ref{L:reguloclco} we have already shown that $\gamma^{-1}(\bar B_{2^{-j}} (x_i))$ is compact.
% 
%if $\gamma_x$ is a $1$-Lipschitz parameterization of $R(x)$, $x \in \mathcal{T}_{ijk}$, then it follows from Lemma \ref{L:reguloclco} that
%\[
%\textrm{closure of} \ \gamma^{-1}_x(\bar B_{2^{-j}}(x_i)) \Subset \gamma^{-1}(R(x)),
%\]
%and since $\gamma$ is continuous,
%\[
%\bar B_{2^{-j}}(x_i) \cap \gamma \Big( \textrm{closure of} \ \gamma^{-1}_x(B_{2^{-j}}(x_i)) \Big) = (\textrm{closed}) \cap (\textrm{compact})
%\]
%is compact.
\end{remark}

From any analytic countable covering, we can find a countable partition into $\mathcal{A}$-class saturated sets by defining
\begin{equation}
\label{E:Zkije}
\mathcal{Z}_{m,e} := \mathcal{T}_{i_mj_mk_m} \setminus \bigcup_{m' = 1}^{m-1} \mathcal{T}_{i_{m'}j_{m'}k_{m'}}, \quad \mathcal{Z}_{0,e} := \mathcal{T}_e \setminus \bigcup_{m \in \N} \mathcal{Z}_{m,e},
\end{equation}
where
\[
\N \ni m \mapsto (i_m,j_m,k_m) \in \N^3
\]
is a bijective map. Intersecting the above sets with $\mathcal{T}$, we obtain the countable partition of $\mathcal{T}$ in $\mathcal{A}$-sets
\begin{equation}
\label{E:Zkij}
\mathcal{Z}_m := \mathcal{Z}_{m,e} \cap \mathcal{T}, \quad m \in \N_0.
\end{equation}
Now we use this partition to prove the strong consistency of the disintegration.

On $\mathcal{Z}_m$, $m > 0$, we define the closed values map
\begin{equation}
\label{E:mapTijkF}
\mathcal{Z}_m \ni x \mapsto F(x) := R(x) \cap \bar B_{2^{-j_m}}(x_{i_m}) \in \mathcal{K} \big( \bar B_{2^{-{j_m}}}(x_{i_m}) \big),
\end{equation}
where $\mathcal{K}(\bar B_{2^{-{j_m}}}(x_{i_m}))$ is the space of compact subsets of $\bar B_{2^{-{j_m}}}(x_{i_m})$. %equipped with the Hausdorff metric $d_H$.

\begin{proposition}
\label{P:sicogrF}
There exists a $\mu$-measurable cross section $f : \mathcal{T} \to \mathcal{T}$ for the equivalence relation $R$. %such that $f(x) = f(y)$ iff $xRy$.
\end{proposition}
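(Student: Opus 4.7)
The plan is to apply Corollary \ref{C:weelsupprr} on each piece of the countable partition $\{\mathcal{Z}_m\}_{m \in \N_0}$ defined in \eqref{E:Zkij} and then glue the resulting local sections together, using crucially the fact that each $\mathcal{Z}_m$ is $R$-saturated.

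First, I would check that for every $m \geq 1$ the multifunction $F$ of \eqref{E:mapTijkF} satisfies the hypotheses of Corollary \ref{C:weelsupprr} on the Polish space $\bar B_{2^{-j_m}}(x_{i_m})$. Its graph equals
\[
\gr(F) = R \cap \big( \mathcal{Z}_m \times \bar B_{2^{-j_m}}(x_{i_m}) \big),
\]
which is in the class $\mathcal{A}$ because $R$ is analytic by Lemma \ref{L:analGR} and $\mathcal{Z}_m \in \mathcal{A}$ by construction. By Remark \ref{R:compFF} each fiber $F(x)$ is compact, hence closed. Moreover, since $\mathcal{Z}_m$ is saturated, whenever $xRy$ with $x\in\mathcal{Z}_m$ we have $y\in\mathcal{Z}_m$ and $R(x) = R(y)$, so $F(x) = F(y)$; this is exactly the compatibility with the equivalence relation needed by the corollary. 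Corollary \ref{C:weelsupprr} thus supplies an $\mathcal{A}$-measurable map $f_m : \mathcal{Z}_m \to \bar B_{2^{-j_m}}(x_{i_m})$ with $(x, f_m(x)) \in F$ and $f_m(x) = f_m(y)$ whenever $xRy$. In particular $f_m(x) \in R(x) \cap \mathcal{Z}_m$ for every $x \in \mathcal{Z}_m$, by saturation of $\mathcal{Z}_m$.

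Second, I would glue the pieces. Recall that Lemma \ref{L:partitilW} shows $\mathcal{T} \setminus \bigcup_{m \geq 1} \mathcal{Z}_m \subset \bigcup_{ijk} N_{ijk}$, which is $\mu$-negligible, so $\mathcal{Z}_0$ is $\mu$-negligible. Define
\[
f(x) := f_m(x) \text{ for } x \in \mathcal{Z}_m,\ m \geq 1, \qquad f(x) := x \text{ for } x \in \mathcal{Z}_0.
\]
Since $\{\mathcal{Z}_m\}$ is a countable $\mathcal{A}$-partition and each $f_m$ is $\mathcal{A}$-measurable, $f$ is $\mathcal{A}$-measurable, hence $\mu$-measurable. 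If $xRy$, saturation of the $\mathcal{Z}_m$ forces $x$ and $y$ to lie in the same block, so $f(x) = f(y)$; and $f(x) \in R(x)$ by the property of $f_m$. Consequently $f(\mathcal{T})$ meets each equivalence class in a single point, which is precisely what is meant by a measurable cross section of $R$.

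The step that requires the most care is the first one, namely producing the compact-valued, analytic-graphed, saturation-compatible multifunction $F$. It is here that the hypotheses of the paper on $(X,d,d_L)$ — continuity and local compactness of geodesics (Assumption \eqref{Cond:XdL5}) — enter, through Lemma \ref{L:reguloclco} and Remark \ref{R:compFF}: without local compactness of the geodesics in $(X,d)$, the fibers $R(x) \cap \bar B_{2^{-j_m}}(x_{i_m})$ need not be compact and we could not apply Corollary \ref{C:weelsupprr}. The remaining steps (applying the corollary and pasting) are then routine consequences of the saturation of the partition and the $\mu$-negligibility of the residual set $\mathcal{Z}_0$.
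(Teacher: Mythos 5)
Your proof is correct and follows essentially the same route as the paper: verify that the multifunction $F$ of \eqref{E:mapTijkF} is $\mathcal{A}$-measurable with compact (hence closed) fibres, apply Corollary \ref{C:weelsupprr} on each saturated piece $\mathcal{Z}_m$, and glue, discarding the $\mu$-negligible remainder. The only cosmetic difference is that the paper checks measurability by computing $F^{-1}(B_\delta(y)) = \mathcal{Z}_m \cap P_1\big(R \cap (X \times B_\delta(y) \cap \bar B_{2^{-j_m}}(x_{i_m}))\big)$ directly --- which is the form actually exploited in the proof of Corollary \ref{C:weelsupprr} --- whereas you argue at the level of the graph; the two agree here because the relevant projection factors through the analytic set $R$, so your verification suffices.
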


\begin{proof}
First we show that $F$ is $\mathcal{A}$-measurable: for $\delta > 0$,
\begin{align*}
F^{-1}(B_\delta(y)) =&~ \Big\{ x \in \mathcal{Z}_m: R(x) \cap B_{\delta}(y) \cap \bar B_{2^{-{j_m}}}(x_{i_m}) \not= \emptyset \Big\} \crcr
=&~ \mathcal{Z}_m \cap P_1 \Big( R \cap \big( X \times B_{\delta}(y) \cap \bar B_{2^{-{j_m}}}(x_{i_m}) \big) \Big).
\end{align*}
Being the intersection of two $\mathcal{A}$-class sets, $F^{-1}(B_\delta(y))$ is in $\mathcal{A}$.

By Corollary \ref{C:weelsupprr} there exists a $\mathcal{A}$-class section $f_m : \mathcal Z_m \to \bar B_{2^{-{j_m}}}(x_{i_m})$. The proposition follows by setting $f \llcorner_{\mathcal Z_m} = f_m$ on $\cup_m \mathcal Z_m$, and defining it arbitrarily on $\mathcal T \setminus \cup_m \mathcal Z_m$: the latter being negligible, $f$ is $\mu$-measurable.
\end{proof}

Up to a $\mu$-negligible saturated set $\mathcal{T}_N$, we can assume it to have $\sigma$-compact range: just let $S \subset f(\mathcal{T})$ be a $\sigma$-compact set where 
$f_\sharp \mu \llcorner_{\mathcal{T}}$ is concentrated, and set
\begin{equation}
\label{E:TNngel}
\mathcal{T}_S := R^{-1}(S) \cap \mathcal{T}, \quad \mathcal{T}_N := \mathcal{T} \setminus \mathcal{T}_S, \quad \mu(\mathcal{T}_N) = 0.
\end{equation}

Having the $\mu\llcorner_{\mathcal{T}}$-measurable cross-section
\[
\mathcal{S} := f(\mathcal{T})= S \cup f(\mathcal{T}_N) = (\textrm{Borel}) \cup (f(\text{$\mu$-negligible})),
\]
we can define the parametrization of $\mathcal{T}$ and $\mathcal{T}_e$ by geodesics.

\begin{definition}[Ray map]
\label{D:mongemap}
Define the \emph{ray map $g$} by the formula
\begin{align*}
g :=&~ \Big\{ (y,t,x): y \in \mathcal{S}, t \in [0,+\infty), x \in G(y) \cap \{d_L(x,y) = t\} \Big\} \crcr
&~ \cup \Big\{ (y,t,x): y \in \mathcal{S}, t \in (-\infty,0), x \in G^{-1}(y) \cap \{d_L(x,y) = -t\} \Big\} \crcr
=&~ g^+ \cup g^-.
\end{align*}
\end{definition}

\begin{proposition}
\label{P:gammaclass}
The following holds.
\begin{enumerate}
\item The restriction $g \cap S \times \R \times X$ is analytic.
\item The set $g$ is the graph of a map with range $\mathcal{T}_e$.
\item $t \mapsto g(y,t)$ is a $d_L$ $1$-Lipschitz $G$-order preserving for $y \in \mathcal{T}$.
\item $(t,y) \mapsto g(y,t)$ is bijective on $\mathcal{T}$, and its inverse is
\[
x \mapsto g^{-1}(x) = \big( f(y),\pm d_L(x,f(y)) \big)
\]
where $f$ is the quotient map of Proposition \ref{P:sicogrF} and the positive/negative sign depends on $x \in G(f(y))$/$x \in G^{-1}(f(y))$.
\end{enumerate}
\end{proposition}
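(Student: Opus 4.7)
I would tackle the four claims in order, with Point (1) being a direct measurability check and Points (2)--(4) all hinging on the non-branching assumption and Lemma \ref{L:uniqr}.

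For Point (1), I would simply rewrite
\[
g^+ \cap (S \times \R \times X) = (S \times \R \times X) \cap \big\{(y,t,x) : (y,x) \in G\big\} \cap \big\{(y,t,x) : t \geq 0,\ d_L(y,x) = t\big\},
\]
and similarly for $g^-$. Since $S$ is $\sigma$-compact hence Borel, $G$ is analytic by Lemma \ref{L:analGR}, and $d_L$ is Borel on $X \times X$, the resulting set is analytic by stability of $\Sigma^1_1$ under countable intersections and unions.

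For Point (2), the key remark is that $\mathcal S \subset \mathcal T$, so Lemma \ref{L:uniqr} applies: for every $y \in \mathcal S$, the set $R(y)$ is a single geodesic. Parametrizing $R(y)$ by signed $d_L$-arc length from $y$ (positive on $G(y)$, negative on $G^{-1}(y)$), the definition of $g$ identifies $g(y,t)$ with the unique point of $R(y)$ at signed distance $t$, provided it exists. Hence $g$ is a graph. For the range, I would argue that $g(\mathcal S \times \R) \subset \mathcal T_e$ is immediate from $G \cup G^{-1} = R$ and \eqref{E:RTedef}, and conversely any $x \in \mathcal T_e$ either belongs to $\mathcal T$ (handled in (4) below) or is an initial/final point, in which case it lies on the closure of a single geodesic issuing from an interior point $z \in \mathcal T \cap R(x)$, and by the non-branching property $x$ is reached from $f(z) \in \mathcal S$ at the appropriate signed distance.

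For Point (3), fix $y \in \mathcal T$ and $t_1 \le t_2$ with $g(y,t_i)$ defined, write $x_i = g(y,t_i)$. All three of $y, x_1, x_2$ lie on the single geodesic $R(y)$, so that $d_L(x_1,x_2) = |t_2 - t_1|$ (treating the three sub-cases according to signs of $t_1, t_2$), giving the $1$-Lipschitz property. For $G$-order preservation, assume e.g.\ $0 \le t_1 \le t_2$; pick a witness $(w,z) \in \Gamma'$ of $(y,x_2) \in G$ so that $d_L(w,y) + d_L(y,x_2) + d_L(x_2,z) = d_L(w,z)$, and observe that $d_L(w,y) + d_L(y,x_1) = d_L(w,x_1)$ and $d_L(y,x_1) + d_L(x_1,x_2) = d_L(y,x_2)$ along the common geodesic, whence
\[
d_L(w,x_1) + d_L(x_1,x_2) + d_L(x_2,z) = d_L(w,z),
\]
so $(x_1,x_2) \in G$. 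The mixed-sign case $t_1 < 0 \le t_2$ and the reversed case are analogous using both $\Gamma'$ witnesses through $y$.

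For Point (4), surjectivity onto $\mathcal T$ is immediate: given $x \in \mathcal T$, set $y = f(x) \in \mathcal S$; since $x R y$ and $R(y)$ is a single geodesic, either $x \in G(y)$ or $x \in G^{-1}(y)$, and $g(y, \pm d_L(x,y)) = x$ by construction. Injectivity: if $g(y_1,t_1) = g(y_2,t_2) = x \in \mathcal T$, then $y_1, y_2 \in \mathcal S \cap R(x)$; since $f$ is a cross-section of $R \llcorner_{\mathcal T \times \mathcal T}$ (Proposition \ref{P:sicogrF}), $y_1 = y_2 = f(x)$, and then $|t_1| = |t_2| = d_L(x, f(x))$ with a common sign determined by the branch of $R(f(x))$ containing $x$. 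This also reads off the inverse formula.

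The main technical obstacle is the bookkeeping in Point (3), specifically producing a $\Gamma'$-witness for $(x_1,x_2) \in G$ out of one for $(y,x_2) \in G$; the key identity to verify is the chain of equalities above, which relies on having $y, x_1, x_2$ on the same non-branching geodesic and on the triangle inequality saturating when one moves along that geodesic.
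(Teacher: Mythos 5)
Your proposal is correct and follows essentially the same route as the paper: the same analyticity computation for Point (1), the same use of Lemma \ref{L:uniqr} and the cross-section $f$ for Points (2) and (4), while Points (3) and (4) — which the paper dismisses as ``direct consequence of the definition'' and ``by substitution'' — are simply worked out in more detail (your $\Gamma'$-witness argument for order preservation is a valid filling-in of that gap).
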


\begin{proof}
For the first point just observe that
\begin{align*}
g^+ =&~ \Big\{ (y,t,x): y \in S, t \in \R^+, x \in G(y) \cap \{d_L(x,y) = t\} \Big\} \crcr
=&~ S \times R^+ \times X \cap \{(y,t,x):(y,x) \in G\} \cap \{(y,t,x):d_L(x,y) = t\} \in \Sigma^1_1.
\end{align*}
Similarly
\[
g^- = \Big\{ (y,t,x): y \in S, t \in \R^-, x \in G^{-1}(y) \cap \{d_L(x,y) = -t\} \Big\} \in \Sigma^1_1.
\]

Since $\mathcal{S} \subset \mathcal{T}$ and $R(y)$ is a subset of a single geodesic for $y \in \mathcal{S} \subset \mathcal{T}$, 
$g$ is the graph of a map. 
Note that for any $x \in\mathcal{T}_{e}$ there exists $z \in \mathcal{T}$ such that $x\in R(z)$: hence $x\in R(f(z))$, and therefore the range of the map is 
the whole $\mathcal{T}_{e}$.

The third point is a direct consequence of the definition.The fourth point follows by substitution.
\end{proof}

\begin{figure}
\label{Fi:mongemetri4}
\psfrag{W}{$W_{ijk}$}
%\psfrag{Gamma'}{\color{Brown}$\Gamma'$}
%\psfrag{b}{\color{Purple}$b$}
%\psfrag{a}{\color{Goldenrod}$a$}
\psfrag{S}{\color{Red}$\mathcal{S}$}
\psfrag{g}{$g$}
\psfrag{R}{$\erre$}
\centerline{resizebox{15cm}{5cm}{\includegraphics{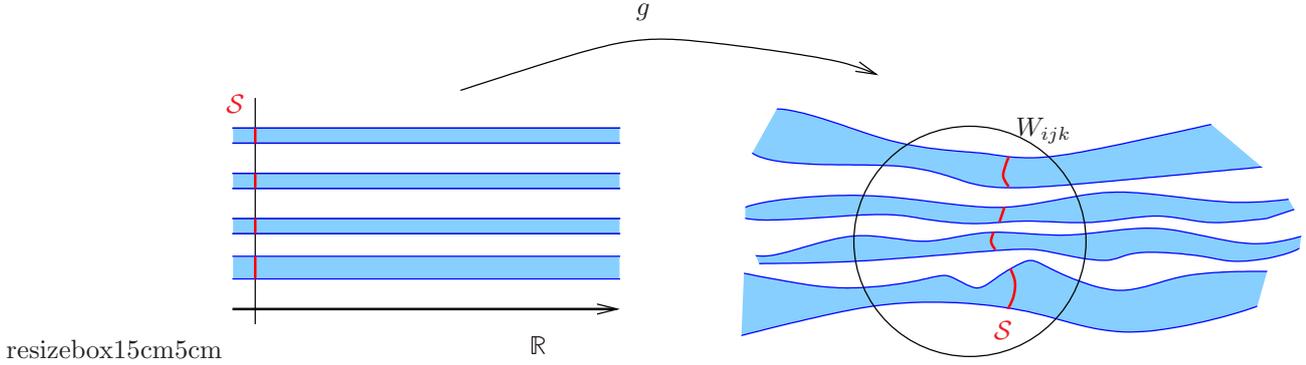}}}
\caption{The ray map $g$.}
\end{figure}

We finally prove the following property of $d_{L}$-cyclically monotone transference plans.

\begin{proposition}
\label{P:ortho}
For any $\pi$ $d_{L}$-monotone there exists a $d_L$-cyclically monotone transference plan $\tilde \pi$ with the same cost of $\pi$ such that it coincides with the identity on $\mu \wedge \nu$.
\end{proposition}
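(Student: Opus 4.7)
The plan is to reduce the construction to $1$-dimensional modifications ray by ray via disintegration, then to exploit the fact that in $1$ dimension with cost $|t-s|$ every transport plan supported in $\{s \leq t\}$ has cost determined entirely by its marginals. This flexibility permits reshuffling the plan on each geodesic so as to leave the common mass $\mu \wedge \nu$ in place without altering the total cost.

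First I would apply Proposition \ref{P:sicogrF} and the Disintegration Theorem to write
\[
\pi = \int \pi_y \, m(dy), \quad \mu = \int \mu_y \, m(dy), \quad \nu = \int \nu_y \, m(dy),
\]
with $m = f_\sharp \mu$, $\mu_y = (P_1)_\sharp \pi_y$ and $\nu_y = (P_2)_\sharp \pi_y$. By Lemma \ref{L:uniqr} each $R(y)$ is a single $d_L$-geodesic which, via the ray map $g(y,\cdot)$ of Proposition \ref{P:gammaclass}, corresponds to an interval of $\R$. Under this parametrization each $\pi_y$ becomes a $1$-dimensional transport plan with cost $|t-s|$, supported in $\{(s,t) : s \leq t\}$ since $\pi$ is concentrated on $G$ and $g$ is $G$-order preserving. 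The key observation is then
\[
\int |t-s| \, \pi_y(dsdt) = \int t \, \nu_y(dt) - \int s \, \mu_y(ds),
\]
so the cost depends only on the marginals.

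Next I would define ray by ray
\[
\tilde\pi_y := (\id \times \id)_\sharp (\mu_y \wedge \nu_y) + \tilde\pi_y',
\]
where $\tilde\pi_y'$ is the monotone rearrangement between the residuals $\mu_y - \mu_y \wedge \nu_y$ and $\nu_y - \mu_y \wedge \nu_y$. This rearrangement is still supported in $\{s \leq t\}$ because the cumulative distribution inequality $F_{\mu_y} \geq F_{\nu_y}$, forced by $\pi_y \subset \{s \leq t\}$, transfers verbatim to the residuals. By the previous display, $\tilde\pi_y$ has the same cost as $\pi_y$. Setting $\tilde\pi := \int \tilde\pi_y \, m(dy)$, one obtains a transference plan with marginals $\mu$ and $\nu$, cost equal to that of $\pi$, and concentrated on $G \cup \{x=y\}$; hence $d_L$-cyclically monotone by Lemma \ref{L:analGR}. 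The identity on $\mu \wedge \nu$ property reduces to the ray-wise identity
\[
\mu \wedge \nu = \int (\mu_y \wedge \nu_y) \, m(dy),
\]
which one checks by disintegrating an arbitrary $\lambda \leq \mu, \nu$ (it is automatically $\mu$-absolutely continuous) and extracting the ray-wise inequalities $\lambda_y \leq \mu_y \wedge \nu_y$ for $m$-a.e. $y$.

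The main technical obstacle is the Borel measurability of $y \mapsto \tilde\pi_y$, needed so that the integral above defines a genuine probability measure on $X \times X$. Since the monotone rearrangement of two Borel probability measures on $\R$ is an explicit function of their cumulative distribution functions, the issue reduces to the measurability of $y \mapsto (\mu_y, \nu_y)$, which is guaranteed by the Disintegration Theorem; alternatively, one can invoke the selection principles of Section \ref{Ss:sele}.
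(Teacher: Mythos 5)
Your proposal is correct and follows essentially the same route as the paper's proof: disintegrate along transport rays via the ray map, replace each one--dimensional $\pi_y$ by the identity on $\mu_y \wedge \nu_y$ plus the monotone rearrangement of the residuals (whose support in $\{s\le t\}$ and cost-preservation follow from the distribution-function domination, as you note), and settle measurability of $y \mapsto \tilde\pi_y$ through the cumulative distribution functions. The only bookkeeping the paper does that you omit is to treat separately the part of $\pi$ starting from points of $\mathcal T_e \setminus \mathcal T$, which is simply left unchanged; this is harmless because after the reduction $\mu(b(\mathcal T)) = \nu(a(\mathcal T)) = 0$ the common mass $\mu \wedge \nu$ is concentrated on $\mathcal T$.
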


We will use the disintegration technique exploited also in the next section. We observe that another proof can be the direct composition of the transference plan with itself, using the fact that the mass moves along geodesics and the disintegration makes the problem one dimensional.

\begin{proof}
We have already shown that we can take
\[
\mu(P_2(b)) = \nu(P_2(a)) = 0,
\]
so that $\mu \wedge \nu$ is concentrated on $\mathcal{T}_S$.

{\it Step 1.} On $\mathcal{T}$ we can use the Disintegration Theorem to write
\begin{equation}
\label{E:disintT1}
\mu \llcorner_{\mathcal T} = \int_S \mu_y m(dy), \quad m = f_\sharp (\mu \llcorner_{\mathcal T}), \ \mu_y \in \mathcal{P}(R(y) \cap \mathcal T).
\end{equation}
In fact, the existence of a Borel section is equivalent to the strong consistency of the disintegration. Since $\{R(y) \times X\}_{y \in \mathcal T}$ is also a partition on $\mathcal T \times X$, we can similarly write
\[
\pi \llcorner_{\mathcal T \times X} = \int_S \pi_y m(dy), \quad \pi_y(R(y)\times R(y)) = 1.
\]
We write moreover
\begin{equation}
\label{E:nuy1}
\nu_y := (P_2)_\sharp (\pi \llcorner_{\mathcal T \times X}), \quad \tilde \nu := \int_S \nu_y m(dy) = \int_S (P_2)_\sharp \pi_y m(dy).
\end{equation}
Clearly the rest of the mass starts from $a(\mathcal T)$, so we have just to show how to rearrange the transference plan in $\mathcal T$ in order to obtain $\mu \perp \nu$. Using $g$, we can reduce the problem to a transport problem on $S \times \R$ with cost
\[
c((y,t),(y',t')) =
\begin{cases}
|t - t'| & y = y' \crcr
+ \infty & y \not= y'
\end{cases}
\]
By standard regularity argument, we can assume that $S \ni y \mapsto \pi_y \in \mathcal{P}(R(y) \times R(y))$ is $\sigma$-continuous, i.e. its graph is $\sigma$-compact.

{\it Step 2.} Using the fact that $(\mu,\nu) \mapsto \mu \wedge \nu$ is Borel w.r.t. the weak topology \cite{biacar:cmono}, we can assume that $S \ni y \mapsto \mu_y \wedge \nu_y \in \mathcal{P}(R(y))$ is $\sigma$-continuous, so that also the map
\[
S \ni y \mapsto (\mu_y - \mu_y \wedge \nu_y, \nu_y - \mu_y \wedge \nu_y) \in \mathcal{P}(R(y)) \times \mathcal{P}(R(y))
\]
is $\sigma$-continuous.

{\it Step 3.} Since in each $R(y)$ the problem is one dimensional, one can take the unique transference plan
\[
\tilde \pi_y \in \Pi \big( \mu_y - \mu_y \wedge \nu_y, \nu_y - \mu_y \wedge \nu_y \big)
\]
concentrated on a monotone set: clearly
\[
\int d_L \tilde \pi_y = \int d_L \pi_y.
\]

{\it Step 4.} If we define the left-continuous distribution functions
\[
H(y,s) := \big( \mu_y - \mu_y \wedge\nu_y \big)(-\infty,s), \quad F(y,t) := \big( \nu_y - \mu_y \wedge\nu_y \big)(-\infty,t),
\]
and
\[
G(y,s,t) := \tilde \pi_y \big( (-\infty,s) \times (-\infty,t) \big),
\]
then the measure $\tilde \pi_y$ is uniquely determined by $G(y,s,t) = \min \{ H(y,s), F(y,t) \}$.

The $\sigma$-continuity of $y \mapsto (\mu_y - \mu_y \wedge \nu_y, \nu_y - \mu_y \wedge \nu_y)$ yields that $H$, $F$ are again $\sigma$-l.s.c., so that $G$ is Borel, and finally $y \mapsto \tilde \pi_y$ is $\sigma$-continuous up to a $f_\sharp \mu$-negligible set.

{\it Step 5.} Define
\[
\hat \pi_y := \tilde \pi_y + (\Id,\Id)_\sharp (\mu_y \wedge \nu_y) \in \Pi(\mu_y,\nu_y).
\]
The above steps show that $\hat \pi$ is $m$-measurable, and thus we can define the measure
\[
\hat \pi := \pi \llcorner_{(\mathcal T_e \setminus \mathcal T) \times X} + \int \hat \pi_y m(dy).
\]
It is routine to check that $\hat \pi$ has the required properties.
\end{proof}

\section{Regularity of the disintegration}
\label{S:regurlr}

This section is divided in two parts.

In the first one we consider the translation of Borel sets by the optimal geodesic flow, we introduce a first regularity assumption (Assumption \ref{A:NDEatom}) on the measure $\mu$ and we show that an immediate consequence is that the set of initial points is negligible. A second consequence is that the disintegration of $\mu$ w.r.t. $R$ has continuous conditional probabilities.

In the second part we consider a stronger regularity assumption (Assumption \ref{A:NDE}) which gives that the conditional probabilities are absolutely continuous with respect to $\haus^{1}$ along geodesics.

\subsection{Evolution of Borel sets}
\label{Ss:evolution}

Let $A \subset \mathcal{T}_e$ be an analytic set and define for $t \in \R$ the \emph{$t$-evolution $A_t$ of $A$} by
\begin{equation}
\label{E:At}
A_t := g \big( g^{-1}(A) + (0,t) \big).
\end{equation}

\begin{lemma}
\label{L:evolution}
The set $A_t \cap g(S \times \R)$ is analytic, and $A_t$ is $\mu$-measurable for $t \geq 0$.
\end{lemma}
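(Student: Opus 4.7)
The plan is to establish the stronger statement that $A_t$ itself is analytic in $X$, from which both claims of the lemma follow at once: the intersection with the analytic set $g(S \times \R)$ is then automatically analytic, and every analytic subset of the Polish space $X$ is universally measurable hence $\mu$-measurable. The key observation is that the definition of $A_t$ through the ray map $g$ can be re-expressed purely in terms of the analytic relation $G$ and the Borel function $d_L$, which sidesteps the need to track the $\mathcal{A}$-measurable cross-section $f$ or the distinction between $S$ and $f(\mathcal{T}_N)$.

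First I would unwind the definition $A_t = g(g^{-1}(A) + (0,t))$: a point $x$ belongs to $A_t$ iff there exist $z \in \mathcal{S}$ and $v \in \R$ with $g(z, v-t) \in A$ and $g(z, v) = x$. Setting $y := g(z, v-t)$, the points $y$ and $x$ lie on the same ray $R(z)$, and since $t \geq 0$ the point $x$ sits $G$-after $y$ at $d_L$-distance exactly $t$, so $(y,x) \in G$ and $d_L(y,x) = t$. Conversely, given any $(y,x) \in G$ with $y \in A$ and $d_L(y,x) = t$, the surjectivity of $g : \mathcal{S} \times \R \to \mathcal{T}_e$ from Proposition \ref{P:gammaclass}(2) furnishes $(z,w) \in \mathcal{S} \times \R$ with $g(z,w) = y$; since $x \in R(z)$ lies at signed parameter $w+t$ along this ray, $g(z, w+t) = x$. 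This yields the geometric reformulation
\[
A_t \;=\; P_2\Bigl( (A \times X) \cap G \cap \bigl\{ (y,x) : d_L(y,x) = t \bigr\} \Bigr).
\]

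From this formula analyticity is immediate. The set $A \times X$ is analytic since $A$ is, $G$ is analytic by Lemma \ref{L:analGR}(3), and $\{d_L = t\}$ is Borel since $d_L$ is Borel. The class $\Sigma^1_1$ is closed under finite intersections and projections, so $A_t \in \Sigma^1_1(X)$. Intersecting with the analytic set $g(S \times \R) = P_3(g \cap (S \times \R \times X))$ (analytic by Proposition \ref{P:gammaclass}(1)) gives the first claim, and universal measurability of analytic sets gives the second.

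The delicate step is the $\supseteq$ direction of the geometric reformulation, particularly when $y$ or $x$ is an endpoint of the ray (so the injectivity of $g$ on $\mathcal{T}$ from Proposition \ref{P:gammaclass}(4) is not directly available): one must verify both that a cross-section representative $z \in \mathcal{S}$ exists and that $g(z, w+t)$ is defined. Surjectivity of $g$ provides $z$, while the fact that $x$ lies a priori on the ray $R(z)$ by the hypothesis $(y,x) \in G$ guarantees that the parameter value $w+t$ is in the domain of the geodesic and $g(z, w+t) = x$. The assumption $t \geq 0$ enters only to preserve the $G$-direction; without it one would have to distinguish whether $x$ lies in $G(y)$ or in $G^{-1}(y)$, and the set-theoretic interpretation of the translated preimage $g^{-1}(A) + (0,t)$ would become more delicate near initial endpoints.
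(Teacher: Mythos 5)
Your proof is correct, and it takes a genuinely different route from the paper's, in fact establishing a strictly stronger conclusion. The paper splits $A$ into $A_S = A \cap g(S\times\R)$ and $A_N = A\setminus A_S$: for $A_S$ it invokes the analyticity of the restricted graph $g\cap S\times\R\times X$ (Proposition \ref{P:gammaclass}(1)) to get that $(A_S)_t$ is analytic, while $(A_N)_t$ lives over the $\mu$-negligible set $\mathcal{T}_N$ and is only shown to be $\mu$-negligible for $t>0$; this is why the paper gets analyticity only after intersecting with $g(S\times\R)$, and mere $\mu$-measurability for $A_t$ itself. Your identity $A_t = P_2\bigl((A\times X)\cap G\cap\{d_L=t\}\bigr)$ bypasses the cross-section bookkeeping entirely and yields that $A_t$ is analytic outright (using Lemma \ref{L:analGR} and the Borel measurability of $d_L$); the price is the geometric verification of the identity, which rests on non-branching via Lemma \ref{L:uniqr} and Proposition \ref{P:equiv}. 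One point in your write-up is imprecise: in the $\supseteq$ direction, when $y\in a(\mathcal{T})$ is an initial point lying on several rays, surjectivity of $g$ gives \emph{some} preimage $(z,w)$ of $y$, but for an arbitrary such $z$ the hypothesis $(y,x)\in G$ does not force $x\in R(z)$, since $G(y)$ may spread over several rays. The fix is to choose $z=f(p)$ for a point $p$ in the open arc $\gamma_{(y,x)}$: such a $p$ lies in $\mathcal{T}$ (it has $x$ in $G(p)\setminus\{p\}$ and $y$ in $G^{-1}(p)\setminus\{p\}$), so $R(f(p))=R(p)$ is a single geodesic containing both $y$ and $x$, and the parametrization by $g(z,\cdot)$ then gives $g(z,w)=y$ and $g(z,w+t)=x$ as required. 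With that adjustment the argument is complete.
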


\begin{proof}
Divide $A$ into two parts:
\[
A_S := A \cap g(S \times \R) \quad \text{and} \quad A_N : = A \setminus A_S.
\]
From Point (1) of Proposition \ref{P:gammaclass} it follows that $A_S$ is analytic. We consider the evolution of the two sets separately.

Again by Point (1) of Proposition \ref{P:gammaclass}, the set $(A_S)_t$ is analytic, hence universally measurable for all $t \in \R$.

Since $\mathcal{T}_N$ is $\mu$-negligible (see \eqref{E:TNngel}), it follows that $(A_N)_t$ is $\mu$-negligible for all $t>0$, and by the assumptions it is clearly measurable for $t=0$.
\end{proof}

We can show that $t \mapsto \mu(A_t)$ is measurable.

\begin{lemma}
\label{L:measumuAt}
Let $A$ be  analytic. The function $t \mapsto \mu(A_t)$ is Souslin for $t \geq 0$. 
If $A \subset g(S \times \R)$, then $t \mapsto \mu(A_t)$ is Souslin for $t \in \R$.
\end{lemma}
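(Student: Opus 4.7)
The plan is to reduce the Souslin-measurability of $t \mapsto \mu(A_t)$ to a standard descriptive-set-theoretic fact about section-measures of analytic sets. Following the splitting already used in the proof of Lemma \ref{L:evolution}, I would write $A = A_S \cup A_N$, with $A_S := A \cap g(S \times \R)$ analytic and $A_N := A \setminus A_S$ contained in $\mathcal{T}_e \setminus g(S \times \R) \subset \mathcal{T}_N \cup P_2(b)$, which is $\mu$-negligible by \eqref{E:TNngel} and Lemma \ref{L:finini0}. For $t > 0$ a point in $(A_N)_t$ lies strictly inside a ray whose $f$-image is not in $S$, hence in $\mathcal{T}_N$ (up to a possible final endpoint of $\mu$-measure zero), so $\mu(A_t) = \mu((A_S)_t)$ for every $t > 0$. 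At $t=0$ the two values may differ by $\mu(A_N)$, but modifying a function at a single point does not affect whether its superlevel sets are in $\Sigma^1_1$. It thus suffices to prove that $h(t) := \mu((A_S)_t)$ is Souslin on $\R$; the second assertion of the lemma will then follow since $A \subset g(S \times \R)$ forces $A_N = \emptyset$.

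The key object I would introduce is the joint evolution set
\[
B := \big\{ (t,x) \in \R \times X : x \in (A_S)_t \big\} = P_{1,4}\Big( \big\{ (t,y,s,x) : (y,s) \in g^{-1}(A_S),\ (y,s+t,x) \in g \cap (S \times \R \times X) \big\} \Big),
\]
where $g^{-1}(A_S)$ is shorthand for $P_{1,2}\bigl((g \cap S \times \R \times X) \cap (S \times \R \times A_S)\bigr)$. By Proposition \ref{P:gammaclass}(1), the latter is analytic. Moreover, the constraint $(y,s+t,x) \in g \cap (S \times \R \times X)$ is the preimage of an analytic set under the Borel map $(t,y,s,x) \mapsto (y,s+t,x)$, so it is analytic as well. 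Since $\Sigma^1_1$ is closed under countable intersections and projections, $B \in \Sigma^1_1(\R \times X)$; intersecting with $\{t \geq 0\}$ gives the version needed for the first statement, and the full $B$ for the second.

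The final step is to invoke the classical fact that, for $B \subset \R \times X$ analytic and $\mu$ a finite Borel measure on the Polish space $X$, the section-measure $t \mapsto \mu(B_t)$ is upper-semianalytic, i.e.\ $\{t : \mu(B_t) > c\} \in \Sigma^1_1$ for every $c \in \R$ (see e.g.\ \cite{Sri:courseborel}); in the terminology of Section \ref{Ss:univmeas} this is precisely the Souslin property of $h$. The main obstacle — the only place where a genuinely non-trivial ingredient enters — is this descriptive-set-theoretic input. It rests on Choquet capacitability, which lets one rewrite $\mu(B_t) = \sup\{\mu(K) : K \subset B_t,\ K \in \mathcal{K}(X)\}$, and then upgrades the parameterized inclusion $K \subset B_t$ to an analytic statement using the Polish structure of $\mathcal{K}(X)$. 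Once this input is available, the lemma follows by pure analytic-set calculus of the kind already used in Section \ref{S:Optimal}.
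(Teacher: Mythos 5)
Your proposal is correct and follows the same skeleton as the paper's proof: split $A$ into $A_S = A \cap g(S\times\R)$ and the $\mu$-negligible (for $t>0$) remainder $A_N$, encode the whole evolution as a single analytic subset of $\R\times X$ (your $B$ is the paper's $\hat A_S = (g,\Id)(\tilde A_S)$ with the coordinates swapped), and then invoke measurability of section measures. The only genuine divergence is in that last step. The paper does not cite the classical section-measure theorem; it proves the needed instance of it by writing $\mu(\hat A_S(t)) = \sup_{\pi\in\Pi(\mu)}\pi(B(t))$ for a Borel set $B$ projecting onto $\hat A_S$, getting Borel-ness of $(t,\pi)\mapsto\pi(B(t))$ from Lemma 4.12 of \cite{biacar:cmono}, the inequality $\leq$ from disintegration, and the inequality $\geq$ from the von Neumann selection theorem (Theorem \ref{T:vanneuma}); the Souslin property then follows because suprema of Borel functions over a Polish parameter are Souslin. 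Your route buys brevity and generality at the price of importing a nontrivial external theorem; the paper's buys self-containedness using tools already set up in Section \ref{Ss:sele}. One caution on your parenthetical justification of the classical fact: the relation $\{(t,K): K\subset B_t\}$ is \emph{coanalytic}, not analytic, when $B$ is analytic, so the naive ``sup over compact subsets'' formula does not directly yield an analytic superlevel set; the correct argument goes through a Souslin scheme and Choquet capacitability (or precisely the sup-over-measures representation the paper uses). Since you only cite the fact rather than rely on that sketch, this does not invalidate the proof, but if you wanted a self-contained argument you would end up reproducing something like the paper's Steps 1--2. A last minor point: your containment $A_N\subset\mathcal T_N\cup P_2(b)$ can miss initial points $a(\mathcal T_N)$, which need not be $\mu$-negligible at this stage; this is harmless because, as you correctly argue next, the \emph{forward} evolution $(A_N)_t$ for $t>0$ lands in $\mathcal T_N\cup b(\mathcal T)$, which is $\mu$-negligible.
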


\begin{proof}
As before, we split the $A$ into the sets
\[
A_S := A \cap g(S \times \R) \quad \text{and} \quad A_N : = A \setminus A_S.
\]

The function
\[
t \mapsto \mu(A_{N,t}) =
\begin{cases}
\mu(A_N) & t = 0 \crcr
0 & t > 0
\end{cases}
\]
is clearly Borel. Observe that since $\mathcal{T}_{N}\subset \mathcal{T}$ and the $\mu$-measure of final points is 0, 
the value of $\mu(A_{N,t})$ is known only for $t>0$.

Since $A_S$ is analytic, then $g^{-1}(A_S)$ is analytic, and the set
\[
\tilde A_S := \big\{ (y,\tau,t) : (y,\tau-t) \in g^{-1}(A_S) \big\}
\]
is easily seen to be again analytic.
Define the analytic set $\hat A_{S} \subset X \times \erre$ by 
$$
\hat{A}_{S} : = (g,\Id) (\tilde A_{S}).
$$
Clearly $(A_{S})_{t} = \hat A_{S}(t)$. We now show in two steps that the function $t \mapsto \mu((A_{S})_{t})$ is analytic. 

{\it Step 1.} Define the closed set in $\mathcal{P}(X \times [0,1])$
$$
\Pi(\mu) : = \big\{ \pi \in \mathcal{P}(X\times [0,1]) : (P_{1})_{\sharp}(\pi)= \mu \big\}
$$
and let $B \subset X\times \R \times [0,1]$ a Borel set such that $P_{12}(B)= \hat A_{S}$.

Consider the function 
\[ 
\R \times \Pi(\mu) \ni (t, \pi) \mapsto \pi(B(t)).
\]
A slight modification of Lemma 4.12 in \cite{biacar:cmono} shows that this function is Borel.

{\it Step 2.}
Since supremum of Borel function are Souslin, pag. 134 of \cite{Sri:courseborel},  
the proof is concluded once we show that 
\[
\mu((A_{S})_{t}) = \mu (\hat A_{S}(t)) = \sup_{ \pi \in \Pi(\mu) } \pi(B(t)). 
\]

From the Disintegration Theorem, for all $\pi \in \Pi(\mu)$ we have 
\[ 
\pi(B(t)) = \int \pi_{x}(B(t)) \mu(dx) \leq \int_{P_{1}(B(t))} \mu(dx) = \mu (\hat A_{S}(t)).
\]
On the other hand from Theorem \ref{T:vanneuma}, there exists an $\mathcal{A}$-measurable section $u: \hat A_{S}(t) \to B(t)$.
Clearly for $\pi_{u} = (\Id,u)_{\sharp}(\mu)$ it holds $\pi_{u}(B(t))= \mu (\hat A_{S}(t))$.
\end{proof}

The next assumption is the first fundamental assumption of the paper. 

\begin{assumption}[Non-degeneracy assumption]
\label{A:NDEatom}
For all Borel sets $A$ such that $\mu(A) > 0$ the set $\{t \in \R^+: \mu(A_t) > 0\}$ has cardinality $> \aleph_0$.
\end{assumption}

By inner regularity, it is clearly enough to verifies Assumption \ref{A:NDEatom} only for compact sets. 
Note that since for analytic set Cantor Hypothesis holds true, Theorem 4.3.5, pag. 142 of \cite{Sri:courseborel} , 
Assumption \ref{A:NDEatom} implies that the cardinality of $\{t \in \R^+: \mu(A_t) > 0\}$ is  $\mathfrak{c}$. 

An immediate consequence of the Assumption \ref{A:NDEatom} is that the measure $\mu$ is concentrated on $\mathcal{T}$.

\begin{lemma}
\label{L:puntini}
If $\mu$ satisfies Assumption \ref{A:NDEatom} then
\[
\mu(\mathcal{T}_e\setminus \mathcal{T}) = 0.
\]
\end{lemma}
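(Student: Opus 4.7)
The plan is to reduce the claim to showing that the set of initial points has measure zero, $\mu(a(\mathcal T)) = 0$. Combining this with the partition $\mathcal T_e = \mathcal T \sqcup a(\mathcal T) \sqcup b(\mathcal T)$ proved earlier and the normalization $\mu(b(\mathcal T)) = 0$ established at the end of Section~\ref{S:Optimal} via Lemma~\ref{L:finini0} and \eqref{E:extere}, the lemma will follow immediately.

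I would argue by contradiction: assume $\mu(a(\mathcal T)) > 0$. Since $a(\mathcal T) \in \mathcal A$ is universally measurable and $\mu$ is a Radon probability, inner regularity yields a Borel set $A \subset a(\mathcal T)$ with $\mu(A) > 0$. Assumption~\ref{A:NDEatom} then forces $T := \{t > 0 : \mu(A_t) > 0\}$ to be uncountable. Writing $T = \bigcup_{n \in \N}\{t > 0 : \mu(A_t) > 1/n\}$, some $T_{n_0}$ must be infinite; pick distinct $t_1, t_2, \ldots \in T_{n_0}$.

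The decisive step is to show that the family $\{A_t\}_{t > 0}$ is pairwise disjoint. Suppose $x \in A_{t_1} \cap A_{t_2}$. From the definition \eqref{E:At} there exist $a_1, a_2 \in A$ with $g^{-1}(a_i) = (y_i, \tau_i) \in \mathcal S \times \R$ and $g(y_i, \tau_i + t_i) = x$. The injectivity of the ray map (Proposition~\ref{P:gammaclass}) yields $y_1 = y_2$ and $\tau_1 + t_1 = \tau_2 + t_2$, so $a_1$ and $a_2$ sit on the same geodesic $R(y_1)$. But this geodesic admits at most one initial point (by the defining condition $G^{-1}(\cdot) \setminus \{\cdot\} = \emptyset$ combined with non-branching), forcing $a_1 = a_2$ and hence $t_1 = t_2$. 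With disjointness in hand, the contradiction is immediate:
\[
1 = \mu(X) \ \geq\  \mu\Big(\bigsqcup_i A_{t_i}\Big) \ =\  \sum_i \mu(A_{t_i}) \ \geq\  \sum_i \frac{1}{n_0} \ =\  +\infty.
\]

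The main obstacle is the disjointness step: the ray map $g$ is only a genuine bijection modulo the $\mu$-negligible set $\mathcal T_N$, and one must check that every $a \in A \subset a(\mathcal T)$ can be uniquely labelled by a pair $(y, \tau) \in \mathcal S \times \R$. This is precisely the content of Proposition~\ref{P:gammaclass}(2), which asserts that the range of $g$ equals $\mathcal T_e$, so every initial point lies in the image of $g$. Once this identification is settled, the rest of the argument reduces to the non-branching hypothesis and the $\sigma$-additivity of $\mu$.
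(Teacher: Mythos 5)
Your proof is correct and takes essentially the same route as the paper's: the paper likewise reduces to a set $A$ of initial points, asserts that the translates $A_t$ are pairwise disjoint, and concludes that only countably many of them can carry positive $\mu$-mass, contradicting Assumption \ref{A:NDEatom}. The only caveat is that the injectivity of $g$ you invoke holds only on $\mathcal T$ (Proposition \ref{P:gammaclass}(4)), so $A_{t_1}\cap A_{t_2}$ need not be literally empty when the common point is a final point shared by two rays of different lengths; but any such overlap lies in $b(\mathcal T)$, which is $\mu$-null by the normalization following Lemma \ref{L:finini0}, so your $\sigma$-additivity contradiction is unaffected (the paper glosses over the same point).
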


\begin{proof}
If $A \subset a(X)$, then $A_t \cap A_s = \emptyset$ for $0 \leq s < t$. Hence
\[
\sharp \big\{ t \in \R^+: \mu(A_t) > 0 \big\} \leq \aleph_0,
\]
because of the boundedness of $\mu$. This contradicts the assumptions.
\end{proof}

Once we know that $\mu(\mathcal{T}) = 1$, we can use the Disintegration Theorem \ref{T:disintr} to write
\begin{equation}
\label{E:disintT}
\mu = \int_S \mu_y m(dy), \quad m = f_\sharp \mu, \ \mu_y \in \mathcal{P}(R(y)).
\end{equation}
The disintegration is strongly consistent since the quotient map $f : \mathcal T \to \mathcal T$ is $\mu$-measurable and $(\mathcal T,\mathcal B(\mathcal T))$ is countably generated.

\begin{figure}
\label{Fi:mongem5}
\psfrag{mu}{$\mu$}
\psfrag{nu}{$\nu$}
\psfrag{B}{$B$}
\psfrag{Bt}{$B_t$}
\psfrag{gamma}{$\gamma$}
% \psfrag{p}{$p$}
% \psfrag{T}{$\mathcal{T}$}
\resizebox{11cm}{6cm}{\includegraphics{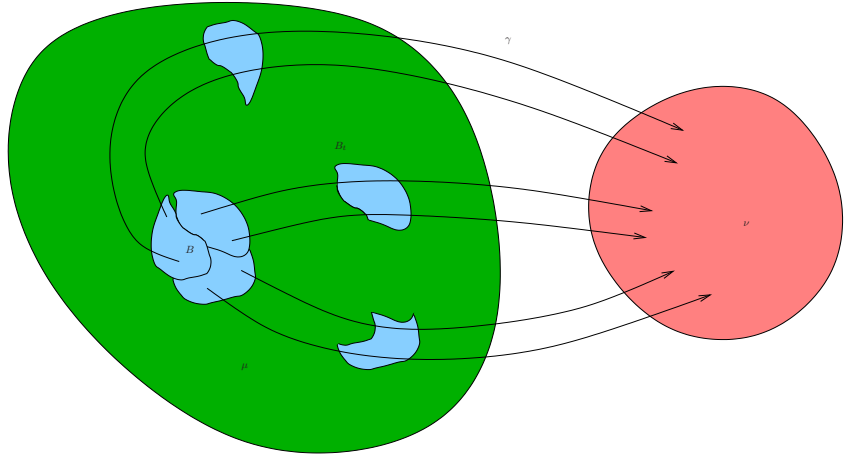}}
\caption{The evolution of a set $B$ through the optimal flow.}
\end{figure}

The second consequence of Assumption \ref{A:NDEatom} is that $\mu_y$ is continuous, i.e. $\mu_y(\{x\}) = 0$ for all $x \in X$.

\begin{proposition}
\label{P:nonatoms}
The conditional probabilities $\mu_y$ are continuous for $m$-a.e. $y \in S$.
\end{proposition}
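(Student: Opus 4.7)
I would argue by contradiction. Assume that
\[
Y := \bigl\{ y \in S : \mu_y \text{ has an atom} \bigr\}
\]
has $m(Y) > 0$. The plan is to produce a $\mu$-measurable set $A \subset \mathcal T$ with $\mu(A) > 0$ which meets every transport ray in at most one point, and then derive a contradiction with Assumption \ref{A:NDEatom} from the disjointness of the translates $\{A_t\}_{t > 0}$.

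Granted such an $A$, the translates $A_t$ for $t > 0$ are pairwise disjoint, by the bijectivity of the ray map $g$ on $\mathcal T$ (Proposition \ref{P:gammaclass}(4)): if $x$ belonged to both $A_t$ and $A_{t'}$ with $0 < t \neq t'$, then $g^{-1}$ would yield two distinct points of $A$ sharing the same $y$-coordinate, violating the one-point-per-ray property. Since $\sum_{t > 0} \mu(A_t) \leq \mu(X) = 1$, only countably many $t > 0$ can give $\mu(A_t) > 0$, contradicting Assumption \ref{A:NDEatom}.

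To construct $A$, for each $n \in \N$ I would set
\[
A_n := \bigl\{ x \in \mathcal T : \mu_{f(x)}(\{x\}) \geq 1/n \bigr\},
\]
so that $Y$ is the union over $n$ of the $f$-projections of $A_n$ to $S$, and in particular $\mu(A_n) > 0$ for $n$ large. By construction $A_n \cap R(y)$ contains at most $n$ points. Pulling back to $\hat A_n := g^{-1}(A_n \cap \mathcal T_S) \subset S \times \R$ and choosing on each non-empty fiber the smallest $t$-coordinate, $t^*(y) := \inf \{ t : (y,t) \in \hat A_n \}$, yields
\[
A := \bigl\{ g(y, t^*(y)) : t^*(y) \text{ is defined} \bigr\}.
\]
This $A$ meets each ray in at most one point and satisfies $\mu_y(A) \geq 1/n$ on the rays where it is non-empty, so $\mu(A) \geq m(P_1(\hat A_n))/n > 0$.

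The main obstacle is the measurability of $A_n$ and of the selection $t^*$. I expect to handle this by working with the pushforwards $\tilde \mu_y := (P_2 \circ g^{-1})_\sharp \mu_y \in \mathcal P(\R)$: an atom of $\tilde \mu_y$ of size $\geq 1/n$ is a jump of that size in the CDF $F(y, t) := \tilde \mu_y((-\infty, t])$, which is jointly measurable in $(y, t)$ by weak measurability of $y \mapsto \mu_y$ together with the analyticity of $g$ on $S \times \R$ (Proposition \ref{P:gammaclass}(1)). The left limit $F(y, t^-) = \sup_{s < t,\, s \in \Q} F(y, s)$ is likewise jointly measurable, so $\hat A_n$ is measurable, $t^*$ has analytic sublevel sets as projections of analytic sets (cf. Section \ref{Ss:univmeas}), and Lemma \ref{L:measuregP} places $A$ in the universally measurable class, as required.
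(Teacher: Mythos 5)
Your proposal is correct and follows essentially the same strategy as the paper: extract from the atoms a $\mu$-positive set meeting each transport ray in at most one point, and contradict Assumption \ref{A:NDEatom} because the translates of such a set along the ray map are (up to the $\mu$-null set $\mathcal T_e \setminus \mathcal T$) pairwise disjoint, so only countably many can carry positive mass. The only differences are implementational --- you select the leftmost atom of mass $\geq 1/n$ via the jointly measurable CDF, whereas the paper restricts to a compact set where $y \mapsto \mu_y$ is weakly continuous and decomposes the atomic part into countably many Borel graphs $x_j(y)$ by Lusin's theorem, running the countability argument through the auxiliary sets $S_{ij}(t)$ --- and both routes handle the measurable-selection issue with comparable rigor.
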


\begin{proof}
From the regularity of the disintegration and the fact that $m(S) = 1$, we can assume that the map $y \mapsto \mu_y$ is weakly continuous on a compact set $K \subset S$ of comeasure $<\epsilon$ such that $L(R(y)) > \epsilon$ for all $y \in K$. It is enough to prove the proposition on $K$.

{\it Step 1.} From the continuity of $K \ni y \mapsto \mu_y \in \mathcal{P}(X)$ w.r.t. the weak topology, it follows that the map
\[
y \mapsto A(y) := \big\{ x \in R(y): \mu_y(\{x\}) > 0 \big\} = \cup_n \big\{ x \in R(y): \mu_y(\{x\}) \geq 2^{-n} \big\}
\]
is $\sigma$-closed: in fact, if $(y_m,x_m) \to (y,x)$ and $\mu_{y_m}(\{x_m\}) \geq 2^{-n}$, then $\mu_y(\{x\}) \geq 2^{-n}$ by u.s.c. on compact sets.

Hence it is Borel, and by Lusin Theorem (Theorem 5.8.11 of \cite{Sri:courseborel}) it is the countable union of Borel graphs: setting in case $c_i(y) = 0$, we can consider them as Borel functions on $S$ and order them w.r.t. $G$,
\[
\mu_{y,\textrm{atomic}} = \sum_{i \in \Z} c_i(y) \delta_{x_i(y)}, \quad x_{i+1}(y) \in G(x_i(y)), \ i \in \Z.
\]

{\it Step 2.} Define the sets
\[
S_{ij}(t) := \Big\{ y \in K: x_i(y) = g \big( g^{-1}(x_j(y)) + t \big) \Big\} \cap \mathcal T.
\]
Since $K \subset S$, to define $S_{ij}$ we are using the graph $g \cap S \times \R \times \mathcal{T}$, which is analytic: hence $S_{ij} \in \Sigma^1_1$.

For $A_j := \{x_j(y), y \in K\}$ and $t \in \R^+$ we have that
\begin{align*}
\mu((A_j)_t) =&~ \int_K \mu_y((A_j)_t) m(dy) = \int_K \mu_{y,\textrm{atomic}}((A_j)_t) m(dy) \crcr
=&~ \sum_{i \in \Z} \int_K c_i(y) \delta_{x_i(y)} \big( g(g^{-1}(x_j(y)) + t) \big) m(dy) = \sum_{i \in \Z} \int_{S_{ij}(t)} c_i(y) m(dy).
\end{align*}
We have used the fact that $A_j \cap R(y)$ is a singleton.

{\it Step 3.} For fixed $i,j \in \N$, again from the fact that $A_j \cap R(y)$ is a singleton
\[
S_{ij}(t) \cap S_{ij}(t') =
\begin{cases}
S_{ij}(t) & t = t' \crcr
\emptyset & t \not= t'
\end{cases}
\]
so that
\[
\sharp \big\{ t : m(S_{ij}(t)) > 0 \big\} \leq \aleph_0.
\]
Finally
\[
\mu((A_j)_t) > 0 \quad \Longrightarrow \quad t \in \bigcup_i \big\{ t : m(S_{ij}(t)) > 0 \big\},
\]
whose cardinality is $\leq \aleph_0$, contradicting Assumption \ref{A:NDEatom}.
\end{proof}

\subsection{Absolute continuity}
\label{Ss:regolarita'}

We next assume a stronger regularity assumption.

\begin{assumption}[Absolute continuity assumption]
\label{A:NDE}
For every Borel set $A \subset \mathcal{T}_{e}$
\[
\mu(A) > 0 \quad \Longrightarrow \quad \int_0^{+\infty} \mu(A_t) dt > 0.
\]
\end{assumption}

Again by inner regularity, Assumption \ref{A:NDE} can be verified only for compact sets.
Note that the condition is meaningful by Lemma \ref{L:measumuAt}. Observe moreover that Assumption \ref{A:NDE} implies Assumption \ref{A:NDEatom}, so that in the following we will restrict the map $g$ to the set $g^{-1}(\mathcal{T})$, where it is analytic. Moreover, we can consider shift $t \mapsto A_t$ for $t \in \R$, because of Lemma \ref{L:measumuAt}.

\begin{remark}\label{R:NDE}
An equivalent form of the Assumption \ref{A:NDE} is the following:
\[
\mu(A)>0 \quad \Longrightarrow \quad \int_{t,s \geq 0} \mu(A_t \cap A_s) dtds > 0.
\]
In fact, due to $\mu(X) = 1$, in the set $I_n := \{t:\mu(A_t) > 2^{-n}\}$ the set $\{s \in I_n : \mu(A_s \cap A_t) = 0, t \in I_n\}$ has cardinality at most $2^{-n}$. Hence, since for some $n$ $\mathcal{L}^1(I_n) > 0$ by Assumption \ref{A:NDE}, it follows that
\[
\mathcal{L}^2(I_n \times I_n) = \big( \mathcal{L}^1(I_n) \big)^2 > 0.
\]
The opposite implication is a consequence of Fubini theorem.
\end{remark}

The next results show regularity of the Radon-Nikodym derivative of $\mu_y$ w.r.t. $(\haus^1_L)\llcorner_{ f^{-1}(y)}$, where $\haus^1_L$ is the $1$-dimensional Hausdorff measure w.r.t. the $d_L$-distance. Note that along $d_L$ $1$-Lipschitz geodesics, $\haus^1_L$ is equivalence to $g(y,\cdot)_\sharp \mathcal{L}^1$: in the following we will use both notations.
% 
% In the following we will write for simplicity $\mathcal{L}^1$ instead of $\mathcal{L}^1 \llcorner_{g^{-1}(\mathcal{T} \cap R(y))}$: in other words, we consider the restriction of $\mathcal{L}^1$ to the domain $P_2(g^{-1}(\mathcal{T} \cap R(y)))$.

\begin{lemma}
\label{Lem:dec}
Let $\mu$ be a Radon measure and
\[
\mu_y = r(y,\cdot) g(y,\cdot)_\sharp \mathcal{L}^1 + \omega_y, \quad \omega_y \perp g(y,\cdot)_\sharp \mathcal{L}^1
\]
be the Radon-Nikodym decomposition of $\mu_y$ w.r.t. $g(y,\cdot)_\sharp \mathcal{L}^1$. Then there exists a Borel set $C\subset X$ such that
\[
\mathcal{L}^{1} \big( g^{-1} (C) \cap (\{y\} \times \R) ) \big) = 0
\]
and $\omega_y = \mu_y \llcorner_C$ for $m$-a.e. $y \in [0,1]$.
\end{lemma}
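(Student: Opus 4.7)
The plan is to straighten the geodesic structure with $g^{-1}$, move everything to the product $\mathcal{S}\times\R$, apply Radon--Nikodym and Fubini there, and then push the exceptional set back to $X$.

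\textbf{Step 1 (Reduction to the product).} By Proposition \ref{P:gammaclass}(4), the map $g^{-1}:\mathcal{T}\to\mathcal{S}\times\R$ is a well-defined bijection on $\mathcal{T}_S$, and its graph is analytic by Proposition \ref{P:gammaclass}(1). Modulo the $\mu$-negligible set $\mathcal T_N$ of \eqref{E:TNngel}, I may treat $g^{-1}$ as $\mu$-measurable from $\mathcal T$ to $\mathcal S\times\R$, with $\mathcal S$ $\sigma$-compact (hence Polish) in $X$. Set
\[
\tilde\mu:=(g^{-1})_\sharp\mu\in\mathcal P(\mathcal S\times\R),
\]
and note that the disintegration of $\mu$ w.r.t.\ the partition $\{R(y)\}$ pushes to the disintegration of $\tilde\mu$ w.r.t.\ the projection $p_1:\mathcal S\times\R\to\mathcal S$, namely $\tilde\mu=\int \tilde\mu_y\,m(dy)$ with $\tilde\mu_y=(g(y,\cdot))^{-1}_\sharp\mu_y$.

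\textbf{Step 2 (Radon--Nikodym and Borel concentration set in the product).} Since $\tilde\mu$ is finite on the Polish space $\mathcal S\times\R$, decompose
\[
\tilde\mu=r\cdot(m\otimes\mathcal L^1)+\tilde\omega,\qquad \tilde\omega\perp m\otimes\mathcal L^1,
\]
with Borel density $r$. By the standard construction of the singular part there exists a Borel set $\tilde C\subset\mathcal S\times\R$ such that $(m\otimes\mathcal L^1)(\tilde C)=0$ and $\tilde\omega=\tilde\mu\llcorner_{\tilde C}$.

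\textbf{Step 3 (Fubini on the slices).} Writing $\tilde C_y:=\{t:(y,t)\in\tilde C\}$, Fubini gives $\mathcal L^1(\tilde C_y)=0$ for $m$-a.e.\ $y$. Disintegrating both sides of the Radon--Nikodym decomposition along $p_1$ yields, for $m$-a.e.\ $y$,
\[
\tilde\mu_y=r(y,\cdot)\mathcal L^1+\tilde\mu_y\llcorner_{\tilde C_y}.
\]
By uniqueness of the Radon--Nikodym decomposition on $\R$, this is exactly the decomposition of $\tilde\mu_y$ w.r.t.\ $\mathcal L^1$. Pushing forward by $g(y,\cdot)$ gives precisely the decomposition in the statement, with $\omega_y=\mu_y\llcorner_{g(y,\tilde C_y)}$ and density $r(y,\cdot)$.

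\textbf{Step 4 (Producing a Borel $C\subset X$).} Set $\hat C:=g(\tilde C)$. By Proposition \ref{P:gammaclass}(1), $\hat C$ is analytic, hence universally measurable, but not obviously Borel. To replace it with a Borel set, use that $\mu$ is Borel: choose a Borel $C_1\supset \hat C$ with $\mu(C_1\setminus\hat C)=0$, and a Borel $C_2\subset \hat C$ with $\mu(\hat C\setminus C_2)=0$; simultaneously, since the $(m\otimes\mathcal L^1)$-null Borel set $\tilde C$ already has $g(\tilde C)\subset C_1$, I may take
\[
C:=C_2\cup(C_1\setminus\hat C{\text{-neg}})
\]
adjusted so that $\mu\llcorner_C=\mu\llcorner_{\hat C}$ and $g^{-1}(C)\subset\tilde C$ modulo $(m\otimes\mathcal L^1)$-null sets, so that $\mathcal L^1(g^{-1}(C)\cap\{y\}\times\R)=0$ for $m$-a.e.\ $y$. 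The main point is that the analytic--Borel gap is only a $\mu$-null and $(m\otimes\mathcal L^1)$-null adjustment, which preserves both $\omega_y=\mu_y\llcorner_C$ and the slicewise $\mathcal L^1$-negligibility.

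\textbf{Main obstacle.} The substantive step is Step 2--3, where one must check that a single globally-chosen Borel concentration set for the singular part in the product space produces, after disintegration, the fiberwise singular parts for $m$-a.e.\ $y$; this rests on Fubini and the uniqueness of Radon--Nikodym. The technical nuisance is Step 4, matching the analytic set $g(\tilde C)$ with a genuine Borel set $C\subset X$ without spoiling either the slice measure property or the identity $\omega_y=\mu_y\llcorner_C$.
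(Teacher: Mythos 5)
Your underlying idea is the same as the paper's: decompose $\mu$ against the reference measure $\lambda := g_\sharp(m\otimes\mathcal L^1)$ and read off the fibrewise singular parts from the Borel concentration set of the singular component. Steps 1--3 are sound. The paper, however, performs the Radon--Nikodym decomposition directly on $X$, where $\mu$ and $\lambda$ are both Borel measures, so the concentration set $C$ with $\lambda(C)=0$ and $\omega=\mu\llcorner_C$ is Borel from the outset; it then simply observes that $\lambda(C)=\int g(y,\cdot)_\sharp\mathcal L^1(C)\,m(dy)=0$ forces $g(y,\cdot)_\sharp\mathcal L^1(C)=0$ for $m$-a.e.\ $y$, and that this plus uniqueness of the decomposition on each ray gives $\omega_y=\mu_y\llcorner_C$. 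By first transporting the problem to $\mathcal S\times\R$ and then pushing the concentration set back with $g$, you manufacture the analytic-versus-Borel difficulty that the paper never encounters.

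That difficulty is where your Step~4 has a genuine gap. The object $C:=C_2\cup(C_1\setminus\hat C\text{-neg})$ is not well-defined --- ``$\hat C\text{-neg}$'' is never specified --- and the sentence that follows asserts rather than proves that the adjustment simultaneously preserves $\mu\llcorner_C=\mu\llcorner_{\hat C}$ and the slicewise $\mathcal L^1$-negligibility of $g^{-1}(C)$. There is also no reason given why one can choose a single Borel set satisfying both conditions. The clean fix within your framework: since $\hat C=g(\tilde C)$ is analytic hence universally measurable and $\lambda(\hat C)=(m\otimes\mathcal L^1)(g^{-1}(\hat C))=(m\otimes\mathcal L^1)(\tilde C)=0$, there is a Borel set $C\supset\hat C$ with $\lambda(C)=0$. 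Then $g(y,\cdot)_\sharp\mathcal L^1(C)=0$ for $m$-a.e.\ $y$, so the absolutely continuous part $r(y,\cdot)\,g(y,\cdot)_\sharp\mathcal L^1$ gives zero mass to $C$, hence $\mu_y\llcorner_C=\omega_y\llcorner_C$; and since $\omega_y$ is concentrated on $g(y,\tilde C_y)\subset\hat C\subset C$, we get $\omega_y\llcorner_C=\omega_y$. Note that enlarging $\hat C$ to $C$ is enough --- you do not need $\mu(C\triangle\hat C)=0$, only $\lambda(C)=0$ together with $\hat C\subset C$ --- which removes the tension you were trying to negotiate in Step~4. Alternatively, delete Steps 1 and 4 altogether and decompose in $X$ as the paper does.
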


\begin{proof}
Consider the measure
\[
\lambda = g_\sharp (m \otimes \mathcal L^1),
\]
and compute the Radon-Nikodym decomposition
\[
\mu  = \frac{D \mu}{D \lambda} \lambda + \omega.
\]
Then there exists a Borel set $C$ such that $\omega = \mu \llcorner_C$ and $\lambda(C)=0$. The set $C$ proves the Lemma. Indeed $C = \cup_{y \in [0,1]} C_{y}$ where $C_{y} = C \cap f^{-1}(y)$ is such that $\mu_y \llcorner_{C_{y}} = \omega_{y} $ and $g(y,\cdot)_{\sharp}\mathcal{L}^{1}(C_{y})=0$ for $m$-a.e. $y \in [0,1]$.
\end{proof}

\begin{theorem}
\label{teo:a.c.}
If $\mu$ satisfies Assumption \ref{A:NDE}, then for $m$-a.e. $y \in [0,1]$ the conditional probabilities $\mu_y$ are absolutely continuous w.r.t. $g(y,\cdot)_\sharp \mathcal{L}^1$.
\end{theorem}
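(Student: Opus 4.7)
The plan is to argue by contradiction. By Lemma \ref{Lem:dec}, write $\mu_y = r(y,\cdot) g(y,\cdot)_\sharp \mathcal{L}^1 + \omega_y$ with $\omega_y = \mu_y \llcorner_C$ for some Borel set $C \subset X$ such that $\mathcal{L}^1(g^{-1}(C) \cap (\{y\}\times\R)) = 0$ for $m$-a.e.\ $y$. Suppose toward a contradiction that $\omega_y \neq 0$ on a set of $y$ of positive $m$-measure; then $\mu(C) > 0$, and since $\mu$ is concentrated on $\mathcal T$ by Lemma \ref{L:puntini} (Assumption \ref{A:NDE} implies Assumption \ref{A:NDEatom}), we may assume $C \subset \mathcal T_e$. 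Applying Assumption \ref{A:NDE} yields $\int_0^{+\infty} \mu(C_t)\, dt > 0$. The goal is to show that this integral in fact vanishes.

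The key step is the following evaluation. For each $t \geq 0$, the set $g^{-1}(C_t) \cap (\{y\}\times\R)$ is, by the definition \eqref{E:At} of $A \mapsto A_t$, simply the translate by $t$ of $g^{-1}(C) \cap (\{y\}\times\R)$; hence it also has $\mathcal{L}^1$-measure zero. Consequently, the absolutely continuous part contributes nothing and
\[
\mu(C_t) = \int \mu_y(C_t)\, m(dy) = \int \omega_y(C_t)\, m(dy).
\]
Integrating in $t$ and applying Fubini (using Lemma \ref{L:measumuAt} for measurability), it suffices to show $\int_0^{+\infty} \omega_y(C_t)\, dt = 0$ for $m$-a.e.\ $y$. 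Pushing $\omega_y$ forward to $\R$ along $g(y,\cdot)^{-1}$ to obtain a finite singular measure $\tilde\omega_y$ supported on the Lebesgue null set $A_y := g^{-1}(C) \cap (\{y\}\times\R) \subset \R$, one has $\omega_y(C_t) = \tilde\omega_y(A_y + t)$, and a second application of Fubini gives
\[
\int_0^{+\infty} \tilde\omega_y(A_y + t)\, dt = \int \mathcal{L}^1\bigl(A_y \cap (-\infty,s]\bigr)\, \tilde\omega_y(ds) = 0,
\]
since $\mathcal{L}^1(A_y) = 0$. This contradicts the lower bound from Assumption \ref{A:NDE}, so $\omega_y = 0$ for $m$-a.e.\ $y$, which is the desired absolute continuity.

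The routine piece is the Fubini computation on the fiber; the step that requires care is the measurability bookkeeping for the chain $t \mapsto \mu(C_t) \mapsto \omega_y(C_t)$. The main obstacle is checking that the singular set $C$ from Lemma \ref{Lem:dec} can be chosen Borel and with fiberwise Lebesgue-null intersections in a jointly measurable way, so that Assumption \ref{A:NDE} (stated for Borel sets) is directly applicable and the pushforward identities $\omega_y(C_t) = \tilde\omega_y(A_y+t)$ hold $m$-a.e.\ and are jointly measurable in $(y,t)$; this is exactly what Lemma \ref{Lem:dec} delivers through the Radon--Nikodym decomposition of $\mu$ against the Borel measure $\lambda = g_\sharp(m \otimes \mathcal{L}^1)$, so no additional regularization is needed.
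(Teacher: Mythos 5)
Your argument is correct and follows essentially the same route as the paper: both proofs take the singular set $C$ from Lemma \ref{Lem:dec}, apply Assumption \ref{A:NDE} to it, and derive a contradiction via a Fubini--Tonelli computation exploiting that the fibers $g^{-1}(C)\cap(\{y\}\times\R)$ and all their translates are Lebesgue null. The only (cosmetic) difference is that you perform the Fubini exchange fiberwise on each ray after discarding the absolutely continuous part, whereas the paper integrates directly against $(g^{-1})_\sharp\mu\otimes\mathcal L^1$ on $S\times\R\times\R^+$.
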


The proof is based on the following simple observation.

\medskip
\noindent Let $\eta$ be a Radon measure on $\erre$. Suppose that for all $A \subset \erre$ Borel with $\eta(A)>0$ it holds
\[
\int_{\R^+} \eta(A+t) dt = \eta \otimes\mathcal{L}^1 \big( \{ (x,t): t \geq 0, x - t \in A \} \big) > 0.
\]
Then $\eta \ll \mathcal{L}^1$.

\begin{proof}
The proof will use Lemma \ref{Lem:dec}: take $C$ the set constructed in Lemma \ref{Lem:dec} and suppose by contradiction that
\[
\mu(C) > 0 \quad \text{and} \quad m \otimes \mathcal{L}^1 (g^{-1}(C)) = 0.
\]
In particular, for all $t \in \R$ it follows that
\[
m \otimes \mathcal{L}^1 (g^{-1}(C_t)) = m \otimes \mathcal{L}^1 (g^{-1}(C) + (0,t)) = 0.
\]
By Fubini-Tonelli Theorem
\begin{align*}
0< &~ \int_{\R^+} \mu(C_t) dt  =  \int_{\R^+} \bigg( \int_{g^{-1}(C_t)} (g^{-1})_\sharp \mu(dyd\tau) \bigg) dt \crcr
=&~ \big( (g^{-1})_\sharp \mu \otimes \mathcal{L}^1 \big) \Big( \Big\{ (y,\tau,t): (y,\tau) \in g^{-1}(\mathcal{T}), (y,\tau-t) \in g^{-1}(C) \Big\} \Big) \crcr
\leq&~ \int_{S \times \R} \mathcal{L}^1 \big( \big\{\tau - g^{-1}(C \cap f^{-1}(y)) \big\} \big) (g^{-1})_{\sharp} \mu (dyd\tau) \crcr
=&~ \int_{S \times \R} \mathcal{L}^1 \big( g^{-1}(C \cap f^{-1}(y)) \big) (g^{-1})_{\sharp} \mu (dyd\tau) \crcr
=&~ \int_{S} \mathcal{L}^1 \big( g^{-1}(C \cap f^{-1}(y)) \big) m(dy) = 0.
\end{align*}
That gives a contradiction.
\end{proof}

Now we will study the regularity of the map $t \mapsto \mu(A_t)$ under Assumption \ref{A:NDE}. We will use the following notation:
\[
\mu(A) = \int_S \mu_y(A) m(dy) = \int_S \bigg( \int_{g(y,\cdot)^{-1}(A)} r(y,\tau) d\tau \bigg) m(dy) = g_\sharp (r m \otimes \mathcal L^1).
\]

\begin{proposition}
\label{P:peloso}
$\mu$ satisfies Assumption \ref{A:NDE} if and only if for all $A$ Borel $t \mapsto \mu(A_t)$ is continuous. Moreover if $A$ is geodesically convex then $\mu(A_t)$ is absolutely continuous.
\end{proposition}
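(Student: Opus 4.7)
The plan is to exploit the disintegration $\mu_y = r(y,\cdot)\, g(y,\cdot)_{\sharp}\mathcal L^1$ provided by Theorem \ref{teo:a.c.} under Assumption \ref{A:NDE}, reducing both continuity and absolute continuity to one-dimensional statements along each geodesic. Since $g(y,\cdot)$ is a $d_L$-isometry, $g(y,\tau) \in A_t$ if and only if $g(y,\tau - t) \in A$, so
\[
\mu_y(A_t) = \int \mathbf{1}_{g(y,\cdot)^{-1}(A)}(\sigma)\, r(y,\sigma + t)\, d\sigma.
\]

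For the forward implication, fix $y$: continuity of translation in $L^1(\R)$ gives that $t \mapsto \mu_y(A_t)$ is continuous, and since $\mu_y(A_t) \leq 1 \in L^1(m)$, dominated convergence yields continuity of $t \mapsto \mu(A_t) = \int \mu_y(A_t)\, m(dy)$. Conversely, assuming continuity for every Borel $A$, if $\mu(A) > 0$ then, reducing to $A \subset \mathcal T_e = g(S\times \R)$ via Lemma \ref{L:mapoutside}, one has $\mu(A_0) = \mu(A) > 0$, so $\mu(A_t) \geq \mu(A)/2$ on some $[0,\varepsilon)$ and $\int_0^{+\infty} \mu(A_t)\, dt > 0$, which is Assumption \ref{A:NDE}.

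For the \emph{moreover} part, geodesic convexity (Definition \ref{D:geoconvx}) together with the non-branching assumption forces, for $m$-a.e.\ $y$, $g(y,\cdot)^{-1}(A)$ to be a single interval $[a(y),b(y)]$; the endpoints are $m$-measurable via Theorem \ref{T:vanneuma} applied to the analytic set $g^{-1}(A) \cap (S \times \R)$. Then $\mu_y(A_t) = \int_{a(y)+t}^{b(y)+t} r(y,\sigma)\, d\sigma$, and for $s < t$ Fubini gives
\[
\mu(A_t) - \mu(A_s) = \int_s^t \bigl[\rho_b(\tau) - \rho_a(\tau)\bigr]\, d\tau,
\qquad \rho_a(\tau) := \int_S r(y, a(y)+\tau)\, m(dy),
\]
with $\rho_b$ defined analogously using $b(y)$. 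Since $\int_\R \rho_a\, d\tau = \int_\R \rho_b\, d\tau = \int_S \mu_y(R(y))\, m(dy) = 1$, both densities lie in $L^1(\R)$, so $F(t) := \mu(A_t)$ is the indefinite integral of an $L^1$ function, hence absolutely continuous.

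The main technical obstacle I foresee is the measurability of $y \mapsto (a(y), b(y))$ together with the bookkeeping at the boundary of the parameter domain of $g(y,\cdot)$: non-branching guarantees that each slice of $g^{-1}(A)$ is connected, but extracting its endpoints in a measurable way requires the analytic selection theorem, and one must also check that translated intervals $[a(y)+t, b(y)+t]$ which protrude outside the geodesic's parameter interval are handled consistently by extending $r(y,\cdot)$ by zero outside its natural domain.
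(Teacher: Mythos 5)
Your proof is correct and follows essentially the same approach as the paper: disintegrate $\mu$ along geodesics via Theorem~\ref{teo:a.c.}, reduce continuity to the one-dimensional statement along each ray, and in the geodesically convex case differentiate the translated interval to get an $L^1$ derivative. You make explicit two points the paper leaves implicit — the $L^1$-translation-continuity step needed before dominated convergence can be applied, and the easy reverse implication of the if-and-only-if — but the decomposition, key lemma, and computation are the same.
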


\begin{proof}
It is enough to prove the continuity for $t=0$. Since
\[
\mu(A_t) = \int_S \bigg( \int_{g(y,\cdot)^{-1}(A_t)} r(y,\tau) d\tau \bigg) m(dy),
\]
its continuity is a direct consequence of Lebesgue dominated convergence theorem applied to the function:
\[
t \mapsto \mu_y(A_t) = \int_{g(y,\cdot)^{-1}(A_t)} r(y,\tau) d\tau.
\]

Suppose now $A$ geodesically convex. Each $g(y,\cdot)^{-1}(A)$ is an interval $(\alpha(y),\omega(y))$, so that the map
\[
t \mapsto \int_{g(y,\cdot)^{-1}(A_t)} r(y,\tau) d\tau
\]
is absolutely continuous with derivative
\[
h(y,t) = r(y,\omega(y)+t) - r(y,\alpha(y)+t).
\]
Since $h(y,t) \in L^1(m \otimes \mathcal{L}^1)$ the result follows by a standard computation.
\end{proof}

\section{Solution to the Monge problem}
\label{S:Solution}

In this section we show that Theorem \ref{teo:a.c.} allows to construct an optimal map $T$. We recall the one dimensional result for the Monge problem \cite{villa:Oldnew}.

\begin{theorem}
\label{T:oneDmonge}
Let $\mu$, $\nu$ be probability measures on $\erre$, $\mu$ continuous, and let
\[
H(s) := \mu((-\infty,s)), \quad F(t) := \nu((-\infty,t)),
\]
be the left-continuous distribution functions of $\mu$ and $\nu$ respectively. Then the following holds.
\begin{enumerate}
\item The non decreasing function $T : \erre \to \erre \cup [-\infty,+\infty)$ defined by
\[
T(s) := \sup \big\{ t \in \erre : F(t) \leq H(s) \big\}
\]
maps $\mu$ to $\nu$. Moreover any other non decreasing map $T'$ such that $T'_\sharp \mu = \nu$ coincides with $T$ on the support of $\mu$ up to a countable set.
\item If $\phi : [0,+\infty] \to \erre$ is non decreasing and convex, then $T$ is an optimal transport relative to the cost $c(s,t) = \phi(|s-t|)$. Moreover $T$ is the unique optimal transference map if $\phi$ is strictly convex.
\end{enumerate}
\end{theorem}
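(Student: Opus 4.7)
The plan is to realize $T$ as a pseudo-inverse of $F$ composed with $H$, verify the push-forward by bookkeeping with distribution functions, and then invoke cyclical monotonicity for the convex-cost part. The result is classical so I aim for a concise presentation.

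First I would establish the key equivalence
\[
T(s) < t \quad \Longleftrightarrow \quad H(s) < F(t),
\]
which is immediate from the definition of $T$ as a supremum together with the left-continuity of $F$: if $H(s) < F(t)$ then by left-continuity some $u < t$ already satisfies $F(u) > H(s)$, forcing $T(s) < t$; conversely $T(s) < t$ means $t$ itself lies outside $\{u: F(u) \leq H(s)\}$, i.e.\ $F(t) > H(s)$. Given this, for part (1) the push-forward is computed directly: since $\mu$ is continuous, $H$ is continuous, so $\{s : H(s) < F(t)\}$ is an open half-line $(-\infty,\bar s)$ with $H(\bar s) = F(t)$ whenever non-empty, hence
\[
(T_{\sharp}\mu)\bigl((-\infty,t)\bigr) = \mu\bigl(\{H < F(t)\}\bigr) = H(\bar s) = F(t) = \nu\bigl((-\infty,t)\bigr),
\]
giving $T_{\sharp}\mu = \nu$. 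For uniqueness, if $T'$ is another non-decreasing map with $T'_{\sharp}\mu = \nu$, then for every $r$ one has $\mu(\{T \geq r\}) = \mu(\{T' \geq r\}) = \nu([r,+\infty))$, and since both sets are half-lines (by monotonicity) their symmetric difference is $\mu$-negligible. Intersecting with $\supp \mu$ and varying $r$ over the at most countable set of jump values of $T$ and $T'$ reduces the discrepancy set to a countable set.

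For part (2), I would use Kantorovich duality via cyclical monotonicity. Given $s_1 < s_2$ in $\supp \mu$ with $t_i := T(s_i)$, the monotonicity of $T$ gives $t_1 \leq t_2$, and a short case analysis on the ordering of $\{s_1,s_2,t_1,t_2\}$ using convexity and monotonicity of $\phi$ yields the L-shaped inequality
\[
\phi(|s_1-t_1|) + \phi(|s_2-t_2|) \leq \phi(|s_1-t_2|) + \phi(|s_2-t_1|),
\]
the essential input being that for $a \leq b$ the map $h \mapsto \phi(b+h) - \phi(a+h)$ is non-decreasing on $\R^+$. Iterating shows that the transference plan induced by $T$ is concentrated on a $c$-cyclically monotone set and is therefore optimal by Theorem \ref{T:kanto}. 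When $\phi$ is strictly convex, the above inequality becomes strict whenever $(s_i,t_i)$, $(s_j,t_j)$ are not already matched monotonically, so any optimal plan must be supported on a monotone graph; by the uniqueness in part (1), it must then agree $\mu$-a.e.\ with the plan induced by $T$.

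The main technical annoyance is the bookkeeping at atoms of $\nu$ and plateaus of $H$: atoms of $\nu$ generate jumps of $F$ and hence intervals on which $T$ is constant, while plateaus of $H$ outside $\supp \mu$ contribute to the countable exceptional set in the uniqueness assertion. Both are handled cleanly by sticking to the left-continuous convention for $H$ and $F$ and by interpreting $T$ strictly through its supremum definition rather than as a classical inverse.
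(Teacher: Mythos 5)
The paper does not prove this theorem: it is stated as a recalled classical result with a citation to Villani's book. So there is no proof of the authors' to compare against; I'll assess your argument on its own.

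Your proof is the standard one and is essentially correct. The pivotal equivalence $T(s)<t\iff H(s)<F(t)$ is established properly (left-continuity of $F$ for one direction, the supremum definition for the other), and the push-forward computation via the continuity of $H$ is clean, including the edge cases $F(t)\in\{0,1\}$. The convex-cost part through $c$-cyclical monotonicity with the submodularity lemma (for $a\le b$, $h\mapsto\phi(b+h)-\phi(a+h)$ is non-decreasing) is the right mechanism and hooks correctly into Theorem~\ref{T:kanto}.

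Two points where the sketch is thinner than it should be, though neither is a fatal gap. First, in the uniqueness step of part~(1) the passage from ``$\{T\ge r\}\triangle\{T'\ge r\}$ is $\mu$-null for each $r$'' to ``$T=T'$ on $\supp\mu$ up to a countable set'' needs a bit more care: you should argue at a common continuity point $s_0\in\supp\mu$ of $T$ and $T'$ that a strict inequality $T(s_0)<T'(s_0)$ would force a nondegenerate one-sided interval adjacent to $s_0$ to be $\mu$-null, and then rule this out (it is not automatic from $s_0\in\supp\mu$ alone; you need to also consider the level sets from the other side, or work with both half-lines $\{T\ge r\}$ and $\{T\le r\}$). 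Second, in part~(2) for strictly convex $\phi$ you assert that any optimal plan is ``supported on a monotone graph''; strict $c$-cyclical monotonicity gives only a monotone \emph{set}, and the reduction to a graph requires the standard observation that the vertical sections are pairwise-disjoint intervals, hence at most countably many are nondegenerate, hence $\mu$-negligible by continuity of $\mu$. Stating this explicitly would close the argument. With those two elaborations the proof is complete and matches the approach the cited reference takes.
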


Assume that $\mu$ satisfies Assumption \ref{A:NDEatom}. Then we can disintegrate $\mu$ and $\pi$ respect to the ray equivalence relation $R$ and $R \times X$ as in \eqref{E:disintT},
\begin{equation}
\label{E:muy}
\mu = \int \mu_y m(dy), \ \pi = \int \pi_y m(dy), \quad \mu_y \ \text{continuous}, \ (P_1)_\sharp \pi_y = \mu_y.
\end{equation}
We write moreover
\begin{equation}
\label{E:nuy}
\nu = \int \nu_y m(dy) = \int (P_2)_\sharp \pi_y m(dy).
\end{equation}
Note that $\pi_y \in \Pi(\mu_y,\nu_y)$ is $d_L$-cyclically monotone (and hence optimal, because $R(y)$ is one dimensional) for $m$-a.e. $y$. If $\nu(\mathcal{T}) = 1$, then \eqref{E:nuy} is the disintegration of $\nu$ w.r.t. $R$.

\begin{theorem}
\label{T:mongeff}
Let $\pi \in \Pi(\mu,\nu)$ be a $d_L$-cyclically monotone transference plan, and assume that Assumption \ref{A:NDEatom} holds. Then there exists a Borel map $T: X \to X$ with the same transport cost as $\pi$.
\end{theorem}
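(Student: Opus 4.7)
The plan is to reduce the Monge problem to a family of one-dimensional monotone rearrangements along the rays of $R$, and then glue them together using the ray map $g$ from Definition \ref{D:mongemap}. Since Assumption \ref{A:NDEatom} is in force, Lemma \ref{L:puntini} gives $\mu(\mathcal{T}_e \setminus \mathcal{T}) = 0$, so we can work on $\mathcal{T}$ where the disintegrations (\ref{E:muy})--(\ref{E:nuy}) are available, and Proposition \ref{P:nonatoms} ensures $\mu_y$ is continuous for $m$-a.e.\ $y \in S$. Pulling back via $g(y,\cdot)$, for such $y$ we obtain continuous probabilities $\tilde\mu_y$, $\tilde\nu_y$ on $\R$ and a transference plan $\tilde\pi_y \in \Pi(\tilde\mu_y,\tilde\nu_y)$; because $\pi_y$ is $d_L$-cyclically monotone and $R(y)$ is an isometric copy of an interval, $\tilde\pi_y$ is optimal for the $1$-dimensional cost $|\cdot|$.

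Second, apply Theorem \ref{T:oneDmonge} with $\phi(r)=r$ to each pair $(\tilde\mu_y,\tilde\nu_y)$: the monotone rearrangement
\[
\tilde T_y(s) := \sup \big\{ t \in \R : F_y(t) \leq H_y(s) \big\}, \qquad H_y(s)=\tilde\mu_y(-\infty,s),\ F_y(t)=\tilde\nu_y(-\infty,t),
\]
is a Borel transport map from $\tilde\mu_y$ to $\tilde\nu_y$ realizing the same cost as $\tilde\pi_y$. The key technical point is the joint measurability of $(y,s) \mapsto \tilde T_y(s)$. This follows from the Disintegration Theorem \ref{T:disintr} together with Lusin's theorem: on a $\sigma$-compact set $K \subset S$ of arbitrarily large $m$-measure, $y \mapsto (\tilde\mu_y,\tilde\nu_y)$ is narrowly continuous, so $(y,s) \mapsto H_y(s)$ and $(y,t) \mapsto F_y(t)$ are Borel, and the generalized inverse defining $\tilde T_y$ is a Borel operation on pairs of continuous distribution functions. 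Exhausting $S$ by such compact sets, we obtain a Borel (hence $\mu$-measurable) map $(y,s) \mapsto \tilde T_y(s)$.

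Third, define
\[
T(x) := g\bigl(f(x),\,\tilde T_{f(x)}(\tau(x))\bigr) \quad \text{for } x \in \mathcal{T},\qquad T(x) := x \quad \text{for } x \in X \setminus \mathcal{T},
\]
where $(f(x),\tau(x)) = g^{-1}(x)$ is given by Proposition \ref{P:gammaclass}. Then $T$ is $\mu$-measurable; by redefining it on a $\mu$-negligible Borel set we may take it Borel. The identity $T_\sharp \mu = \nu$ is verified ray by ray: for a Borel set $B \subset X$,
\[
T_\sharp \mu(B) = \int_S \mu_y\bigl(T^{-1}(B)\bigr)\, m(dy) = \int_S \nu_y(B)\, m(dy) = \nu(B),
\]
using $(\tilde T_y)_\sharp \tilde\mu_y = \tilde\nu_y$ and the extension \eqref{E:extere} to handle any $\nu$-mass outside $\mathcal{T}_e$ (which agrees with the identity part of $T$, cf.\ Lemma \ref{L:mapoutside} and Proposition \ref{P:ortho}). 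The cost equality follows by Fubini:
\[
\int d_L(x,T(x))\,\mu(dx) = \int_S \left( \int d_L\bigl(x,T(x)\bigr)\, \mu_y(dx) \right) m(dy) = \int_S \left( \int d_L \, \pi_y \right) m(dy) = \int d_L\, \pi.
\]

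The main obstacle is the joint Borel measurability of the family $y \mapsto \tilde T_y$; once this is in hand, gluing via $g$ and checking the marginal and cost identities is routine, because every nontrivial step has already been absorbed into the disintegration machinery of Section \ref{S:partition} and the one-dimensional Theorem \ref{T:oneDmonge}.
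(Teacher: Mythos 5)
Your proposal is correct and follows essentially the same route as the paper: reduce via the ray map $g$ to a transport problem on $S\times\R$, restrict to compact subsets of $S$ where $y\mapsto(\mu_y,\nu_y)$ is weakly continuous so that the distribution functions $H(y,s)$, $F(y,t)$ and hence the monotone rearrangement $T(y,s)$ are Borel, and conclude by noting that $\pi_y$ and the monotone map are both optimal for the one-dimensional problem on $R(y)$, so the costs agree. The only differences are cosmetic (the paper records $H$, $F$ as l.s.c. rather than merely Borel, and checks measurability of $T$ via preimages of sets $A\times[t,+\infty)$, which is the concrete form of your ``generalized inverse is a Borel operation'' step).
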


\begin{proof}
By means of the map $g^{-1}$, we reduce to a transport problem on $S \times \R$, with cost
\[
c((y,s),(y',t)) =
\begin{cases}
|t - s| & y = y' \crcr
+ \infty & y \not= y'
\end{cases}
\]
It is enough to prove the theorem in this setting under the following assumptions: $S$ compact and $S \ni y \mapsto (\mu_y,\nu_y)$ weakly continuous. We consider here the probabilities $\mu_y$, $\nu_y$ on $\R$.

{\it Step 1.} From the weak continuity of the map $y \mapsto (\mu_y,\nu_y)$, it follows that the maps
\[
(y,t) \mapsto H(y,t) := \mu_y((-\infty,t)), \ \ (y,t) \mapsto F(y,t) := \nu_y((-\infty,t))
\]
are easily seen to be l.s.c..  Both are clearly increasing in $t$. Note also that $H$ is continuous in $t$.

{\it Step 2.} The map $T$ defined as Theorem \ref{T:oneDmonge} by
\[
T(y,s) := \Big( y, \sup \big\{ t : F(y,t) \leq H(y,s) \big\} \Big)
\]
is Borel. In fact, for $A$ Borel,
\[
T^{-1}(A \times [t,+\infty)) = \big\{ (y,s) : y \in A, H(y,s) \geq F(y,t) \big\} \in \mathcal{B}(S \times \R).
\]

{\it Step 3.} Note that $\pi_{y}$ and $T(y,\cdot)$ are both optimal for the transport problem between $\mu_{y}$ and $\nu_{y}$ with cost $d_{L}$ restricted to $R(y)$. 
Indeed $d_{L}$ restricted to $R(y)\times R(y)$ is finite. Therefore $\pi_{y}$ and $T(y,\cdot)$ have the same cost.
\end{proof}

\begin{remark}\label{R:monot}
By the definition of the set $G$, it follows that along each geodesic $\mu_y(g(y,(-\infty,t))) \geq \nu_y(g(y,(-\infty,t)))$, because in the opposite case $G$ is not $d_L$-cyclically monotone. Hence $T(s) \geq s$, and $c((y,s),T(y,s)) =P_{2}( T(y,s)) - s$. Hence 
\begin{equation}\label{E:costo}
\int d_{L} \pi =\int d_{L}(x,T(x))\mu(dx) =  \int_{S\times \erre} s \big(g(y,\cdot)^{-1}_{\sharp}(\nu_{y} - \mu_{y})\big)(ds)m(dy)= \int P_{2}(g^{-1}(x)) (\nu - \mu)(dx).
\end{equation}
\end{remark}

\section{Dynamic interpretation}
\label{S:div}

In this section we show how the regularity of the disintegration yields a correct definition of the current $\dot g$ representing the flow along the geodesics of an optimal transference plan. This allows to solve the PDE
\[
\partial U = \mu - \nu
\]
in the sense of currents in metric spaces. In particular, under additional regularity assumptions, one can prove that the boundary $\partial \dot g$ is well defined and satisfies an ODE along geodesics. This gives a dynamic interpretation to the transport problem.

The setting here is slightly different from the previous sections: 
\begin{enumerate}
\item $d(x,y) \leq d_L(x,y)$;
\item there exists a probability measure $\eta$, such that it (or more precisely $\eta \llcorner_{\mathcal{T}_e}$) satisfies Assumption \ref{A:NDE} along the transport rays of the transportation problem with marginals $\mu$, $\nu$;
\item $\mu \ll \eta$, so that also $\mu$ satisfies Assumption \ref{A:NDE}.
\end{enumerate}
In particular, $\textrm{Lip}(X) \subset \textrm{Lip}_{d_L}(X)$.

The main reference for this chapter is \cite{ambkir:currentmetric}.

\subsection{Definition of $\dot g$}
\label{Ss:dotgam}

For any Lipschitz function $\omega : X \to \erre$ we can define the derivative $\partial_t \omega$ along the geodesic $g(t,y)$ for a.e. $t \in \R$,
\[
\partial_t \omega(g(y,t)) := \frac{d}{dt} \omega(g(t,y)).
\]
Using the disintegration formula
\[
\eta \llcorner_{\mathcal T} = \int (g(y,\cdot))_\sharp (q(y,\cdot) \mathcal L^1) m(dy) = g_\sharp (q m \otimes \mathcal L^1)
\]
for some $q \in L^{1}(m \otimes \mathcal{L}^{1})$ (Theorem \ref{teo:a.c.}), we can define the measure $\partial_t \omega \eta$ as
\[
\int \phi(x) (\partial_{t} \omega \eta)(dx) :=  \int_S \int_\R \phi(g(y,t)) \partial_t \omega(g(y,t)) q(y,t) dt m(dy).
\]
where $\phi \in C_b(X,\erre)$.

\begin{definition}
\label{D:dotgamma}
We define the \emph{flow $\dot g$} as the current
\[
\langle \dot{g}, (h, \omega) \rangle = \int_{S \times \R}  h(g(y,t)) \partial_t \omega(g(y,t)) q(y,t) dt m(dy)
\]
where $h$, $\omega$ are Lipschitz functions of $(X,d)$ with $h$ bounded.
\end{definition}

It is fairly easy to see that $\dot g$ is a current: in fact,
\begin{enumerate}
\item $\dot g$ has finite mass, namely
\[
\big| \langle \dot g, (h,\omega) \rangle \big| \leq \textrm{Lip}(\omega) \int h \eta;
\]
\item $\dot g$ is linear in $h$, $\omega$;
\item if $\omega_n \to \omega$ pointwise in $X$ with uniformly bounded Lipschitz constant, then by Lebesgue Dominated Convergence Theorem if follows that
\[
\lim_{n \to +\infty} \langle \dot g, (h_n,\omega_n) \rangle = \langle \dot g, (h,\omega) \rangle;
\]
\item $\langle \dot g, (h,\omega) \rangle = 0$ if $\omega$ is constant in $\{h \not= 0\}$.
\end{enumerate}

In general, $\dot g$ is only a current, with boundary $\partial \dot g$ defined by the duality formula
\begin{equation}
\label{E:boundgamma}
\langle \partial \dot g, \omega \rangle = \langle \dot g, (1,\omega) \rangle.
\end{equation}
Under additional assumptions, the current $\dot g$ is a normal current, i.e. $\partial \dot g$ is also a scalar current, in particular it is a bounded measure on $(X,d)$.

\begin{lemma}
\label{L:normalcurr}
Assume that $q(y,\cdot) : \R \to \R$ belongs to $\BV(\R)$ for $m$-a.e. $y$ and
\[
\sigma_y := - \frac{d}{dt} q(y,t), \quad \int_S |\sigma_y(\R)| m(dy) = \int_S \TV(q(y,\cdot) )m(dy) < + \infty.
\]
Then $\dot g$ is a normal current and its boundary is given by
\[
\langle \partial \dot g, \omega \rangle = \int_S \int_\R \omega(g(y,t)) \sigma_y(dt) m(dy).
\]
\end{lemma}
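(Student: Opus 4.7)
The plan is to unfold the definition of the boundary,
\[
\langle \partial \dot g, \omega\rangle = \langle \dot g, (1,\omega)\rangle,
\]
which by Definition \ref{D:dotgamma} equals $\int_S \int_\R \partial_t \omega(g(y,t))\, q(y,t)\, dt\, m(dy)$, and then, for $m$-a.e. fixed $y$, to perform a one-dimensional integration by parts in the inner integral. The function $t \mapsto \omega(g(y,t))$ is Lipschitz on $\R$: indeed $g(y,\cdot)$ is $d_L$-$1$-Lipschitz by Proposition \ref{P:gammaclass}(3), and $d\leq d_L$, so $\textrm{Lip}(\omega\circ g(y,\cdot)) \leq \textrm{Lip}_d(\omega)$. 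On the other hand, $q(y,\cdot) \in \BV(\R)$ by hypothesis, with distributional derivative the finite signed measure $-\sigma_y$.

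Next I would invoke the standard integration-by-parts formula for $\BV$ functions paired against a Lipschitz function to obtain
\[
\int_\R \partial_t[\omega\circ g(y,\cdot)](t)\,q(y,t)\,dt = \int_\R \omega(g(y,t))\,\sigma_y(dt) + \big[\omega(g(y,\cdot))\,q(y,\cdot)\big]_{-\infty}^{+\infty}.
\]
The main technical point is to show that the boundary contribution vanishes: since $q(y,\cdot)\in \BV(\R)$ it admits limits at $\pm\infty$, and since $\int_\R q(y,t)\,dt = \mu_y(R(y))\leq 1$ these limits are forced to equal zero. Choosing the pointwise representative of $q(y,\cdot)$ consistent with the measure-theoretic derivative $\sigma_y$ therefore eliminates the boundary term, yielding the identity
\[
\int_\R \partial_t\omega(g(y,t))\,q(y,t)\,dt = \int_\R \omega(g(y,t))\,\sigma_y(dt)
\]
for $m$-a.e. $y\in S$.

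Finally, integrating over $y$ against $m$ and applying Fubini, which is legal because
\[
\int_S\int_\R |\omega(g(y,t))|\,|\sigma_y|(dt)\,m(dy)\leq \|\omega\|_\infty \int_S |\sigma_y|(\R)\,m(dy)<\infty
\]
by hypothesis, yields the stated representation of $\partial\dot g$, and shows at the same time that $\omega \mapsto \langle \partial \dot g,\omega\rangle$ extends to a bounded linear functional on $C_b(X,d)$ with norm bounded by $\int_S|\sigma_y|(\R)\,m(dy)$. By the Ambrosio--Kirchheim theory \cite{ambkir:currentmetric} this forces $\partial\dot g$ to be represented by a finite signed Borel measure on $(X,d)$, so $\dot g$ is a normal current, as claimed. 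The delicate step is the vanishing of the boundary contribution at $\pm\infty$, which rests on the combined $\BV$ and $L^1$ character of $q(y,\cdot)$ and the correct choice of its pointwise representative matching $\sigma_y$; every other step is routine Fubini or the linear algebra of the Ambrosio--Kirchheim mass functional.
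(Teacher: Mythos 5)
Your proposal is correct and follows essentially the same route as the paper: the paper packages the one-dimensional integration by parts through the identity $q(y,t)=\sigma_y((t,+\infty))$ (which encodes exactly your observation that the $\BV\cap L^1$ character of $q(y,\cdot)$ forces its limits at $\pm\infty$ to vanish) and a Fubini exchange, and uses that same identity to settle the $m$-measurability of $y\mapsto\sigma_y$ needed for the outer integral. The only slip is notational: $q$ is the density of the conditional probabilities of the background measure $\eta$, so $\int_\R q(y,t)\,dt=\eta_y(R(y))$ rather than $\mu_y(R(y))$, which does not affect the argument.
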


Note that in the above formula we cannot restrict $\sigma_y$ to $g^{-1}(\mathcal T)$: in fact, in general
\[
\int_S (g(y,\cdot)_\sharp \sigma_y)(\mathcal T_e \setminus \mathcal T) m(dy) > 0.
\]

\begin{proof}
First of all, by using the formula $q(y,t) = \sigma_y((t,+\infty))$, it follows that $\sigma_y$ is $m$-measurable, i.e. for all $\phi \in C_b(X,\erre)$ the integral
\[
\int \bigg( \int \phi(g(y,t)) \sigma_y(dt) \bigg) m(dy)
\]
is meaningful and then
\[
\int \big( g(y,\cdot)_\sharp \sigma_y \big) m(dy)
\]
is a finite measure on $(X,d)$.

A direct computation yields
\begin{align*}
\langle \partial \dot g, \omega \rangle = &~ \langle \dot g, (1,\omega) \rangle = \int_S \int_\R \partial_t \omega(g(t,y)) \sigma_y((t,+\infty)) dt m(dy) = \int_S \int_\R \omega(g(t,y)) \sigma_y(dt) m(dy).
\end{align*}
\end{proof}

\begin{remark}
\label{R:abscurr}
In many cases the measure $\int (g(y,\cdot)_\sharp \sigma_y) \llcorner_{\mathcal{T}} m(dy)$ is absolutely continuous w.r.t. $\eta$, i.e. for $m$-a.e. $y$
\[
\sigma_{y}\llcorner_{\mathcal{T}} = h(g(t,y)) q(y,t) \mathcal{L}^1.
\]
for some $h \in L^1(\eta)$. In that case we obtain that
\begin{align*}
\langle \partial \dot g, \omega \rangle =&~ \int \omega(b(y)) \sigma_y \big( P_2(\{g^{-1}(b(y))\}) \big) m(dy) \crcr
&~ - \int \omega(a(y)) \sigma_y \big( P_2(\{g^{-1}(a(y))\}) \big) m(dy) + \int \omega(x) h(x) \eta(dx).
\end{align*}
\end{remark}

\subsection{Transport equation}
\label{Ss:transpT}

We now consider the problem $\partial U = \mu - \nu$ in the sense of currents:
\[
\langle U, (1,\omega) \rangle = \langle \mu - \nu, \omega \rangle = \int \omega(x) (\mu - \nu)(dx).
\]
Using the disintegration formula and \eqref{E:muy}, \eqref{E:nuy} we can write
\[
\langle U, (1,\omega) \rangle = \int_S \bigg\{ \int_\R \omega(g(y,t)) (g^{-1}(y,\cdot)_\sharp \mu_y)(dt) - \int_\R \omega(g(y,t)) (g^{-1}(y,\cdot)_\sharp \nu_{y})(dt)  \bigg\} m(dy).
\]
By integrating by parts we obtain
\begin{align*}
\int_\R \omega(g(y,t)) (g^{-1}(y,\cdot)_\sharp \mu_y)(dt) =&~ - \int_\R \mu_y(g(y,(-\infty,t))) \partial_t \omega(g(y,t)) dt = - \int_\R H(y,t) \partial_t \omega(g(y,t)) dt,
\end{align*}
\begin{align*}
\int_\R \omega(g(y,t)) (g^{-1}(y,\cdot)_\sharp \nu_y)(dt) = - \int_\R \nu_y(g(y,(-\infty,t))) \partial_t \omega(g(y,t)) dt = - \int_\R F(y,t) \partial_t \omega(g(y,t)) dt.
\end{align*}

Observe that the map 
\[
S \times \R \ni (y,t) \mapsto F(y,t) - H(y,t) \in \R
\]
is in $L^1(m \otimes \mathcal L^1)$ if the transport cost $\mathcal{I}(\pi)$ is finite: in fact, using the fact that $F(y,t) \leq H(y,t)$ and integrating by parts,
\begin{equation}
\label{E:elle1}
\int_\R H(y,t) - F(y,t) dt = \int_\R  (g^{-1}(y,\cdot)_\sharp \mu_y - g^{-1}(y,\cdot)_\sharp \nu_y)(-\infty,t) (dt) = \int_{\R^2} (t - s) \tilde \pi_{y}(ds,dt),
\end{equation}
where $\tilde \pi_{y}$ is the monotone rearrangement.

We deduce the following proposition.

\begin{proposition}
\label{P:bidual}
Under Assumption \ref{A:NDEatom}, a solution to $\partial U = \mu - \nu$ is given by the current $U$ defined as
\[
\langle U, (h,\omega) \rangle = \int_S \bigg( \int_\R (F(y,t) - H(y,t)) h(g(y,t)) \partial_t \omega(g(y,t)) dt \bigg) m(dy).
\]
\end{proposition}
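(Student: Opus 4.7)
The plan is to verify that the formula for $U$ defines a current in the sense of Ambrosio--Kirchheim and then to reverse the integration by parts performed immediately before the statement so as to recover $\mu - \nu$ as boundary.

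First I would check that the expression is well-posed. Since $h$ is Lipschitz and bounded on $(X,d)$ and $d \le d_L$, the composition $t \mapsto h(g(y,t))$ is bounded; similarly $t \mapsto \omega(g(y,t))$ is Lipschitz with Lipschitz constant $\le \mathrm{Lip}(\omega)$, so $\partial_t \omega(g(y,t))$ exists for $\mathcal L^1$-a.e.\ $t$ and $m$-a.e.\ $y$, and is dominated by $\mathrm{Lip}(\omega)$. The integrability of $|F - H|$ against $m \otimes \mathcal L^1$ is precisely the content of \eqref{E:elle1}, where finiteness follows from the finite transport cost assumed at the outset (since $\tilde \pi_y$ is the monotone rearrangement and by Remark \ref{R:monot} this gives the $d_L$-cost). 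Hence we have the mass bound
\[
\bigl| \langle U, (h,\omega)\rangle \bigr| \le \|h\|_\infty \, \mathrm{Lip}(\omega) \int_S \!\!\int_\R \bigl( H(y,t) - F(y,t) \bigr)\, dt\, m(dy) \;<\; +\infty.
\]

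Next I would verify the remaining axioms of a metric current. Linearity in $h$ and $\omega$ is immediate. If $\omega_n \to \omega$ pointwise with uniformly bounded Lipschitz constants, then on each ray $\partial_t \omega_n(g(y,t)) \weak \partial_t \omega(g(y,t))$ weakly in $L^\infty$, and since $(H-F)h(g(y,\cdot)) \in L^1(m\otimes\mathcal L^1)$, dominated convergence (applied first on each ray, then in $y$) yields continuity. Locality is equally direct: if $\omega$ is constant on the open set where $h \ne 0$, then for every $y \in S$ we have $\partial_t\omega(g(y,t))=0$ whenever $h(g(y,t)) \ne 0$, since $g(y,\cdot)$ is $d$-continuous by the standing assumption on geodesics.

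The core step is computing the boundary. Setting $h \equiv 1$, I would apply Fubini on $S \times \R$ and integrate by parts in $t$. Since $H(y,t) = \mu_y(g(y,(-\infty,t)))$ is the distribution function of $(g(y,\cdot))^{-1}_\sharp \mu_y$ and $H(y,-\infty)=0$, $H(y,+\infty)=1$ (and analogously for $F$), the one-dimensional integration by parts gives
\[
-\int_\R H(y,t)\, \partial_t\omega(g(y,t))\, dt \;=\; \int_\R \omega(g(y,t))\, \bigl((g(y,\cdot))^{-1}_\sharp \mu_y\bigr)(dt),
\]
and the analogous identity for $F$ and $\nu_y$. Here I use Assumption \ref{A:NDEatom} and Lemma \ref{L:puntini} to know that $\mu$ (and by the same construction applied to $\nu$ on $\mathcal T$) is concentrated on $\mathcal T$, so that the disintegration formulas \eqref{E:muy}--\eqref{E:nuy} apply and $\mu - \nu = \int (\mu_y - \nu_y)\, m(dy)$. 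Pushing forward by $g(y,\cdot)$ and integrating in $m$ then gives $\langle U,(1,\omega)\rangle = \int \omega\, d(\mu-\nu)$, which is \eqref{E:boundgamma}.

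The main obstacle I expect is purely bookkeeping: ensuring that the $L^1$ bound \eqref{E:elle1} is strong enough to justify all applications of Fubini and dominated convergence, and checking the boundary values of $H$ and $F$ at $\pm\infty$ along each ray, which requires that neither $\mu_y$ nor $\nu_y$ places mass at the endpoints; under Assumption \ref{A:NDEatom} this follows from Lemma \ref{L:puntini} for $\mu$, and for $\nu$ the boundary terms that might survive at endpoints contribute zero because $\omega$ is evaluated at the same endpoints in both integrations by parts and hence cancels when one forms the difference $F - H$ inside the definition of $U$.
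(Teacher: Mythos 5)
Your proposal is correct and follows essentially the same route as the paper: the paper's ``proof'' is precisely the integration-by-parts computation displayed immediately before the statement, which converts $\int \omega\, d(\mu-\nu)$ ray by ray into the given formula, with the $L^1$ bound \eqref{E:elle1} guaranteeing finite mass. You merely run the computation in the reverse direction and make explicit the verification of the current axioms and the cancellation of boundary terms in $H-F$, details the paper leaves implicit.
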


In general, the solution is not unique: just add a boundary free current to our solution.

Some further assumptions allow to represent our solution $U$ as the product of a scalar $\rho$ with the current $\dot g$.

\begin{proposition}
\label{P:dualregu}
Assume that $q(y,t) > 0$ whenever $H(y,t) - F(y,t) > 0$. Then $R = \rho \dot g$, where
\[
\rho(g(y,t)) = \frac{F(y,t) - H(y,t)}{q(y,t)}.
\]
\end{proposition}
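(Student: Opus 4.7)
I read the conclusion ``$R = \rho\dot g$'' as a typographical misprint for ``$U = \rho\dot g$'', where $U$ is the current constructed in Proposition \ref{P:bidual}: in this section $R$ has always denoted the transport ray relation, not a current, while the introduction explicitly previews the representation $U = \rho\dot g$ in exactly this context. Under this reading the proof is essentially a substitution. The plan is: (i) verify that the proposed $\rho$ is a well-defined measurable function on $\mathcal T$, (ii) check the integrability needed to make sense of the scalar--current product $\rho\dot g$, and (iii) match the two defining integrals.

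First I would check well-definedness. By Remark \ref{R:monot} we have $H(y,t) \geq F(y,t)$ for $m \otimes \mathcal L^1$-a.e. $(y,t)$. Hence $F - H \leq 0$, and the standing hypothesis ``$q(y,t) > 0$ whenever $H(y,t) - F(y,t) > 0$'' ensures that the quotient $(F-H)/q$ is defined pointwise on the set $\{H > F\}$; one sets $\rho \equiv 0$ on the complement (including outside $g(S\times\R)$). Measurability then follows from the joint measurability of $H$, $F$ (distribution functions of the weakly $\sigma$-continuous families $y\mapsto \mu_y$, $y\mapsto\nu_y$ used in Section \ref{S:regurlr}) together with the $m\otimes\mathcal L^1$-measurability of $q$ (the density provided by Theorem \ref{teo:a.c.}).

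Next I would secure the integrability needed to define $\rho\dot g$ in the sense of \cite{ambkir:currentmetric}, namely $\langle \rho\dot g,(h,\omega)\rangle := \langle\dot g,(\rho h,\omega)\rangle$. Because $\dot g$ has density $q$ with respect to $g_\sharp(m\otimes\mathcal L^1)$, the relevant requirement is $\rho q \in L^1(m\otimes\mathcal L^1)$. But on $\{H>F\}$ one has $\rho(g(y,t))\, q(y,t) = F(y,t)-H(y,t)$, and $\rho q \equiv 0$ elsewhere; and (\ref{E:elle1}) shows precisely that $\int_S\int_\R (H-F)\,dt\,m(dy)$ equals the (finite) transport cost $\mathcal I(\pi)$. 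Hence $\rho q \in L^1(m\otimes\mathcal L^1)$ and the product $\rho\dot g$ is a well-defined current with finite mass, extendable from bounded Lipschitz $h$ to the situation required here by a routine truncation/density argument.

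The matching step is then immediate: for any bounded Lipschitz $h$ and Lipschitz $\omega$,
\begin{align*}
\langle \rho\dot g,(h,\omega)\rangle
&= \int_S\!\int_\R \rho(g(y,t))\, h(g(y,t))\, \partial_t\omega(g(y,t))\, q(y,t)\, dt\, m(dy) \\
&= \int_S\!\int_\R \bigl(F(y,t)-H(y,t)\bigr)\, h(g(y,t))\, \partial_t\omega(g(y,t))\, dt\, m(dy) \\
&= \langle U,(h,\omega)\rangle,
\end{align*}
the last equality being the definition of $U$ from Proposition \ref{P:bidual}. The only genuinely delicate point is the extension of the Ambrosio--Kirchheim scalar-multiplication beyond bounded Lipschitz scalars to the possibly unbounded Borel function $\rho$; this is handled by the fact that $\rho$ is $q$-integrable along $m$-a.e. geodesic, so that truncations $\rho_N := (\rho\wedge N)\vee(-N)$ give currents $\rho_N\dot g$ converging to $\rho\dot g$ in mass, each agreeing with $U$ up to an error that vanishes as $N\to\infty$.
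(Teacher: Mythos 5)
Your proposal is correct and follows essentially the same route as the paper: the whole content is the pointwise identity $\rho(g(y,t))\,q(y,t)=F(y,t)-H(y,t)$, the integrability of $\rho$ against $\eta$ obtained from \eqref{E:elle1} (finiteness of the transport cost), and the resulting coincidence of the integrals defining $U$ in Proposition \ref{P:bidual} and $\rho\dot g$; your reading of ``$R$'' as a misprint for the current $U$ matches the paper's intent, as Corollary \ref{C:regudual} confirms. The extra care you take with measurability and with truncating the possibly unbounded $\rho$ is sound but not a different argument, only a more detailed write-up of the paper's one-line computation.
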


\begin{proof}
It is enough to observe that
\begin{align*}
\int_{S \times \R} F(y,t) - H(y,t) dt m(dy) =&~ \int_{S \times \R} \frac{F(y,t) - H(y,t)}{q(y,t)} q(y,t) dt m(dy) \crcr
=&~ \int_{S \times \R} \rho(g(y,t)) q(y,t) dt m(dy) = \int_X \rho(x) \eta(dx),
\end{align*}
and from \eqref{E:elle1} we conclude that $\rho \in L^1(\eta)$.
\end{proof}

\begin{corollary}
\label{C:regudual}
If $q(y,t) \not= 0$ for $m \otimes \mathcal{L}^1$-a.e. $(y,t) \in g^{-1}(\mathcal{T})$, then there exists a scalar function $\rho$ such that $\partial (\rho \dot g) = \mu - \nu$.
\end{corollary}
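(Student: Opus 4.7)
The plan is to reduce directly to Proposition \ref{P:dualregu} by treating the exceptional set $E := \{(y,t) \in g^{-1}(\mathcal T) : q(y,t) = 0\}$, which by hypothesis is $(m \otimes \mathcal L^1)$-negligible. Since $q \geq 0$ is a Radon-Nikodym density, $q(y,t) \neq 0$ means $q(y,t) > 0$, so outside $E$ the formula from Proposition \ref{P:dualregu} makes sense. First I would define
\[
\rho(x) := \begin{cases} (F(y,t) - H(y,t))/q(y,t) & \text{if } x = g(y,t) \text{ with } (y,t) \in g^{-1}(\mathcal T) \setminus E, \\ 0 & \text{otherwise,} \end{cases}
\]
using Proposition \ref{P:gammaclass}(4) (the injectivity of $g$ on $g^{-1}(\mathcal T)$) to see that $\rho$ is well-defined on $\mathcal T$, and extending by $0$ outside $\mathcal T$. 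Because $\eta \llcorner_{\mathcal T} = g_\sharp(q\, m \otimes \mathcal L^1)$, the image $g(E)$ is $\eta$-negligible, so $\rho$ is unambiguously defined as an element of the $\eta$-a.e. equivalence class.

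Next I would check $\rho \in L^1(\eta)$ by a change of variables:
\[
\int_X |\rho|\, \eta(dx) = \int_{S \times \R} \frac{|F(y,t) - H(y,t)|}{q(y,t)}\, q(y,t)\, dt\, m(dy) = \int_{S \times \R} |F - H|\, dt\, m(dy),
\]
and the last integral is finite by the identity \eqref{E:elle1}, since the transport cost $\mathcal I(\pi)$ is finite by assumption. Hence $\rho \dot g$ is a well-defined current whose mass is controlled by $\|\rho\|_{L^1(\eta)}$.

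Finally I would compare $\rho \dot g$ with the current $U$ from Proposition \ref{P:bidual}. For every Lipschitz $h$ bounded and every Lipschitz $\omega$, using Definition \ref{D:dotgamma} and the identity $\rho(g(y,t))\, q(y,t) = F(y,t) - H(y,t)$ valid $(m \otimes \mathcal L^1)$-a.e.,
\[
\langle \rho \dot g, (h,\omega) \rangle = \int_{S \times \R} (F - H)\, (h \circ g)\, (\partial_t \omega \circ g)\, dt\, m(dy) = \langle U, (h,\omega) \rangle,
\]
the exceptional set $E$ contributing nothing because it is $(m \otimes \mathcal L^1)$-negligible. Taking boundaries and invoking Proposition \ref{P:bidual} yields $\partial(\rho \dot g) = \partial U = \mu - \nu$.

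There is no serious obstacle here: the only subtlety is that Proposition \ref{P:dualregu} is phrased with a pointwise assumption on $q$ while the Corollary only requires an a.e. version, but this is absorbed by defining $\rho = 0$ on the null set $g(E)$ and noting that neither the integral defining $\rho \dot g$ nor the $L^1(\eta)$ norm of $\rho$ sees this modification.
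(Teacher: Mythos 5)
Your argument is correct and is exactly the route the paper intends: the corollary is a direct combination of Proposition \ref{P:bidual} and Proposition \ref{P:dualregu}, with the a.e.\ non-vanishing of $q$ absorbed by modifying $\rho$ on the $\eta$-negligible set $g(E)$ and the integrability of $\rho$ coming from \eqref{E:elle1}. No gap; the paper leaves these details implicit and you have simply written them out.
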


\section{Stability of the non degeneracy condition}
\label{S:limite}

In this section we prove a general approximation theorem, which will be then applied to the Measure-Gromov-Hausdorff (MGH) convergence: if a uniform estimate holds for the disintegration in the approximating spaces, we deduce the regularity of the disintegration also in the limit.

\subsection{A general stability result}
\label{Ss:genstb}

We consider the following setting:
\begin{enumerate}
\item $\mu_n$ is a sequence of measure converging to $\mu$ weakly;
\item there exists functions $g_n : S_n \times \R \to X$, $S_n \subset X$ Borel, and measures $r_n m_n \otimes \mathcal{L}^1 \in \mathcal P(S_n \times \R)$ such that

\begin{equation}\label{enum:erren}
\mu_n = (g_n)_\sharp \big( r_n m_n \otimes \mathcal{L}^1 \big).
\end{equation}
\end{enumerate}

% \begin{definition}\label{D:equi}
% Let $\{\xi_n\}_{n \in \enne} \subset \mathcal{P}(Y)$ 
% \end{definition}

The following is the basic tool for our stability result.
\begin{proposition}
\label{P:limite}
Let $Y$ be a Polish space, $\{\xi_n\}_{n \in \enne} \subset \mathcal{P}(Y)$ such that $\xi_n \rightharpoonup \xi$. 
Consider $\{r_n\}_{n \in \enne}$, $r_n \geq 0$, such that $r_n \in L^{1}( \xi_n )$, $r_n \xi_n \weak \zeta$ and the following equintegrability condition holds:
%\[
%\forall \delta > 0 \ \exists A_\delta >0 \ \bigg( \xi_{n}(A_{\delta}) \leq C_{\delta}, \ r_{n}\xi_{n}(X\times Y \setminus A_{\delta}) < \delta \bigg).
%\]
%and 
\[
\forall \ve > 0 \ \exists \delta >0  \ \bigg( \forall A \in \mathcal B, \xi_n(A) < \delta \quad \Longrightarrow \quad \int_A r_n \xi_n < \ve \bigg).
\]
Then there exists $r \in L^1(\xi)$ such that $\zeta= r \xi$.
\end{proposition}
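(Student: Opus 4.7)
The plan is to show that $\zeta$ is absolutely continuous with respect to $\xi$ and then invoke the Radon--Nikodym theorem: since both $\xi_n$ are probabilities and the weak limit $r_n \xi_n \weak \zeta$ forces uniform boundedness of the total masses (test against $\phi \equiv 1 \in C_b(Y)$), the limit $\zeta$ is a finite positive Borel measure, and any Radon--Nikodym density will automatically lie in $L^1(\xi)$.

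The core of the argument is absolute continuity, and the key device is a combination of the Portmanteau characterization of weak convergence with the equintegrability hypothesis. First I would prove the claim for closed sets: if $C \subset Y$ is closed with $\xi(C)=0$, then by Portmanteau
\[
\limsup_n \xi_n(C) \leq \xi(C) = 0,
\]
so $\xi_n(C) \to 0$. Given $\ve > 0$, pick $\delta$ from the equintegrability condition; for $n$ large enough one has $\xi_n(C) < \delta$, hence $(r_n \xi_n)(C) = \int_C r_n \xi_n < \ve$. Applying Portmanteau again to the closed set $C$ with the measures $r_n \xi_n \weak \zeta$ yields
\[
\zeta(C) \leq \limsup_n (r_n \xi_n)(C) \leq \ve,
\]
and letting $\ve \downarrow 0$ gives $\zeta(C)=0$.

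To pass from closed sets to arbitrary Borel sets $A$ with $\xi(A) = 0$, I would use inner regularity of $\zeta$: since $Y$ is Polish, every finite Borel measure is Radon, so
\[
\zeta(A) = \sup \bigl\{ \zeta(K) : K \subset A,\ K \text{ compact} \bigr\}.
\]
Every such $K$ is closed and satisfies $\xi(K) \leq \xi(A) = 0$, so by the previous step $\zeta(K)=0$, and hence $\zeta(A)=0$. This establishes $\zeta \ll \xi$, and the Radon--Nikodym theorem produces $r \in L^1(\xi)$ with $\zeta = r \xi$.

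The main obstacle I anticipate is the direction of the Portmanteau inequalities: equintegrability provides an upper bound on $(r_n \xi_n)$-mass only on sets of small $\xi_n$-mass, whereas weak convergence gives upper bounds on $\xi_n$-mass only on closed sets. This is why I restrict the first step to closed $C$ (where both inequalities point the right way) and use Radon inner regularity to lift to Borel sets, rather than trying to work with open neighbourhoods of $A$, which would only yield lower bounds on $\xi_n$ via Portmanteau and would not interact usefully with the equintegrability condition.
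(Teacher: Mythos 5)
Your overall architecture (reduce to $\zeta \ll \xi$, then Radon--Nikodym; reduce Borel null sets to closed ones by inner regularity) is sound, but the central step contains a genuine error: the inequality $\zeta(C) \leq \limsup_n (r_n\xi_n)(C)$ for closed $C$ is the \emph{wrong} direction of the Portmanteau theorem. For finite measures $\mu_n \rightharpoonup \mu$ and $C$ closed, Portmanteau gives $\limsup_n \mu_n(C) \leq \mu(C)$; the reverse inequality fails in general, e.g.\ $\mu_n = \delta_{1/n} \rightharpoonup \delta_0$ on $\R$ with $C = \{0\}$ gives $\mu_n(C) = 0$ for every $n$ while $\mu(C) = 1$. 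So knowing $(r_n\xi_n)(C) < \varepsilon$ for large $n$ tells you nothing about $\zeta(C)$: exactly this kind of concentration of the limit onto a $\xi$-null closed set is what the proposition must rule out, and your argument assumes it away. The upper bound on the limit measure in terms of the approximants is only available on \emph{open} sets, via $\zeta(U) \leq \liminf_n (r_n\xi_n)(U)$; this is why your closing paragraph, where you argue that closed sets make ``both inequalities point the right way,'' is precisely backwards.

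The gap is fixable, but not by staying with indicators of closed sets. The paper's route is to test against nonnegative $\phi \in C_b(Y)$ with $\int \phi\, \xi$ small (such $\phi \geq \chi_B$ exist when $\xi(B)=0$, by outer regularity): then $\int \phi\, r_n\xi_n \to \int \phi\, \zeta$ exactly, and one splits $\int \phi\, r_n\xi_n$ over $\{\phi > \delta\}$, where Chebyshev combined with $\int\phi\,\xi_n \leq \delta^2$ gives $\xi_n(\{\phi>\delta\}) \leq \delta$ so that equintegrability applies, and over $\{\phi \leq \delta\}$, where the integrand is at most $\delta\, r_n$ and the total masses $(r_n\xi_n)(Y)$ are bounded. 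Alternatively, you could repair your set-theoretic version by sandwiching $C$ inside an open $\xi$-continuity set: taking $U_t = \{x : d(x,C) < t\}$ with $t$ small and $\xi(\partial U_t) = 0$, one has $\xi_n(U_t) \to \xi(U_t) < \delta$, hence $(r_n\xi_n)(U_t) < \varepsilon$ eventually, and then the \emph{correct} Portmanteau inequality on the open set gives $\zeta(C) \leq \zeta(U_t) \leq \liminf_n (r_n\xi_n)(U_t) \leq \varepsilon$. Either way, some continuity or regularization of the test set is unavoidable; as written, your proof does not go through.
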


\begin{proof}
%Since the total variation of $r_n \xi_n$ is uniformly bounded and $r_n$ is equintegrable, from the fact that $\xi_n  \rightharpoonup \xi$  
We will show that $\zeta(B) = 0$ for all $B$ such that $\xi(B) = 0$. Clearly by inner and outer regularity, it is enough to prove the following statement:
\[
\forall \ve > 0 \ \exists \delta >0 \ \bigg( \phi \in C_{b}(Y), \phi\geq 0, \ \int \phi \xi < \delta \quad \Longrightarrow \int \phi \zeta < \ve \bigg).
\]

Fix $\ve > 0$ and take the corresponding $\delta$ given by the equintegrability condition on $r_n$. Clearly w.l.o.g. $\delta\leq\ve$.
Consider $\phi \in C_b(Y)$ positive such that
\[
\int \phi \xi \leq \delta^2/2.
\]
From the weak convergence for $n$ great enough
\[
\int \phi \xi_n \leq \delta^2,
\]
so that we can estimate
\[
\int \phi r_n \xi_n \leq \int_{\phi > \delta} r_n \xi_n + \delta <  \ve+\delta . 
\]
Hence $\int \phi \zeta < 2 \ve$.
\end{proof}

\begin{theorem}
\label{T:apprograph}
Assume that the family of functions $\{r_n\} \subset L^1(m_n \otimes \mathcal{L}^1)$ given by \eqref{enum:erren} is such that 
\[
(\Id,\Id,g_n)_\sharp \big( r_n m_n \otimes \mathcal{L}^1 \big) \rightharpoonup (\Id,\Id,g)_\sharp \zeta
\]
with $\zeta \in \mathcal{P}(S \times \R)$ and $g$ being the ray map (Definition \ref{D:mongemap}). Assume moreover
\[
\forall T\geq 0 \ \forall \ve > 0 \ \exists \delta >0  \ \bigg( A \in \mathcal B(S \times[-T,T]), m_{n}\otimes\mathcal{L}^{1}(A) < \delta \quad \Longrightarrow \quad 
\int_A r_n m_n\otimes\mathcal{L}^{1} < \ve \bigg).
\]
Then $\zeta = r m \otimes \mathcal{L}^1$ for some function $r \in L^1(m \otimes \mathcal{L}^1)$, measure $m \in \mathcal{P}(S)$ and the disintegration of $\mu$ is a.c. w.r.t. $\mathcal H^1$ on each geodesic.
\end{theorem}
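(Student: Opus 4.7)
The natural strategy is to apply Proposition \ref{P:limite} along the reference sequence $\xi_n := m_n\otimes\mathcal L^1$, whose product structure with the Lebesgue factor will be inherited by any weak limit and will ultimately furnish the absolute continuity of the conditional probabilities of $\mu$ along the rays. Since $\mathcal L^1$ has infinite mass, I will localize to $S\times[-T,T]$ for arbitrary $T>0$ and let $T\to\infty$ only at the end.

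Fix $T>0$ and set $\xi_n^T:=(m_n\otimes\mathcal L^1)\llcorner_{S\times[-T,T]}$, which are finite of mass $2T$. Along a subsequence the probability measures $m_n$ converge weakly to some $\mathfrak m\in\mathcal P(X)$: tightness of $\{m_n\}$ has to be extracted from the equi-integrability hypothesis combined with the weak convergence $r_n m_n\otimes\mathcal L^1\weak\zeta$ (obtained by projecting the joint convergence of $(\Id,\Id,g_n)_\sharp(r_nm_n\otimes\mathcal L^1)$ onto the first two factors), which together rule out mass escaping in the $y$-direction without contributing to the limit. A standard product argument then gives $\xi_n^T\weak \mathfrak m\otimes\mathcal L^1\llcorner_{[-T,T]}=:\xi^T$; this is the crucial step where the product form of the limiting reference measure is established.

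Restricting the weak convergence $r_n m_n\otimes\mathcal L^1\weak\zeta$ to $S\times[-T,T]$ gives $r_n\xi_n^T\weak\zeta^T:=\zeta\llcorner_{S\times[-T,T]}$, and the hypothesis of the theorem is precisely the equi-integrability condition required by Proposition \ref{P:limite}. Applying that proposition in the Polish space $S\times[-T,T]$ yields $\zeta^T=r^T\xi^T=r^T\,\mathfrak m\otimes\mathcal L^1\llcorner_{[-T,T]}$ for some $r^T\in L^1(\xi^T)$. The densities $r^T$ are consistent as $T$ increases and, because $\zeta$ is a finite measure (since $\mu$ is), they glue into a global density $r\in L^1(\mathfrak m\otimes\mathcal L^1)$ with $\zeta=r\,\mathfrak m\otimes\mathcal L^1$; setting $m:=\mathfrak m$ gives the asserted representation of $\zeta$.

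To conclude about $\mu$, push $\zeta=r\,m\otimes\mathcal L^1$ forward under $g$: from the weak convergence $\mu_n=(g_n)_\sharp(r_n m_n\otimes\mathcal L^1)\weak\mu$ and the assumed joint convergence we obtain $\mu=g_\sharp\zeta=g_\sharp(r\,m\otimes\mathcal L^1)$. Disintegrating along $f$ this reads $\mu_y=r(y,\cdot)\,g(y,\cdot)_\sharp\mathcal L^1$ for $m$-a.e.\ $y$, and since $g(y,\cdot)$ parametrizes the ray $R(y)$ by $d_L$-arc length the measure $g(y,\cdot)_\sharp\mathcal L^1$ coincides with $\mathcal H^1_{d_L}\llcorner_{R(y)}$, so the disintegration of $\mu$ is absolutely continuous with respect to $\mathcal H^1_{d_L}$ on each geodesic as claimed. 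The principal obstacle is the middle step: proving that the weak limit of the reference family $\xi_n^T$ is genuinely a product $\mathfrak m\otimes\mathcal L^1$ (rather than some measure with nontrivial correlations between $y$ and $t$), for which one uses that each $\xi_n^T$ is translation-invariant in $t$ away from the endpoints $\pm T$, so the invariance survives in the weak limit on the interior and forces the product form; the equi-integrability prevents the necessary tightness from failing and controls boundary contributions.
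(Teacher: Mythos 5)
Your proposal is correct and follows essentially the same route as the paper: localize in $t$, pass the reference measures $m_n\otimes\mathcal L^1$ to the limit, apply Proposition \ref{P:limite} (the paper uses continuous cutoffs $\phi_k$ in $t$ rather than sharp restrictions to $[-T,T]$, which avoids boundary issues in the weak convergence), and then push forward by $g$. The only remark worth making is that the ``principal obstacle'' you identify is vacuous: since $\int r_n(y,t)\,dt=1$, the measure $m_n$ is exactly the first marginal of $r_n m_n\otimes\mathcal L^1$, so $m_n\rightharpoonup m:=(P_1)_\sharp\zeta$ follows by simply projecting the hypothesis (no subsequence extraction or separate tightness argument is needed), and $m_n\otimes\lambda\rightharpoonup m\otimes\lambda$ for the \emph{fixed} finite measure $\lambda=\mathcal L^1\llcorner_{[-T,T]}$ is immediate by testing against product functions, with no need for any translation-invariance argument to rule out correlations.
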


\begin{proof}
Define for $k \in \enne$
\[ 
\phi_{k} \in C_{c}(\erre),\ \phi_{k}\geq 0, \quad  
\phi_{k}(t):=
\begin{cases}
1 & |t| \leq k, \crcr
0 & |t| \geq k+1.
\end{cases}
\]
Let $\xi_{n,k}=m_{n}\otimes\mathcal{L}^{1}\llcorner_{[-k-1,k+1]}$ and consider the functions $\tilde r_{n,k}:= r_{n}(y,t)\phi_{k}(t)$. 
Since $m_{n}=(P_{1})_{\sharp}(r_{n}m_{n}\otimes\mathcal{L}^{1})$ and hence $m_{n}\weak m = (P_{1})_{\sharp}\zeta$,  then 
\[
\xi_{n,k} \weak m\otimes \mathcal{L}^{1}\llcorner_{[-k-1,k+1]}  
\]
and the hypothesis of Proposition \ref{P:limite} are verified up to rescaling. So $\zeta = r m \otimes \mathcal{L}^1$.

The fact that $g_\sharp \zeta$ is a disintegration is a consequence of the a.c. of $\zeta$ along each geodesic: in this case the initial points have $\zeta$-measure $0$ 
and therefore $g$ is invertible on a set of full $\mu$-measure. 
\end{proof}

In general the convergence of the graph of $g_n$ is too strong: the next result considers a more general case.

\begin{proposition}
\label{P:finalapr}
Assume that $\tilde \zeta \in \Pi(r m \otimes \mathcal{L}^1, \mu)$ is concentrated on the graph of a Borel function $h : \mathcal{T} \times \R \to \mathcal{T}_e$ such that
\begin{enumerate}
\item \label{Cond:1final} $(y,t) \mapsto e(y) := f(h(y,t)) \in S$ is constant w.r.t. $t$,
\item it holds
\[
h(y,\cdot)_\sharp \big( r(y,\cdot) \mathcal{L}^1 \big) \ll \mathcal{H}^1 \llcorner_{g(e(y),\R)}.
\]
\end{enumerate}
Then the disintegration w.r.t. $g$ has absolutely continuous conditional probability.
\end{proposition}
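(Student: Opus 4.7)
Since $\tilde\zeta \in \Pi(rm \otimes \mathcal L^1, \mu)$ is concentrated on the graph of $h$, the second marginal gives at once $\mu = h_\sharp(r m \otimes \mathcal L^1)$. The plan is to use this representation to identify explicitly the conditionals of the disintegration of $\mu$ along $f$, and then to read off absolute continuity from assumption \emph{(2)} by a direct Fubini argument. The whole argument depends on assumption \eqref{Cond:1final}: since $(y,t) \mapsto e(y) = f(h(y,t))$ is independent of $t$, the map $h$ sends each vertical fibre $\{y\}\times\R$ into a single ray $R(e(y))$, so the partition into rays is respected by $h$.

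Set $\rho(y) := \int_\R r(y,t)\,dt$ and $\tilde r(y,t) := r(y,t)/\rho(y)$, so that $\tilde r(y,\cdot)\mathcal L^1$ is a probability measure on $\R$ for $\rho m$-a.e.\ $y$. Pushing forward through $f$,
\[
f_\sharp \mu = (f \circ h)_\sharp(rm \otimes \mathcal L^1) = e_\sharp(\rho m) =: \bar m.
\]
Disintegrate $\rho m$ along $e:\mathcal T \to S$ as $\rho m = \int_S m_s \,\bar m(ds)$; this disintegration is strongly consistent by Corollary \ref{C:disintegration}. For $\bar m$-a.e.\ $s$ define the probability
\[
\mu_s := h_\sharp\bigl(\tilde r \cdot m_s \otimes \mathcal L^1\bigr).
\]
A Fubini computation gives $\mu = \int_S \mu_s\,\bar m(ds)$, and $\mu_s$ is concentrated on $R(s)$ because for $m_s$-a.e.\ $y$ one has $e(y)=s$, whence $h(y,t)\in R(e(y))=R(s)$ for every $t$. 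By the uniqueness part of Theorem \ref{T:disintr}, $\{\mu_s\}_{s \in S}$ is the conditional family in the disintegration of $\mu$ along $f$.

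For absolute continuity, fix $s$ and a Borel set $A \subset R(s)$ with $\mathcal H^1_{d_L}(A)=0$. Then
\[
\mu_s(A) = \int_{\mathcal T}\bigl(h(y,\cdot)_\sharp(\tilde r(y,\cdot)\mathcal L^1)\bigr)(A)\, m_s(dy),
\]
and for $m_s$-a.e.\ $y$ we have $R(s) = g(e(y),\R)$; assumption \emph{(2)} (noting that multiplying the density on $\R$ by the positive constant $1/\rho(y)$ preserves absolute continuity) then yields $\bigl(h(y,\cdot)_\sharp(\tilde r(y,\cdot)\mathcal L^1)\bigr)(A)=0$. Integration against $m_s$ gives $\mu_s(A)=0$, whence $\mu_s \ll \mathcal H^1_{d_L}\llcorner_{R(s)}$ for $\bar m$-a.e.\ $s$. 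The only real bookkeeping step is the factorization through the fibre-constant map $e$; once that is done, no further analytic input is needed, and the per-$y$ absolute continuity from assumption \emph{(2)} survives integration against $m_s$ simply because a countable-union-stable zero set is preserved.
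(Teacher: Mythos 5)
Your proof is correct and follows essentially the same route as the paper: disintegrate the quotient measure along the fibre-constant map $e$, use condition (1) to see that each $h(y,\cdot)_\sharp(\tilde r(y,\cdot)\mathcal L^1)$ lives on the single ray $R(e(y))$, and invoke condition (2) to get absolute continuity of the resulting conditionals. The only cosmetic difference is that the paper writes the conditional density explicitly as the superposition $g(z,\cdot)_\sharp\bigl(\int_{e^{-1}(z)}\tilde r(y,\cdot)\,m_z(dy)\bigr)\mathcal L^1$, whereas you argue via $\mathcal H^1$-null sets; both are fine.
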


\begin{proof}
We can disintegrate the measure $m$ as follows:
\[
m = \int_S m_z (e_\sharp m)(dz),
\]
and by the second assumption
\[
h(y,\cdot)_\sharp (r(y,\cdot) \mathcal{L}^1) = g(e(y),\cdot)_\sharp (\tilde r(y,\cdot) \mathcal{L}^1),
\]
for $m$-a.e. $y \in \mathcal{T}$. Hence by explicit computation,
\begin{align*}
\mu =&~  \int_{S} h(y,\cdot)_{\sharp}( r(y,\cdot) \mathcal{L}^{1} ) m(dy) = \int_{S} g(e(y),\cdot)_{\sharp}( \tilde r(y,\cdot) \mathcal{L}^{1} ) m(dy) \crcr
=&~ \int_{S} \bigg( \int_{e^{-1}(z)} g(z,\cdot)_{\sharp} (\tilde r(y,\cdot) \mathcal{L}^{1}) m_{z}(dy)\bigg)  e_{\sharp}m(dz).
\end{align*}
To conclude the proof observe that 

\begin{align*}
\int_{e^{-1}(z)} g(z,\cdot)_{\sharp} (\tilde r(y,\cdot) \mathcal{L}^{1}) m_{z}(dy) = &~ 
g(z,\cdot)_{\sharp}\bigg( \int_{e^{-1}(z)} \tilde r(y,\cdot) \mathcal{L}^{1} m_{z}(dy) \bigg) \crcr
=&~ g(z,\cdot)_{\sharp}\bigg( \int_{e^{-1}(z)} \tilde r(y,\cdot) m_{z}(dy) \bigg) \mathcal{L}^{1}.
\end{align*}
\end{proof}

\begin{remark}
\label{R:morereg}
Observe that some properties of $r_n$ are preserved passing to the limit $r$. 
In relation with the previous section, we consider the following cases: for $A \subset X \times \erre$ open
\begin{enumerate}
\item for some $\ve>0$
\[
\big((r_n -\ve)m_{n}\otimes \mathcal{L}^{1}\big)\llcorner_{A} \geq0;
\]
\item there exists $L> 0$ such that
\[
r_n(y,\cdot) \in  \textrm{Lip}_{L}(A_{y});
\]
\item  there exists $M>0$ such that
\[
TV( r_n(y,\cdot)\llcorner_{A}) \leq M.
\]
\end{enumerate}
The first condition yields that the assumptions of Corollary \ref{C:regudual} holds in $A$.
The second and third conditions imply that we are under the conditions for Remark \ref{R:abscurr} in $A$.
\end{remark}

\subsection{Approximations by metric spaces}
\label{Ss:GHC}

In this section we explain a procedure to verify if the transport problem under consideration satisfies Assumption \ref{A:NDE}. The basic references for this sections are \cite{villott:curv} and \cite{sturm:MGH1,sturm:MGH2}.

We consider the following setting:
\begin{enumerate}
\item $(X,d,d_{L})$, $(X_n,d_n,d_{L,n})$, $n \in \N$, are metric structures satisfying the assumptions of page \pageref{P:assumpDL} and Remark \ref{R:compact}: more precisely,
$d_{L},d_{L,n}$ l.s.c.,  $d_{L}\geq d, d_{L,n}\geq d_{n}$ and
\[
\bigcup_{x \in K_1, y \in K_2} \gamma_{[x,y]}\ \  \textrm{is $d_{n}$($d$)-compact if $K_1$, $K_2$ are $d_{n}$($d$)-compact, $d_{L,n}(d_{L}) \llcorner_{K_1 \times K_2}$ uniformly bounded}.
\]
\item $\mu_n, \nu_n \in \mathcal{P}(X_n)$, $\mu_n \perp \nu_n$;
\item $\pi_n \in \Pi(\mu_n,\nu_n)$ is a $d_{L,n}$-cyclically monotone transference plan with finite cost.
\end{enumerate}
For $\mu,\nu \in \mathcal{P}(X)$ let $\pi \in \Pi(\mu,\nu)$ be a generic transference plan.

\begin{definition}\label{D:mgh}
We say that the structures $(X_n,d_n,d_{L,n}, \pi_n)$ \emph{converge} to $(X,d,d_L,\pi)$ if the following holds: 
there exists $C>0$ such that for all $n \in \N$ 
\begin{equation*}
\int d_{L,n} \pi_{n}\leq C
\end{equation*}
and there exist Borel sets $A_n \subset X_n$ and Borel maps $\ell_n : A_n \to X$ such that
\begin{equation}
\label{E:conMHD0}
(\ell_n\otimes \ell_n)_\sharp \pi_n \llcorner_{A_n \times A_n} \rightharpoonup \pi,
\end{equation}
\begin{equation}
\label{E:conMHD}
 \big| d_L(\ell_n(x),\ell_n(y)) - d_{L,n}(x,y) \big| \leq 2^{-n},
\end{equation}
and if $(\ell_n(x_n),\ell_n(y_n)) \to (x,y)$, then
\begin{equation}
\label{E:conMHD1}
d_L(x,y) = \lim_n d_{L,n}(x_n,y_n).
\end{equation}
\end{definition}

As a first result, we show that also $\pi$ is $d_{L}$-cyclically monotone with finite cost.

\begin{proposition}\label{P:bddmono}
If $(X_n,d_n,d_{L,n},\pi_n)$ converges to $(X,d,d_L,\pi)$ and the plans $\pi_{n}$ have uniformly bounded cost then 
also $\pi$ has finite cost and is $d_{L}$-cyclically monotone.
\end{proposition}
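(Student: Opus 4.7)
The proposition has two parts: finiteness of $\int d_L\, d\pi$ and $d_L$-cyclical monotonicity of $\pi$. I will handle them separately.

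\emph{Finite cost.} Set $\tilde\pi_n := (\ell_n \otimes \ell_n)_\sharp \bigl(\pi_n \llcorner_{A_n \times A_n}\bigr)$, so $\tilde\pi_n \rightharpoonup \pi$ by \eqref{E:conMHD0}. By the change of variables and \eqref{E:conMHD},
\[
\int d_L\, d\tilde\pi_n = \int_{A_n \times A_n} d_L\bigl(\ell_n(x),\ell_n(y)\bigr)\, \pi_n(dxdy) \leq \int d_{L,n}\, d\pi_n + 2^{-n} \leq C + 2^{-n}.
\]
Since $d_L$ is lower semicontinuous on $(X \times X, d \times d)$ by hypothesis (2') and $\tilde\pi_n \rightharpoonup \pi$, the Portmanteau theorem yields $\int d_L\, d\pi \leq \liminf_n \int d_L\, d\tilde\pi_n \leq C$.

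\emph{Cyclical monotonicity.} Fix $k \in \enne$. I will show that the inequality $\sum_{i=0}^k d_L(x_i,y_i) \leq \sum_{i=0}^k d_L(x_{i+1},y_i)$ (indices modulo $k+1$) holds $\pi^{\otimes(k+1)}$-almost surely; by a standard argument this suffices to produce a $d_L$-cyclically monotone set carrying $\pi$. By inner regularity pick a $\sigma$-compact $d_{L,n}$-cyclically monotone $\Gamma_n \subset X_n \times X_n$ with $\pi_n(\Gamma_n)=1$, and set $\tilde\Gamma_n := (\ell_n \otimes \ell_n)(\Gamma_n \cap A_n \times A_n)$. Apply Skorokhod's representation to $\tilde\pi_n^{\otimes(k+1)} \rightharpoonup \pi^{\otimes(k+1)}$ on the Polish space $(X \times X)^{k+1}$ to obtain, on a common probability space, random vectors $\xi_n = ((X_n^i,Y_n^i))_{i=0}^k$ and $\xi = ((X^i,Y^i))_{i=0}^k$ with the prescribed laws and $\xi_n \to \xi$ almost surely. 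A Jankov--von Neumann selection (applied to the analytic multimap $(x,y) \mapsto \Gamma_n \cap (\ell_n \otimes \ell_n)^{-1}(x,y)$) lifts $\xi_n$ to a measurable tuple $((\hat X_n^i,\hat Y_n^i))$ in $\Gamma_n^{k+1}$ with $(\ell_n(\hat X_n^i),\ell_n(\hat Y_n^i)) = (X_n^i,Y_n^i)$. The $d_{L,n}$-cyclical monotonicity of $\Gamma_n$ then reads
\[
\sum_{i=0}^k d_{L,n}(\hat X_n^i,\hat Y_n^i) \leq \sum_{i=0}^k d_{L,n}(\hat X_n^{i+1},\hat Y_n^i).
\]
Since $(\ell_n(\hat X_n^j),\ell_n(\hat Y_n^i)) = (X_n^j,Y_n^i) \to (X^j,Y^i)$ almost surely for each relevant pair $(j,i)$, hypothesis \eqref{E:conMHD1} gives termwise convergence of both sides to $\sum d_L(X^i,Y^i)$ and $\sum d_L(X^{i+1},Y^i)$ respectively, whence the desired inequality $\pi^{\otimes(k+1)}$-almost surely.

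\emph{Main obstacle.} The crux is the passage to the limit in the cyclical inequality: since $d_L$ is only lower semicontinuous, weak convergence alone yields the one-sided estimate $\liminf_n d_L(X_n^i,Y_n^i) \geq d_L(X^i,Y^i)$, which is insufficient to preserve an equality between a sum and its cyclic shift. The two-sided convergence $d_{L,n}(\hat X_n^j,\hat Y_n^i) \to d_L(X^j,Y^i)$ provided by \eqref{E:conMHD1} along label-convergent sequences is precisely what is needed, and is the quantitative content of the MGH-like convergence of structures. A secondary care point is lifting the Skorokhod sequence into $\Gamma_n$ rather than merely $A_n \times A_n$, which is routine via measurable selection.
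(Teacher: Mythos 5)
Your argument for the finiteness of the cost is correct and is essentially the paper's: push forward by $\ell_n\otimes\ell_n$, use \eqref{E:conMHD} to compare $\int d_L\,d\tilde\pi_n$ with $\int d_{L,n}\,d\pi_n$, and invoke lower semicontinuity of $d_L$ under the weak convergence \eqref{E:conMHD0}.

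The cyclical monotonicity part, however, has a genuine gap in its last line. What you actually prove (and this part is fine: the Skorokhod representation, the measurable lifting into $\Gamma_n$, and the use of \eqref{E:conMHD1} for two-sided convergence of both the ``diagonal'' and the ``shifted'' terms) is that for each $k$ the cyclic inequality holds $\pi^{\otimes(k+1)}$-almost surely. But the conclusion you need --- and the definition of $d_L$-cyclical monotonicity of a \emph{plan} used in this paper (Definition \ref{D:cicl}) --- is that $\pi$ is concentrated on a single $d_L$-cyclically monotone \emph{set}. The passage from ``the violating $(k+1)$-tuples form a $\pi^{\otimes(k+1)}$-null set for every $k$'' to ``there is a set $\Gamma$ with $\pi(\Gamma)=1$ and $\Gamma$ cyclically monotone'' is not a standard argument, and for a general Borel cost it is \emph{false}. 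A counterexample: on $[0,1]$ take $\mu=\nu=\mathcal L^1$, $\pi=(\id,\id)_\sharp\mathcal L^1$, and $c(x,x)=1$, $c(x,y)=0$ if $x-y\in\Q\setminus\{0\}$, $c(x,y)=1$ otherwise. Every $N_n$ is $\pi^{\otimes n}$-null (violations require some cyclically consecutive pair with nonzero rational difference), yet any $c$-cyclically monotone set contained in the diagonal corresponds to a partial Vitali transversal and hence is $\pi$-null. The naive ``remove a null set'' strategy fails because a product-null set need not be avoidable by a full-measure rectangle; and even under the l.s.c.\ hypothesis (2') in force here, the violation set restricted to a Lusin compact is only $F_\sigma$, so the usual support trick does not apply either.

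This is exactly the difficulty the paper's proof is designed to avoid: it extracts compact pieces $\Gamma_{n,m}\subset\Gamma_n$ with $\pi_n(\Gamma_{n,m})\geq 1-2^{-m}$ such that $(\ell_n\otimes\ell_n)(\Gamma_{n,m})$ converges in the \emph{Hausdorff} distance to a compact set $\Gamma_m$. Hausdorff convergence guarantees that \emph{every} point of $\Gamma_m$ (hence every finite cycle in $\Gamma_m$) is a limit of images of points of $\Gamma_{n,m}$, so \eqref{E:conMHD1} yields the cyclic inequality for all tuples of $\Gamma_m$, not merely almost all; the increasing union $\cup_m\Gamma_m$ is then a genuine $d_L$-cyclically monotone set of full $\pi$-measure. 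To repair your proof you would need to replace the Skorokhod/a.s. step by such an every-point approximation (or supply a correct proof of the a.s.-to-concentration implication valid under the standing hypotheses, which is a nontrivial task in its own right).
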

\begin{proof}
Since $d_{L}$ is l.s.c.
\begin{align*}
\int d_{L} \pi \leq~&  \liminf_{n\to +\infty} \int d_{L} (\ell_{n}\otimes \ell_{n})_{\sharp}\pi_{n}  =  
\liminf_{n\to +\infty} \int d_{L}(\ell_{n}(x), \ell_{n}(y)) \pi_{n}(dxdy) \crcr
\stackrel{\textrm{\eqref{E:conMHD}}}{\leq}&  \liminf_{n\to +\infty} \bigg\{ \int d_{L,n}(x,y) \pi_{n}(dxdy) + 2^{-n} \bigg\} \leq C,
\end{align*}
for some $C<+\infty$.

Now let $\Gamma_{n}$ be a $d_{L,n}$-cyclically monotone set with $\pi_{n}(\Gamma_{n})=1$: by standard regularity of Borel function and by Prokhorov Theorem 
we can assume that 
\begin{enumerate}
\item $\Gamma_{n}$ is $\sigma$-compact, $\Gamma_{n}=\cup_{m\in \enne} \Gamma_{n,m}$ with $\Gamma_{n,m}\subset \Gamma_{n,m+1}$;
\item $(\ell_{n}\otimes \ell_{n})(\Gamma_{n,m})$ is compact and $(\ell_{n}\otimes \ell_{n})(\Gamma_{n,m}) \to \Gamma_{m}$ 
in the Hausdorff distance $d_{H}$; 
\item $\pi_{n}(\Gamma_{n,m})\geq 1- 2^{-m}$.
\end{enumerate}
It follows that: $\pi(\Gamma_{m})\geq 1 - 2^{-m}$, hence 
\[
\pi \bigg(\bigcup_{m \in \enne}\Gamma_{m} \bigg)=1.
\]
Since each $\Gamma_{m}$ is  the limit in Hausdorff distance of $(\ell_{n}\otimes \ell_{n})(\Gamma_{n,m})$, 
\eqref{E:conMHD1} implies that $\Gamma_{m}$ (and thus $\cup_{m}\Gamma_{m}$, because $\Gamma_{m}\subset \Gamma_{m+1}$) 
is $d_{L}$-cyclically monotone.
\end{proof}

Note that since $\pi$ is $d_{L}$-cyclically monotone, we can define 
the sets $\Gamma, \Gamma', G, G^{-1}, R, a, b$ of Section \ref{S:Optimal} as well as 
the quotient map $f$ and the ray map $g$ constructed in Section \ref{S:partition}. 
The same sets and maps can be given for the structures $(X_{n},d_{n},d_{L,n})$: we 
will denote them with the subscript $n$.

For the transport problems in $(X_n,d_n)$ with measures $\mu_n$, $\nu_n$, we assume the following.

\begin{assumption}[Non degeneracy]
\label{A:punti}
The $d_{L,n}$-cyclically monotone plan $\pi_n$ satisfies Assumption \ref{A:NDE} for all $n \in \N$.
\end{assumption}

This allows to write the disintegration of $\mu_n$  w.r.t. 
the ray equivalence relation $R_{n}$:
\[
\mu_n = (g_n)_\sharp (r_n m_{n} \otimes \mathcal L^1) = \int g_n(y,\cdot)_\sharp (r_{n}(y,\cdot) \mathcal L^1) m_{n}(dy),
\]
with $f_{n\,\sharp}\mu_{n}=m_{n}$ and $r_n \in L^1(m_{n} \otimes \mathcal L^1)$.

\begin{lemma}\label{L:gnid}
If $(X_n,d_n,d_{L,n},\pi_n)$ converges to $(X,d,d_L,\pi)$ then the structures $(S_{n}\times \erre,\tilde d_{n}, \tilde d_{L,n}, \tilde \pi_{n} )$, where 
\[
\tilde d_{n} = d_{n} \circ (g_{n}\otimes g_{n}), \quad \tilde \pi_{n}=(g_{n}^{-1}\otimes g_{n}^{-1})_{\sharp} (\pi_{n}),
\quad \tilde d_{L,n}((y,t),(y',t')) = 
\begin{cases}
|t-t'| & y=y' \crcr
+\infty & y\neq y',
\end{cases}
%\tilde d_{L,n} = d_{L,n} \circ (g_{n}\otimes g_{n}),
\]
converges to $(X,d,d_{L},\pi)$.
\end{lemma}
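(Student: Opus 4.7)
The plan is to realize the convergence with $\tilde \ell_n := \ell_n \circ g_n$ defined on $\tilde A_n := g_n^{-1}(A_n \cap \mathcal T_{e,n}) \subset S_n \times \R$, and to verify the three conditions of Definition \ref{D:mgh} by transporting them from the original structures via the ray map. The map $\tilde \ell_n$ is Borel because $g_n$ is Borel on its (analytic) domain by Proposition \ref{P:gammaclass}(1), restricted in the usual way to a $\sigma$-compact full-measure subset, and because $\ell_n$ is Borel by hypothesis. By the very definition $\tilde \pi_n = (g_n^{-1} \otimes g_n^{-1})_\sharp \pi_n$, so $(g_n\otimes g_n)_\sharp \tilde\pi_n = \pi_n$ and hence $(\tilde\ell_n\otimes\tilde\ell_n)_\sharp \tilde\pi_n = (\ell_n\otimes\ell_n)_\sharp \pi_n$, which yields \eqref{E:conMHD0} immediately from the corresponding convergence for $\pi_n$.

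The uniform cost bound is the next item. Since $\pi_n$ is $d_{L,n}$-cyclically monotone with finite cost, it is concentrated on $G_n$, which by $g_n^{-1}$ is carried to the set $\{((y,t),(y,t')): t \leq t'\} \subset (S_n\times\R)^2$. By Proposition \ref{P:gammaclass}(3) the curve $t\mapsto g_n(y,t)$ is a $d_{L,n}$-isometric parametrization, so on this set $\tilde d_{L,n}((y,t),(y,t')) = t'-t = d_{L,n}(g_n(y,t),g_n(y,t'))$. Therefore $\int \tilde d_{L,n}\, \tilde\pi_n = \int d_{L,n}\,\pi_n \leq C$, and on each fiber the support is one-dimensional monotone, which gives $\tilde d_{L,n}$-cyclical monotonicity of $\tilde\pi_n$.

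For the metric estimates \eqref{E:conMHD} and \eqref{E:conMHD1}, note that wherever $\tilde d_{L,n}((y,t),(y',t'))<+\infty$ we must have $y=y'$, and the isometric parametrization gives
\[
\tilde d_{L,n}((y,t),(y,t')) = d_{L,n}\bigl(g_n(y,t),g_n(y,t')\bigr), \quad d_L\bigl(\tilde\ell_n(y,t),\tilde\ell_n(y,t')\bigr) = d_L\bigl(\ell_n g_n(y,t), \ell_n g_n(y,t')\bigr).
\]
Thus both sides of \eqref{E:conMHD} for $\tilde\pi_n$ are literally the corresponding quantities for $\pi_n$ evaluated at $(g_n(y,t), g_n(y,t')) \in A_n\times A_n$, and the $2^{-n}$ bound transfers. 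The same substitution handles \eqref{E:conMHD1}: a convergent sequence $(\tilde\ell_n(z_n),\tilde\ell_n(z_n'))\to(x,y)$ with $\tilde d_{L,n}$ finite gives a convergent sequence $(\ell_n g_n(z_n),\ell_n g_n(z_n'))\to(x,y)$ to which the original condition applies.

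The main obstacle is the presence of the $+\infty$ value of $\tilde d_{L,n}$ off the diagonal $\{y=y'\}$: read literally, \eqref{E:conMHD} makes no sense when the right-hand side is infinite and the left-hand side is finite. The resolution is that the only way \eqref{E:conMHD} and \eqref{E:conMHD1} enter later (e.g.\ in Proposition \ref{P:bddmono} and the applications of this lemma) is through integration against $\tilde\pi_n$, whose support is exactly where $\tilde d_{L,n}$ is finite; thus one either interprets the estimates modulo $\tilde\pi_n$-null sets, or equivalently shrinks $\tilde A_n$ further so that $\tilde d_{L,n}\llcorner_{\tilde A_n\times\tilde A_n}$ is finite. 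Either way, the verification reduces to the argument above, and together with the cost bound produces the claimed convergence $(S_n\times\R,\tilde d_n,\tilde d_{L,n},\tilde\pi_n) \to (X,d,d_L,\pi)$.
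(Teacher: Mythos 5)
Your proposal is correct and follows essentially the same route as the paper's (very terse) proof: the paper simply observes that $\pi_n(G_n)=1$, that $\tilde d_{L,n} = d_{L,n}\circ(g_n\otimes g_n)$ on $G_n$, and that one replaces $\ell_n$ by $\ell_n\circ g_n$, which is exactly the substitution argument you carry out in detail. Your additional remark about interpreting \eqref{E:conMHD} where $\tilde d_{L,n}=+\infty$ off the diagonal is a reasonable clarification of a point the paper leaves implicit.
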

\begin{proof}
It is enough to observe that $\pi_{n}(G_{n})=1$, $\tilde d_{L,n} = d_{L,n} \circ (g_{n}\otimes g_{n})$ on $G_{n}$ and to replace the map $\ell_{n}$
with the map $\ell_{n}\circ g_{n}$.
\end{proof}

By Lemma \ref{L:gnid}, in the following we  assume that the ray map $g_{n}$ is the identity map.

The next assumption is the fundamental one.

\begin{assumption}[Equintegrability]
\label{A:equi}
The $L^{1}$-functions  $r_n$ are equintegrable w.r.t. the measure $m_n \otimes \mathcal L^1$: 
\[
\forall \ve>0 \ \exists \delta>0 \bigg( (m_n \otimes \mathcal L^1)(A) < \delta \ \Rightarrow \ \int_A r_n m_n \otimes \mathcal L^1 < \ve \bigg).
\]
\end{assumption}

From now on we will assume that $(X_n,d_n,d_{L,n}, \pi_n) \to (X,d,d_L,\pi)$ in the sense of Definition \ref{D:mgh},
$(X_n,d_n,d_{L,n}, \pi_n)$ verifies Assumption \ref{A:punti} and Assumption \ref{A:equi}. 
%and $\int d_{L,n} \pi_{n}\leq C$.

Our aim is to prove that the structure $(X,d,d_L,\pi)$ satisfies Assumption \ref{A:NDE}, which is equivalent to the fact that
%if the structures $(X_{n},d_{n},d_{L,n},\pi_{n})$ converge 
%in the sense of Definition \ref{D:mgh} and they verify Assumption \ref{A:punti} and Assumption \ref{A:equi} 
%then 
the marginal probabilities of the disintegration of $\mu$ w.r.t the ray equivalence relation $R$ are a.c. w.r.t. $\haus^{1}$.

The next lemma shows that in order to obtain our purpose we can perform some reductions without losing generality.
We will write $\mu_{k}\nearrow \mu$ for $\mu_{k}\leq \mu_{k+1}$ and  $\mu = \sup_{k} \mu_{k}$.

\begin{lemma}\label{L:approx}
Let $\{\mu_{k}\}_{k\in\enne} \subset \mathcal{M}(X)$, $\mu_k \geq 0$, be such that $\mu_{k} \nearrow \mu$ and assume that
\[
\mu_{k}=g_{\sharp}( r_{k}m_{k}\otimes \mathcal{L}^{1}), \quad r_{k}\geq0,
\]
where $g$ is the ray map on $\mathcal{T}$. Then there exist $m \in \mathcal{P}(X)$, $r \in L^{1}(m\otimes \mathcal{L}^{1})$, $r\geq0$ such that the same formula holds for $\mu$: 
\[
\mu=g_{\sharp}( r m \otimes \mathcal{L}^{1}).
\]
\end{lemma}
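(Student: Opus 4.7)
The plan is to re-express every $\mu_k$ with respect to a common base measure on the cross-section $\mathcal{S}$, and then to let $k$ tend to infinity using monotone convergence together with the uniqueness of the Radon--Nikodym derivative. Since the conclusion requires $r \in L^1(m \otimes \mathcal{L}^1)$, the statement implicitly forces $\mu$ to be a finite measure, which I assume from the outset.

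First I would set $\tilde m := f_{\sharp}\mu$, where $f : \mathcal{T} \to \mathcal{S}$ is the Borel quotient map of Proposition \ref{P:sicogrF}, and normalize $m := \tilde m / \tilde m(\mathcal{S}) \in \mathcal{P}(\mathcal{S}) \subset \mathcal{P}(X)$. Since $\mu_k \leq \mu$, the pushforwards obey $m_k := f_{\sharp}\mu_k \leq \tilde m$, so by Radon--Nikodym there exist bounded Borel densities $\rho_k \geq 0$ on $\mathcal{S}$ with $m_k = \rho_k\, m$. Setting
\[
h_k(y,t) := r_k(y,t)\, \rho_k(y)\, \tilde m(\mathcal{S}),
\]
the hypothesis becomes $\mu_k = g_{\sharp}\bigl( h_k \, m \otimes \mathcal{L}^1 \bigr)$ with $h_k \geq 0$, so that all the approximating measures are now written against a single base measure $m \otimes \mathcal{L}^1$ on $\mathcal{S}\times\R$.

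Next I would invoke Proposition \ref{P:gammaclass}: the ray map restricted to $g^{-1}(\mathcal{T}) \subset \mathcal{S} \times \R$ is a Borel bijection onto $\mathcal{T}$ with Borel inverse. Pulling back gives $(g^{-1})_{\sharp}\mu_k = h_k \, m \otimes \mathcal{L}^1$, so $h_k$ is the essentially unique Radon--Nikodym derivative of $(g^{-1})_{\sharp}\mu_k$ with respect to $m \otimes \mathcal{L}^1$. Because $\mu_k \leq \mu_{k+1}$, the differences $(g^{-1})_{\sharp}(\mu_{k+1} - \mu_k) = (h_{k+1} - h_k)\, m \otimes \mathcal{L}^1$ are non-negative, and uniqueness of the density forces $h_k \leq h_{k+1}$ almost everywhere with respect to $m \otimes \mathcal{L}^1$.

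Finally, setting $r := \sup_k h_k = \lim_k h_k$, monotone convergence applied on $\mathcal{S}\times\R$ yields, for every Borel $B \subset X$,
\[
\mu(B) = \lim_k \mu_k(B) = \lim_k \int_{g^{-1}(B)} h_k \, d(m \otimes \mathcal{L}^1) = \int_{g^{-1}(B)} r \, d(m \otimes \mathcal{L}^1),
\]
i.e.\ $\mu = g_{\sharp}(r\, m \otimes \mathcal{L}^1)$, and the bound $\|r\|_{L^1(m \otimes \mathcal{L}^1)} = \mu(\mathcal{T}) < +\infty$ gives the required integrability. The only delicate step is the uniqueness of the density $h_k$, which rests on the bijectivity of $g$ on $\mathcal{T}$ (and thus ultimately on the strong consistency of the disintegration established in Section \ref{S:partition}); once a common base measure has been produced and this uniqueness exploited, monotone convergence takes care of the rest.
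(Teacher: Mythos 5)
Your proof is correct and follows essentially the same route as the paper's: push the $\mu_k$ forward to the quotient to get $m_k\nearrow f_\sharp\mu$, rewrite each $r_k m_k\otimes\mathcal L^1$ against the common base measure $m\otimes\mathcal L^1$ via the Radon--Nikodym derivative $dm_k/dm$, and define $r$ as the monotone supremum of the resulting densities. You merely make explicit two points the paper leaves implicit — that the rewritten densities are $m\otimes\mathcal L^1$-a.e.\ increasing (via uniqueness of the density and the injectivity of $g$ on $\mathcal T$) and that monotone convergence closes the argument — apart from a harmless normalization slip in the formula $m_k=\rho_k m$ versus $m_k=\rho_k\tilde m$.
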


\begin{proof}
Since $\int r_{k}(y,t)dt=1$ it follows that $P_{1\,\sharp}(r_{k}m_{k}\otimes \mathcal{L}^{1})= m_{k}$ and therefore $m_{k} \nearrow m$ with 
$m=f_{\sharp}\mu$ (recall that $f$ is a section for the ray equivalence relation $R$).
The convergence $\mu_{k}\nearrow \mu$ yields
\[
\bigg( r_{k}\frac{dm_{k}}{d m} \bigg) m\otimes \mathcal{L}^{1} \nearrow \zeta,
\]
where $\mu=g_{\sharp}\zeta$. We conclude $\zeta =r m\otimes \mathcal{L}^{1}$ with $r:= \sup_{k} r_{k}\frac{dm_{k}}{d m}$.
\end{proof}

A first reduction is given by the following lemma.

\begin{lemma} \label{L:ptdist}
To prove that there exist $m \in \mathcal{P}(X)$, $r \in L^{1}(m\otimes \mathcal{L}^{1})$, $r\geq0$ such that 
\[
\mu=g_{\sharp}( r m \otimes \mathcal{L}^{1}),
\]
we can assume w.l.o.g. that there exist  $\bar x, \bar y \in X$ and $q \geq0$ such that 
\[
\pi \Big( \Big\{ (x,y): d(\bar x,\bar y) > 8q, d(x,\bar x), d(y,\bar y) \leq q \Big\} \Big) =1.
\]
Moreover the $d_{L}$-cyclically monotone set $\Gamma$ and the set of oriented transport rays $G$ can be assumed to be compact subsets of $X\times X$.
\end{lemma}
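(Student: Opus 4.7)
The plan is a reduction by countable partition of $\pi$ into localized pieces of the claimed form, followed by a reassembly via Lemma~\ref{L:approx}.

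First I would choose a $d$-dense sequence $\{x_i\}_{i\in\N}\subset X$ and, for each quadruple $(i,j,k,\ell)\in\N^4$ with $d(x_i,x_j)>8/k$, consider the cell
\[
C_{ijk\ell}:=\bar B_{1/k}(x_i)\times\bar B_{1/k}(x_j)\cap\{d_L\leq\ell\}.
\]
Because $\pi$ has finite $d_L$-cost and $\mu\perp\nu$ forces $\pi(\{x=y\})=0$, these countably many cells cover $\supp\pi$ up to a $\pi$-null set. Enumerating them as $\{C_n\}_{n\in\N}$, setting $A_n:=C_n\setminus\bigcup_{n'<n}C_{n'}$, $\pi_n:=\pi\llcorner_{A_n}$, and $\mu_n:=(P_1)_\sharp\pi_n$, one has $\pi=\sum_n\pi_n$ and $\mu=\sum_n\mu_n$. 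Each $\pi_n$ is automatically $d_L$-cyclically monotone (cyclic monotonicity passes to subsets) and is concentrated in a cell of exactly the geometric form required by the statement, with $\bar x=x_i$, $\bar y=x_j$, $q=1/k$.

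For the compactness of $\Gamma$ and $G$, observe that $C_n$ is $d$-compact, being the product of two closed $d$-balls intersected with $\{d_L\leq\ell\}$, which is closed by lower semicontinuity of $d_L$. I would replace $\Gamma$ by $\overline{\supp\pi_n}\cap C_n$: this is compact and remains $d_L$-cyclically monotone, since the defining finite-sum inequalities pass to closures under an l.s.c.\ cost. Assumption~(5') of Remark~\ref{R:compact} then applies with $K_1=\bar B_{1/k}(\bar x)$, $K_2=\bar B_{1/k}(\bar y)$ and $d_L\llcorner_{K_1\times K_2}$ uniformly bounded by $\ell$, yielding $d$-compactness of $\bigcup_{x\in K_1,y\in K_2}\gamma_{[x,y]}$. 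The construction in Remark~\ref{R:lsccase1} then upgrades $\Gamma'$, $G$, $G^{-1}$, $a$, $b$ from $\sigma$-compact to genuinely compact subsets of $X\times X$.

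Finally, assuming the target formula $\mu_n=g_\sharp(r_n\,m_n\otimes\mathcal L^1)$ has been established in each localized setting, the partial sums $\tilde\mu_N:=\sum_{n\leq N}\mu_n$ are of the same form (sum the Radon--Nikodym densities against a common dominating weight) and monotonically increase to $\mu$, so Lemma~\ref{L:approx} delivers the desired representation of $\mu$. The main obstacle I anticipate is checking compatibility between the ray maps of each $\pi_n$ and the global ray map $g$: since each $\pi_n$ inherits cyclic monotonicity from $\pi$, the local transport rays are sub-geodesics of the global ones, and the local parametrizations should embed into the global $g$ up to a translation of the parameter; verifying that this compatibility is preserved when passing from $\Gamma$ to $\Gamma'$ (which enlarges by zero-cost cycles) is the delicate bookkeeping point, but it follows from the uniform bound $d_L\leq\ell$ which keeps all such cycles inside the compact cell.
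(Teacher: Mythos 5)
Your localization-plus-reassembly strategy matches the paper's Steps 1 and 3 in outline (cover $X\times X\setminus\{x=y\}$ by countably many cells of the prescribed form, prove the representation on each piece, recombine through Lemma~\ref{L:approx}). But there is a genuine gap: you treat the lemma as a purely measure-theoretic reduction, whereas in the context of Section~\ref{S:limite} the standing hypotheses are that $(X_n,d_n,d_{L,n},\pi_n)$ converges to $(X,d,d_L,\pi)$ in the sense of Definition~\ref{D:mgh} and satisfies Assumptions~\ref{A:punti} and \ref{A:equi}; the only available tool for proving the a.c.\ disintegration of the localized problem is precisely this approximation machinery. For the ``w.l.o.g.''\ to be legitimate you must therefore show that each localized plan $\pi\llcorner_{\Gamma_{ijk}}$ is itself the limit of suitable approximating structures. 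The paper devotes Step~2 of its proof to exactly this: it takes $\phi_l\in C_c(X\times X,[0,1])$ with $\phi_l\searrow\chi_{\Gamma_{ijk}}$, uses $\phi_l(\ell_n\otimes\ell_n)_\sharp\pi_n\rightharpoonup\phi_l\pi$, and extracts a diagonal sequence $\tilde\pi_i=(\phi_{l_i}\circ\ell_{n_i})\pi_{n_i}$ converging to $\pi\llcorner_{\Gamma_{ijk}}$ in the sense of Definition~\ref{D:mgh}. Your proposal contains no analogue of this step, so the reduction does not preserve the hypotheses under which the localized statement can be proved.

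Two further points in your compactness argument do not hold as written. First, replacing $\Gamma$ by $\overline{\supp\pi_n}\cap C_n$ and asserting that cyclical monotonicity ``passes to closures under an l.s.c.\ cost'' is incorrect: in the inequality $\sum_i d_L(x_i,y_i)\le\sum_i d_L(x_{i+1},y_i)$ lower semicontinuity controls the left-hand side in the limit but the right-hand side would require upper semicontinuity, so the closure of a $d_L$-monotone set need not be $d_L$-monotone. The paper avoids this by working with the $\sigma$-compact exhaustion of $\Gamma$ from \eqref{E:gamman} (on whose pieces $d_L$ is continuous) and the $\sigma$-compactness of $\Gamma'$, $G$ from Remark~\ref{R:lsccase1}, then restricting $\pi$ to compact pieces $\Gamma_k$ and reassembling again by the monotone-union argument. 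Second, even with a compact cell and compact $\Gamma$, Remark~\ref{R:lsccase1} only yields that $\Gamma'=\cup_I\Gamma'_{n,I}$ and $G=\cup_I G_{n,I}$ are $\sigma$-compact --- the union over the cycle length $I$ cannot be removed --- so the claimed ``upgrade to genuinely compact'' requires the additional exhaustion of the paper's Step~3 rather than following directly.
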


\begin{proof}
{\it Step 1.}
Since $\pi(\{x=y \})=0$ we can assume that $\Gamma \cap \{x=y \}= \emptyset$. 
Take two dense sequences $\{x_{i}\}_{i\in \enne} \subset X$, $\{q_{i} \}_{i\in \enne}\subset \erre^+$ and consider the family of closed sets
\[ 
\Gamma_{ijk}:=\Big\{ (x,y): d( x_{i},x_{j}) \geq 8q_{k}, d(x,x_{i}), d(y,x_{j}) \leq q_{k} \Big\}. 
\]  
Then $\Gamma_{ijk}$ is a countable covering of $X\times X \setminus \{x=y\}$. 

Suppose now to have proven that for all $\mu_{ijk} = P_{1\,\sharp}( \pi\llcorner_{\Gamma_{ijk}})$ the disintegration formula holds with 
$\mathcal H^{1}$-a.c. marginal probabilities, then the same $\mathcal H^1$-a.c. property is true if we replace $\Gamma_{ijk}$ with the finite union of 
sets $\Gamma_{i'j'k'}$.

Define
\[ 
\tilde \Gamma_{m} := \bigcup_{n < m} \Gamma_{i_{n}j_{m}k_{m}},
\]
where
\[
\N \ni m \mapsto (i_m,j_m,k_m) \in \N^3
\]
is a bijective map, and consider $\mu_{m} = P_{1\,\sharp}(\pi\llcorner_{\tilde \Gamma_{m}})$, then $\{\mu_{m}\}_{m\in\enne}$ 
verifies the hypothesis of Lemma \ref{L:approx}.

{\it Step 2.} It remains to show how to construct the approximating structure $\tilde \pi_{n} \in \mathcal{P}(X_{n})$ 
converging in the sense of Definition \ref{D:mgh} to $\pi \llcorner_{\Gamma_{ijk}}$.
Since $\Gamma_{ijk}$ is closed, there exists a sequence
$\phi_{l} \in C_{c}(X\times X,[0,1])$ such that $\phi_{l}\searrow \chi_{\Gamma_{ijk}}$. 
Now $\phi_{l} \pi \searrow \pi\llcorner_{\Gamma_{ijk}}$ as $l\to +\infty$ and  
\[
\phi_{l} (\ell_{n}\otimes \ell_{n} )_{\sharp}\pi_{n} \weak \phi_{l} \pi.
\]
Hence there exists a subsequence $\{\phi_{l_i} (\ell_{n_i}\otimes \ell_{n_i} )_{\sharp}\pi_{n_i} \}_{i \in \N}$ satisfying \eqref{E:conMHD0} with weak limit $\pi\llcorner_{\Gamma_{ijk}}$. If one defines
\[
\tilde \pi_{i} = \big( \phi_{l_i} \circ \ell_{n_i} \big) \pi_{n_i},
\]
then it is straightforward to show that $(X_i,d_i,d_{L,i},\tilde \pi_i)$ converges to $(X,d,d_L,\pi\llcorner_{\Gamma_{ijk}})$ in the sense of Definition \ref{D:mgh}.

{\it Step 3.}
Since Remark \ref{R:lsccase1} yields that $\Gamma$, $G$ are $\sigma$-compact, let $\Gamma= \cup_{k}\Gamma_{k}$, $G=\cup_{k}G_{k}$
with $\Gamma_{k}$, $G_{k}$ compact and consider $\pi\llcorner_{\Gamma_{k}}$.
The same reasoning done in Step 1 and Step 2. yields that it is enough to prove the a.c. of disintegration for $\pi\llcorner_{\Gamma_{k}}$.
%Assume that $\mu_{k}$ can be disintegrated as 
%\[
%\mu_{k} = g_{\sharp}( r_{k} m_{k}\otimes \mathcal{L}^{1})
%\]  
%where $g$ is the ray map on $\mathcal{T}$. Then we have $m_{k} \nearrow m$ and from $\mu_{k}\nearrow \mu$ it follows that 
%\[
%\bigg( r_{k}\frac{dm_{k}}{d m} \bigg) m\otimes \mathcal{L}^{1} \nearrow \zeta.
%\]
%where $\mu=g_{\sharp}\zeta$. It follows that $\zeta =r m\otimes \mathcal{L}^{1}$.
%
%To show the existence of an approximating sequence $\pi_{n,k}$ converging in the sense of \eqref{E:conMHD} to $\pi \llcorner_{\Gamma_{k}}$,
%take as before $\phi_{l} \in C_{c}(X\times X,[0,1])$ such that $\phi_{l}\searrow \chi_{\Gamma_{k}}$. 
%Now $\phi_{l} \pi \searrow \pi\llcorner_{\Gamma_{k}}$ as $l\to +\infty$ and  
%\[
%\phi_{l} (\ell_{n}\otimes \ell_{n} )_{\sharp}\pi_{n} \weak \phi_{l}\pi.
%\]
%Hence there exists a subsequence $\{\phi_{l} (\ell_{nl}\otimes \ell_{nl} )_{\sharp}\pi_{nl} \}$ satisfying \eqref{E:conMHD}.
\end{proof}

Therefore from now on we will assume that $\pi$ is concentrated on the set
\[ 
\Big\{ (x,y): d(\bar x,\bar y) > 8q, d(x,\bar x), d(y,\bar y) \leq q \Big\}. 
\]
Using the same reasoning of Lemma \ref{L:ptdist} one can also prove the following.
\begin{lemma} \label{L:cpt}
%To prove that there exist $m \in \mathcal{P}(X)$, $r \in L^{1}(m\otimes \mathcal{L}^{1})$, $r\geq 0$ such that 
%\[
%\mu=g_{\sharp}( r m \otimes \mathcal{L}^{1}),
%\]
We can assume w.l.o.g. that the sets $A_{n}\subset X_{n}$ are compact and the maps $\ell_{n}:A_{n} \to X$ are continuous.
Moreover $\ell_{n}(A_{n})$ converges in Hausdorff distance to a compact set $K$ on which $\mu$ and $\nu$ are concentrated.
\end{lemma}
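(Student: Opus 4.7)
The plan is threefold: first apply Lusin to replace $A_n$ by a compact subset on which $\ell_n$ is continuous; then combine the bounded-support reduction of Lemma \ref{L:ptdist} with the Blaschke selection theorem to extract a Hausdorff-convergent subsequence of the images $\ell_n(A_n)$; and finally identify the Hausdorff limit as a support set for $\mu$ and $\nu$ via Portmanteau.

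For the first step, for each $n$ I apply Lusin's theorem to the Borel map $\ell_n : A_n \to X$ together with the inner regularity of $\pi_n$ on the Polish space $X_n \times X_n$, obtaining a compact $C_n \subset A_n$ with $\pi_n(C_n \times C_n) \geq 1 - 2^{-n}$ and such that $\ell_n \llcorner_{C_n}$ is continuous. Redefining $A_n := C_n$, the pointwise bounds \eqref{E:conMHD} and \eqref{E:conMHD1} are trivially preserved, while the weak convergence \eqref{E:conMHD0} survives because the total-variation distance between $\pi_n \llcorner_{A_n \times A_n}$ and its further restriction to $C_n \times C_n$ is at most $2^{-n}$. This yields the first two assertions.

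For the second step, by Lemma \ref{L:ptdist} we may assume that $\pi$ is concentrated on $\{(x,y): d(x,\bar x),\, d(y,\bar y)\leq q\}$ for fixed $\bar x,\bar y \in X$ and $q > 0$. Combined with the tightness of $\mu,\nu$ on the Polish space $(X,d)$, the weak convergence \eqref{E:conMHD0} forces the marginals of $(\ell_n \otimes \ell_n)_\sharp \pi_n \llcorner_{C_n \times C_n}$ to concentrate, up to a vanishing error, in a fixed $d$-compact set $K_0 \subset X$; a further harmless shrinking of $C_n$ (using again Lusin-type inner regularity) ensures $\ell_n(C_n) \subset K_0$. The space of nonempty closed subsets of $K_0$ equipped with the Hausdorff distance $d_H$ is compact by Blaschke's theorem, so along a subsequence $\ell_n(C_n) \to K$ in $d_H$ for some compact $K \subset K_0$.

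For the third step, fix $\varepsilon > 0$ and let $K^\varepsilon := \{x \in X : d(x,K) \leq \varepsilon\}$. Hausdorff convergence gives $\ell_n(C_n) \subset K^\varepsilon$ for all large $n$, so $((\ell_n \otimes \ell_n)_\sharp \pi_n \llcorner_{C_n \times C_n})(K^\varepsilon \times K^\varepsilon) \geq 1 - 2^{-n}$. Since $K^\varepsilon \times K^\varepsilon$ is $(d \times d)$-closed, the Portmanteau theorem applied to \eqref{E:conMHD0} yields
\[
\pi(K^\varepsilon \times K^\varepsilon) \geq \limsup_n ((\ell_n \otimes \ell_n)_\sharp \pi_n \llcorner_{C_n \times C_n})(K^\varepsilon \times K^\varepsilon) = 1.
\]
Intersecting over $\varepsilon = 1/k$ and using closedness of $K$ gives $\pi(K \times K) = 1$, whence $\mu(K) = \nu(K) = 1$. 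The main obstacle is the second step: producing a common compact ambient set $K_0$ for all the $\ell_n(C_n)$ requires combining Lemma \ref{L:ptdist} with the tightness of the limit $\pi$ and the uniform bound \eqref{E:conMHD}; once this is in hand, Blaschke selection and Portmanteau are routine.
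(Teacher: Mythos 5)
Your proof is correct and follows essentially the same route as the paper's: Lusin's theorem plus inner regularity for the first claim, then compactness of the space of compact subsets of a fixed ambient compact in the Hausdorff metric together with weak convergence for the second (the paper defers this to the argument of Proposition \ref{P:bddmono}). The one point to watch is in your Step 2: fixing a single compact $K_0$ containing all the $\ell_n(C_n)$ costs a fixed $\ve$ of mass rather than a vanishing one, so strictly speaking Step 3 only yields $\mu(K),\nu(K)\geq 1-2\ve$; this is harmless because it is absorbed by the w.l.o.g. reduction scheme of Lemmas \ref{L:approx} and \ref{L:ptdist} (and the paper's own reference to Proposition \ref{P:bddmono} has the same feature, producing a $\sigma$-compact rather than compact limit set).
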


\begin{proof}
By Lusin Theorem and inner regularity of measures it follows that there exist $B_{n}\subset A_{n}$ such that
\begin{itemize}
\item $A_{n}\setminus B_{n}$ is compact;
\item $\mu_{n}(B_{n})\leq 1/n$;
\item the map $\ell_{n}: A_{n}\setminus B_{n} \to X$ is continuous.
\end{itemize}
To prove the first part of the claim just observe that $(\ell_{n} \otimes \ell_{n} )_{\sharp} \pi_{n}\llcorner_{A_{n}\setminus B_{n} \times A_{n}\setminus B_{n}} \weak \pi$. 

The second part of the statement can be proven following the line of the second part of the proof of the Proposition \ref{P:bddmono}.
\end{proof}

%\begin{corollary}\label{C:conce}
%We can assume w.l.o.g. that the compact sets $\ell_n(A_n)$ converge in Hausdorff distance to a compact set $K$ on which $\mu$ and $\nu$ are concentrated;
%\end{corollary}
%\begin{proof}
%Since $(\ell_n\otimes\ell_n)_{\sharp} \pi_n \llcorner_{A_n \times A_n} \rightharpoonup \pi$, and $\pi$ is supported by a compact set $\Gamma$, then
%\[
%\forall \epsilon > 0  \ \exists N \ \Big( n \geq N \ \Rightarrow \ \pi_n \Big( (\ell_n\otimes\ell_n)^{-1} \big( \big\{ x : d(x,\Gamma) < \epsilon \big\} \big) \Big) > 1 - \epsilon \Big).
%\]
%Define 
%\[
%A_{n,k}: = \ell_{n}^{-1}( \{x: d(x,\Gamma)\leq 2^{-k}  \}) \cap A_{n}
%\]
%So for every $k>0$ there exists $N$ such that if $n\geq N$  
%\[
%\pi_{n}(A_{n,k}\times A_{n,k}) \geq 1-2^{-k+1}.
%\]
%Hence there is a subsequence $\{ A_{n_{k},k} \}$ such that  
%\[
%\pi_{n(k)}: = \pi_{n(k)}\llcorner_{A_{n(k),k} \times A_{n(k),k} } \weak \pi,
%\] 
%moreover $A_{n(k),k}$ is compact, $\ell_{n(k)}(A_{n(k),k})\times \ell_{n(k)}(A_{n(k),k})$ is compact and converges in Hausdorff distance 
%to  $\tilde \Gamma\subset \Gamma$ such that $\pi(\tilde \Gamma)=1$.
%\end{proof}

By Lemma \ref{L:cpt}  it is straightforward that for all $n$ great enough we have
\[ 
\big(\ell_{n\,\sharp}\mu_{n}\big)(B_{2q}(\bar x) )= \big(\ell_{n\,\sharp}\nu_{n}\big)(B_{2q}(\bar y) ) =1. 
\]

%We can consider the following sequence of map
%%\[
%%\begin{array}{ccccc}
%%\tilde g_n &:& [0,1] \times \R &\to& X \crcr
%%&& (y,t) &\mapsto& g_{n} \circ (\Xi_{n},Id)^{-1}(y,t) .
%%\end{array}
%%\]
%%and

\begin{lemma}\label{L:rette}
We can assume that the measure $m_{n}$ is concentrated on a compact subset of 
\[
%\ell_{n}^{-1} \bigg(\bar B\bigg(\bar x, \frac{3}{2}q \bigg) \bigg) \cap 
\Big\{ y \in S_{n} :  \exists t,s>\delta : (y,-s),(y,0),(y,t) \in P_{12}( \textrm{\rm graph} (\ell_{n}) ) \Big\}
\]
for some fixed $\delta>0$ and 
\begin{equation}\label{E:rette}
\mu_{n}( S_{n}\times (-\infty,0]) =1, \quad \nu_{n}( S_{n}\times (4q,+\infty)) =1.
\end{equation}
\end{lemma}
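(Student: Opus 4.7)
The strategy is to choose the cross-section $f_n$ (equivalently the origin of each ray) so that, in the resulting parametrization, $\mu_n$ is supported in $S_n\times(-\infty,0]$ and $\nu_n$ in $S_n\times(4q,+\infty)$, and then to use inner regularity of $m_n$ to restrict to a compact set of rays for which the three-point condition on $A_n$ holds.

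I would start by bounding from below the $d_{L,n}$-gap between the two marginals on each ray. Set $\alpha:=d(\bar x,\bar y)-8q>0$. The reductions of Lemma \ref{L:ptdist} and Lemma \ref{L:cpt} together with the asymptotic isometry \eqref{E:conMHD} yield, for $n$ large,
\[
d_{L,n}(x_n,y_n)\;>\;4q+\alpha/2\qquad\text{for every }(x_n,y_n)\in\mathrm{supp}(\pi_n)\cap(A_n\times A_n).
\]
Since $B_{2q}(\bar x)\cap B_{2q}(\bar y)=\emptyset$ and, after a further vanishing loss of mass, we may assume that $\ell_n$ sends $\mathrm{supp}(\mu_n\llcorner A_n)$ into $B_{2q}(\bar x)$ and $\mathrm{supp}(\nu_n\llcorner A_n)$ into $B_{2q}(\bar y)$, the restricted supports meet each ray $\{y\}\times\R$ in disjoint closed sets, and the $d_{L,n}$-cyclical monotonicity places the $\mu_n$-part strictly before the $\nu_n$-part, with separation at least $4q+\alpha/2$.

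Next I would reparametrize every ray so that its new origin is the top of the $\mu_n$-mass, i.e.\ replace $f_n(y)$ by the point at parameter $M_n^\mu(y):=\sup\{t:(y,t)\in\mathrm{supp}(\mu_n\llcorner A_n)\}$. After this shift $\mu_n\llcorner A_n$ sits in $S_n\times(-\infty,0]$, and by the gap estimate $\nu_n\llcorner A_n$ sits in $S_n\times(4q+\alpha/2,+\infty)\subset S_n\times(4q,+\infty)$, which is \eqref{E:rette}. Fix any $\delta$ with $0<\delta<\min\{q,\alpha/4\}$. The point $(y,0)=f_n(y)$ belongs to $A_n$ because $A_n$ is compact and $M_n^\mu(y)$ is a limit of $A_n$-points; the point $(y,m_n^\nu(y)-M_n^\mu(y))\in A_n$, with $m_n^\nu$ defined analogously, provides the right-side endpoint at parameter $>4q>\delta$.

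The main obstacle is producing a left endpoint $(y,-s)\in A_n$ with $s>\delta$: this requires the $\mu_n$-portion of each ray inside $A_n$ to extend at least $\delta$ below its top. The natural bad set is
\[
S_n^{\mathrm{thin}}(\delta):=\bigl\{y\in S_n:\mathrm{diam}_{d_{L,n}}\bigl(\mathrm{supp}(\mu_n\llcorner A_n)\cap(\{y\}\times\R)\bigr)<\delta\bigr\}.
\]
Here the equintegrability Assumption \ref{A:equi} is decisive: the $\mu_n$-mass carried by thin rays is dominated by the integral of $r_n$ over $\bigcup_y\{y\}\times(M_n^\mu(y)-\delta,M_n^\mu(y)]$, a set of $(m_n\otimes\mathcal L^1)$-measure at most $\delta$, so equintegrability gives $\mu_n(S_n^{\mathrm{thin}}(\delta)\times\R)=o(1)$ uniformly in $n$ as $\delta\to 0$. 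Discarding the thin-ray mass therefore costs an arbitrarily small amount, which is absorbed by the monotone approximation of Lemma \ref{L:approx}; inner regularity of $m_n$ on $S_n\setminus S_n^{\mathrm{thin}}(\delta)$ then yields the required compact subset.
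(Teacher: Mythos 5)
Your proof is correct and follows essentially the same route as the paper's: shift the origin of each ray to the top of the $\mu_n$-mass, use the $8q$-separation of the supports to obtain the $4q$ gap in \eqref{E:rette}, and combine Fubini with the equintegrability Assumption \ref{A:equi} to discard an arbitrarily small amount of mass so that each surviving ray extends at least $\delta$ to the left of the new origin. The only (immaterial) difference is that the paper removes the set $\{(y,t):|a(y)-t|<\delta\}$ around the initial points while you remove the rays whose $\mu_n$-support inside $A_n$ has diameter less than $\delta$; both rest on the same measure-$\delta$ slab estimate.
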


\begin{proof}

{\it Step 1.} Defining
\[
A_\delta := \Big\{ (y,t)\in S_{n} \times \erre :  |a(y)-t|<\delta \Big\},
\]
by Fubini Theorem 
\[ 
m_{n}\otimes\mathcal{L}^{1}(A_{\delta})
% = m_{n}\otimes \mathcal{L}^{1}\big( \{(y,t)\in S_{n}\times \erre :  |a(y)-t|<\delta \} \big) 
\leq \delta,
\]
hence by Assumption \ref{A:equi}, for any $\ve>0$ we can choose $\delta>0$ such that  
$r_{n} m_{n} \otimes \mathcal{L}^{1}(A_{\delta})<\ve$ for al $n \in \N$.

Therefore we can assume that  $r_{n} m_{n} \otimes \mathcal{L}^{1}$ is concentrated on a compact subset $B_{n}$
of $\ell_{n}^{-1}(\bar B (\bar x, \frac{3}{2}q) ) \setminus A_{\delta}$.

{\it Step 2.}
Define the u.s.c. selection of $B_{n}$ $t_{n}: S_{n} \to \erre $ in the following way:
\[
y \mapsto t_{n}(y):= \max \big\{ t \in \erre, (y,t) \in B_{n} \big\}.
\]
By removing a set of arbitrarily small measure we can assume that for all $y \in P_{1}(\gr (t_{n}))$ there exists $t>4q$ such that 
\[
(y,t_{n}(y)+t) \in \ell_{n}^{-1}\bigg( \bar B\bigg(\bar y, \frac{3}{2}q \bigg) \bigg).
\]

{\it Step 3.}
The Borel transformation 
\begin{align*}
B_{n} \ni (y,t) \mapsto (y, t-t_{n}(y))
\end{align*}
maps $m_{n}\otimes \mathcal{L}^1$ into itself and in the new coordinates the section $S_{n}$ satisfies the first part of the claim.

By the definition of $G_{n}$ and $\mu_{n}\perp\nu_{n}$ it follows that $\mu_{n}$ and $\nu_{n}$ satisfy \eqref{E:rette}, see Remark \ref{R:monot}.
\end{proof}

%\begin{lemma}
%$S_{n}\times \{0\} \in P_{12}(\gr( \ell_{n}))$ and $\{y\}\times (-\infty,-\delta]\cap P_{12}(\gr(\ell_{n}))\neq \emptyset$
%for all $y \in S_{n}$ for all $n \in \enne$.
%\end{lemma}
%
%\begin{proof}
%{\it Step 1.}
%\[
%\mu_{n} (S_{n} \times (-\infty,0]))=1.
%\]
%
%{\it Step 2.}
%By the continuity of $\ell_{n}$ the map $t^{*}_{n} : S_{n} \to \erre$
%\[
%\begin{array}{ccccc}
%y& \mapsto & t_{n}^{*}(y):= \max \{ t\in P_{12}(\gr (\ell_{n})) \cap \{y\}\times (-\infty,0] \}
%\end{array}
%\]
%is upper semi-continuous. Consider the measure preserving transformation  
%\begin{align*}
%\textrm{Dom}(t^{*}_{n}) \times \erre \ni (y,t) \mapsto (y, t-t^{*}(y)).
%\end{align*}
%
%
%
%
%{\it Step 3.} By the equintegrability for all $\ve>0$ there exists $\delta>0$ such that  
%\[
%\tilde r_{n}\tilde m_{n}\otimes \mathcal{L}^{1}([0,1]\times (-\delta,0)) \leq \ve
%\]
%hence we can assume that for all $y \in \textrm{Dom}(t^{*}_{n})$ $\exists s_{n}\leq- \delta \leq 0$:
%\[ 
%(y,s_{n}) \in \textrm{Dom}(h_{n})
%\]
%
%{\it Step 3.}
%By removing a set of $\ve \ll 1$ measure to $\mu_{n}$ then one can assume that $y \in \textrm{Dom}(t_{n}^{*})$ for all $y\in [0,1]$ and for all $n$.
%
%
%{\it Step 4.}
%Taking a Borel isomorphism from $\textrm{Dom}(t^{*}_{n})$ to $[0,1]$ the claim follows.
%\end{proof}
%
%
%
%
%
%
%
%
%

Define the map
\[
\begin{array}{ccccc}
h_n &:& S_{n} \times \R &\to& X \times \erre \crcr
&& (y,t) &\mapsto& (\ell_{n}  (y,0),t) .
\end{array}
\]
and the measure $h_{n\,\sharp} ( r_{n}m_{n}\otimes \mathcal{L}^{1} ) = \tilde r_{n} \tilde m_{n} \otimes \mathcal{L}^{1}$, 
with $\tilde m_{n} = \ell_{n}(\cdot,0)_{\sharp} m_{n}$.

\begin{lemma} \label{L:tight} 
The family of measures  $\{\tilde r_{n} \tilde m_{n} \otimes \mathcal{L}^{1}\}_{n\in \enne} \subset \mathcal{P}( \ell_{n}(S_{n}\times \{0\}) \times \erre)$ is tight and $\tilde r_{n}$ 
is equintegrabile w.r.t.  $\tilde m_{n} \otimes \mathcal{L}^{1}$.
\end{lemma}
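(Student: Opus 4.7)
The plan is to decouple the tightness claim into the two coordinates of $X\times\R$, and to derive the equintegrability as a pure change-of-variables consequence of Assumption \ref{A:equi}, using that $h_n$ intertwines the base measures $m_n\otimes\mathcal L^1$ and $\tilde m_n\otimes\mathcal L^1$.

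\textbf{Tightness in the first coordinate.} I would first observe that $\tilde m_n=\ell_n(\cdot,0)_\sharp m_n$ is supported on $\ell_n(\{y:(y,0)\in A_n\}\times\{0\})\subset\ell_n(A_n)$, since Lemma \ref{L:rette} forces $m_n$-a.e.\ $y$ to lie in $P_1(\mathrm{graph}(\ell_n(\cdot,0)))$. By Lemma \ref{L:cpt} each $\ell_n(A_n)$ is compact and $\ell_n(A_n)\to K$ in Hausdorff distance for some compact $K\subset X$. A routine totally-bounded argument then shows that
\[
L:=\overline{K\cup\bigcup_{n\in\enne}\ell_n(A_n)}
\]
is compact: for every $\eta>0$, all but finitely many $\ell_n(A_n)$ lie in $B_\eta(K)$, so a finite $\eta$-net of $K\cup\bigcup_{n<N}\ell_n(A_n)$ is an $\eta$-net of $L$, and $L$ is closed in the complete space $X$. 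Hence every $\tilde m_n$ is concentrated on the fixed compact $L$.

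\textbf{Tightness in the second coordinate.} After the reduction of Lemma \ref{L:gnid} the cost reads $\tilde d_{L,n}((y,s),(y',t))=|t-s|$ on $\{y=y'\}$, and by Lemma \ref{L:rette} the marginals satisfy $\mu_n(S_n\times(-\infty,0])=1$ and $\nu_n(S_n\times(4q,+\infty))=1$. Using the explicit expression \eqref{E:costo} from Remark \ref{R:monot},
\[
C\;\geq\;\int \tilde d_{L,n}\,\tilde\pi_n\;=\;\int t\,\nu_n(dy\,dt)-\int s\,\mu_n(dy\,ds),
\]
and since both summands on the right are non-negative given the sign of $s$ and $t$ on the respective supports, we deduce
\[
\int|s|\,\mu_n(dy\,ds)\;=\;-\int s\,\mu_n(dy\,ds)\;\leq\;C.
\]
Markov's inequality then gives $(P_2)_\sharp\mu_n([-T,T]^c)\leq C/T$. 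Since $h_n$ preserves the $t$-coordinate, $(P_2)_\sharp(h_{n\sharp}\mu_n)=(P_2)_\sharp\mu_n$. Combining with the first step, the compact set $L\times[-C/\ve,C/\ve]$ yields the required uniform estimate and so $\{\tilde r_n\,\tilde m_n\otimes\mathcal L^1\}_n$ is tight.

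\textbf{Equintegrability.} The key point is the identity
\[
h_{n\sharp}(m_n\otimes\mathcal L^1)\;=\;\tilde m_n\otimes\mathcal L^1,
\]
which is immediate on rectangles $A\times B$ from $h_n(y,t)=(\ell_n(y,0),t)$ and extends to Borel sets by uniqueness of product measures. Now given $\ve>0$, pick $\delta>0$ from Assumption \ref{A:equi}. For any Borel $A\subset X\times\R$ with $(\tilde m_n\otimes\mathcal L^1)(A)<\delta$ we have $(m_n\otimes\mathcal L^1)(h_n^{-1}(A))<\delta$, hence $\int_{h_n^{-1}(A)}r_n\,d(m_n\otimes\mathcal L^1)<\ve$. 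By the very definition $\tilde r_n\,\tilde m_n\otimes\mathcal L^1=h_{n\sharp}(r_n\,m_n\otimes\mathcal L^1)$, the change of variables gives
\[
\int_A \tilde r_n\,d(\tilde m_n\otimes\mathcal L^1)\;=\;\int_{h_n^{-1}(A)} r_n\,d(m_n\otimes\mathcal L^1)\;<\;\ve,
\]
which is the desired equintegrability.

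The main obstacle is the bookkeeping: after the chain of reductions in Lemmas \ref{L:gnid}, \ref{L:ptdist}, \ref{L:cpt} and \ref{L:rette}, one has to check that the cost formula \eqref{E:costo} applies on $S_n\times\R$ with the correct signs — so that both $\int t\,\nu_n$ and $-\int s\,\mu_n$ are non-negative and each is individually bounded by $C$ — and that $\ell_n(y,0)$ is well-defined for $m_n$-a.e.\ $y$, which is exactly what Lemma \ref{L:rette} guarantees by the translation in its Step 3.
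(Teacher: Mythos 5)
Your proof is correct and follows essentially the same route as the paper's: tightness in the $\R$-coordinate from the uniform cost bound via \eqref{E:costo} together with the sign condition \eqref{E:rette}, tightness in the $X$-coordinate from the Hausdorff precompactness supplied by Lemma \ref{L:cpt}, and equintegrability by the change of variables $\int_A\tilde r_n\,\tilde m_n\otimes\mathcal L^1=\int_{h_n^{-1}(A)}r_n\,m_n\otimes\mathcal L^1$ combined with $h_{n\sharp}(m_n\otimes\mathcal L^1)=\tilde m_n\otimes\mathcal L^1$. The only difference is that you spell out the Markov inequality and the compactness of the union of the supports, which the paper leaves implicit.
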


\begin{proof}
Performing the same calculation of \eqref{E:costo}  
\[
C\geq \int d_{L,n} \pi_{n} = \int s  \nu_{n} - \int s \tilde r_{n} \tilde m_{n}\otimes \mathcal{L}^{1}.
\]
From \eqref{E:rette}, Lemma \ref{L:rette}, it follows that $s \leq 0$, $\tilde r_{n} \tilde m_{n}\otimes \mathcal{L}^{1}$-a.e..
Hence $\tilde r_{n} \tilde m_{n}\otimes \mathcal{L}^{1} \in \mathcal{P}( \ell_{n}(S_{n}\times \{0\}) \times (-\infty,0]))$ and 
\[
0 \leq - \int s \tilde r_{n} \tilde m_{n}\otimes \mathcal{L}^{1} \leq C,
\]
therefore $\tilde r_{n} \tilde m_{n}\otimes \mathcal{L}^{1}$ is tight. Recall in fact that $\{S_n\}_{n \in \N}$ is a precompact sequence w.r.t. the Hausdorff distance by Lemma \ref{L:cpt}.

The equintegrability is straightforward: 
\[
\int_{A} \tilde r_{n} \tilde m_{n}\otimes \mathcal{L}^{1} = \int_{(h_{n})^{-1}(A)} r_{n} m_{n}\otimes \mathcal{L}^{1}
\]
and $m_{n}\otimes \mathcal{L}^{1}((h_{n})^{-1}(A)) = \tilde m_{n}\otimes \mathcal{L}^{1} (A)$.
\end{proof}

Consider the following measure
\[
\zeta_n := (h_{n}, \ell_{n})_\sharp ( r_n  m_n \otimes \mathcal L^1) \in \Pi \Big( \tilde r_n \tilde m_n \otimes \mathcal L^1, 
(\ell_n )_{\sharp} (\mu_{n}) \Big) \in \mathcal{P}(X\times \erre \times X).
\]

\begin{proposition}
\label{P:graphnn}
Up to subsequences, $\zeta_n \rightharpoonup \zeta$, where $\zeta \in \Pi(r m \otimes \mathcal L^1, \mu)$ is supported on a Borel graph $h : \mathcal T \times \R \to \mathcal T_e$ such that $t \mapsto h(y,t)$ is the $d_L$ $1$-Lipschitz curve $R(y)$ for $m$-a.e. $y \in X$.
\end{proposition}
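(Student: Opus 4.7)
The plan is to first extract a subsequential weak limit $\zeta$ of $\{\zeta_n\}$ by Prokhorov, then identify its marginals (first via equintegrability and Proposition \ref{P:limite}, second via weak convergence), and finally prove $\zeta$ is concentrated on a graph using the interplay of \eqref{E:conMHD}--\eqref{E:conMHD1} and the non-branching of $(X,d_L)$.

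For tightness of $\{\zeta_n\}$: the first marginal $\tilde r_n \tilde m_n \otimes \mathcal{L}^1$ is tight by Lemma \ref{L:tight}, and the second marginal $(\ell_n)_\sharp \mu_n$ converges weakly to $\mu$ (as the first marginal of the plans $(\ell_n \otimes \ell_n)_\sharp \pi_n$ which converge to $\pi$ by \eqref{E:conMHD0}), hence is tight. Joint tightness follows. Along a subsequence, $\zeta_n \rightharpoonup \zeta$. The second marginal of $\zeta$ is $\mu$. For the first marginal, Lemma \ref{L:tight} provides equintegrability of $\tilde r_n$ w.r.t.\ $\tilde m_n \otimes \mathcal L^1$, and $\tilde m_n \otimes \mathcal L^1$ has limit $\tilde m \otimes \mathcal L^1$ for some tight $\tilde m$; Proposition \ref{P:limite} (truncating in $t$ as in the proof of Theorem \ref{T:apprograph}) gives $\tilde r \in L^1(\tilde m \otimes \mathcal L^1)$ such that the first marginal of $\zeta$ is $\tilde r \tilde m \otimes \mathcal L^1$. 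The reduction in Lemma \ref{L:rette} ensures that $\tilde m$ is concentrated on the image under the section $y \mapsto \ell_n(y,0)$, which in the limit lies in the initial-points cross-section $\mathcal S \subset \mathcal T$ of $R$; identifying this with the ray-map coordinate gives the form $r m \otimes \mathcal L^1$ with $m = f_\sharp \mu$.

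For the graph property, I claim $\mathrm{supp}(\zeta) \subset \Sigma$, where
\[
\Sigma := \Big\{ ((x,t),z) \in (X \times \R) \times X \colon z \in R(x),\ d_L(x,z) = |t| \Big\}.
\]
A point of $\mathrm{supp}(\zeta_n)$ has the form $((\ell_n(y,0),t),\ell_n(y,t))$, and \eqref{E:conMHD} gives
\[
\big| d_L(\ell_n(y,0),\ell_n(y,t)) - |t| \big| \leq 2^{-n}.
\]
By \eqref{E:conMHD1} (passing through a convergent subsequence of triples) we obtain $d_L(x,z) = |t|$ in the limit. That $z \in R(x)$ follows because the three points $(\ell_n(y,-s),\ell_n(y,0),\ell_n(y,t))$ are used by $\pi_n$ along a single geodesic of $(X_n,d_{L,n})$, so by Proposition \ref{P:bddmono} their limits belong to a common $d_L$-geodesic that is $d_L$-cyclically monotone with $\Gamma$, placing $z$ in $G(x) \cup G^{-1}(x) = R(x)$. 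By non-branching of $(X,d_L)$ and Lemma \ref{L:uniqr}, $R(x)$ is a single geodesic for $x \in \mathcal T$, so given $(x,t)$ the point $z \in R(x)$ with $d_L(x,z)=|t|$ is uniquely determined, with the sign of $t$ selecting between $G(x)$ and $G^{-1}(x)$. Thus $\Sigma$ is the graph of a function $h \colon \mathcal T \times \R \to \mathcal T_e$, and Borel measurability of $h$ follows from analyticity of $R$ together with Lusin's theorem applied after restriction to $\sigma$-compact subsets of full $\zeta$-measure.

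Finally, for $m$-a.e.\ $y$ the map $t \mapsto h(y,t)$ parameterizes $R(y)$ by arc length: by construction $h(y,t)\in R(y)$ and $d_L(h(y,0),h(y,t))=|t|$, and the unique geodesic structure of $R(y)$ forces $d_L(h(y,s),h(y,t))=|s-t|$, so $h(y,\cdot)$ is the $d_L$-1-Lipschitz geodesic $R(y)$. I expect the main obstacle to be the verification that the limiting point $z$ lies on $R(x)$ rather than on some other geodesic in $X$: this uses crucially the $d_L$-cyclical monotonicity inherited by $\pi$ via Proposition \ref{P:bddmono}, combined with the $\sigma$-compact reductions of Lemma \ref{L:ptdist} and Lemma \ref{L:rette} which keep all relevant points in a bounded region and make the compactness and closedness arguments work.
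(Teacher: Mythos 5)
Your proof is correct and follows essentially the same route as the paper's: extract a weak limit, identify the marginals via Lemma \ref{L:tight} and Proposition \ref{P:limite}, pass the Hausdorff-converging supports through \eqref{E:conMHD}--\eqref{E:conMHD1} to get $d_L(x,z)=|t|$ and $(x,z)\in G\cup G^{-1}$, and use the $\delta$-separation of Lemma \ref{L:rette} together with non-branching to conclude $x\in\mathcal T$ and single-valuedness. One minor slip: the first marginal is $r\,m\otimes\mathcal L^1$ where $m$ is the limit of $\tilde m_n=\ell_n(\cdot,0)_\sharp m_n$, which is in general \emph{not} $f_\sharp\mu$ and whose base points do not form a cross-section of $R$ --- this is exactly why Proposition \ref{P:finalapr} is still needed afterwards.
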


\begin{proof}
{\it Step 1.}
The convergence to the correct marginals is a consequence of \eqref{E:conMHD0}
\[
(P_2)_\sharp \zeta_n = (\ell_n \circ g_n)_\sharp (r_n m_n \otimes \mathcal L^1) = 
(\ell_n)_\sharp \mu_n \rightharpoonup \mu, 
\]
and by Lemma \ref{L:tight} 
\[
(P_1)_\sharp \zeta_n= \tilde r_n  \tilde m_n \otimes \mathcal L^1 \rightharpoonup r m \otimes \mathcal L^{1}.
\]

{\it Step 2.} Since up to subsequence $\zeta_{n} \weak \zeta$, using the same technique of Lemma \ref{P:bddmono}, 
we can assume that $K_{n}:= (h_{n},\ell_{n})(S_{n}\times \erre)$ is compact  and   $d_{H}(K_{n}, \gr(h)) \to 0$ where $\gr(h)$ is a compact set supporting $\zeta$ and $h$ is the associated multivalued function.

{\it Step 3.}
Let $(y,t,x) \in \gr(h)$, then by the definition of convergence in the Hausdorff metric, 
there exists a sequence $(\ell_{n}(y_{n},0), t_{n},\ell_{n} (y_{n},t_{n})) \to (y,t,x)$. 
Hence from
\[ 
d_{L,n}\big( (y_{n},t_{n}),(y_{n},0) \big)=|t_{n}| \to |t|,
\]
we deduce by \eqref{E:conMHD1} that $d_{L}(x,y)=|t|$. In particular this implies that if $t=0$ then $x=y$.

{\it Step 4.} Let $(y,t,x), (y,t',x') \in \gr(h)$ with $t < 0$ and $t' > 0$. 
Again by the Hausdorff convergence there exist two sequences satisfying
\[
\big(\ell_{n}(y_{n},0), t_{n},\ell_{n}(y_{n},t_{n})\big) \to(y,t,x), \quad\big(\ell_{n}(y'_{n},0), t'_{n}, \ell_{n}(y'_{n},t'_{n})\big) \to(y,t',x').
\]
Since $d_{L}(y,y)=0$, from \eqref{E:conMHD1} we deduce
\[
d_{L,n}\big((y_{n},0),(y'_{n},0)\big) \to 0
\]
hence by the definition of $d_{L,n}$, for $n$ great enough $y_{n}=y'_{n}$.
%Hence we conclude that
%\[
%d_{L}(x,y)= \lim d_{L,n}(\tilde g_{n}(y'_{n},t'_{n}),\tilde g_{n}(y'_{n},0)) = t,
%\]
%\[
%d_{L}(x',y)= \lim d_{L,n}(\tilde g_{n}(y''_{n},t''_{n}),\tilde g_{n}(y''_{n},0)) = t'.
%\]
%By the $d_{L,n}$ monotonicity
%\begin{align*}
%d_{L,n}\big( (y'_{n},t'_{n}), (y_{n},t_{n})\big) + &~ d_{L,n}\big((y'_{n},0),(y_{n},0)\big)\crcr \geq  & ~
%   d_{L,n}\big( (y'_{n},t'_{n}),(y'_{n},0) \big) +d_{L,n}\big((y_{n},t_{n}),(y'_{n},0)\big) 
% \geq t'_{n} + |t_{n}|.
%\end{align*}
Therefore
\[
d_{L}(x,x')= \lim_{n \to + \infty} d_{L,n}((y_{n},t_{n}),(y'_{n},t'_{n})) = |t|+t',
\]
and by Step 3 we conclude that $d_{L}(x,x')=d_{L}(x,y)+d_{L}(y,x')$.

{\it Step 5.} Let $(y,t,x) \in \gr(h)$: we now show that
\[
t\geq0\ \Rightarrow\ (y,x)\in G, \quad -t\geq0\ \Rightarrow\ (y,x)\in G^{-1}.
\] 
We will prove only the first implication for $t>0$.
Since following Lemma \ref{L:cpt} we can take $G_{n}$ compact such that
\begin{enumerate}    
\item $(\ell_{n}\otimes \ell_{n})(G_{n}) \to \hat G$ in the Hausdorff metric;
\item $\hat G \subset G$,
\end{enumerate}
it is enough to show that there exists a sequence $(\ell_{n}(y_{n},0),t_{n},\ell (y_{n},t_{n}))\to (y,t,x)$
so that $(y_{n},x_{n}) \in G_{n}$ for all $n$, but this last implication is straightforward.

{\it Step 6.} We next show that for any $y \in P_{1}(\gr (h))$ there exist $t_{-},t_{+} \geq \delta$  
and $x_{-},x_{+}$ such that $(y,-t_{-},x_{-}),(y,t_{+},x_{+}) \in \gr(h)$.
In fact 
we recall that for all $y_{n} \in S_{n}$ there exist $t_{-,n},t_{+,n}\geq \delta$,
for some strictly positive constant $\delta$,  
such that 
\[
\big( (y_{n}, -t_{-,n}),(y_{n},0) \big), \big((y_{n},0),(y_{n},t_{+,n})\big) \in G_{n}.
\] 
Hence chose $y_{n} \in S_{n}$ such that $\ell_{n}(y_{n})\to y$ 
and pass to converging subsequences to obtain the claim. 

{\it Step 7.} Since for $y \in P_{1}(\gr(h))$ there exist  $x,x'$ such that $(x,y), (y,x')\in G \setminus \{x=y\}$, then $(x,x')\in G$, $y \in \mathcal{T}$
and $h$ is single valued. 
The same computation of Point 5 yields that
\[
\big\{ (y,h(t,y)), t\geq 0\big\} \cup \big\{ (h(t,y),y), t \leq 0\big\}\subset G,
\]
and from this it follows that $h(y,\erre)\subset R(y)$. 

Again from Point 5 one obtains that $d_{L}(y,h(t,y))=|t|$ and therefore
$t\mapsto g^{-1}(h(y,t))= g^{-1}(y)+t$.
\end{proof}

\begin{theorem}
\label{T:aproxxgg}
Let  $(X_{n},d_{n},d_{L,n},\pi_{n}) \to (X,d,d_{L},\pi)$  and $(X_{n},d_{n},d_{L,n},\pi_{n})$, $n \in \N$,
verifies Assumption \ref{A:punti} and Assumption \ref{A:equi}. Then the marginal measure $\mu = P_{1\,\sharp}(\pi)$ satisfies Assumption \ref{A:NDE}.
\end{theorem}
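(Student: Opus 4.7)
The strategy is to combine Proposition \ref{P:graphnn} with Proposition \ref{P:finalapr}, which reduces the problem to verifying two compatibility conditions between the limiting graph $h$ and the ray map $g$ of the limit structure, and then to deduce Assumption \ref{A:NDE} from the absolute continuity of the disintegration of $\mu$.

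First I invoke Proposition \ref{P:graphnn} to pass to a subsequence for which $\zeta_n \rightharpoonup \zeta$, where $\zeta \in \Pi(r\, m \otimes \mathcal L^1, \mu)$ is concentrated on the graph of a Borel map $h : \mathcal{T} \times \R \to \mathcal{T}_e$ such that, for $m$-a.e.\ $y$, the curve $t \mapsto h(y,t)$ is a $d_L$-Lipschitz parametrization of $R(y)$ with $d_L(h(y,t),h(y,s)) = |t-s|$ (this last property being recorded in Steps 3--4 of the proof of \ref{P:graphnn}). This identifies a candidate measure and a candidate graph for feeding into \ref{P:finalapr}.

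Next I verify the two hypotheses of Proposition \ref{P:finalapr}. For condition (\ref{Cond:1final}): since $\{h(y,t): t \in \R\} \subset R(y)$ and $f$ is a section of the equivalence relation $R$ on $\mathcal T$, the composition $e(y) := f(h(y,t))$ is independent of $t$. For the second condition: the map $t \mapsto h(y,t)$ is an isometric (unit-speed) parametrization of the single geodesic $R(y)$ with respect to $d_L$, hence it must agree with the reference parametrization $t \mapsto g(e(y),t)$ up to a translation, i.e.\ $h(y,t) = g(e(y), t + c(y))$ for a Borel $c : \mathcal T \to \R$ (the signed $d_L$-distance, in the $G$-direction, from $e(y)$ to $h(y,0)$; the non-branching assumption removes any sign ambiguity). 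Consequently
\[
h(y,\cdot)_\sharp \bigl( r(y,\cdot)\,\mathcal L^1 \bigr) \;=\; g(e(y),\cdot)_\sharp \bigl( r(y,\cdot - c(y))\,\mathcal L^1 \bigr),
\]
which is absolutely continuous with respect to $g(e(y),\cdot)_\sharp \mathcal L^1 = \mathcal H^1 \llcorner_{g(e(y),\R)}$. Proposition \ref{P:finalapr} then yields that the disintegration of $\mu$ along the ray equivalence relation $R$ has $\mathcal H^1_{d_L}$-absolutely continuous conditional probabilities.

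Finally I translate this absolute continuity into Assumption \ref{A:NDE}: given a Borel set $A \subset \mathcal{T}_e$ with $\mu(A) > 0$, write $\mu = \int g(y,\cdot)_\sharp(\rho(y,\cdot)\mathcal L^1)\,m(dy)$ with $\rho \in L^1(m\otimes\mathcal L^1)$; then $\mu(A_t) = \int \int_{g(y,\cdot)^{-1}(A)+t} \rho(y,\tau)\,d\tau\,m(dy)$ is continuous in $t$ by translation continuity of $L^1$ functions on $\R$ combined with dominated convergence, so $\mu(A_t) \to \mu(A) > 0$ as $t \to 0^+$ and in particular $\int_0^{+\infty} \mu(A_t)\,dt > 0$. (Alternatively this is the forward implication of Proposition \ref{P:peloso}.) The main obstacle is the second hypothesis of \ref{P:finalapr}: one must really use that the limit curves $t \mapsto h(y,t)$ inherit the isometric parametrization from the pre-limit curves, which is the reason Proposition \ref{P:graphnn} is set up exactly to preserve the speed along geodesics under the MGH-type convergence.
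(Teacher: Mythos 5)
Your proposal follows exactly the paper's route: the paper's own proof is a two-line assertion that the measure $\zeta$ from Proposition \ref{P:graphnn} satisfies the hypotheses of Proposition \ref{P:finalapr}, and your verification of those hypotheses (constancy of $e(y)=f(h(y,t))$ in $t$, and $h(y,t)=g(e(y),t+c(y))$ via the isometric parametrization established in Step 7 of the proof of Proposition \ref{P:graphnn}) together with the final passage from absolute continuity of the disintegration back to Assumption \ref{A:NDE} via Proposition \ref{P:peloso} is precisely the intended argument, just spelled out in more detail. The proof is correct.
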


\begin{proof}
The measure $\zeta$ constructed in the Proposition \ref{P:graphnn} satisfies the hypothesis of Proposition \ref{P:finalapr}. 
Therefore the marginal probabilities of the disintegration of $\mu$ are absolutely continuous with respect to $\haus^{1}$ and therefore $\mu$  
verifies Assumption \ref{A:NDE}.
\end{proof}

\begin{remark}
\label{R:reguadd}
As in Remark \ref{R:morereg}, if we know more regularity of the disintegrations for the approximating problems, we can pass them to the limit. Here the key observation is that geodesics converge to geodesics, so uniform continuous functions on them converge pointwise to continuous functions.
\end{remark}

A special case is when $d_L = d$: a natural approximation is by transport plans where $\nu$ is atomic, with a finite number of atoms. 
This case can be studied with more standard techniques, we refer to the analysis contained in \cite{biaglo:HJ1}.

%\begin{proposition}
%\label{P:uniapprox}
%Let $\{x_n\}_{n \in \N}$ be a dense sequence in $X$, and assume that for all transport problems
%\[
%\inf \bigg\{ \int d(x,y) \pi, \pi \in \Pi(\mu,\delta_{x_n} )\bigg\}
%\]
%the disintegration along transport rays $\mu = \int \mu_{n,y} m_n(dy)$ satisfies
%\[
%\mu_{n,y} \in \big[ h(d(x,x_n),L(R(y))), H(d(x,x_n),L(R(y))) \big] \mathcal L^1
%\]
%for some strictly positive continuous functions $h,H : (\R^{+})^2 \mapsto \R$. Then for all $\nu \in \mathcal P(X)$ the disintegration $\mu = \int \mu_y m(dy)$ satisfies the same bounds.
%\end{proposition}
%
%\begin{proof}
%Observe that for all $a>0$ along rays of length in $[a,b]$ and  for $d(x,x_{n})\leq a/2$, 
%the densities $\mu_{n,y}$ are equintegrable. 
%\end{proof}

%Proposition \ref{P:uniapprox} ensure that
%there exists a solution for the Monge minimization problem in all the spaces that satisfies the Measure Contraction Property, like Alexandrov spaces and the $d$-dimensional Heisenberg group, see \cite{juill:heisenmcp}. 
%The reference for the measure contraction property is \cite{ohta:mcp}.

\section{Applications}
\label{S:mcp}

In this section we recall the definition of Measure Contraction Property ($MCP$) and then we 
prove that for a metric measure space $(X,d,\eta)$ satisfying $MCP$, the Monge minimization problem with
marginal measures 
$\mu$ and $\nu$ with $\mu \ll \eta$ and cost $d$ admits a solution. We show moreover that the hypotheses of Corollary \ref{C:regudual} hold, and if $\supp \mu$ and $\supp \nu$ are at positive distance then the assumptions of Lemma \ref{L:normalcurr} are satisfied, i.e. the current $\dot g$ is normal. The main reference for this section is \cite{ohta:mcp}.

From now on $d=d_{L}$ and $\eta \in \mathcal{M}^{+}(X)$ is a locally finite measure on $X$. 
Since $d_{L}=d$ there exists a Lipschitz function $\f$ potential for the transport problem: 
hence in the following we will set
\[
\Gamma = \Gamma' = G = \Big\{ (x,y) \in X\times X : \f(x)-\f(y) = d(x,y) \Big\},
\]
where $\phi$ is a potential for the transport problem.
%Consider a sequence $\{ \delta_{x_{i}} \}_{i \in \N} \subset \mathcal{P}(X)$ such that $\sum_{i} c_{i} \delta_{x_{i}}  \rightharpoonup \nu$. 

Let $H$ be the set of all geodesics: we regard $H$ as a subset of $\textrm{Lip}_1([0,1],X)$ with the uniform topology. 
Define the evaluation map $e_{t}(\gamma)$ by
\begin{equation}\label{E:evalua}
\begin{array}{ccccc}
e &:& [0,1] \times H &\to& X \crcr
&& e_t(\gamma) &\mapsto& \gamma(t)
\end{array}
\end{equation}
It is immediate to see that $e_t(\gamma)$ is continuous.

A \emph{dynamical transference plan} $\Xi$ is a Borel probability measure on $H$, and 
the path $\{ \xi_{t}\}_{t \in [0,1]} \subset \mathcal{P}^{2}(X)$ given by $\xi_{t}= (e_{t})_{\sharp}\Xi$
is called \emph{displacement interpolation} associated to $\Xi$. 
We recall that $\mathcal{P}^{2}(X)$ is the set of Borel probability measures $\xi$
satisfying $\int_{X}d^{2}(x,y)\xi(dy)< \infty$ for some (and hence all) $x \in X$.

Define for $K\in \erre$ the function $s_{K} : [0,+\infty) \to \erre$ (on $[0,\pi/\sqrt{K})$ if $K>0$) 
\begin{equation}\label{E:sk}
s_{K}(t):= 
\begin{cases}
(1/\sqrt{K})\sin(\sqrt{K}t) & \rm{if}\ K>0, \crcr
t & \rm{if}\ K=0, \crcr
(1/\sqrt{-K})\sinh(\sqrt{-K}t) &\rm{if}\ K<0,
\end{cases}
\end{equation}
and let $N \in \enne$.

\begin{definition}\label{D:mcp}
A metric measure space $(X,d,\eta)$ is said to satisfies the $(K,N)$-\emph{measure contraction property} ($MCP(K,N)$)
if for every point $x \in X$ and $\eta$-measurable set $A \subset X$ with $\eta(A)>0$
there exists a displacement interpolation $\{\xi_{t}\}_{t\in [0,1]}$ associated to 
a dynamical transference plan $\Xi = \Xi_{x,A}$ satisfying the following: 
\begin{enumerate}
\item We have $\xi_{0}=\delta_{x}$ and $\xi_{1}=\eta(A)^{-1} \eta_{\llcorner A}$; 
\item \label{E:mcp} for $t \in [0,1]$ 
\[ 
\eta \geq (e_{t})_{\sharp} \bigg( t \bigg\{  \frac{s_{K}(t d(x,\gamma(1)) )}{s_{K}( d(x,\gamma(1)) )} \bigg\}^{N-1} \eta(A) \Xi  \bigg),
\]
where we set $0/0=1$.
\end{enumerate}
\end{definition}

From now on we will assume the metric measure space $(X,d,\eta)$ to satisfies $MCP(K,N)$ for some $K\in \erre$ and $N\in \enne$.
Recall that $MCP(K,N)$ implies that $(X,d)$ is locally compact, Lemma 2.4 of \cite{ohta:mcp}.

The strategy to prove Assumption \ref{A:NDE} for any $d$-cyclically monotone plan is the following: 
first we prove that for any $\pi \in \Pi(\mu,\delta_{x})$ $d$-monotone with $x$ arbitrary, 
the marginal probabilities of $\eta$
obtained by the disintegration induced by the ray map $g$ are absolutely continuous w.r.t. $\haus^{1}$ and their densities 
satisfy some uniform estimates. 
Then we observe that these estimates hold true also for any $\pi\in \Pi(\mu,\sum_{i\leq I} c_{i}\delta_{x_{i}})$ $d$-monotone. 
Finally we show that the same estimates hold for general transference plans and therefore 
we deduce that the densities of the marginals obtained by disintegrating $\eta$ w.r.t. any $d$-monotone plan $\pi$
are absolutely continuous w.r.t. $\haus^{1}$. 

By Lemma \ref{L:approx}, it is enough to assume that there exists $K_{1}, K_{2} \subset X$ compact set,
such that  $\mu(K_{1}) =\nu(K_{2})=1$ and $d_{H}(K_{1},K_{2})< + \infty$. 
Hence we can assume that $\diam(X)< + \infty$ and $\eta(X)=1$.

% imply Assumption \ref{A:punti} and Assumption \ref{A:equi}, and selecting 
%$\{ \delta_{x_{i}} \}_{i \in \N} \subset \mathcal{P}(X)$ such that $\sum_{i} c_{i} \delta_{x_{i}}  \rightharpoonup \nu$
%and $\pi_{n}\in \Pi(\mu,\sum_{i\leq n} c_{i}\delta_{x_{i}})$ we have done.

%Consider $\{ \delta_{x_{i}} \}_{i \in \N} \subset \mathcal{P}(X)$ such that $\sum_{i} c_{i} \delta_{x_{i}}  \rightharpoonup \nu$. 
%For each $x_{i}$ consider the $d$-monotone plan $\pi_{i} \in \Pi(\mu, x_{i})$ and all the related transport sets, evolution and parametrization 
%omitting the subscript.

\begin{lemma}
\label{L:punto}
Consider $\bar x\in X$ and let $\pi \in \Pi(\mu,\delta_{\bar x})$ be the unique $d$-cyclically monotone transference plan. 
Then $\eta$ and the optimal flow induced by $\pi$ verify Assumption \ref{A:NDE}: 
more precisely, $\eta =   g_{\sharp} ( q m\otimes \mathcal{L}^{1} )$
and the density $q$ satisfies the estimate
\begin{equation}\label{E:marg} 
q(y,t) \geq \bigg\{   \frac{s_{K}( d(g(y, t) , \bar x))}{s_{K}( d(g(y,s) , \bar x)) } \bigg\}^{N-1} q(y,s)
\end{equation}
for $m$-a.e. $y \in S$, for any $s\leq t$ such that $d(g(y,t),x)>0$.
\end{lemma}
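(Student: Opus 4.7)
The plan is to apply $MCP(K,N)$ with base point $\bar x$ to thin cylinders of transport rays, which first yields Assumption \ref{A:NDE} for $\eta$ (and hence the existence of $q$ via Theorem \ref{teo:a.c.}), and then delivers \eqref{E:marg} through a Lebesgue differentiation argument in the product structure $m\otimes\mathcal L^1$.

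\emph{Set-up.} Since $\nu=\delta_{\bar x}$, the unique transference plan is $\pi=\mu\otimes\delta_{\bar x}$, which is trivially $d$-cyclically monotone. All transport rays terminate at $\bar x$, so in the parametrization of Definition \ref{D:mongemap} there is a Borel function $T\colon S\to[0,\diam X]$ with $g(y,T(y))=\bar x$ and $d(g(y,t),\bar x)=T(y)-t$ for $t\in[0,T(y)]$. A direct consequence of Lemma \ref{L:uniqr} and non-branching is that for every $z=g(y,s)$ in the interior of $\mathcal T$ the minimizing geodesic from $\bar x$ to $z$ is unique and coincides with the reversed ray through $z$: indeed if $\gamma_1,\gamma_2$ were two such geodesics, then $G(z)$ would contain both of them and $R(z)=G(z)\cup G^{-1}(z)$ would fail to be a single geodesic. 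Hence every dynamical plan provided by $MCP$ with $x=\bar x$ and $A$ in the interior of $\mathcal T$ must be concentrated on reversed transport rays, and the evaluation map reads $e_u(g(y,s))=g\bigl(y,(1-u)T(y)+us\bigr)$.

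\emph{Existence of $q$ via Assumption \ref{A:NDE}.} Fix $y_0\in S$, $0<s<t<T(y_0)$, a small neighbourhood $\tilde B\subset S$ of $y_0$, and set $u:=(T(y_0)-t)/(T(y_0)-s)\in(0,1)$, $C_\epsilon:=g(\tilde B\times[s,s+\epsilon])$. Applying \eqref{E:mcp} to $A=C_\epsilon$ yields
\[
\eta(e_u(C_\epsilon))\;\geq\; u\,\Bigl(\tfrac{s_K(u(T(y_0)-s))}{s_K(T(y_0)-s)}\Bigr)^{N-1}\eta(C_\epsilon)\bigl(1+o_{\tilde B}(1)\bigr),
\]
where the $o_{\tilde B}(1)$ term absorbs the fluctuation of $T$ over $\tilde B$. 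Integrating in $u\in(0,1)$ and performing the change of variables $\tau=(1-u)(T(y_0)-s)$ identifies $\bigcup_u e_u(C_\epsilon)$ with $\bigcup_\tau (C_\epsilon)_\tau$ up to the same fluctuation, so $\int_0^\infty\eta((C_\epsilon)_\tau)\,d\tau>0$ whenever $\eta(C_\epsilon)>0$. Since such cylinders generate the Borel sets meeting $\mathcal T$, $\eta$ satisfies Assumption \ref{A:NDE} and Theorem \ref{teo:a.c.} supplies the decomposition $\eta=g_\sharp(q\,m\otimes\mathcal L^1)$ with $m=f_\sharp\eta$ and $q\in L^1(m\otimes\mathcal L^1)$. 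With $q$ in hand,
\[
\eta(C_\epsilon)=\int_{\tilde B}\!\int_s^{s+\epsilon}\!q(y,\sigma)\,d\sigma\,m(dy),\qquad \eta(e_u(C_\epsilon))=\int_{\tilde B}\!\int_{t+\alpha(y)}^{t+\alpha(y)+u\epsilon}\!q(y,\sigma)\,d\sigma\,m(dy),
\]
where $\alpha(y):=(1-u)(T(y)-T(y_0))$. Substituting these into the above MCP inequality, dividing by $u\epsilon\,m(\tilde B)$ and letting first $\epsilon\to 0$ and then $\tilde B\downarrow\{y_0\}$, Lebesgue differentiation on $(S\times\R,m\otimes\mathcal L^1)$ converts both averages into pointwise values of $q$, producing
\[
q(y_0,t)\;\geq\;\Bigl(\tfrac{s_K(T(y_0)-t)}{s_K(T(y_0)-s)}\Bigr)^{N-1}q(y_0,s)
\]
for $m$-a.e.\ $y_0$ and a.e.\ pair $0<s<t<T(y_0)$; continuity of $s_K$ together with a lower-semicontinuous choice of representative of $q(y_0,\cdot)$ then extends the bound to every pair $s\leq t$ with $d(g(y_0,t),\bar x)>0$.

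\emph{Main obstacle.} The technical heart is the Lebesgue differentiation step: $T$ is a priori only $\mathcal A$-measurable, and $(S,m)$ does not come with a Vitali basis. The standard remedy is to use Lusin's theorem to restrict $m$ to a compact cross-section on which $T$ is continuous and $y\mapsto q(y,\cdot)$ is weakly continuous, metrize this section via the ambient distance $d$, and then differentiate along balls in the induced metric. One also has to confirm that the $o_{\tilde B}(1)$ error is uniform in $\epsilon$ as $\tilde B$ shrinks, which becomes routine once we are on the good compact section.
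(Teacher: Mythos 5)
Your overall strategy (apply $MCP$ with base point $\bar x$ to thin cylinders along the transport rays) is in the right spirit, but there is a genuine gap at the pivotal step ``Since such cylinders generate the Borel sets meeting $\mathcal T$, $\eta$ satisfies Assumption \ref{A:NDE}.'' The property $\eta(A)>0\Rightarrow\int_0^{\infty}\eta(A_t)\,dt>0$ is an absolute-continuity statement ($\eta\ll\Lambda$ with $\Lambda(A):=\int_0^\infty\eta(A_t)\,dt$) and cannot be verified on a generating class: take, in ray coordinates, $\eta_y=\tfrac12\mathcal L^1\llcorner_{[0,1]}+\tfrac12\delta_{1/2}$ for every $y$. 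Every cylinder of positive measure then has $\int\eta(A_t)\,dt>0$ (any interval meeting $1/2$ also carries Lebesgue mass), yet $A=g(S\times\{1/2\})$ has $\eta(A)>0$ and $\eta(A_t)=0$ for $t\neq0$. Of course $MCP$ rules out such an $\eta$, but only if you apply it to sets other than cylinders. A related omission: you never show that the set of initial points $a(X)$ is $\eta$-negligible, which is needed before one can disintegrate $\eta$ along rays at all; since your route to Assumption \ref{A:NDE} is broken, you cannot get this from Lemma \ref{L:puntini} either.

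The paper's proof avoids both problems. It first applies $MCP$ directly to $A=a(X)$ and uses non-branching to see that the evolutions $A^s$ are pairwise disjoint with uniformly positive measure, forcing $\eta(a(X))=0$. It then parametrizes the rays so that $g(y,0)=\bar x$ (which kills your $o_{\tilde B}(1)$ fluctuation of $T$), applies $MCP$ to sets $g(S'\times[t_1,t_2])$, and upgrades the integrated inequality to an inequality for $\eta_y$ on intervals, valid for $m$-a.e.\ $y$ and all $t_1<t_2$; this needs only the fact that an inequality between two measures on $S$ holding on all Borel $S'$ holds for the disintegration densities, not any Vitali or Besicovitch covering on $(S,m)$ (which, as you suspect, is the weak point of your ``metrize and differentiate along balls'' remedy). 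Finally, a pigeonhole over $n$ disjoint rescaled copies of an interval gives $\eta_y$ of an interval of length $\ell$ bounded by $C\ell$, which yields $\eta_y\ll\mathcal H^1$ together with the estimate \eqref{E:marg} in one stroke, with no detour through Assumption \ref{A:NDE}, Theorem \ref{teo:a.c.}, or Lebesgue differentiation on the quotient. If you want to salvage your argument, replace the ``generation'' step by this quantitative interval bound.
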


We recall that $S$ is a section for the ray equivalence relation. Since $\mu \ll \eta$, \eqref{E:marg} implies that $\mu = g_{\sharp}( r m \otimes \mathcal{L}^{1})$ with $r \leq q$.

\begin{proof}

First observe that the potential for the transport problem is 
\[ 
\f(x) : = \f(\bar x) + d(x,\bar x), 
\]
so that the geodesics used by $\pi$ are exactly $H_{\bar x}:= H \cap e_{0}^{-1}(\bar x)$, 
in the sense that  
\[ 
G = \Big\{ \big(\gamma(1-s),\gamma(1-t)\big), s \leq t, \gamma \in H_{\bar x} \Big\}.
\]

{\it Step 1.} 
We first prove that the set of initial points $A=a(X)$ has $\eta$-measure zero. 
Suppose by contradiction that $\eta(A)>0$ and let $\Xi_{\bar x,A}$ be the dynamical transference plan associated:
we can assume that $\Xi_{\bar x,A}$ is supported on the set 
$H_{\bar x,A}:= H_{\bar x} \cap e_{1}^{-1}(A)$.
Then the evolution of $A$ by the geodesics of $H_{\bar x, A}$ can be defined as
\[
A^{s}: = e_{1-s}(H_{\bar x,A}).
\]
By Condition \ref{E:mcp} of Definition \ref{D:mcp} and the fact that $e_{1-s}^{-1}(A^{s})=H_{\bar x, A}$
\begin{equation}\label{E:acaso}
\eta(A^{s}) \geq \eta(A) \int_{H_{\bar x,A}} 
(1- s) \bigg\{  \frac{s_{K}((1- s) d(\bar x,\gamma(1)) ) }{s_{K}( d(\bar x,\gamma(1)) )} \bigg\} ^{N-1} \Xi_{x,A}(d\gamma) > 0,
\end{equation}
for all $s \in [0,1)$.
Since all $A^{s}$ are disjoint being the space non branching, it follows that $\eta(A)=0$.

{\it Step 2.} For $A$ with $\eta(A)>0$ let $\Xi_{\bar x,A}$ be the dynamical transference plan concentrated on a set 
$H_{\bar x,A}: =H_{\bar x} \cap e_{1}^{-1}(A)$.
Denote as before $A^{s}: = e_{1-s}( H_{\bar x, A})$.

Observe that since the set initial point has $\eta$-measure zero, we can disintegrate $\eta$ w.r.t. the ray equivalence relation:
using the disintegration formula $\eta = \int \eta_{y} m(dy)$ the same estimate as in \eqref{E:acaso} yields
\[
\int \eta_{y}(A^{s}) m(dy) \geq \int \eta_{y}(A)m(dy) \bigg( \int_{ H_{\bar x,A}} (1- s) \bigg\{  \frac{s_{K}((1- s) d(\bar x,\gamma(1)) ) }{s_{K}( d(\bar x,\gamma(1)) )} \bigg\} ^{N-1} \Xi_{\bar x,A}(d\gamma) \bigg).
\]
By evaluating the above formula on sets of the form $A=g( S \times [t_{1},t_{2}] )$, where $g$ is the ray map
such that $g(y,0)=\bar x$ for all $y$, gives 
\begin{align*}
\int_{S} \eta_{y} \big( g(y,[t_{1},t_{2}] (1-s)) \big) m(dy) \geq &~ \int_{S} \eta_{y} \big( g(y,[t_{1},t_{2}]) \big) m(dy) \crcr
&~ \qquad \cdot \bigg( \int_{ H_{\bar x,A}} (1- s) \bigg\{  \frac{s_{K}((1- s) d(\bar x,\gamma(1)) ) }{s_{K}( d(\bar x,\gamma(1)) )} \bigg\} ^{N-1} \Xi_{\bar x,A}(d\gamma) \bigg) \crcr 
\geq &~ \int_{S} \eta_{y}\big(g(y,[t_{1},t_{2}])\big) m(dy)     \min_{c\in [t_{1},t_{2}]} \bigg\{  (1-s) \frac{s_{K}((1- s) |c| ) }{s_{K}( |c| )} \bigg\} ^{N-1}.
\end{align*}
and therefore for $m$-a.e. $y$ and every $t_{1},t_{2}$
\begin{equation}\label{E:evo}
\eta_{y} \big( g(y,[t_{1},t_{2}] (1-s)) \big) \geq \eta_{y} \big( g(y,[t_{1},t_{2}]) \big) \min_{c\in [t_{1},t_{2}]} \bigg\{  (1-s) \frac{s_{K}((1- s) |c| ) }{s_{K}( |c| )} \bigg\} ^{N-1}.
\end{equation}

{\it Step 3.}
For $t_{1}<0$ consider the family of disjoint open sets 
\[
t_{1} \bigg(  1 - \frac{k}{2n},  1 - \frac{k+1}{2n}\bigg), \quad k = \{0,1,\dots, n-1 \}.
\]
The above estimate and the fact that $\eta_{y}$ is probability yield  
\[ 
\eta_{y} \bigg\{  g\bigg(y,  t_{1}\bigg(1,1-\frac{1}{2n}\bigg) \bigg) \bigg\} \leq \frac{1}{n} 
\max_{c\in [t_{1},t_{1}/2]} \bigg\{  2 \frac{s_{K}( |c| ) }{s_{K}( 2|c| )} \bigg\} ^{N-1}.
\]
Hence $\eta_{y} = q \haus^{1}\llcorner_{g(y,\erre)}$ and $q$ satisfies \eqref{E:marg}.
\end{proof}

\begin{lemma}\label{L:piupunti}
Let $\pi\in \Pi(\mu,\sum_{i \leq I} c_{i}\delta_{x_{i}})$ $d$-cyclically monotone. 
Then the conditional probabilities of the disintegration of $\eta$ w.r.t. the ray equivalence relation induced by $\pi$ 
are absolutely continuous w.r.t. $\haus^{1}$ and  the density $q(y,\cdot)$ satisfies
\[
q(y,t) \geq \bigg\{   \frac{s_{K}( d(g(y, t) , b(y)))}{s_{K}( d(g(y,s) , b(y))) } \bigg\}^{N-1} q(y,s).
\]
\end{lemma}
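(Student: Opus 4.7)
The plan is to reduce to Lemma \ref{L:punto} by decomposing $\pi$ according to its second marginal.

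Write $\pi = \sum_{i=1}^I \pi_i$ with $\pi_i := \pi \llcorner_{X \times \{x_i\}}$ and set $\mu_i := (P_1)_\sharp \pi_i$. The normalized plan $\tilde\pi_i := c_i^{-1}\pi_i \in \Pi(c_i^{-1}\mu_i, \delta_{x_i})$ is trivially $d$-cyclically monotone: for any cycle $\{(w_k, x_i)\}_{k \leq n}$ the alternating sum $\sum_k d(w_{k+1 \bmod (n+1)}, x_i) - \sum_k d(w_k, x_i)$ telescopes to zero. Applying Lemma \ref{L:punto} to each $\tilde\pi_i$ with base point $x_i$ gives $\eta = (g_i)_\sharp(q_i\, m_i \otimes \mathcal L^1)$ along the $\tilde\pi_i$-ray structure, with $\eta$-negligible initial-point set $a_i(X)$ and density bound
\[
q_i(y, t) \geq \bigg\{\frac{s_K(d(g_i(y, t), x_i))}{s_K(d(g_i(y, s), x_i))}\bigg\}^{N-1} q_i(y, s).
\]

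Next I identify the $\pi$-rays inside the $\tilde\pi_i$-rays. Since $d_L = d$, there exists a Lipschitz Kantorovich potential $\varphi$ for $\pi$, and by the discussion in Section \ref{S:mcp} (see Remark \ref{R:TR}) the oriented transport rays are $G = \{(x, y) : \varphi(x) - \varphi(y) = d(x, y)\}$. On any $\pi$-ray terminating at $x_i = b(y)$, $c$-concavity of $\varphi$ forces $\varphi(z) = \varphi(x_i) + d(z, x_i)$, whence $\varphi(z) - \varphi(z') = d(z, x_i) - d(z', x_i)$; combined with $|d(z, x_i) - d(z', x_i)| = d(z, z')$ along the ray, this says the $\pi$-ray is a sub-segment of the $\tilde\pi_i$-ray through any of its interior points. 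Setting $\mathcal T^{(i)} := \{y \in \mathcal T : b(y) = x_i\}$, we have $\mathcal T = \sqcup_i \mathcal T^{(i)}$, and the $\pi$-ray partition refines the $\tilde\pi_i$-partition on $\mathcal T^{(i)}$. For the initial-points set of $\pi$, I adapt Step 1 of Lemma \ref{L:punto}: if $\eta(a(X)) > 0$ then some $\eta(a(X) \cap \mathcal T^{(i)}) > 0$, and applying $MCP$ at $x_i$ to this set produces a dynamical plan concentrated on the reversed $\pi$-rays ending at $x_i$; the evolved sets $A^s_i$ are pairwise disjoint by non-branching (an overlap would place a purported initial point in the interior of a longer ray ending at $x_i$), and each carries $\eta$-mass bounded below by the $MCP$ factor, contradicting $\eta(X) = 1$.

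Finally I assemble the disintegration. On $\mathcal T^{(i)}$, disintegration of $\eta$ along the $\pi$-ray partition is a refinement of the $\tilde\pi_i$-disintegration produced above; by uniqueness of disintegrations, the conditionals along $\pi$-rays are the restrictions of the $\tilde\pi_i$-conditionals, hence they are absolutely continuous with respect to $\mathcal H^1$. Since $b(y) = x_i$ on $\mathcal T^{(i)}$, substituting $x_i = b(y)$ in the $\tilde\pi_i$-bound yields the claimed estimate
\[
q(y,t) \geq \bigg\{\frac{s_K(d(g(y, t), b(y)))}{s_K(d(g(y, s), b(y)))}\bigg\}^{N-1} q(y,s).
\]
The principal technical point is the ray identification in the middle paragraph: guaranteeing that $\pi$-rays (determined by $\varphi$) are genuinely sub-segments of the $\tilde\pi_i$-rays (determined by $d(\cdot, x_i)$), so that the $MCP$ density bound from Lemma \ref{L:punto} transfers verbatim to the finer partition.
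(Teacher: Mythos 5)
Your proof is correct and follows essentially the same route as the paper's: both decompose the transport set according to the target Dirac $x_i$ (your $\mathcal T^{(i)}$ is the paper's $E_i = \{z : \varphi(z)-\varphi(x_i) = d(z,x_i)\}$) and reduce the estimate to Lemma \ref{L:punto} applied at each $x_i$, using the negligibility of the initial points to make the pieces disjoint. Your explicit transfer of the density bound from the $\tilde\pi_i$-ray structure to the finer $\pi$-ray structure is precisely the step the paper compresses into ``we can perform exactly the same calculations done in Lemma \ref{L:punto}'', and your treatment of it is sound.
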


\begin{proof}
Let $\f$ be a potential for the transport problem with marginal $\mu$ and $\nu$.
Define 
\[
E_{i}:= \bigg\{z \in \mathcal{T}_{e} : \varphi(z)-\varphi(x_{i})=d(z,x_{i})\bigg\}.
\]
Now each $E_{i}$ is sent by the optimal geodesic flow to $x_{i}$, so we can perform exactly the same calculations done in Lemma \ref{L:punto}. Indeed $E_{i}\cap E_{j} \subset a(X)$ which has $\eta$-measure zero, 
$\eta_{\llcorner E_{i}}$ verifies \eqref{E:mcp} of Definition \ref{D:mcp} along the geodesic flow connecting $E_{i}$ to $x_{i}$. 
% Hence the estimate holds. 
\end{proof}

Given $\tilde H \subset \textrm{Lip}_1([0,1],X)$ a set of geodesics and $A\subset X$, define 
\begin{equation}\label{E:evodue}
A^{s,\tilde H}:= e_{1-s}( e^{-1}_{1}(A) \cap \tilde H).
\end{equation}

\begin{lemma}\label{L:fine}
Assume that there exists two compact sets $K_{1}, K_{2}\subset X$ such that 
\begin{enumerate} 
\item $\mu(K_{1})=\nu(K_{2})=1$; 
\item  there exist $0<a \leq b < + \infty$ such that  
\[ 
a =  \min_{x_{1} \in K_{1}, x_{2}\in K_{2}} d(x_{1},x_{2})  \leq  \max_{x_{1} \in K_{1}, x_{2}\in K_{2}} d(x_{1},x_{2})    ;
\]
\item $K_{2}$ is a section of $R$.
\end{enumerate}
Then if 
\begin{equation}\label{E:geodue}
H(G) : = \Big\{ \gamma \in H : \exists y \in K_{2} \Big( \f(\gamma(0)) - \f(\gamma(1))= d(\gamma(0),\gamma(1))\, \wedge \,\gamma(0)=y  \Big) \Big\},
\end{equation}
where $\f$ is the potential for the transport problem with marginal $\mu$ and $\nu$, then
\[
\eta(K_{1}^{s,H(G)}) \geq \eta(K_{1})     \min_{a \leq c \leq b}  \bigg\{  (1- s) \frac{s_{K}((1- s) c ) }{s_{K}( c )} \bigg\} ^{N-1}.
\]
\end{lemma}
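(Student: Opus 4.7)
The plan is to derive the inequality by a per-ray application of the measure contraction property together with an integration over the section $K_2$, in direct analogy with Step 2 in the proof of Lemma \ref{L:punto}.

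First I would use that $K_2$ is a section of $R$ to disintegrate $\eta \llcorner_{\mathcal T} = \int_{K_2} \eta_y \, m(dy)$ along the transport rays, with $m = f_\sharp(\eta \llcorner_{\mathcal T})$ concentrated on $K_2$ and $\eta_y$ on $R(y)$. I would parameterize each ray by the ray map $g$ with $g(y,0) = y \in K_2$, so that $d(g(y,t),y) = |t|$ in the appropriate range. Under this normalization the single point $y$ itself plays the role of the target point $\bar x$ of Lemma \ref{L:punto} for its own ray.

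Second, I would show that each $\eta_y$ is absolutely continuous with respect to $\mathcal H^1 \llcorner_{R(y)}$ with density $q(y,\cdot)$ obeying the MCP-type estimate \eqref{E:marg}. By the very same argument as in the proof of Lemma \ref{L:punto}, regarded as a computation localized to a single ray, one then obtains the evolution bound
\[
\eta_y \bigl( g(y,(1-s)T) \bigr) \geq \eta_y \bigl( g(y,T) \bigr) \cdot \min_{c \in T} \bigg\{ (1-s) \frac{s_K((1-s)c)}{s_K(c)} \bigg\}^{N-1}
\]
for every measurable $T \subset (0,+\infty)$. Condition (2) of the hypothesis implies $T_y := \{ t > 0 : g(y,t) \in K_1 \} \subset [a,b]$ for $m$-a.e.\ $y$, and by the definition of $H(G)$ and of the evolution $A^{s,\tilde H}$ one has $K_1^{s,H(G)} \cap R(y) = g(y,(1-s)T_y)$. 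Substituting $T = T_y$ into the previous display and integrating in $y$ against $m$, together with $\eta(K_1^{s,H(G)}) = \int \eta_y(K_1^{s,H(G)}) m(dy)$ and $\eta(K_1) = \int \eta_y(K_1) m(dy)$, produces the claimed inequality.

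The main obstacle is the second step: Lemma \ref{L:piupunti} furnishes the desired absolutely continuous disintegration and density bound only when $\nu$ is a finite atomic measure, whereas here $\nu$ can be arbitrary on $K_2$. The natural remedy is to approximate $\nu$ by atomic measures $\nu_I$ supported on $K_2$, apply Lemma \ref{L:piupunti} to each $\pi_I \in \Pi(\mu,\nu_I)$ (whose catchment sets $E_{i,I}$ cover $K_1$ up to an $\eta$-null set), and pass to the limit using that $K_2$ remains a common section and the stability results of Section \ref{S:limite} (cf.\ Remark \ref{R:morereg}) preserve the uniform lower bound.
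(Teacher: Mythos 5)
Your first move --- approximating $\nu$ by atomic measures $\nu_I$ supported on a dense subset of $K_2$ and invoking Lemma \ref{L:piupunti} for each atomic problem --- is exactly the paper's Step 1, and your per-ray derivation of the evolution estimate from the density bound \eqref{E:marg} is fine \emph{for each fixed $I$}. The gap is in the limit passage. The paper never attempts to pass the disintegration or the density bound to the limit: it observes that $H(G)\cap e_1^{-1}(K_1)$ and $H(G_I)\cap e_1^{-1}(K_1)$ are compact, hence $K_1^{s,H(G)}=e_{1-s}(\cdot)$ and $K_1^{s,H(G_I)}$ are compact; that $\f_I\to\f$ forces $K_1^{s,H(G_I)}$ to converge (up to subsequences) in Hausdorff distance to a compact subset of $K_1^{s,H(G)}$; and then it concludes by upper semicontinuity of $\eta$ under Hausdorff convergence of compact sets. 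Only the integrated inequality for each fixed $I$ is needed, and no control of the disintegrations in the limit is required.

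Your proposed limit instead wants to transfer the per-ray density bound from the rays of $\pi_I$ to the rays of $\pi$ via ``the stability results of Section \ref{S:limite}''. This is not justified as stated and it inverts the logical architecture of the paper. The machinery of Section \ref{S:limite} concerns convergence of structures $(X_n,d_n,d_{L,n},\pi_n)$ under Assumption \ref{A:equi} and yields absolute continuity of the disintegration of $\mu$ in the limit; to use it here you would have to verify equintegrability for the densities $q_I$ of $\eta$, identify the limit ray structure with that of $\f$, and still extract a per-ray estimate --- in effect proving Theorem \ref{T:regularity} inside the proof of Lemma \ref{L:fine}. But Lemma \ref{L:fine} is precisely the tool the paper uses to prove Theorem \ref{T:regularity}: its whole purpose is to furnish the integrated evolution estimate for general $\nu$ \emph{before} one knows that $\eta(a(X))=0$ or that the conditionals $\eta_y$ are absolutely continuous. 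For the same reason your opening step --- disintegrating $\eta\llcorner_{\mathcal T}$ over $K_2$ and arguing ray by ray for the general plan --- already presupposes what Step 1 of Theorem \ref{T:regularity} later deduces from this very lemma. The repair is to keep your per-ray argument only for the atomic problems and close the limit at the level of the compact evolved sets, as the paper does.
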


\begin{proof}
{\it Step 1.} 
It follows directly from Lemma \ref{L:piupunti} that
the statement holds for $\nu = \sum_{i\leq I}c_{i} \delta_{y_{i}}$.

We thus consider the sequence of approximating problem constructed as follows: 
let $\{y_{i}\}_{i \in \enne}$ be a dense sequence in $K_{2}$ and for $I \in \enne$
define

$$
\f_{I} (x) : = \min \Big\{ \f(y) + d(x,y), y \in  \{y_{1},\cdots, y_{I}\} \Big\},
$$

$$
E_{i,I} := \Big\{ x \in X : \f_{I}(x) -\f_{I}(y_{i}) = d(x,y_{i}),  i \leq I \Big\},
$$

$$
\nu_{I}= \sum_{i\leq I} c_{i,I} \delta_{y_{i}}, \quad \textrm{where} \quad c_{i,I}= \mu \bigg(E_{i,I}\setminus \bigcup_{j\neq i} E_{j,I} \bigg).
$$

Clearly $\f_{I}$ is a potential for the transport problem with marginal $\mu$ and $\nu_{I}$ and let 
$$
H(G_{I}) : = 
\Big\{ \gamma \in H :  \f_{I}(\gamma(0)) - \f_{I}(\gamma(1))= d(\gamma(0),\gamma(1))\, \wedge \,\gamma(0) \in 
\{y_{1},\cdots, y_{I}\}   \Big\}.
$$

{\it Step 2.} Observe that $K_{1}^{s,H(G)}$ is compact. 
In fact, since $K_{1}$ and $K_{2}$ are compact, 
% $\Gamma\cap P_{1}^{-1}(K_{1})$ is compact, 
$H(G)\cap e_{1}^{-1}(K_{1})$ is compact and 
since $e_{1-s}$ is continuous $K_{1}^{s,H(G)} = e_{1-s}(H(G))$ is compact.
For the same reasons the sets $K_{1}^{s,H(G_{I})}$ are compact.

{\it Step 3.} $K_{1}^{s,H(G_{I})}$ is contained in a compact set and $\f_{I} \to \f$ as $I \to + \infty$, so that
up to subsequences $K_{1}^{s,H(G_{I})}$ converges in Hausdorff distance to a compact subset of $K_{1}^{s,H(G)}$.
By the upper semicontinuity of Borel bounded measures with respect to Hausdorff convergence for compact sets
the claim follows.
%
%
%
%
%{\it Step 2.} Take $\delta_{y_{i}}$ and $c_{i}\geq 0$ such that $\sum c_{i}\delta_{x_{i}} \weak \nu$. 
%Consider for any $I \in \enne$ the sets 
%\[
%E_{i,I} := \{ x \in P_{1}(\Gamma) : \f(y_{i}) + d(x,y_{i}) \leq \f(y_{j}) + d(x,y_{j}), j\neq i, i, j \leq I \}.
%\]
%By Lemma \ref{L:punto} the flow generated by each Monge problem between $\mu\llcorner_{E_{i,I}}/\mu(E_{i,I})$ and $\delta_{y_{i}}$ verifies the equation \eqref{E:marg}
%
\end{proof}

\begin{theorem}\label{T:regularity}
If $\pi\in \Pi(\mu,\nu)$ $d$-monotone then $\eta \llcorner_{\mathcal T_e} = g_{\sharp} (q m \otimes \mathcal{L}^{1})$, where $\mathcal T_e$ is the transport set with end points \eqref{E:TRe}, and for $m$-a.e. $y$ and $s\leq t$ it holds
\begin{equation}\label{E:bigreg}
\bigg\{   \frac{s_{K}( d(g(y, t) , b(y)))}{s_{K}( d(g(y,s) , b(y))) } \bigg\}^{N-1} \leq \frac{q(y,t)}{q(y,s)} \leq 
\bigg\{   \frac{s_{K}( d(g(y, t) , a(y)))}{s_{K}( d(g(y,s) , a(y))) } \bigg\}^{N-1} 
\end{equation}
\end{theorem}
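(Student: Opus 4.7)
The plan is to extend Lemma \ref{L:piupunti} from finitely supported $\nu$ to general $\nu$ via the approximation scheme introduced in Lemma \ref{L:fine}, and then to obtain the upper bound in \eqref{E:bigreg} by reversing the orientation of the transport.

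First I would perform the usual reductions: by Lemma \ref{L:approx} together with the arguments of Lemma \ref{L:ptdist} and Lemma \ref{L:cpt}, it suffices to treat the case in which $\mu, \nu$ are supported on compact sets $K_1, K_2$ of finite positive mutual distance, with $K_2$ a cross-section of the ray equivalence relation $R$. For the lower bound in \eqref{E:bigreg}, I would then take a dense sequence $\{y_i\}_{i\in\N} \subset K_2$ and set up the approximating potentials $\f_I$, discrete marginals $\nu_I = \sum_{i\leq I} c_{i,I} \delta_{y_i}$, and oriented-ray sets $G_I$ exactly as in the proof of Lemma \ref{L:fine}. For each $I$, Lemma \ref{L:piupunti} already gives a disintegration $\eta = g_{I,\sharp}(q_I m_I \otimes \mathcal{L}^{1})$ whose density $q_I(y,\cdot)$ satisfies the lower bound with endpoint $b_I(y) \in \{y_1,\dots,y_I\}$. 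The Hausdorff convergence $K_1^{s,H(G_I)} \to K_1^{s,H(G)}$ established in Lemma \ref{L:fine}, together with the a.e.\ convergence $b_I(y) \to b(y)$ which follows from $\f_I \to \f$ and non-branching, then let me pass the integrated density estimate to the limit; specialising $K_1$ to sets of the form $g(S' \times [t_1,t_2])$ with $S' \subset S$ small and the interval $[t_1,t_2]$ shrinking gives the pointwise lower bound in \eqref{E:bigreg}.

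For the upper bound, I would apply the lower bound just obtained to the reversed transport problem, i.e.\ with marginals $(\nu,\mu)$. The reversed problem is again $d$-cyclically monotone and uses the same geometric rays, only with reversed orientation, so its initial-point map coincides with $b$ and its endpoint map with $a$, while the reversed density satisfies $q^{\mathrm{rev}}(y,\tau) = q(y,-\tau)$. Writing the lower bound for the reversed problem at times $\tau = -t$, $\tau' = -s$ (so $\tau \leq \tau'$ iff $s \leq t$) and rearranging gives exactly the upper bound in \eqref{E:bigreg}.

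The main obstacle is the passage to the limit $I \to \infty$ in the first step: the sections, ray maps, quotient measures, and densities all depend on $I$, so preserving a pointwise density estimate through the limit is delicate. The key technical input is Lemma \ref{L:fine}, which supplies precisely the Hausdorff continuity of the evolved sets $K_1^{s,H(G_I)}$ that, combined with upper semicontinuity of bounded Borel measures on Hausdorff-convergent compact sets, allows one to localise the atomic-$\nu_I$ estimate to arbitrarily thin ray slabs and thereby extract pointwise control on $q$.
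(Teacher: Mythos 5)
Your overall strategy (atomic approximation of the target, Lemma \ref{L:piupunti} for each approximant, Hausdorff/u.s.c.\ passage to the limit, localisation to thin ray slabs $g(S'\times[t_1,t_2])$, and a symmetric argument for the upper bound) is the same as the paper's, and your reversal trick for the upper bound is an acceptable variant of the paper's Step 3, which instead places the section near $a$. But there are two genuine gaps. First, your opening reduction --- that one may assume $\nu$ is concentrated on a compact $K_2$ which is a \emph{cross-section} of $R$ --- is not delivered by Lemmas \ref{L:approx}, \ref{L:ptdist}, \ref{L:cpt} and is false for the actual second marginal, which in general charges whole subsegments of each ray. Hypothesis (3) of Lemma \ref{L:fine} is a real restriction, and the paper satisfies it by applying Lemma \ref{L:fine} not to $(\mu,\nu)$ but to an \emph{auxiliary} problem: the normalized slab measure $\eta\llcorner_{g^{-1}(S'\times[t_1,t_2])}$ transported to its quotient image $f_\sharp\mu$ on a section $S$ chosen at $d$-distance $\delta$ from $b$ along each ray. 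Note that you cannot take the section to be $b(\mathcal T)$ itself, since distinct rays may share an endpoint; the estimate you obtain therefore involves $d(\cdot,b(y))-\delta$ rather than $d(\cdot,b(y))$, and the limit $\delta\to 0$ is an essential step that your write-up omits. Relatedly, your assertion that $b_I(y)\to b(y)$ ``follows from $\f_I\to\f$ and non-branching'' is exactly the kind of ray-by-ray convergence that Lemma \ref{L:fine} is formulated to avoid: the lemma only passes the \emph{integrated} estimate on Hausdorff-convergent evolved compacta to the limit, never the individual approximating ray maps, sections or endpoint maps.

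Second, the first assertion of the theorem, $\eta\llcorner_{\mathcal T_e}=g_\sharp(qm\otimes\mathcal L^1)$, presupposes that the set of initial points $a(\mathcal T)$ is $\eta$-negligible; otherwise the disintegration of $\eta\llcorner_{\mathcal T_e}$ along rays is not even strongly consistent on the whole of $\mathcal T_e$. The paper devotes its Step 1 to this, applying Lemma \ref{L:fine} to the pair $(\eta\llcorner_{a(S)},f_\sharp\eta)$ and using that the disjoint translates $a(S)^{s}$ would all have positive $\eta$-measure. Your proposal never addresses this point, so even granting your density estimates you have not established the representation of $\eta\llcorner_{\mathcal T_e}$ claimed in the statement.
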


\begin{proof}
{\it Step 1.}
We first show that the set of initial points has $\eta$-measure zero. 
In fact suppose by contradiction that $\eta(a(S))>0$, where $S$ is a section for the ray equivalence relation of $\pi$. 
Hence we can assume that $S$ and $a(S)$ are compact and at strictly positive distance. 

Applying Lemma \ref{L:fine} to the transport problem with marginals $\eta\llcorner_{a(S)}$ and $f_{\sharp}\eta$, 
where $f$ is the quotient map, it follows that $\eta(a(S))=0$.

{\it Step 2.} 
Since the initial points have $\eta$-measure zero, 
we can disintegrate $\eta \llcorner_{\mathcal T_e}$ w.r.t. the ray equivalence relation obtaining
$\eta \llcorner_{\mathcal T_e} = \int \eta_{y} m(dy)$. 
By a standard covering argument, it is enough to prove the statement on the set 
\[ 
D_{\ve}:=\big\{x: d(x,b(x)) \geq \ve\big\}.
\]
For any $0<\delta<\ve$ we can take the section $S$ compact such that $d(f(x),b(x))=\delta$, in particular we have $g(y,\delta)=b(y)$.

For $S'\subset S$ and $t_{1}<t_{2}$ consider $\eta\llcorner_{g^{-1} (S'\times [t_{1},t_{2}])}$.
Applying Lemma \ref{L:fine} with 
$$
\mu = \frac{\eta\llcorner_{g^{-1} (S'\times [t_{1},t_{2}])}}{  \eta(g^{-1} (S'\times [t_{1},t_{2}]))}, \quad
\nu =  f_{\sharp}\mu
$$
where $f$ is the quotient map for the ray equivalence relation $R$, it holds 
\begin{align*}
\int_{S'} \eta_{y} \big( g(y,[t_{1},t_{2}] (1-s)) \big) m(dy) \geq \min_{c\in [t_{1},t_{2}]} \bigg\{  (1-s) \frac{s_{K}((1- s) |c| ) }{s_{K}( |c| )} \bigg\} ^{N-1} \int_{S'} \eta_{y} \big( g(y,[t_{1},t_{2}]) \big) m(dy).
\end{align*}
As in Step 2 of the proof of Lemma \ref{L:punto}, the estimate \eqref{E:evo} holds for $m$-a.e. $y$ and every $t_{1}<t_{2}$ 
and we deduce 
\[
\bigg\{   \frac{s_{K}( d(g(y, t) , b(y))   -\delta )    }{s_{K}( d(g(y,s) , b(y))-\delta) } \bigg\}^{N-1} \leq \frac{q(y,t)}{q(y,s)}.  
\]
Letting $\delta \to 0$, we obtain the left hand side of \eqref{E:bigreg}.

{\it Step 3.}
The right hand side of of \eqref{E:bigreg} is obtained by the same procedure taking 
$$
F_{\ve}:= \big\{ x : d(d,a(x)) \geq \delta \big\} 
$$
and the section $S$ such that $d(y,a(y))=\delta$ for all $y \in S$.
\end{proof}

\begin{figure}
\label{Fi:mongemetric5}
\psfrag{q}{$q(y,t)$}
\psfrag{sa}{$q(y,\bar t) \left( \frac{s_K(t+d(y,a(y))}{s_K(\bar t+d(y,a(y))} \right)^{N-1}$}
\psfrag{sb}{$q(y,\bar t) \left( \frac{s_K(d(y,b(y))-t}{s_K(d(y,b(y))-\bar t} \right)^{N-1}$}
\psfrag{qyt}{$q(y,\bar t)$}
\psfrag{da}{$-d(y,a(y))$}
\psfrag{db}{$d(y,b(y))$}
\psfrag{t}{$\bar t$}
\centerline{\resizebox{12cm}{8cm}{\includegraphics{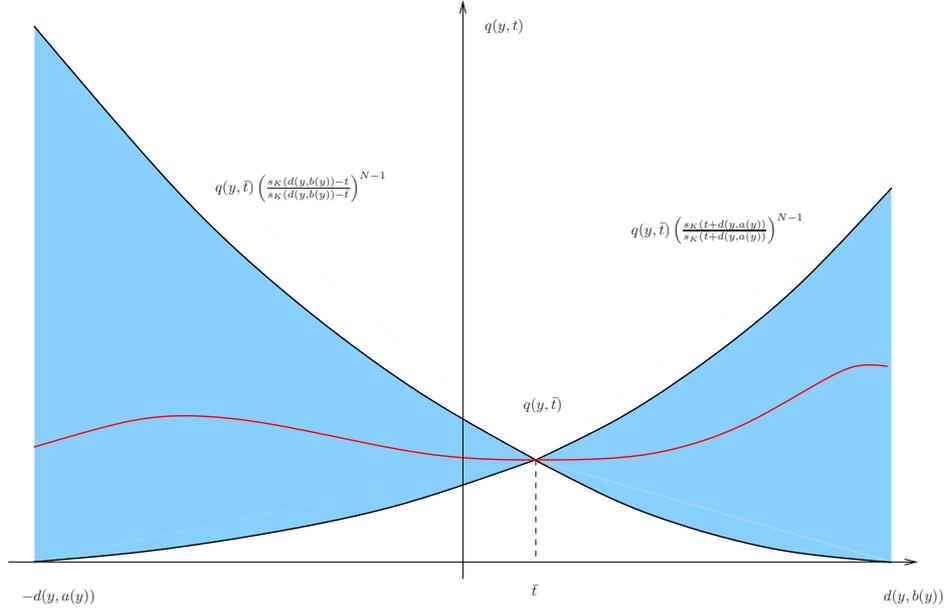}}}
\caption{The region where $q(y,t)$ takes values.}
\end{figure}

Since $\mu \ll \eta$, it follows that also the densities of the conditional probabilities of $\mu$ are absolutely continuous 
w.r.t. $\haus^{1}$, and therefore we have the following corollary.

\begin{corollary}\label{C:conclu}
Let $(X,d,\eta)$ satisfies $MCP(K,N)$, let $\mu,\nu \in \mathcal{P}(X)$ with $\mu \ll \eta$, then there exists a $\mu$-measurable map $T:X \to X$ such that 
$T_{\sharp}\mu =\nu$ and 
\[
\int d(x,T(x))\mu(dx) = \min_{\pi \in \Pi(\mu,\nu)} \int d(x,y) \pi(dxdy).
\]
\end{corollary}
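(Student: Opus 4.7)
The plan is to combine Theorem \ref{T:regularity} with Theorem \ref{T:mongeff}. First I would fix an optimal (equivalently, $d$-cyclically monotone) transference plan $\pi \in \Pi(\mu,\nu)$ with finite cost; such a $\pi$ exists by Kantorovich duality (Theorem \ref{T:kanto}) since $d$ is continuous and $\int d(x,y)\,\pi(dxdy) \leq \diam(\supp\mu \cup \supp\nu) < +\infty$ after the standard reduction to bounded supports (cf.\ Lemma \ref{L:approx} and the comments preceding Lemma \ref{L:punto}).

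Next, I would apply Theorem \ref{T:regularity} to this $\pi$: it gives the disintegration $\eta \llcorner_{\mathcal T_e} = g_\sharp(q\, m \otimes \mathcal L^1)$ along the transport rays $R$ determined by $\pi$. Because $\mu \ll \eta$, the conditional measures of $\mu$ along geodesics are of the form $r(y,\cdot)\,\mathcal L^1$ with $r \leq (d\mu/d\eta)\,q$, so $\mu$-a.a.\ conditional probabilities are absolutely continuous with respect to $\mathcal H^1_d$ on each ray. From this I would deduce that $\mu$ verifies the Absolute Continuity Assumption \ref{A:NDE}: indeed, if $\mu(A) > 0$ then the set $g^{-1}(A) \subset S \times \R$ has positive $m \otimes \mathcal L^1$ measure, and a direct Fubini computation as in the proof of Theorem \ref{teo:a.c.} gives $\int_{\R^+} \mu(A_t)\,dt > 0$. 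In particular Assumption \ref{A:NDEatom} holds too.

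With Assumption \ref{A:NDEatom} verified for the $d$-cyclically monotone plan $\pi$, Theorem \ref{T:mongeff} applies and produces a Borel map $T : X \to X$ with $T_\sharp \mu = \nu$ and $\int d(x,T(x))\,\mu(dx) = \int d(x,y)\,\pi(dxdy)$. Since $\pi$ was chosen optimal and $T$ realizes the same cost, $T$ solves the Monge problem.

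I do not expect any genuine obstacle: all the heavy lifting has been done in Theorem \ref{T:regularity} (where $MCP(K,N)$ is exploited through the measure contraction estimate \eqref{E:bigreg}) and in Theorem \ref{T:mongeff} (where the one-dimensional monotone rearrangement is glued measurably across rays). The only mildly technical point is to justify that the reduction to a compact situation allowed by Lemma \ref{L:approx} is compatible with pasting the resulting Monge maps back together, but this follows from the standard $\sigma$-additive construction already implicit in Lemma \ref{L:ptdist} combined with the extension \eqref{E:extere} outside the transport set $\mathcal T_e$.
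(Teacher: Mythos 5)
Your proposal is correct and follows essentially the same route as the paper: Theorem \ref{T:regularity} plus $\mu \ll \eta$ gives absolutely continuous (in particular continuous, atom-free) conditional probabilities along the rays of an optimal $d$-cyclically monotone plan, so Assumption \ref{A:NDEatom} (indeed \ref{A:NDE}) is verified and Theorem \ref{T:mongeff} produces a map with the same cost as the optimal plan. The paper states this derivation only in the one-line remark preceding the corollary, so your spelled-out version, including the Fubini argument recovering Assumption \ref{A:NDE} from the a.c.\ disintegration, is just a more explicit rendering of the intended proof.
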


We can obtain additional regularity of the conditional probabilities $\eta_y$ under $MCP(K,N)$: in particular we deduce that the conclusion of Corollary \ref{C:regudual} holds and if the support of $\mu$ and $\nu$ are compact sets with empty intersection the statements of Lemma \ref{L:normalcurr} and Remark \ref{R:abscurr} are true.

\begin{lemma}
\label{L:mhdregul}
The marginal densities
\[
\big( -d(a(y),y),d(y,b(y) \big) \ni t \mapsto q(y,t) \in \R^+
\]
are strictly positive Lipschitz continuous for $m$-a.e. $y \in S$, and for some constant $C>0$
\[
\TV \big( q(y,\cdot) \big) \leq \frac{C}{d(a(y),b(y))}.
\]
\end{lemma}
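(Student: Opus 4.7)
My plan is to unpack Theorem~\ref{T:regularity}. With $\alpha := d(a(y),y)$ and $\beta := d(y,b(y))$, parametrize the geodesic so that $d(g(y,t),a(y)) = \alpha+t$ and $d(g(y,t),b(y)) = \beta-t$ on $(-\alpha,\beta)$, and set $\phi_1(t) := s_K(\beta-t)^{N-1}$, $\phi_2(t) := s_K(\alpha+t)^{N-1}$. Theorem~\ref{T:regularity} then reads equivalently as the two monotonicity statements: $t \mapsto q(y,t)/\phi_1(t)$ is non-decreasing, and $t \mapsto q(y,t)/\phi_2(t)$ is non-increasing. Strict positivity of $q(y,\cdot)$ on $(-\alpha,\beta)$ is immediate: $q(y,\cdot)$ is the density of a probability measure on the geodesic, so it cannot vanish identically, and if $q(y,t_0) > 0$ for some $t_0$, the lower bound propagates positivity to the right of $t_0$ and the upper bound, read with $s$ and $t$ reversed, propagates it to the left, since both $\phi_i$ are strictly positive on the open interval.

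For absolute continuity I would observe that $d(q/\phi_1)$ and $-d(q/\phi_2)$ are non-negative Borel measures, while subtracting the two Leibniz identities for $dq$ gives
\[
\phi_1\, d(q/\phi_1) - \phi_2\, d(q/\phi_2) \;=\; q\,\big[(\log\phi_2)' - (\log\phi_1)'\big]\, dt,
\]
whose right-hand side is absolutely continuous. Since a non-negative measure dominated by an a.c.\ measure is itself a.c., both $\phi_1\, d(q/\phi_1)$ and $\phi_2\, d(q/\phi_2)$ are absolutely continuous, so $q(y,\cdot)$ is a.c.\ and its a.e.\ derivative is pinched between
\[
\frac{\phi_1'(t)}{\phi_1(t)}\, q(y,t) \;\leq\; q'(y,t) \;\leq\; \frac{\phi_2'(t)}{\phi_2(t)}\, q(y,t).
\]
These bounds are bounded on every compact subinterval of $(-\alpha,\beta)$, which together with strict positivity yields the Lipschitz claim.

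The total variation estimate is the main technical step, since the pointwise bound on $|q'|/q$ is non-integrable at the endpoints. The plan is two-fold. First, derive the $L^\infty$ estimate $\|q(y,\cdot)\|_\infty \leq C_{K,N}/(\alpha+\beta)$: at a point $t^*$ where $q$ attains its maximum, the two monotonicities give $q(y,t) \geq q(y,t^*)\,\phi_2(t)/\phi_2(t^*)$ on $(-\alpha,t^*]$ and $q(y,t) \geq q(y,t^*)\,\phi_1(t)/\phi_1(t^*)$ on $[t^*,\beta)$; integrating these lower bounds against the normalization $\int q(y,\cdot)\,dt = 1$ and using the explicit primitives of $s_K^{N-1}$ gives the bound (explicitly $N/(\alpha+\beta)$ when $K=0$). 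Second, split $\int_{-\alpha}^\beta |q'|\,dt$ at an interior midpoint $t_m$. On $[-\alpha,t_m]$ apply the factorization $q = (q/\phi_1)\,\phi_1$: the estimate $|q'| \leq \phi_1\,(q/\phi_1)' + (q/\phi_1)\,|\phi_1'|$ together with monotonicity of $q/\phi_1$ and integration by parts yields
\[
\int_{-\alpha}^{t_m}|q'|\,dt \;\leq\; 2\,\frac{q(y,t_m)}{\phi_1(t_m)}\,\phi_1(-\alpha),
\]
with the ratio $\phi_1(-\alpha)/\phi_1(t_m)$ bounded by a constant depending only on $K,N$ (for $K=0$ it is $2^{N-1}$). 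A symmetric computation using $q = (q/\phi_2)\,\phi_2$ handles $[t_m,\beta]$, and combining with the $L^\infty$ bound gives $\TV(q(y,\cdot)) \leq C/(\alpha+\beta)$. The main obstacle is precisely this balancing of the singularities of the pointwise derivative bounds against the two factorizations, which is why both monotonicity statements from Theorem~\ref{T:regularity} are essential.
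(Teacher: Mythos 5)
Your proof is correct and follows essentially the same strategy as the paper's: read \eqref{E:bigreg} as two-sided monotonicity, get positivity and (local) Lipschitz regularity from the resulting pinching of $q'/q$, bound $\|q(y,\cdot)\|_\infty$ by $C/d(a(y),b(y))$ via the normalization $\int q\,dt=1$, and control $\TV(q(y,\cdot))$ by compensating $q'$ with the monotone parts — your factorizations $q=(q/\phi_i)\phi_i$ with a midpoint split play exactly the role of the paper's auxiliary function $\omega(y,t)$ with $\omega'\geq 0$. Your Leibniz-identity argument for why $q(y,\cdot)$ is absolutely continuous in the first place is actually more careful than the paper's bare ``by differentiating,'' but this does not change the route.
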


\begin{proof}
From \eqref{E:bigreg} it follows immediately that the function $q(y,t) > 0$ and Lipschitz continuous for $t \in (-d(y,a(y)),d(y,b(y)) )$ and 
$m$-a.e. $y$.
%Then for positive $h$
%\[
%r(y,t) \bigg\{   \frac{s_{K}( d(g(y, t+h) , b(y)))}{s_{K}( d(g(y,s) , b(y))) } \bigg\}^{N-1} \leq r(y,t+h) \leq 
%\bigg\{   \frac{s_{K}( d(g(y, t+h) , a(y)))}{s_{K}( d(g(y,t) , a(y))) } \bigg\}^{N-1}  r(y,t),
%\]
%therefore 
By differentiating it follows that 
\begin{equation}\label{E:der}
-(N-1)  \frac{s'_{K}( d(g(y, t) , b(y)))}{s_{K}( d(g(y,t) , b(y))) }  \leq \frac{q'(y,t)}{ q(y,t)} \leq 
(N-1)   \frac{s'_{K}( d(g(y, t) , a(y)))}{s_{K}( d(g(y,t) , a(y))) }.
\end{equation}
In particular $q(y,\cdot)$ is Lipschitz.

For notational convenience let us assume that $d(a(y),y)=d(y,b(y))=l$.
From \eqref{E:bigreg} one can prove that
\[ 
q(y,t) \geq q(y,0)  \cdot
\begin{cases}
\displaystyle \frac{s_{K}(l-t)}{s_{K}(l)}, & t\geq 0 \crcr
\displaystyle \frac{s_{K}(-l+t)}{s_{K}(-l)}, & t\leq 0
\end{cases}
\]
Since $\int q(y,t) dt=1$ it follows that 
\[
q(y,0) \leq c_{K}(d(a(y),b(y))),
\]
where
\[
c_{K}(t) := \frac{s_{k}( t/2 )^{N-1}}{2} \bigg(  \int_{0}^{t/2} s_{K}(\tau)^{N-1} d\tau \bigg)^{-1} \leq \frac{C}{t},
\]
being $C$ a constant depending only on $K$.

To show that 
\[
\int_{-l}^{l} |q' (y,t)| dt < + \infty,
\]
it is enough to prove 
\[
\int_{-l}^{0} |q' (y,t)| dt < + \infty,
\]
From \eqref{E:der} it follows
\[
\omega'(y,t) := q'(y,t)  +(N-1)  \frac{s'_{K}( l-t)}{s_{K}( l) }q(y,0) \geq 0 
\]
so that 
$$
\TV \big( \omega(y,\cdot) ) \leq \bigg(1 + (N-1)\bigg( \frac{s_{K}(2l)}{s_{K}(l)} -1  \bigg) \bigg) q(y,0).
$$
Hence
\begin{align*}
\TV \big( q(y,\cdot), (-\ell,0] \big) \leq&~ \TV \big( \omega(y,\cdot), (-\ell,0] \big) + \TV \bigg( (N-1) \frac{s'_{K}}{s_{K}}, (-\ell,0] \bigg) q(0,y) \crcr 
\leq&~ \TV \big( \omega(y,\cdot), (-\ell,0] \big) + (N-1) \frac{s'_{K}(2 l)}{s_{K}(l)} q(0,y) \crcr
\leq&~ \bigg( 1 + 2\bigg( \frac{s_{K}(2l)}{s_{K}(l)} - 1 \bigg) q(y,0).
\end{align*}
Collecting all the estimates, we get
\[
\TV \big( q(y,\cdot) \big) \leq 2 \bigg(1 + 2\bigg( \frac{s_{K}(2l)}{s_{K}(l)} -1  \bigg) c_{K}(2l).
\]
\end{proof}

In general, the current $\dot g$ is not normal, as one can easily verify in $\mathbb T^2$ with the standard distance.

\section{Examples}
\label{S:examples}

We end this paper with some examples which shows how the different hypotheses of Section \ref{ss:Metric} enter into the analysis. In the following we denote the standard Euclidean scalar product in $\R^d$ as $\cdot$ and the standard distance in $\mathbb{T}^d$ by $|\cdot|$. We will also denote points by $p = (x,y,z,\dots) \in \R^d$, and $\alpha$ a fixed constant in $[0,1] \setminus \Q$.

\begin{example}[Non strongly consistent disintegration along rays]
\label{Ex:nonsection}
Consider the metric space
\[
(X,d) = \big( \mathbb{T}^2, |\cdot| \big)
\]
and the l.s.c. distance in the local chart $X = \{ (x,y) : 0 \leq x,y < 1 \}$
\[
d_L(p_1,p_2) :=
\begin{cases}
|x_1 - x_2 + i| & y_1 - y_2 = \alpha (x_1 - x_2) + i \alpha + n \crcr
+\infty & \text{otherwise}
\end{cases}
\]
for $i,n \in \Z$.
The sets $D_L$ are given by
\[
D_L(p_1) = \Big\{ (x,y) :   y = y_1 + \alpha (x - x_1 + i) \mod 1, i \in \N \Big\},
\]
so that it is easy to see that the partition $\{D_L(p)\}_{p \in X}$ does not yield a strongly consistent disintegration. Since $t \mapsto (t \mod 1, \alpha t \mod 1)$ is a continuous not locally compact geodesic, Condition \eqref{Cond:XdL5} is not verified in this system.

Consider the measures $\mu = \mathcal{L}^2 \llcorner_{\mathbb{T}}$ and the map $T : (x,y)\mapsto (x,y + \alpha \mod 1)$: being $\mu$ invariant w.r.t. translations, one has $T_\sharp \mu = \mu$, and moreover
\[
\int d_L(x,T(x)) \mu(dx) = 1.
\]

If we consider points $(p_i,(x_i, y_i + \alpha \mod 1))$, $i = 1,\dots,I$, then the only case for which $d_L(p_{i+1},p_i) < +\infty$ is when $p_{i+1} = (x_i + t \mod 1, y_i + \alpha t \mod 1)$ for some $t \in \R$, i.e. they belong to the geodesic
\[
\R \ni t \mapsto (x_i + t \mod 1, y_i + \alpha t \mod 1) \in X.
\]
Hence, to prove $d_L$-cyclical monotonicity, it is sufficient to consider path which belongs to a single geodesic, where $d_L$ reduces to the the one dimensional length:
\[
d_L \big( (x,y), (x + t \mod 1, y + \alpha t \mod 1) \big) = |t|.
\]
Since translations in $\R$ are cyclically monotone w.r.t. the absolute value, we conclude that $T$ is $d_L$-cyclically monotone.

The fact that the optimal rays coincide with the sets $D_L$ yields that the disintegration is not strongly consistent, in particular there is not a Borel section up to a saturated negligible set. Note that every transference plan which leaves the common mass in the same place has cost $0$, so that this example shows the necessity of Condition \eqref{Cond:XdL5} for Proposition \ref{P:ortho}.

\begin{figure}
\label{Fi:mongem1}
\psfrag{x}{$x$}
\psfrag{y}{$y$}
\psfrag{DL(P)}{$D_L(p)$}
\psfrag{T2}{$\mathbb{T}^2$}
\psfrag{L2}{$\mathcal{L}^2 \llcorner_{\mathbb{T}^2}$}
\psfrag{p}{$p$}
\psfrag{T(p)}{$T(p)$}
\centerline{\resizebox{9cm}{9cm}{\includegraphics{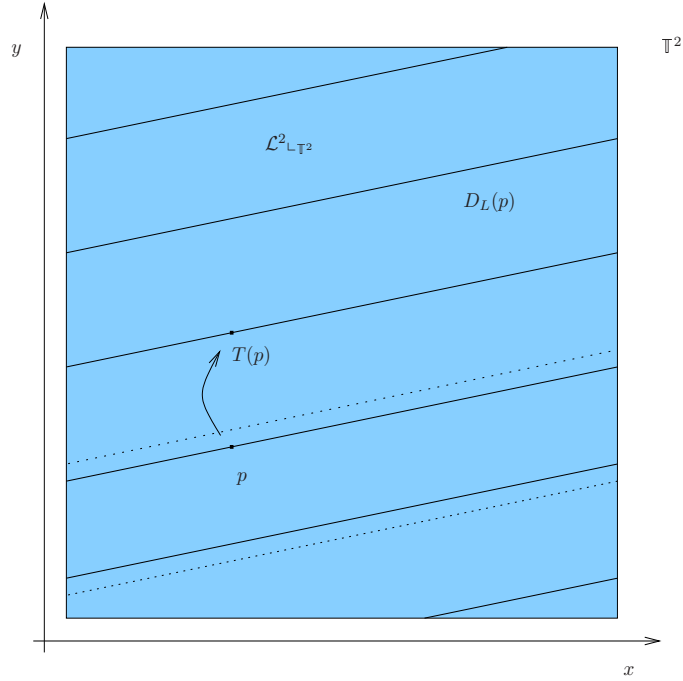}}}
\caption{The metric space of Example \ref{Ex:nonsection}}
\end{figure}

\end{example}

\begin{example}[Non optimality of transport map]
\label{Ex:nooptir}
Consider countable copies of the manifolds $\mathbb{T}^2$: we denote them in local coordinates by
\[
C := \big\{ (x,y) : 0 \leq x,y < 1 \big\}, \quad C^i := \big\{ (x^i,y^i) : 0 \leq x^i,y^i < 1 \big\},\ i \in \Z \setminus \{0\}.
\]
With this fixed choice of coordinates, identify the points $(x,0) \equiv (x^i,0)$ if $0 \leq x = x^i < 1$. In other words, we glue the sets $C$, $C^i$, $i \in \Z \setminus \{0\}$, along a maximal circle $S$, which will be written in local coordinates by
\[
S  = \big\{ \theta : 0 \leq \theta < 1 \big\}.
\]
The space $X$ is the set obtained with this procedure.

In the following points we need to divide $C$ into two parts: with the same coordinates as above, we set
\[
C^- := \big\{ (x,y) \in C : 0 \leq x < 1, 0 \leq y \leq 1/2 \big\}, \quad C^+ := \big\{ (x,y) \in C : 0 \leq x < 1, 1/2 < y < 1 \big\}.
\]

{\it Definition of $d$ and $d_L$.} The distance $d$ is defined as follows:
\[
d(p_1,p_2) = \min \Big\{ |p_1 - p_2|, |(x_1,y_1) - (\theta,0)| + |(x_2,y_2) -  (\theta,0)|, \theta \in S^1 \Big\}.
\]
Note that $p_1 - p_2$ can be computed only when the points belong to the same component. It is fairly easy to see that $(X,d)$ is a compact set, in particular Polish.

The distance $d_L$ is defined as follows: if $\gamma : [0,1] \to X$ is a $d$-Lipschitz map, then set
\[
L(\gamma) := \int_0^1 \omega(\gamma(t),\dot \gamma(t)) dt, \quad \omega(p,v) :=
\begin{cases}
|\dot \gamma| & p \in C^-, \dot \gamma \cdot (-1,\alpha) = 0 \crcr
4 |\dot \gamma| & p \in C^+ \setminus S, \dot \gamma \cdot (-1,\alpha) = 0 \crcr
|\dot \gamma| & p \in C^i \setminus S, \dot \gamma \cdot (-1,i\alpha) = 0 \crcr
+\infty & \text{otherwise}
\end{cases}
\]
In other words, the Lipschitz path with finite length are a countable union of segments in $C$ or $C^i$, $i \in \Z \setminus \{0\}$ with slope $(\alpha,1)$, $(i\alpha,1)$, respectively. The distance $d_L$ is defined then by
\[
d_L(p_1,p_2) := \inf \Big\{ L(\gamma): \gamma \in \text{Lip}([0,1],X), \gamma(0)=p_1, \gamma(1)=p_2 \Big\}.
\]

{\it Study of the distance $d_L$.} To study the distance $d_L$, observe that it is enough to analyze the induced distance on $S^1$. We consider the length of the return map on $S$ depending on which sets we are moving on:
\begin{enumerate}
\item if we take the path $\theta \to \theta+\alpha$ along $C$, then its length is $\frac{5}{2} \sqrt{1+\alpha^2}$;
\item if we take the path $\theta \to \theta+i\alpha$ along $C^i$, then its length is $\sqrt{1+(i\alpha)^2}$.
\end{enumerate}
In particular, geodesics starting from $S$ and ending in some $C^i$, $i \in \Z \setminus \{0\}$, never take values in $C \setminus S$. Due to the invariance w.r.t. translations $(x,y) \mapsto (x+\alpha \mod 1,y)$, it is sufficient to study the structure the metric space $(D_L((0,0)),d_L)$.

The set $D_L((0,0))$ is the set $\{y = \alpha x + z \alpha \mod 1, z \in \Z\}$ in each component $C$, $C^i$, $i \in \Z \setminus \{0\}$. The metric $d_L \llcorner_{D_L((0,0))}$ is obtained as follows: given two points $(p_1,p_2)$, we can connect them using a path on the same component or by connecting each of them to points $\theta_1$, $\theta_2$ of $S$, and using one of the $C^i$ to connect these last points.

It follows that $(D_L((0,0)),d_L)$ is geodesic, and a more careful analysis shows that $d_L$ is actually l.s.c.. Moreover, the fact that $\sqrt{\cdot}$ is subadditive yields that there are not geodesic of infinite length: in particular all the assumptions listed on Page \pageref{P:assumpDL} are satisfied.

{\it Transport problem.} Define the sets
\[
A := \bigg\{ (x,y) \in C: 0 \leq x < 1, \frac{1}{2} < y < \frac{5}{8} \bigg\}, \quad B := \bigg\{ (x,y) \in C: 0 \leq x < 1, \frac{7}{8} < y < 1 \bigg\},
\]
and the measures $\mu := \mathcal{L}^2 \llcorner_A$, $\nu := \mathcal{L}^2 \llcorner_B$. Consider the two maps defined on $A$
\[
\begin{array}{ccccc}
T^+ &:& A &\to& B \crcr
&& (x,y) &\mapsto& T^+(x,y) := \Big( x + \frac{3}{8} \alpha \mod 1, y + \frac{3}{8} \Big)
\end{array}
\]
\[
\begin{array}{ccccc}
T^- &:& A &\to& B \crcr
&& (x,y) &\mapsto& T^-(x,y) := \Big( x - \frac{5}{8} \alpha \mod 1, y + \frac{3}{8} \Big)
\end{array}
\]
It is standard to show that $T^\pm_\sharp \mu = \nu$.

Let $(p_i,T^+(p_i)) \in \text{graph}(T^+)$, $i = 1,\dots,I$: from the definition of $d_L$, $d_L(p_{i+1},T^+(p_i))$ can be either equal to $d_L(p_i,T^+(p_i))$ or greater than $3 \sqrt{1+\alpha^2}$ (by taking the path along $C^- \cup C^1$). Since $d_L(p_i,T^+(p_i)) = \frac{3}{2} \sqrt{1+\alpha^2}$, it follows that $T^+$ is $d_L$-cyclically monotone.

However, one has
\[
\int d_L(x,T^-(x)) \mu(dx) =  \frac{1}{8}\sqrt{1 + \alpha^{2}} < \frac{3}{2} \frac{1}{8}\sqrt{1 + \alpha^{2}} = \int d_L(x,T^+(x)) \mu(dx).
\]
Hence the $d_L$-cyclical monotonicity is not sufficient for optimality. Note that Assumption \ref{A:NDE} is verified.

\begin{figure}
\label{Fi:mongemetri2}
\psfrag{D}{$C$}
\psfrag{Ci}{$C^i$}
\psfrag{t}{$\theta$}
\psfrag{t+a}{$\theta+\alpha$}
\psfrag{S}{$S$}
\psfrag{t+ia}{$\theta+i\alpha$}
\psfrag{D-}{$C^-$}
\psfrag{D+}{$C^+$}
\psfrag{T-}{$T^-$}
\psfrag{T+}{$T^+$}
\psfrag{mu}{$\mu$}
\psfrag{nu}{$\nu$}
\centerline{\resizebox{14cm}{5cm}{\includegraphics{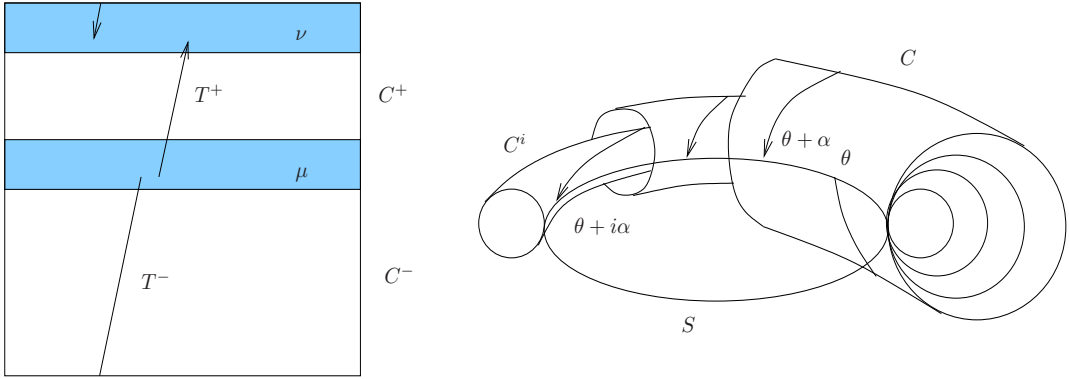}}}
\caption{The metric space of Example \ref{Ex:nooptir}.}
\end{figure}

\end{example}

\appendix

\section{Notation}
\label{S:notation}

\begin{tabbing}
\hspace{4cm}\=\kill
$P_{i_1\dots i_I}$ \> projection of $x \in \Pi_{k=1,\dots,K} X_k$ into its $(i_1,\dots,i_I)$ coordinates, keeping order
\\
$\mathcal{P}(X)$ or $\mathcal{P}(X,\Omega)$ \> probability measures on a measurable space $(X,\Omega)$
\\
$\mathcal{M}(X)$ or $\mathcal{M}(X,\Omega)$ \> signed measures on a measurable space $(X,\Omega)$
\\
$f \llcorner_A$ \> the restriction of the function $f$ to $A$
\\
$\mu \llcorner_A$ \> the restriction of the measure $\mu$ to the $\sigma$-algebra $A \cap \Sigma$
\\
$\mathcal{L}^d$ \> Lebesgue measure on $\R^d$
\\
$\mathcal{H}^k$ \> $k$-dimensional Hausdorff measure
\\
$\Pi(\mu_1,\dots,\mu_I)$ \> $\pi \in \mathcal{P}(\Pi_{i=1}^I X_i, \otimes_{i=1}^I \Sigma_i)$ with marginals $(P_i)_\sharp \pi = \mu_i \in \mathcal{P}(X_i)$
\\
$\mathcal{I}(\pi)$ \> cost functional \eqref{E:Ifunct}
\\
$c$ \> cost function $ : X \times Y \mapsto [0,+\infty]$
\\
$\mathcal{I}$ \> transportation cost \eqref{E:Ifunct}
\\
$\phi^c$ \> $c$-transform of a function $\phi$ \eqref{E:ctransf}
\\
$\partial^c \f$ \> $d$-subdifferential of $\f$ \eqref{E:csudiff}
\\
$\Phi_c$ \> subset of $L^1(\mu) \times L^1(\nu)$ defined in \eqref{E:Phicset}
\\
$J(\phi,\psi)$ \> functional defined in \eqref{E:Jfunct}
\\
$C_b$ or $C_b(X,\R)$ \> continuous bounded functions on a topological space $X$
\\
$(X,d)$ \> Polish space
\\
$(X,d_L)$ \> non-branching geodesic separable metric space
\\
$D_L(x)$ \> the set $\{y : d_L(x,y) < +\infty\}$
\\
$L(\gamma)$ \> length of the Lipschitz curve $\gamma$, Definition \ref{D:lengthstr}
\\
$\gamma_{[x,y]}(t)$ \> geodesics $\gamma : [0,1] \to X$ such that $\gamma(0) = x$, $\gamma(1) = y$
\\
$\gamma_{(x,y)}$, $\gamma_{[x,y]}$ \> open, closed geodesics \eqref{E:opeclgeo}
\\
$B_r(x)$ \> open ball of center $x$ and radius $r$ in $(X,d)$
\\
$B_{r,L}(x)$ \> open ball of center $x$ and radius $r$ in $(X,d_L)$
\\
$\mathcal{K}(X)$ \> space of compact subsets of $X$
\\
$d_H(A,B)$ \> Hausdorff distance of $A$, $B$ w.r.t. the distance $d$
\\
$A_x$, $A^y$ \> $x$, $y$ section of $A \subset X \times Y$ \eqref{E:sectionxx}
\\
$\mathcal{B}$, $\mathcal{B}(X)$ \> Borel $\sigma$-algebra of $X$ Polish
\\
$\Sigma^1_1$, $\Sigma^1_1(X)$ \> the pointclass of analytic subsets of Polish space $X$, i.e.~projection of Borel sets
\\
$\Pi^1_1$ \> the pointclass of coanalytic sets, i.e.~complementary of $\Sigma^1_1$
\\
$\Sigma^1_n$, $\Pi^1_n$ \> the pointclass of projections of $\Pi^1_{n-1}$-sets, its complementary
\\
$\Delta^1_n$ \> the ambiguous class $\Sigma^1_n \cap \Pi^1_n$
\\
$\mathcal{A}$ \> $\sigma$-algebra generated by $\Sigma^{1}_{1}$
\\
$\mathcal{A}$-function \> $f : X \to \R$ such that $f^{-1}((t,+\infty])$ belongs to $\mathcal A$
\\
$h_\sharp \mu$ \> push forward of the measure $\mu$ through $h$, $h_\sharp \mu(A) = \mu(h^{-1}(A))$
\\
$\textrm{graph}(F)$ \> graph of a multifunction $F$ \eqref{E:graphF}
\\
$F^{-1}$ \> inverse image of multifunction $F$ \eqref{E:inverseF}
\\
$F_x$, $F^y$ \> sections of the multifunction $F$ \eqref{E:sectionxx}
\\
$\mathrm{Lip}_1(X)$ \> Lipschitz functions with Lipschitz constant $1$
\\
$\Gamma'$ \> transport set \eqref{E:gGamma}
\\
$G$, $G^{-1}$ \> outgoing, incoming transport ray, Definition \ref{D:Gray}
\\
$R$ \> set of transport rays \eqref{E:Rray}
\\
$\mathcal{T}$, $\mathcal{T}_e$ \> transport sets \eqref{E:TR0}
\\
$a,b : \mathcal{T}_e \to \mathcal{T}_e$ \> endpoint maps \eqref{E:endpoint0}
\\
$\mathcal{Z}_{m,e}$, $\mathcal Z_m$ \> partition of the transport set $\Gamma$ \eqref{E:Zkije}, \eqref{E:Zkij}
\\
$\mathcal S$ \> cross-section of $R \llcorner_{\mathcal T \times \mathcal T}$
\\
$g = g^+ \cup g^-$ \> ray map, Definition \ref{D:mongemap}
\\
$A_t$ \> evolution of $A \subset \mathcal{Z}_{k,i,j}$ along geodesics \eqref{E:At}
\\
$\dot g$ \> current on $(X,d)$ corresponding to the flow along geodesics, Definition \ref{D:dotgamma}
\\
$\partial \dot g$ \> boundary of the current $\dot \gamma$ \eqref{E:boundgamma}
\\
$H$\> be the set of all geodesics as a subset of $\textrm{Lip}_1([0,1],X)$
\\
$e_{t}(\gamma)$\> evaluation map \eqref{E:evalua}
\\
$\Xi$\> dynamical transference plan
\\
$s_{K}$\>  the map defined in \eqref{E:sk}
\\
$A^{s,\tilde H}$\> the evolution of the set $A$ along the geodesics $\tilde H \subset H$ \eqref{E:evodue}
\\
$H(G)$\> the set of geodesics used by $G$ \eqref{E:geodue}

\end{tabbing}

\bibliography{biblio}

\end{document}